\documentclass[10pt,fleqn]{article}

\usepackage[a4paper,textwidth=178mm,textheight=224mm,centering]{geometry}

\usepackage[T1]{fontenc}
\usepackage[utf8]{inputenc}

\usepackage{amssymb,amsmath,amsthm,amsfonts,mathrsfs}
\usepackage{mathtools}
\usepackage{bm}

\usepackage{graphicx}
\usepackage{subcaption}
\usepackage{float,wrapfig}
\usepackage{tikz}

\usepackage{multirow,multicol}
\usepackage{booktabs}

\usepackage[linesnumbered,ruled,algosection]{algorithm2e}
\usepackage{enumitem}

\usepackage[numbers,sort&compress]{natbib}

\usepackage[colorlinks,linktocpage,citecolor=black,linkcolor=black,
            pdftitle={A volume-controlled level-set method for topology optimization},
            pdfauthor={Wei Gong, Zhenmin Zhang, Shengfeng Zhu}]{hyperref}

\allowdisplaybreaks

\numberwithin{equation}{section}

\theoremstyle{plain}
\newtheorem{theorem}{Theorem}[section]
\newtheorem{lemma}[theorem]{Lemma}
\newtheorem{corollary}[theorem]{Corollary}

\newtheorem{example}[theorem]{Example}
\newtheorem{remark}[theorem]{Remark}

\theoremstyle{definition}
\newtheorem{definition}[theorem]{Definition}

\renewenvironment{proof}[1][]{%
  \par\trivlist
  \item[\hskip\labelsep{\scshape Proof\ifx\relax#1\relax.\else\ (#1).\fi}]\rmfamily\ignorespaces
}{%
  \endtrivlist
}
\renewcommand*{\qed}{\hbox{}\hfill$\Box$}

\newcommand{\dx}{\,\mathrm{d}\bm{x}}
\newcommand{\ds}{\,\mathrm{d}s}
\newcommand{\dd}{\,\mathrm{d}}
\newcommand{\jump}[1]{[\![ #1 ]\!]}
\newcommand{\avg}[1]{\{\!\!\{ #1 \}\!\!\}}

\begin{document}
\let\WriteBookmarks\relax
\def\floatpagepagefraction{1}
\def\textpagefraction{.001}

\title{\Large A volume-controlled level-set method for topology optimization\thanks{This work is supported in part by the National Key Basic Research Program under Grant 2022YFA1004400, the Strategic Priority Research Program of the Chinese Academy of Sciences XDB0640000 \& XDB0640200, the National Natural Science Foundation of China under Grant (Nos. 12471377, 12471393 and 12494543), and the Science and Technology Commission of Shanghai Municipality (Nos. 23JC1400503 and 22DZ2229014).}}

\author{%
  Wei Gong\textsuperscript{1}, \quad
  Zhenmin Zhang\textsuperscript{2}, \quad
  Shengfeng Zhu\textsuperscript{2,3,*}\\[1.2em]
  \parbox{0.90\textwidth}{\centering\normalsize
    \textsuperscript{1}The State Key Laboratory of Mathematical Sciences, National Center for
    Mathematics and Interdisciplinary Sciences and Institute of Computational Mathematics,
    Academy of Mathematics and Systems Science, Chinese Academy of Sciences,
    Beijing 100190, China\\[0.4em]
    \textsuperscript{2}School of Mathematical Sciences, East China Normal University,
    Shanghai 200241, China\\[0.4em]
    \textsuperscript{3}Key Laboratory of MEA (Ministry of Education) \& Shanghai Key
    Laboratory of Pure Mathematics and Mathematical Practice, Shanghai 200241, China\\[0.7em]
    \textsuperscript{*}Corresponding author.
    E-mail addresses: \texttt{wgong@lsec.cc.ac.cn} (W.~Gong),
    \texttt{52290155013@stu.ecnu.edu.cn} (Z.~Zhang),
    \texttt{sfzhu@math.ecnu.edu.cn} (S.~Zhu)%
  }%
}
\date{}
\maketitle

\begin{abstract}
Topology optimization typically involves a geometric volume/mass constraint due to the limited use of material. As a popular technique for topology optimization, the level-set method typically employs the Lagrange method, the penalty method, or the augmented Lagrangian approach to address the volume constraint. This requires careful parameter tuning and may lead to non-monotonic changes in the objective. In this paper, we present a new volume-controlled level-set method for topology optimization. By imposing a scalar weighted volume-compatibility constraint in a modified Stokes-type extension-regularization problem, the velocity is made volume-compatible at the beginning of each frozen transport interval. At the continuous level, we prove that the initial rate of change of the regularized volume vanishes and that the subsequent frozen-velocity drift is of order $T^2$; under a narrow-band assumption, the corresponding constant is independent of the smoothing parameter. For the discontinuous Galerkin spatial semi-discretization of the level-set transport equation, we derive a quantitative regularized-volume drift estimate that displays the dependence on the smoothing parameter, spatial mesh size, transport time, and discrete velocity. The effectiveness of the present algorithm is demonstrated through the minimum compliance problem for linear elasticity, the eigenfrequency maximization for structural vibration, and the minimum energy dissipation problem for Stokes flows.
\end{abstract}

\noindent\textbf{Keywords}: Volume-controlled, level-set method, topology optimization, discontinuous Galerkin finite element method, Stokes gradient flow

\section{Introduction}\label{sect:intro}

Topology optimization constitutes a fundamental class of partial differential equation (PDE)-constrained optimization problems in which the design variables describe the geometry and topology of the computational domain. Given a governing PDE, the goal is to determine an optimal material distribution that minimizes (or maximizes) a performance functional subject to physical and geometric constraints, while allowing topological changes such as the nucleation, merging, or disappearance of holes. Representative applications include minimum compliance and eigenfrequency design in structural mechanics, drag and dissipation reduction in fluid mechanics, and thermal, acoustic, and electromagnetic design; see, e.g., \cite{bendsoe2013topology,allaire2021shape} and the references therein.

A variety of numerical approaches have been developed for shape and topology optimization, including the homogenization method \cite{Allaire}, the density or SIMP method \cite{bendsoe1989optimal,bendsoe1999material,bourdin2003design,borrvall2001topology}, the level-set method \cite{osher1988fronts,allaire2004structural,burger2003framework,wang2003level,Yamada}, the phase-field method \cite{bourdin2003design,garcke2015numerical,takezawa2010shape,wang2004phase,JLXZ,xie2023phasefield}, the moving morphable component method \cite{GuoZhangZhong}, and the threshold dynamics method \cite{chen2022efficient}. Among these, the level-set method is a widely used implicit interface-tracking technique that provides sharp interface descriptions and naturally accommodates topological changes without explicit hole nucleation rules. It has been successfully applied to structural optimization \cite{allaire2004structural,wang2003level}, fluid topology design \cite{BorrvallPetersson:2003,pironneau1973optimum_1,pironneau1974optimum_2}, structural vibration \cite{allaire2005levelvibration,liu2014eigenvalue}, spectral optimization \cite{LQZ,JQZ}, and multi-physics design \cite{FADJ,DGHK}.

A persistent difficulty in level-set topology optimization is the treatment of the volume (or mass) constraint. Since only a limited amount of material is typically available, admissible designs must satisfy a prescribed volume fraction. Classical formulations usually enforce this constraint through a Lagrange multiplier, a penalty term, or an augmented Lagrangian strategy. Although standard, these approaches introduce algorithmic parameters that must be selected and updated carefully; inappropriate updates may cause oscillatory behavior, slow convergence, or noticeable volume errors. Moreover, because the level-set function is usually evolved by a Hamilton--Jacobi-type equation with a velocity derived from shape sensitivity analysis (see, e.g., \cite{allaire2004structural,wang2003level,WLKW,allaire2005levelvibration}), volume is not directly controlled during the evolution step and is instead corrected through the outer constrained optimization loop.

Several strategies have been proposed to improve volume control. In \cite{WLKW}, a bisection method was used to compute the volume Lagrange multiplier in a meshless level-set method, while \cite{CWW} developed a volume-preserving phase-field method using a secant-type update. These approaches improve robustness but still treat the volume constraint through an external algebraic mechanism rather than as a property of the geometric evolution. In a related direction, conservative level-set methods for two-phase flows \cite{olsson2005conservative_1,olsson2007conservative_2} combine divergence-free velocities with DG or finite-volume discretizations to preserve phase volume. By contrast, direct DG discretization of the level-set transport equation is not volume-conservative in shape and topology optimization when the velocity is not divergence-free \cite{TYZ,TZ,FGP}. Stokes gradient flows have also been used to construct volume-preserving interface evolution laws in two-phase flows \cite{DuanLiYang} and shape optimization \cite{gong2024convergent,FYZ}. These observations motivate embedding an appropriate volume constraint directly in the velocity extension problem. DG discretization is attractive for the transport step because of its local conservation principle and numerical fluxes across element interfaces \cite{DE}.

Motivated by these observations, we develop a new volume-controlled level-set method for general topology optimization problems. A scalar weighted volume-compatibility constraint is embedded in a Stokes-type extension-regularization problem, and the resulting descent velocity drives the level-set transport equation. Thus, volume control is incorporated directly into the geometric evolution law rather than enforced through outer multiplier or penalty updates.

The framework is analyzed at both the continuous and discrete levels. At the continuous level, the Stokes-type extension provides a descent direction, while the weighted constraint cancels the first-order change of the regularized material volume at the beginning of each frozen transport interval and yields an explicit short-time drift bound of order $T^2$; under a narrow-band assumption, the corresponding constant is independent of the smoothing parameter. At the discrete level, we consider a DG spatial semi-discretization of the level-set advection equation. Because the weighted constraint is imposed only when the velocity is computed at the beginning of a transport interval, we derive a regularized-volume drift estimate rather than claim exact semi-discrete conservation. The estimate explicitly resolves the effects of the smoothing parameter, mesh size, transport time, and discrete velocity.

The method is demonstrated on three representative PDE-constrained topology optimization problems: minimum compliance in linear elasticity, fundamental-eigenfrequency maximization in structural vibration, and minimum dissipation in Stokes flow. The numerical results show that the material volume remains close to its prescribed target while competitive objective values and stable convergence behavior are retained.

The main contributions are summarized as follows:
\begin{itemize}
    \item We propose a volume-controlled level-set method in which a scalar weighted volume-compatibility constraint is incorporated directly into a Stokes-type extension-regularization problem coupled with level-set transport.
    \item We establish the descent property and an $O(T^2)$ frozen-velocity regularized-volume drift at the continuous level, including the narrow-band uniformity, and derive a DG semi-discrete drift estimate displaying the dependence on the smoothing parameter, mesh size, transport time, and discrete velocity.
    \item We validate the framework on structural compliance, eigenfrequency optimization, and Stokes-flow topology optimization, demonstrating effective volume control and stable objective evolution.
\end{itemize}
The remainder of the paper is organized as follows. Section~\ref{sect:method} introduces the volume-controlled level-set formulation for topology optimization in structural and fluid design problems and derives a short-time regularized-volume drift estimate for each frozen continuous transport interval. Section~\ref{sect:NM} presents the numerical algorithm and the DG discretization of the level-set transport equation, and derives quantitative regularized-volume drift estimates for the spatially semi-discrete scheme. Section~\ref{sect:NR} provides numerical experiments for linear elasticity, eigenfrequency optimization, and Stokes flow design. Concluding remarks are given in Section~\ref{sect:Conclusion}.

\section{Volume-controlled level-set method for topology optimization}\label{sect:method} 
Let us first introduce the level-set method for a model problem in topology optimization. Let $D\subset\mathbb{R}^d$ $(d=2,3)$ be an open bounded design domain. Let $\Omega\subset D$ be the open domain to be optimized. We introduce a level-set function $\phi:D\rightarrow\mathbb{R}$ for implicit representation:
	\begin{equation}
		\phi(\bm{x}) 
		\begin{cases}
			<0 & \bm{x} \in \Omega, \\
			=0 & \bm{x} \in \Gamma := \partial \Omega, \\
			>0 & \bm{x} \in D \setminus \Omega.
		\end{cases}
	\end{equation}
Consider the following topology optimization problem using the level-set function
	\begin{eqnarray}
    \begin{aligned}
		&\min_{\phi }&& J(\phi,u)  \\
        &\texttt{s.t.}  &&E(\phi,u)=0,\\ 
        & && C(\phi) = 0,
        \end{aligned}
	\end{eqnarray}
where $J:\mathcal{H} \times \mathcal{X} \rightarrow \mathbb{R}$ for some Banach spaces $\mathcal{H}$ and $\mathcal{X}$, $E:\mathcal{H} \times \mathcal{X}\rightarrow \mathbb{R}$ and $C:\mathcal{H}\rightarrow \mathbb{R}$ correspond to the physical constraint of certain PDE and the geometric volume constraint of $\Omega$, respectively. More specifically, 	
\begin{equation}
C(\phi):=\int_D (1-H(\phi))dx-V_0,
\end{equation}
where $V_0>0$ is a given target volume, and $H(\phi)$ is the Heaviside function defined by 
	\begin{equation}
		H(\phi) = 
		\begin{cases}
			1 & \phi > 0, \\
			0 & \phi < 0.
		\end{cases}
	\end{equation}
	In numerical computations, we introduce a $C^1$ and a piecewise quadratic function $H_\gamma(\phi)$ as a regularized Heaviside function of $H(\phi)$:
	\begin{equation} \label{eq:HeaviSide}
		H_\gamma(\phi)=
		\begin{cases}
			0 & \phi<-\gamma,\\[4pt]
			\displaystyle \frac12+\frac{\phi}{\gamma}+\frac{\phi^2}{2\gamma^2} & -\gamma\le \phi\le 0,\\[8pt]
			\displaystyle \frac12+\frac{\phi}{\gamma}-\frac{\phi^2}{2\gamma^2} & 0< \phi\le \gamma,\\[8pt]
			1 & \phi>\gamma,
		\end{cases}
	\end{equation}
	where a small number $\gamma>0$ corresponds to the thickness of a tube that surrounds the interface $\Gamma$. The derivative reads as follows
	\begin{equation}\label{Hprime}
	H_\gamma^\prime(\phi)=
	\begin{cases}
		0 & |\phi|>\gamma,\\[4pt]
		\displaystyle \frac{1}{\gamma}+\frac{\phi}{\gamma^2} & -\gamma\le \phi\le 0,\\[8pt]
		\displaystyle \frac{1}{\gamma}-\frac{\phi}{\gamma^2} & 0<\phi\le \gamma.
	\end{cases}
	\end{equation}
	Let $M(\phi)=1-H(\phi)$ and
	\[
	M_\gamma(\phi)=1-H_\gamma(\phi),
	\]
	the latter of which will be used to approximate the characteristic function $\chi_{\Omega}=M(\phi)$ of $\Omega$, while the volume $|\Omega|=\int_D \chi_{\Omega} {\rm d}x=\int_D M(\phi) {\rm d}x$ of $\Omega$ is approximated by $\int_D M_{\gamma}(\phi) {\rm d}x$. We have $M_\gamma^\prime(\phi)=-H_\gamma^\prime(\phi)$. To illustrate that $\int_D M_{\gamma}(\phi) {\rm d}x$ is a good approximation of $|\Omega|$, we define the following signed distance function
		\begin{equation}
			\psi(\bm{x}) =
			\begin{cases}
				-\min\limits_{\bm{x}_I \in \partial \Omega}  \| \bm{x} - \bm{x}_I \| & \texttt{if } \bm{x} \in \Omega, \\
				\min\limits_{\bm{x}_I \in \partial \Omega}  \| \bm{x} - \bm{x}_I \| & \texttt{if } \bm{x} \in D \setminus \Omega,
			\end{cases}
		\end{equation}
        where $\| \cdot \|$ denotes the Euclidean norm. It has a desirable property $\|\nabla\psi(\bm{x})\|=1$ in $D$. Denoting by $\bm{n}$ an outward unit normal vector on a certain part of the boundary of $D$, we have the following theorem:

    \begin{theorem}
    \label{prop:volume-approximation}
		Assume that the interface $\Gamma$ is of class $C^2$, has no boundary, $\Gamma\Subset D$, and $\gamma<{\rm reach}(\Gamma)$, where
\begin{equation*}
    {\rm reach}(\Gamma):=\sup\{r>0: {\rm every point }\ \bm{x}\ {\rm with }\ {\rm dist}(\bm{x},\Gamma)<r\ {\rm has\ a\ unique\ nearest\ point\ on }\ \Gamma\}.
\end{equation*}
        Then there exists a constant $a_0$ that satisfies $\gamma<a_0<\min\{{\rm reach}(\Gamma),{\rm dist}(\Gamma,\partial D)\}$, such that the signed distance function $\psi(\bm{x})$ is smooth in the tubular neighborhood $ \lbrace \bm{x}: -a_0 \leqslant \psi(\bm{x}) \leqslant a_0 \rbrace \subset D$; see, e.g., \cite{delfour2011shapes}. If, in addition, the magnitude of the curvature is uniformly bounded by a positive constant $\kappa_{\infty}$, where $\kappa(t)$ denotes the signed curvature of $\Gamma$ parameterized by arc length $t$ for $d=2$, and $\kappa_1$ and $\kappa_2$ denote the principal curvatures of $\Gamma$ for $d=3$, then we have the following estimate
		\begin{equation}
			\Big| \int_D M_{\gamma}(\psi) {\rm d}x - |\Omega| \Big| \leqslant L_{\Gamma} \kappa_{\infty} \gamma^2,
		\end{equation}
        where $L_{\Gamma}$ denotes the length ($d=2$) or the area ($d=3$) of $\Gamma$. 
			
			
    \end{theorem}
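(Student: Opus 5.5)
The plan is to reduce the volume error to an integral over the tube $\{|\psi|\le\gamma\}$, write that integral in normal (tubular) coordinates, and exploit that the kernel $M_\gamma-M$ is odd in $\psi$.

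\textbf{Reduction to the tube.} Since $M_\gamma(\psi)=M(\psi)$ wherever $|\psi|>\gamma$, we have
\[
\int_D M_\gamma(\psi)\,\mathrm{d}x-|\Omega|=\int_{\{|\psi|\le\gamma\}} g_\gamma(\psi)\,\mathrm{d}x,
\qquad g_\gamma:=M_\gamma-M .
\]
From \eqref{eq:HeaviSide}, $g_\gamma(s)=-H_\gamma(s)$ for $s<0$ and $g_\gamma(s)=1-H_\gamma(s)$ for $s>0$. A direct check gives $H_\gamma(-s)=1-H_\gamma(s)$, so $g_\gamma$ is odd. Moreover $|g_\gamma(s)|=\tfrac12(1-|s|/\gamma)^2$ on $[-\gamma,\gamma]$. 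Hence $\int_{-\gamma}^{\gamma}g_\gamma(s)\,\mathrm{d}s=0$ and $\int_{-\gamma}^{\gamma}|s\,g_\gamma(s)|\,\mathrm{d}s=2\int_0^\gamma \tfrac{s}{2}(1-s/\gamma)^2\,\mathrm{d}s=\gamma^2/12$.

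\textbf{Tubular coordinates.} Because $\gamma<a_0<\mathrm{reach}(\Gamma)$ and $\psi$ is smooth on $\{|\psi|\le a_0\}$, the map $\Phi:\Gamma\times[-\gamma,\gamma]\to\{|\psi|\le\gamma\}$, $\Phi(\bm y,s)=\bm y+s\bm n(\bm y)$, is a $C^1$ diffeomorphism with $\psi(\Phi(\bm y,s))=s$. The change of variables then gives
\[
\mathrm{d}x=J(\bm y,s)\,\mathrm{d}\sigma(\bm y)\,\mathrm{d}s .
\]
With the sign convention in which $\kappa$ (resp.\ $\kappa_i$) is positive for convex $\Omega$, the Jacobian is $J=1+s\kappa(\bm y)$ for $d=2$ and $J=(1+s\kappa_1)(1+s\kappa_2)=1+s(\kappa_1+\kappa_2)+s^2\kappa_1\kappa_2$ for $d=3$. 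Positivity of $J$ on the tube follows from the reach assumption.

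\textbf{Cancellation and estimate.} Substituting, the error becomes
\[
\int_\Gamma\int_{-\gamma}^{\gamma} g_\gamma(s)\,J(\bm y,s)\,\mathrm{d}s\,\mathrm{d}\sigma(\bm y).
\]
The constant part of $J$ integrates to zero against the odd $g_\gamma$, and so does the $s^2$ term in $d=3$ (odd times even). Only the linear term survives, and it is bounded by
\[
L_\Gamma\,\sup_\Gamma|\mathrm{tr}\,\text{curvature}|\;\frac{\gamma^2}{12}.
\]
For $d=2$ this is $L_\Gamma\kappa_\infty\gamma^2/12\le L_\Gamma\kappa_\infty\gamma^2$. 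For $d=3$, $|\kappa_1+\kappa_2|\le2\kappa_\infty$ gives $L_\Gamma\kappa_\infty\gamma^2/6$, again within the stated bound.

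\textbf{Main obstacle.} The analytic content is routine. The delicate step is justifying the tubular change of variables: showing $\Phi$ is a bijection onto $\{|\psi|\le\gamma\}$ with $\psi\circ\Phi=s$, and deriving the Jacobian formula. For this I would invoke the cited regularity of $\psi$ (e.g.\ \cite{delfour2011shapes}), together with the fact that $\nabla\psi(\Phi(\bm y,s))=\bm n(\bm y)$ and that the Hessian of $\psi$ restricted to parallel surfaces has eigenvalues $\kappa_i/(1+s\kappa_i)$. A secondary check is that the tube stays inside $D$, which follows from $\gamma<\mathrm{dist}(\Gamma,\partial D)$, and that the curvature sign convention matches the choice of $\bm n$ pointing out of $\Omega$. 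The convention does not affect the absolute bound.
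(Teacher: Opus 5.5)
Your proof is correct and is essentially the paper's argument. Your tubular change of variables $\mathrm{d}x=J(\bm y,s)\,\mathrm{d}\sigma\,\mathrm{d}s$ is what the paper obtains from the co-area formula with the level-set measures $L(\ell)$, and your oddness of $M_\gamma-M$ is the paper's symmetry $M_\gamma(t)+M_\gamma(-t)=1$, which folds the inner half of the tube onto the outer half so that only $L(\ell)-L(-\ell)$, the term linear in the curvature, survives. The one difference is that you integrate the exact profile $\tfrac12(1-|s|/\gamma)^2$, which gives the sharper bounds $L_\Gamma\kappa_\infty\gamma^2/12$ ($d=2$) and $L_\Gamma\kappa_\infty\gamma^2/6$ ($d=3$); the paper instead uses the cruder bound $M_\gamma\le\tfrac12$ on $[0,\gamma]$ and obtains $L_\Gamma\kappa_\infty\gamma^2/2$ and $L_\Gamma\kappa_\infty\gamma^2$.
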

	\begin{proof}
    It is obvious that $0\leqslant M_\gamma(\phi(\bm{x}))\leqslant 1$ for any $\bm{x}\in D$.
		Notice that
		\begin{align}
			\Big| \int_D M_{\gamma}(\psi) {\rm d}x - |\Omega| \Big| &= \Big| \int_{D \setminus \Omega} M_{\gamma}(\psi) {\rm d}x - \int_{\Omega} (1 - M_{\gamma}(\psi)) {\rm d}x \Big|.  
		\end{align}
		Then we can obtain by the co-area formula and the property $\|\nabla\psi\|=1$ for $|\psi|\leqslant a_0$ that
		\begin{equation} \label{3_I1_eq}
			0 < \int_{D \setminus \Omega} M_{\gamma}(\psi) {\rm d}x = \int_0^{\gamma} M_{\gamma}(\ell)L(\ell) d\ell,
		\end{equation}
		where $L(\ell)$ is the length or area of the interface corresponding to $\psi = \ell$, so for any $|\ell| \leqslant a_0$
		\begin{equation*}
			L(\ell) = \begin{cases}
			    \int_0^{L_{\Gamma}} (1 + \kappa(t) \ell) dt & d = 2, \\
                    \int_{\Gamma} (1+\kappa_1 \ell)(1+\kappa_2 \ell) dS & d=3.
			\end{cases} 
		\end{equation*}
		Here $L_{\Gamma} = L(0)$. Similarly, because of the symmetry $M_{\gamma}(t) + M_{\gamma}(-t) = 1$ for any $t \in \mathbb{R}$,  
		\begin{align} \label{3_I2_eq}
			0 < \int_{\Omega} (1 - M_{\gamma}(\psi)) {\rm d}x = \int_{-\gamma}^0 (1-M_{\gamma}(\ell)) L(\ell) d\ell 
			= \int_{-\gamma}^0 M_{\gamma}(-\ell) L(\ell) d\ell 
			= \int_{0}^{\gamma} M_{\gamma}(\ell) L(-\ell) d\ell.
		\end{align}
		From \eqref{3_I1_eq}-\eqref{3_I2_eq} and $M_{\gamma}(t) \leqslant 1/2$ for any $t \geqslant 0$, we can get for $d = 2$
		\begin{align}
			\Big| \int_D M_{\gamma}(\psi) {\rm d}x - |\Omega| \Big| &= \Big| \int_{0}^{\gamma} M_{\gamma}(\ell) \left[ L(\ell) - L(-\ell) \right] d\ell \Big| \notag \\
			&= \Big| \int_{0}^{\gamma} 2M_{\gamma}(\ell) \int_{0}^{L_{\Gamma}} \kappa(t) \ell dt \ d\ell \Big| \notag \\
			& \leqslant L_{\Gamma} \kappa_{\infty} \Big|  \int_{0}^{\gamma} 2\ell \ M_{\gamma}(\ell) d\ell \Big| \notag \\
			& \leqslant L_{\Gamma} \kappa_{\infty} \frac{\gamma^2}{2} \leqslant L_{\Gamma} \kappa_{\infty} \gamma^2,
		\end{align}
        and for $d = 3$
            \begin{align}
			\Big| \int_D M_{\gamma}(\psi) {\rm d}x - |\Omega| \Big| &= \Big| \int_{0}^{\gamma} M_{\gamma}(\ell) \left[ L(\ell) - L(-\ell) \right] d\ell \Big| \notag \\
			&= \Big| \int_{0}^{\gamma} 2 \ell M_{\gamma}(\ell) \int_{\Gamma} (\kappa_1 + \kappa_2) dS \ d\ell \Big| \notag \\
			& \leqslant 2L_{\Gamma} \kappa_{\infty} \Big|  \int_{0}^{\gamma} 2\ell \ M_{\gamma}(\ell) d\ell \Big| \notag \\
			& \leqslant L_{\Gamma} \kappa_{\infty} \gamma^2,
		\end{align}
		both of which give the bound. \qed
	\end{proof}

From the perspective of numerical accuracy, Theorem \ref{prop:volume-approximation} establishes that the discrepancy between the regularized volume and its geometric counterpart is of order $\mathcal{O}(\gamma^2)$. Here, $\gamma>0$ denotes the regularization parameter, which governs the thickness of the transition layer around the zero level-set interface.
    
	Throughout the paper, we describe our algorithm based on level-set methods for three topology optimization model problems: (1) structural compliance; (2) structural vibration; (3) Stokes flows.
	
	\subsection{Compliance minimization for linear elasticity system} \label{subsect:min_compliance}
	Let the boundary $\partial D$ of $D$ be decomposed into three disjoint pieces such that $\partial D = \partial D_D \cup \partial D_N \cup \partial D_0$ (see Fig. \ref{fig:two-model-examples} (left) for an illustration). 
    Let $ [H^1_{\partial D_D}(D)]^d := \left\lbrace \bm{v} \in [H^1(D)]^d: \ \bm{v} = 0 \texttt{ on } \partial D_D \right\rbrace$, where $H^1(D)$ denotes the space of square-integrable functions whose weak derivatives are also square-integrable in $D$.
One can use the ersatz material method to build the topology optimization model of compliance minimization:
	\begin{equation} \label{2_original_J1}
		\min_{C(\phi) = 0 } J(\phi,\bm{u}) = \int_D A_{\eta} \varepsilon(\bm{u}):\varepsilon(\bm{u}) {\rm d}x,
	\end{equation}
	where the displacement field $\bm{u}=\bm{u}(\phi)\in [H^1_{\partial D_D}(D)]^d$ satisfies
\begin{equation}\label{Model2_1_PDE}
		\begin{cases}
			-\bm{\nabla} \cdot (A_{\eta} \varepsilon(\bm{u})) = \bm{f}_s & \texttt{in} \ D, \\
			\bm{u} = \bm{0} & \texttt{on} \ \partial D_D, \\
			A_{\eta} \varepsilon(\bm{u}) \bm{n} = \bm{g}_s & \texttt{on} \ \partial D_N, \\
			A_{\eta} \varepsilon(\bm{u}) \bm{n}= \bm{0} & \texttt{on} \ \partial D_0
		\end{cases}
	\end{equation}
	 for a given body force $\bm{f}_s \in [L^2(D)]^d$ and boundary traction $\bm{g}_s \in [L^2(\Gamma_N)]^d$. Here, the stress tensor $             A_{\eta}\varepsilon(\bm{u}) =
             M_{\eta,\gamma}(\phi) A\varepsilon(\bm{u})$, where
	\begin{align*}
              M_{\eta,\gamma}(\phi)=M_{\gamma}(\phi)  + \eta(1-M_{\gamma}(\phi)), 
	\end{align*} 
  $\eta \ll 1$ is a positive constant, and the strain tensor $\varepsilon(\bm{u}) := \frac{1}{2} \left( \bm{\nabla} \bm{u} + \bm{\nabla} \bm{u}^T \right)$. The Hooke's law reads as
     \begin{align*}
		 A \varepsilon(\bm{u}) = 2 \mu_L \varepsilon(\bm{u}) + \lambda_L \texttt{tr}(\varepsilon(\bm{u})) \bm{I},
	\end{align*} 
     where $\bm{I} \in \mathbb{R}^{d \times d}$ is the identity tensor, ${\rm tr}(\cdot)$ denotes the trace operator, and the Lam\'e parameters are given by
	\begin{equation*}
		\mu_L = \frac{E_Y}{2(1+\nu)}, \texttt{ and } \lambda_L = \frac{E_Y\nu}{(1+\nu)(1+\nu(1-d))}
	\end{equation*}
	with $E_Y$ and $\nu$ being Young’s modulus and Poisson’s ratio, respectively.

    \subsection{Eigenfrequency maximization for structural vibration}
    Let the boundary of $D$ be decomposed into two disjoint pieces such that $\partial D = \partial D_D \cup \partial D_N$ (see Fig. \ref{fig:two-model-examples} (middle) for an illustration). 
    Similarly to the compliance minimization problem, one can use the ersatz material method to build the topology optimization model of eigenfrequency maximization
	\begin{equation} \label{2_original_J3}
		\min_{C(\phi) = 0 } J(\phi,\bm{u}) = -\lambda_1,
	\end{equation}
	where $\lambda_1$ is the first eigenvalue corresponding to the fundamental eigenfrequency $\omega_1$ with $\lambda_1=\omega_1^2$ and the eigenpair $(\omega^2, \bm{u})\in \mathbb{R}^+\times [H^1_{\partial D_D}(D)]^d$ satisfies the following eigenvalue problem
    \begin{equation}\label{Model2_3_PDE}
		\begin{cases}
			-\bm{\nabla} \cdot (A_{\eta_1} \varepsilon(\bm{u})) = \omega^2 \rho_M \bm{u} & \texttt{in} \ D, \\
			\bm{u} = 0 & \texttt{on} \ \partial D_D, \\
			A_{\eta_1} \varepsilon(\bm{u}) \bm{n} = \bm{0} & \texttt{on} \ \partial D_N,
		\end{cases}
	\end{equation}
    with a sequence of frequencies $0<\omega_1\leqslant \omega_2\leqslant\cdots<\infty$. The stress tensor $A_{\eta_1}\varepsilon(\bm u) =M_{\eta_1,\gamma}(\phi) A\varepsilon(\bm{u})$ with some constant $0 < \eta_1 \ll 1$, where
	\begin{align*}
              M_{\eta_1,\gamma}(\phi)=M_{\gamma}(\phi)  + \eta_1(1-M_{\gamma}(\phi)), 
	\end{align*} 
    the strain tensor $\varepsilon(\bm{u})$, the Young modulus $E$ and Poisson's ratio $\nu$ are similar to those in Section \ref{subsect:min_compliance}. Let $\chi_{\texttt{block}}$ be the characteristic function of a small square region (a black block illustrated in Fig. \ref{fig:two-model-examples}) in the design domain (cf. \cite{allaire2005levelvibration}) with the mass density $\rho_{\texttt{block}}$, which mimics the load of the structure and is not subject to optimization; we set the effective mass density $\rho_M:D\rightarrow\mathbb{R}$ as
    \begin{equation}
        \rho_M = \rho_{\texttt{block}} \chi_{\texttt{block}} + \left( M_{\gamma}(\phi) + \eta_2(1-M_{\gamma}(\phi)\right) (1-\chi_{\texttt{block}})
    \end{equation}
    for some positive constant $\eta_2 \ll 1$, which is the mass density assigned to the non-material region.
    \begin{figure}[htbp]
        \centering
        \begin{subfigure}[htbp]{0.3\textwidth}
            \centering
            \begin{tikzpicture}[scale = 1.8]
                \fill[white] (0,0) rectangle (2,1);
                \fill[red!75] (0,{1/5}) rectangle (2,{4/5});

                \draw[thick] (0,0) rectangle (2,1);
                \node at (1,{1/2}) {$D$};

                \draw[line width=2.4pt,blue] (0,0) -- (0,1);
                \node[left=6pt,blue] at (0,{1/2}) {$\partial D_D$};
                \foreach \y in {{1/25},{3/25},{5/25},{7/25},{9/25},{11/25},{13/25},{15/25},{17/25},{19/25},{21/25},{23/25}}
                {
                    \draw[blue,thick] (0,\y) -- ++(-0.05,-0.05);
                }

                \draw[line width=2.4pt,blue] (2,{9/20}) -- (2,{11/20});
                \node[right=3pt,blue] at (2,{27/50}) {$\partial D_N$};

                \draw[->,thick] ({43/20},{21/50}) -- ({43/20},{29/100});
                \node[right=1pt] at ({43/20},{71/200}) {$g$};
                \node[above=1pt,black] at (1,1) {$\partial D_0$};
                \node[below=1pt,black] at (1,0) {$\partial D_0$};
            \end{tikzpicture}
        \end{subfigure}
        \hfill
        \begin{subfigure}[htbp]{0.3\textwidth}
            \centering
            \begin{tikzpicture}[scale=1.8]
        \fill[white] (0,0) rectangle (2,1);
        \fill[red!75] (0,{1/5}) rectangle (2,{4/5});

        \fill[black] ({19/10},{9/20}) rectangle (2,{11/20});

        \draw[thick] (0,0) rectangle (2,1);
        \node at (1,{1/2}) {$D$};

        \draw[line width=2.4pt,blue] (0,0) -- (0,1);
        \node[left=6pt,blue] at (0,{1/2}) {$\partial D_D$};
        \foreach \y in {{1/25},{3/25},{5/25},{7/25},{9/25},{11/25},{13/25},{15/25},{17/25},{19/25},{21/25},{23/25}}
        {
            \draw[blue,thick] (0,\y) -- ++(-0.05,-0.05);
        }

        \node[above=1pt,black] at (1,1) {$\partial D_N$};
        \node[below=1pt,black] at (1,0) {$\partial D_N$};
        \node[right=4pt,black] at (2,{1/2}) {$\partial D_N$};
    \end{tikzpicture}
            
        \end{subfigure}
        \hfill
        \begin{subfigure}[htbp]{0.3\textwidth}
 \begin{tikzpicture}[scale = 1.8]
                \fill[white] (0,0) rectangle ({3/2},1);

                \fill[red!75]
                (0,{2/3}) -- (0,{5/6}) -- ({3/4},{11/20}) --
                ({3/2},{5/6}) -- ({3/2},{2/3}) -- ({3/4},{9/20}) -- cycle;
                \fill[red!75]
                (0,{1/6}) -- (0,{1/3}) -- ({3/4},{11/20}) --
                ({3/2},{1/3}) -- ({3/2},{1/6}) -- ({3/4},{9/20}) -- cycle;

                \draw[thick] (0,0) rectangle ({3/2},1);
                \node at ({3/4},{1/2}) {$D$};

                \draw[line width=2.2pt,blue] (0,{1/6}) -- (0,{1/3});
                \draw[line width=2.2pt,blue] (0,{2/3}) -- (0,{5/6});
                \node[left=8pt,blue] at (0,{1/4}) {$\Gamma_{\texttt{in}}$};
                \node[left=8pt,blue] at (0,{3/4}) {$\Gamma_{\texttt{in}}$};
                \foreach \y in {{19/100},{23/100},{27/100},{31/100}}
                {
                    \draw[->,blue,thick] (-0.06,\y) -- (0.01,\y);
                }
                \foreach \y in {{69/100},{73/100},{77/100},{81/100}}
                {
                    \draw[->,blue,thick] (-0.06,\y) -- (0.01,\y);
                }

                \draw[line width=2.2pt,blue] ({3/2},{1/6}) -- ({3/2},{1/3});
                \draw[line width=2.2pt,blue] ({3/2},{2/3}) -- ({3/2},{5/6});
                \node[right=8pt,blue] at ({3/2},{1/4}) {$\Gamma_{\texttt{out}}$};
                \node[right=8pt,blue] at ({3/2},{3/4}) {$\Gamma_{\texttt{out}}$};
                \foreach \y in {{19/100},{23/100},{27/100},{31/100}}
                {
                    \draw[<-,blue,thick] ({39/25},\y) -- ({149/100},\y);
                }
                \foreach \y in {{69/100},{73/100},{77/100},{81/100}}
                {
                    \draw[<-,blue,thick] ({39/25},\y) -- ({149/100},\y);
                }

                \node[above=1pt,black] at (0.75,1) {$\partial D_{\texttt{wall}}$};
                \node[below=1pt,black] at (0.75,0) {$\partial D_{\texttt{wall}}$};
            \end{tikzpicture}
        \end{subfigure}

        \caption{Topology optimization examples: structural compliance (left), eigenfrequency (middle), and Stokes flow (right).}
        \label{fig:two-model-examples}
        \end{figure}

    \subsection{Energy dissipation minimization for Stokes flows}
	Consider the Brinkman approximation to the design problem related to fluid flows \cite{BorrvallPetersson:2003}. Let the boundary $\partial D$ of the design domain $D$ be decomposed into three disjoint pieces such that $\partial D = \partial D_{\texttt{wall}} \cup \Gamma_{\texttt{in}} \cup \Gamma_{\texttt{out}}$. Here, $\Gamma_{\texttt{in}}$ and $\Gamma_{\texttt{out}}$ are fixed as prescribed inflow and outflow boundaries, respectively (see Fig. \ref{fig:two-model-examples} for an illustration).
	 Let $M_{\gamma}(\phi)$ approximate the density function $\rho \in [0,1]$ so that $M_{\gamma}(\phi)=1$ gives the fluid region and $M_{\gamma}(\phi)=0$ gives the solid region, while $\alpha(M_{\gamma}(\phi))$ penalizes the intermediate values. We consider the optimization problem
	\begin{equation} \label{2_original_J2}
		\min_{C(\phi) = 0} J(\phi,\bm{u}) = \frac{1}{2} \int_D \big( \alpha(M_{\gamma}(\phi)) \left| \bm{u} \right|^2 + \nu_f \left| \bm{\nabla} \bm{u} \right|^2 \big) {\rm d}x - \int_{D} \bm{f}_f \cdot \bm{u} {\rm d}x
	\end{equation}
	Let $\Gamma_D^f:=\Gamma_{\rm in}\cup\partial D_{\texttt{wall}}$ and $\Gamma_N^f:=\Gamma_{\rm out}$, so that $\partial D=\Gamma_D^f\cup\Gamma_N^f$ and $\Gamma_D^f\cap\Gamma_N^f=\emptyset$. Define
	$\bm V_{\bm g_f}^f:=\{\bm v\in[H^1(D)]^d:\bm v=\bm g_f\text{ on }\Gamma_{\rm in},\ \bm v=\bm0\text{ on }\partial D_{\texttt{wall}}\}$ and $\bm V_0^f:=\{\bm v\in[H^1(D)]^d:\bm v=\bm0\text{ on }\Gamma_D^f\}$. We seek $\bm u\in\bm V_{\bm g_f}^f$ and $p_f\in L_0^2(D)$ such that
    \begin{equation}\label{weakStokes}
	\begin{aligned}
		\int_D \alpha(M_{\gamma}(\phi)) \bm{u} \cdot \bm{v} {\rm d}x + \int_D \nu_f \bm{\nabla u} : \bm{\nabla v} {\rm d}x - \int_D p_f\,\bm{\nabla} \cdot \bm{v} {\rm d}x = \langle\bm f_f,\bm v\rangle & \quad \forall \bm{v} \in \bm V_0^f, \\
		\int_D q_f\,\bm{\nabla} \cdot \bm{u} {\rm d}x = 0 & \quad \forall q_f \in L_0^2(D),
	\end{aligned}
    \end{equation}
	where $\bm{f}_f\in [H^{-1}(D)]^d$, $\bm{g}_f\in [H^{1/2}(\Gamma_{\rm in})]^d$, $L_0^2(D)=\{q_f\in L^2(D)\vert \int_D q_f{\rm d}x=0\}$, $\nu_f>0$ is a viscosity constant, the natural traction condition is imposed on $\Gamma_N^f$, and the penalization $\alpha(\rho)$ satisfies $\lim\limits_{\rho \rightarrow 0} \alpha(\rho) = +\infty$ and $\lim\limits_{\rho \rightarrow 1} \alpha(\rho) = 0$, so it can force $\bm{u} \rightarrow 0$ as $\rho \rightarrow 0$ and $\bm{u}$ large but finite as $\rho \rightarrow 1$, respectively. Note that \eqref{weakStokes} is the weak form of the Brinkman system for velocity $\bm{u}$ and pressure $p_f$:
	\begin{equation} \label{Model2_2_PDE}
		\begin{cases}
			-\nu_f \bm{\Delta} \bm{u} + \alpha(M_{\gamma}(\phi)) \bm{u} + \bm{\nabla} p_f = \bm{f}_f & \texttt{ in } D, \\
			\bm{\nabla} \cdot \bm{u} = 0 & \texttt{ in } D, \\
			\bm{u} = \bm{g}_f & \texttt{ on } \Gamma_{\texttt{in}}, \\
			\bm{u} = \bm{0} & \texttt{ on } \partial D_{\texttt{wall}}, \\
			\nu_f \bm{\nabla} \bm{u} \bm{n} - p_f \bm{n} = 0 & \texttt{ on } \Gamma_{\texttt{out}}.
		\end{cases}
	\end{equation}

    \subsection{Derivation of directional derivatives}
	Consider the velocity method and assume that the domain $\Omega$ changes along the given velocity field $\bm{V}$. For a sufficiently small time parameter $\delta > 0$, let $\bm{V} \in C \left( [0, \delta); C_0^1(\mathbb{R}^d;\mathbb{R}^d) \right)$, where $C_0^1(\mathbb{R}^d;\mathbb{R}^d)$ denotes the space of all continuously differentiable functions with compact support contained in $\mathbb{R}^d$. Let $\bm{x}(t, \bm{X})\in D$ ($t \in [0, \delta)$) satisfy
	\begin{equation} \label{3_deform_eq}
		\begin{cases}
			\dfrac{d\bm{x}(t, \bm{X})}{dt} = \bm{V}(t, \bm{x}(t, \bm{X})), \\
			\bm{x}(0, \bm{X}) = \bm{X}.
		\end{cases}
	\end{equation}
	The transformed domain $\Omega_t$ is denoted as the set $\lbrace \bm{x}(t, \bm{X}): \bm{X} \in \Omega \rbrace$ and $\Omega_0 := \Omega$. For any $t \in [0, \delta)$, let $\phi_t := \phi(t, \bm{x})$ be the level-set function that describes the domain $\Omega_t$. Assume that 
	$\phi(t, \bm{x}(t)) = \phi(0, \bm{x}(0)) = \phi(0, \bm{X})$ for $t \in [0, \delta)$. In view of  \eqref{3_deform_eq}, we can obtain the following convection equation of $\phi(t, \bm{x})$:
	\begin{equation} \label{3_levelset_eq}
		0 = \frac{d\phi}{dt} = \partial_t \phi\left(t, \bm{x} \right) + \bm{V}\left(t, \bm{x} \right) \cdot \nabla \phi \left(t, \bm{x} \right) = 0 \quad \forall (t, \bm{x}) \in [0, T) \times {D},
	\end{equation}
    where $T>0$ is a long-time instance.
        Although the deformation flow \eqref{3_deform_eq} is defined only for sufficiently small $t \in [0,\delta)$ to ensure the existence of a diffeomorphism, the time variable $t \in [0,T)$ in the level-set equation is an optimization pseudo-time. The long-time evolution is realized by repeatedly applying local deformations and recomputing the velocity field at each iteration \cite{PS}. Unlike the Hamilton-Jacobi equation considered in the level-set method for topology optimization \cite{allaire2004structural,wang2003level,burger2003framework}, the transport equation \eqref{3_levelset_eq} based on continuous Galerkin finite element discretizations \cite{YOYINP,ZHW} and discontinuous-Galerkin finite element discretizations \cite{TZ,TYZ} is considered here for topology optimization.
    
    Let $\phi_0$ be the initial level-set function that describes the domain $\Omega$ and evolved by \eqref{3_levelset_eq}.
	Next, we introduce the variational derivative at $\phi_0$ in the direction $\bm{V}$:
	\begin{definition}
		For a shape functional $\mathcal{J}:\mathcal{H}  \rightarrow \mathbb{R}$, the directional derivative of $\mathcal{J}$ at $\phi_0$ in the direction $\bm{V}\in C \left( [0, \delta); C_0^1(\mathbb{R}^d;\mathbb{R}^d) \right)$ is defined by the following limit if it exists:
		\begin{equation}
			d\mathcal{J}(\phi_0;\bm{V}) := \lim\limits_{t \searrow 0} \frac{\mathcal{J}(\phi_t) - \mathcal{J}(\phi_0)}{t},
		\end{equation}
        where $\mathcal{J}(\phi_t):=J(\phi_t,\bm{u}_t)$ and $\mathcal{J}(\phi_0):=J(\phi_0,\bm{u}_0)$ with  $\bm{u}_t=\bm{u}(\phi_t)$ and $\bm{u}_0=\bm{u}(\phi_0)$.
	\end{definition} 
       For simplicity, we write  $d\mathcal{J}(\phi;\bm{V})$ when $\bm{V}(t,   \bm{x}) \equiv \bm{V}(0, \bm{x}) =: \bm{V}(\bm{x}) \in W^{1,\infty}(D;\mathbb{R}^d)$ appears in, e.g., the perturbation of the identity method, which is equivalent to the velocity method in the sense of first-order expansion. We present below the directional derivatives of three types of model problems (see the Appendix for formal derivations). 
	
	For the compliance minimization problem, 
    the derivative of objective in \eqref{2_original_J1} is given by 
		\begin{align} \label{3_Modified_dJ1}
			d\mathcal{J}(\phi; \bm{V}) = \int_D A \varepsilon(\bm{u}):\varepsilon(\bm{u}) \ M_{\eta,\gamma}^\prime(\phi_0) \ \bm{V} \cdot \nabla \phi_0 {\rm d}x.
		\end{align}

    For the eigenfrequency problem, 
     assume that $\lambda_1=\lambda_1(\phi)$ is simple and thus it is differentiable. The directional derivative of the objective $-\lambda_1(\phi)$ is given by 
		\begin{align} \label{3_Modified_dJ3}
			d\mathcal{J}(\phi; \bm{V}) = \int_D \left[ A \varepsilon(\bm{u}_1):\varepsilon(\bm{u}_1) - \lambda_1 \frac{1-\eta_2}{1-\eta_1}(1-\chi_{\texttt{block}}) |\bm{u}_1|^2\right] \ M_{\eta_1,\gamma}^\prime(\phi_0) \ \bm{V} \cdot \nabla \phi_0 {\rm d}x,
		\end{align}
    where $\bm{u}_1$ is the unit eigenvector normalized by $\int_D\rho_M |\bm{u}_1|^2dx=1$ associated with $\lambda_1$ to \eqref{Model2_3_PDE}. 
    		
	For the Stokes flow energy dissipation minimization problem, 
		let $(\bm{u}(\phi),p(\phi))$ be the weak solution to the Stokes equation \eqref{Model2_2_PDE}. Then, the variational derivative of the objective in \eqref{2_original_J2} is given by 
		\begin{align} \label{3_Modified_dJ2}
			d\mathcal{J}(\phi; \bm{V}) = -\frac{1}{2} \int_D \alpha^\prime(M_{\gamma}(\phi_0)) \left| \bm{u} \right|^2 \ M'_{\gamma}(\phi_0) \ \bm{V} \cdot \nabla \phi_0 {\rm d}x.
		\end{align}
	
	\subsection{Volume-controlled level-set method}
	In numerical computations, the level-set equation \eqref{3_levelset_eq} is usually evolved for a short time $T > 0$, along with an explicit discretization of the velocity field $\bm{V}\left(t, \bm{x} \right) = \bm{V}\left(0, \bm{x} \right)$. So, it suffices to take $\bm{V}\left(t, \bm{x} \right) = \bm{V}(\bm{x})$.
	
	To evolve the level-set equation \eqref{3_levelset_eq}, we need to find an appropriate velocity field $\bm{V}(x)$, this is usually realized through the so-called Hilbertian extension-regularization approach: Find $\bm{V} \in \bm{H}$ such that:
	\begin{equation} \nonumber
			a(\bm{V},\bm{W}) = -d\mathcal{J}(\phi; \bm{W})\quad \forall\bm{W} \in \bm{H}, 		
	\end{equation}
    where $a(\cdot,\cdot):\bm{H}\times \bm{H}\rightarrow\mathbb{R}$ is a bilinear form associated with some Hilbert space $\bm{H}$. We refer to \cite[Sec. 5.2]{allaire2021shape} for a comprehensive overview of different choices of $\bm{H}$ and $a(\cdot,\cdot)$, and we refer to  \cite{Muller} for velocities in a general Banach space.

    In the current paper, we impose a scalar volume-compatibility constraint within the Hilbertian extension-regularization framework and solve the following Stokes-type saddle-point problem: Find $(\bm{V}, p) \in \bm{H} \times \mathbb{R}$ such that 
	\begin{equation} \label{3_CLS_eq}
		\begin{cases}
			a(\bm{V},\bm{W}) + b(\bm{W}, p) = -d\mathcal{J}(\phi; \bm{W})\quad \forall\bm{W} \in \bm{H},  \\
			b(\bm{V},r) = 0\quad \forall r \in \mathbb{R},
		\end{cases}
	\end{equation}
	where
    \begin{equation}
    \bm{H}:= \left\{
        \begin{aligned}
           & \lbrace \bm{V} \in [H^1(D)]^d \, : \, \bm{V} \cdot \bm{n} = 0 \ \text{on} \ \partial D_D \cup \partial D_0, \, \bm{V} = 0 \ \text{on} \ \partial D_N \rbrace\quad\text{for structural compliance}\\
          &  \lbrace \bm{V} \in [H^1(D)]^d \, : \, \bm{V} \cdot \bm{n} = 0 \ \text{on} \ \partial D  \rbrace\quad\text{for structural eigenfrequency}\\
          &  \lbrace \bm{V} \in [H^1(D)]^d \, : \, \bm{V} = 0 \ \text{on} \ \partial D  \rbrace\quad\text{for Stokes flows}.
        \end{aligned}
        \right.
    \end{equation}
    Then 
	\begin{align}
		a(\bm{V},\bm{W}) := \int_D \left[ \bm{\nabla} \bm{V} : \bm{\nabla} \bm{W} + \bm{V} \cdot \bm{W} \right] {\rm d}x, \quad
		b(\bm{V}, r) := r \int_D M'_{\gamma}(\phi) \ \bm{V} \cdot \nabla \phi {\rm d}x,
	\end{align}
	and $d\mathcal{J}(\phi;\bm{V})$ is defined in the previous subsection. Since the scalar multiplier space is one-dimensional, the inf-sup condition reduces to the existence of a velocity field \(\bm{V}\in \bm{H}\) such that \(b(\bm{V},1)\neq 0\). Assuming this nondegeneracy condition holds, the saddle-point problem satisfies the LBB condition and is well posed.
	
	Since $a(\cdot,\cdot)$ is coercive and $\phi_t = \phi(t, \bm{x})$, for $t\in (0,T)$ with some $T > 0$ being small enough, we can derive
	\begin{align}
		\mathcal{J}(\phi_t) &= \mathcal{J}(\phi_0) + t d\mathcal{J}(\phi;\bm{V}) + o(t) \notag \\
		&= \mathcal{J}(\phi_0) - t a(\bm{V},\bm{V}) - tb(\bm{V}, p) + o(t) \notag \\
		&= \mathcal{J}(\phi_0) - t a(\bm{V},\bm{V}) + o(t) \notag \\
		&< \mathcal{J}(\phi_0),	
	\end{align}
	for $0\neq\bm{V}\in \bm{H}$, which implies that the velocity given by \eqref{3_CLS_eq}  is indeed a descent direction.
    Because $r\in\mathbb R$, the second equation in this Stokes-type extension problem imposes one scalar weighted constraint rather than the pointwise condition $\nabla\cdot\bm V=0$. Under the boundary conditions defining $\bm H$, one has $\bm V\cdot\bm n=0$ on $\partial D$, and hence, at the beginning of a transport interval,
    \begin{equation}\label{eq:weighted-compatibility}
        b(\bm V,1)=\int_D \bm V\cdot\nabla M_\gamma(\phi_0)\,\mathrm d\bm x
        =-\int_D M_\gamma(\phi_0)\nabla\cdot\bm V\,\mathrm d\bm x.
    \end{equation}
    Therefore, the second equation enforces a weighted compatibility condition at the current level-set function; it does not make the velocity divergence-free in $D$. For the short-time transport estimate below, we additionally assume that the frozen velocity satisfies $\bm V\in W^{1,\infty}(D;\mathbb R^d)$ and that its flow maps $D$ onto itself. This is an analytical regularity assumption for the transport step and is not inferred solely from the $H^1$ well-posedness of the saddle-point problem.

    \begin{theorem}[Regularized-volume drift under a frozen velocity]\label{thm:frozen-volume-drift}
    Let $\bm V\in W^{1,\infty}(D;\mathbb R^d)$ be held fixed on $[0,T]$. Assume that $\bm V\cdot\bm n=0$ on $\partial D$ and that the flow generated by $\bm V$ maps $D$ onto itself. Let $\phi$ solve
    \begin{equation}
        \partial_t\phi+\bm V\cdot\nabla\phi=0,
        \qquad \phi(0)=\phi_0,
    \end{equation}
    and define
    \begin{equation*}
        \rho_\gamma(t):=M_\gamma(\phi(t)),
        \qquad \mathcal V_\gamma(t):=\int_D\rho_\gamma(t)\,\mathrm d\bm x.
    \end{equation*}
    Set
    \begin{equation*}
        \beta:=\|\nabla\cdot\bm V\|_{L^\infty(D)},\qquad
        \vartheta:=\|\bm V\|_{L^\infty(D)},\qquad
        G_\gamma:=\|\nabla M_\gamma(\phi_0)\|_{L^1(D)}.
    \end{equation*}
    Assume \(M_\gamma(\phi_0)\in W^{1,1}(D)\). If the weighted compatibility condition is imposed at the beginning of the transport interval,
    \begin{equation}
        \int_D M_\gamma(\phi_0)\nabla\cdot\bm V\,\mathrm d\bm x=0,
    \end{equation}
    then $\mathcal V_\gamma'(0)=0$ and
    \begin{equation}\label{eq:frozen-volume-drift}
        |\mathcal V_\gamma(T)-\mathcal V_\gamma(0)|
        \le \frac12\beta\vartheta G_\gamma e^{\beta T}T^2.
    \end{equation}
    Assume also that, for some $r_0>0$ and all $0<\gamma\le r_0$,
    \begin{equation*}
        |M_\gamma'(s)|\le\gamma^{-1}\mathbf 1_{\{|s|\le\gamma\}},
        \qquad \|\nabla\phi_0\|_{L^\infty(D)}\le C_M,
    \end{equation*}
    and
    \begin{equation*}
        |\{\bm x\in D:|\phi_0(\bm x)|\le r\}|\le C_{\rm band}r,
        \qquad 0<r\le r_0.
    \end{equation*}
    Then $G_\gamma\le C_MC_{\rm band}$ and consequently
    \begin{equation}
        |\mathcal V_\gamma(T)-\mathcal V_\gamma(0)|
        \le C\beta\vartheta e^{\beta T}T^2,
    \end{equation}
    where $C$ is independent of $\gamma$ and $T$. If, moreover, $\phi_0$ is the signed-distance initialization covered by Theorem~\ref{prop:volume-approximation} and the initial geometric volume is $V_0=|\Omega_0|$, then
    \begin{equation}
        |\mathcal V_\gamma(T)-V_0|
        \le C_{\rm geom}\gamma^2+C\beta\vartheta e^{\beta T}T^2.
    \end{equation}
    \end{theorem}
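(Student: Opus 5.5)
The plan is to use the fact that $\rho_\gamma(t)=M_\gamma(\phi(t))$ is itself transported by $\bm V$. Since $M_\gamma$ is $C^1$, the chain rule gives
\[
\partial_t\rho_\gamma+\bm V\cdot\nabla\rho_\gamma=0 .
\]
Integrating over $D$ and using $\bm V\cdot\bm n=0$ on $\partial D$ then yields
\[
\mathcal V_\gamma'(t)=-\int_D\bm V\cdot\nabla\rho_\gamma(t)\,\mathrm d\bm x=\int_D\rho_\gamma(t)\,\nabla\cdot\bm V\,\mathrm d\bm x .
\]
At $t=0$ this is exactly the weighted compatibility integral, so $\mathcal V_\gamma'(0)=0$. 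To make the computation rigorous for merely $W^{1,1}$ data and Lipschitz $\bm V$, I would use the Lagrangian representation
\[
\rho_\gamma(t,\bm x)=\rho_\gamma(0,X_{-t}(\bm x)),
\]
where $X_t$ is the flow of $\bm V$, a bi-Lipschitz map of $D$ onto itself. Alternatively one can mollify $\phi_0$ and pass to the limit; the formulas are stable in $W^{1,1}$.

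For the drift I write $\mathcal V_\gamma(T)-\mathcal V_\gamma(0)=\int_0^T\mathcal V_\gamma'(t)\,\mathrm dt$ and subtract the vanishing initial value:
\[
\mathcal V_\gamma'(t)=\int_D\bigl(\rho_\gamma(t)-\rho_\gamma(0)\bigr)\nabla\cdot\bm V\,\mathrm d\bm x .
\]
Therefore $|\mathcal V_\gamma'(t)|\le\beta\,\|\rho_\gamma(t)-\rho_\gamma(0)\|_{L^1(D)}$. The key intermediate bound is
\[
\|\rho_\gamma(t)-\rho_\gamma(0)\|_{L^1}\le \vartheta\, t\, e^{\beta t}\, G_\gamma .
\]
To prove it, I note that $\partial_s\rho_\gamma(s)=-\bm V\cdot\nabla\rho_\gamma(s)$, so $\|\rho_\gamma(t)-\rho_\gamma(0)\|_{L^1}\le\vartheta\int_0^t\|\nabla\rho_\gamma(s)\|_{L^1}\,\mathrm ds$. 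It then suffices to control the growth of the total variation, $\|\nabla\rho_\gamma(s)\|_{L^1}\le e^{\beta s}G_\gamma$.

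This total-variation bound is the step I expect to be the main obstacle. Differentiating the transport equation gives
\[
\partial_s\nabla\rho+(\bm V\cdot\nabla)\nabla\rho+(\nabla\bm V)^T\nabla\rho=0 .
\]
Testing against $\nabla\rho/|\nabla\rho|$ produces, besides the transport term, the terms $(\nabla\cdot\bm V)|\nabla\rho|$ and $\nabla\rho\cdot\nabla\bm V\,\nabla\rho/|\nabla\rho|$. The second involves the full gradient $\nabla\bm V$, not only $\nabla\cdot\bm V$. Since the stated constant uses only $\beta$, I would try to obtain exactly that rate via the Lagrangian change of variables, but I am not sure it works. In that approach one writes $\rho(s)=\rho_0\circ X_{-s}$, so that $\|\rho(s)-\rho_0\|_{L^1}=\int_D|\rho_0(X_{-s}\bm x)-\rho_0(\bm x)|\,\mathrm d\bm x$. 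One then integrates $\nabla\rho_0$ along the trajectory segment $\sigma\mapsto X_{-\sigma}(\bm x)$, of speed at most $\vartheta$. Fubini and the Jacobian bound $\det\nabla X_{\sigma}\le e^{\beta\sigma}$ (Liouville's formula) then give $\le\vartheta\,G_\gamma\int_0^s e^{\beta\sigma}\,\mathrm d\sigma\le\vartheta s e^{\beta s}G_\gamma$. If the Jacobian bookkeeping turns out to require a $\|\nabla\bm V\|_\infty$ factor, the fallback is to replace $\beta$ by that Lipschitz constant in the exponent only. Integrating the bound on $\mathcal V_\gamma'$ gives $\int_0^T\beta\vartheta te^{\beta t}G_\gamma\,\mathrm dt\le\tfrac12\beta\vartheta G_\gamma e^{\beta T}T^2$, which is \eqref{eq:frozen-volume-drift}.

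For the narrow-band part, I use the chain rule $\nabla M_\gamma(\phi_0)=M_\gamma'(\phi_0)\nabla\phi_0$ together with the bound on $M_\gamma'$:
\[
G_\gamma\le \gamma^{-1}C_M\,|\{|\phi_0|\le\gamma\}|\le C_MC_{\rm band}.
\]
This makes the constant independent of $\gamma$ and $T$. Finally, the triangle inequality together with Theorem~\ref{prop:volume-approximation}, which bounds $|\mathcal V_\gamma(0)-|\Omega_0||\le L_\Gamma\kappa_\infty\gamma^2$, gives the last estimate with $C_{\rm geom}=L_\Gamma\kappa_\infty$. For the signed distance function, $C_M=1$, and $C_{\rm band}$ follows from the tube-area formula $L(\ell)$ used in that proof.
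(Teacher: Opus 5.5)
Your proof is correct and has the same skeleton as the paper's. Both use the chain rule for $\rho_\gamma$ and the identity $\mathcal V_\gamma'(t)=\int_D\rho_\gamma(t)\nabla\cdot\bm V\,\mathrm d\bm x$. Both subtract the initial compatibility condition, prove the key bound $\|\rho_\gamma(t)-\rho_\gamma(0)\|_{L^1(D)}\le\vartheta G_\gamma te^{\beta t}$, and integrate in time. Both finish with the narrow-band bound on $G_\gamma$ and the triangle inequality with Theorem~\ref{prop:volume-approximation}.

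The only real difference is how the key $L^1$ bound is obtained. The paper stays Eulerian. It notes that $w=\rho_\gamma(t)-\rho_\gamma(0)$ solves $\partial_tw+\bm V\cdot\nabla w=-\bm V\cdot\nabla\rho_\gamma(0)$ with $w(0)=0$. The source is time independent and involves only the initial gradient. The standard $L^1$ transport estimate then gives $\frac{\mathrm d}{\mathrm dt}\|w\|_{L^1}\le\beta\|w\|_{L^1}+\vartheta G_\gamma$, and Gronwall finishes.

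Your Lagrangian computation is exactly the characteristic (Duhamel) form of this. It reads $w(t,\bm x)=-\int_0^t(\bm V\cdot\nabla\rho_\gamma(0))(X_{-\sigma}\bm x)\,\mathrm d\sigma$. After Fubini, the substitution $\bm y=X_{-\sigma}\bm x$ produces only the factor $\det\nabla X_\sigma(\bm y)\le e^{\beta\sigma}$. So no $\|\nabla\bm V\|_{L^\infty}$ enters, and your fallback is unnecessary.

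You were right to abandon the detour through $\|\nabla\rho_\gamma(s)\|_{L^1}\le e^{\beta s}G_\gamma$. That bound is false in general: a divergence-free, boundary-tangential flow (e.g.\ a differential rotation in a disk winding a radial interface into a spiral) stretches the interface, so the total variation grows even when $\beta=0$. Both the paper's $w$-equation and your trajectory integral avoid this by keeping the gradient frozen at $t=0$.

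As for what each buys, the paper's version is shorter but relies on a cited renormalized $L^1$ estimate. Yours is self-contained given the bi-Lipschitz flow and Liouville's formula. It needs only a density argument in $W^{1,1}$ (composition with a bi-Lipschitz map is continuous in $L^1$) to justify the fundamental theorem of calculus along trajectories. It also gives $\mathcal V_\gamma'(t)$ directly from $\mathcal V_\gamma(t)=\int_D\rho_\gamma(0)\det\nabla X_t\,\mathrm d\bm y$, without a separate integration by parts.
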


    \begin{proof}
    By the chain rule,
    \begin{equation*}
        \partial_t\rho_\gamma+\bm V\cdot\nabla\rho_\gamma=0.
    \end{equation*}
    Set $w(t):=\rho_\gamma(t)-\rho_\gamma(0)$. Since $\bm V$ is frozen in time,
    \begin{equation*}
        \partial_t w+\bm V\cdot\nabla w=-\bm V\cdot\nabla\rho_\gamma(0),
        \qquad w(0)=0.
    \end{equation*}
    The standard $L^1$ estimate for the transport equation \cite[Sec.~2]{ambrosio2014continuity}, together with $\bm V\cdot\bm n=0$ on $\partial D$, yields
    \begin{equation*}
        \frac{\mathrm d}{\mathrm dt}\|w(t)\|_{L^1(D)}
        \le\beta\|w(t)\|_{L^1(D)}+\vartheta G_\gamma.
    \end{equation*}
    Hence, by Gronwall's inequality,
    \begin{equation*}
        \|w(t)\|_{L^1(D)}\le\vartheta G_\gamma t e^{\beta t}.
    \end{equation*}
    On the other hand, using the transport equation and integration by parts,
    \begin{equation*}
        \mathcal V_\gamma'(t)=\int_D\rho_\gamma(t)\nabla\cdot\bm V\,\mathrm d\bm x.
    \end{equation*}
    The initial weighted compatibility condition, therefore, gives
    \begin{equation*}
        \mathcal V_\gamma'(t)=\int_D[\rho_\gamma(t)-\rho_\gamma(0)]
        \nabla\cdot\bm V\,\mathrm d\bm x,
    \end{equation*}
    so that
    \begin{equation*}
        |\mathcal V_\gamma'(t)|\le\beta\vartheta G_\gamma t e^{\beta t}.
    \end{equation*}
    In particular, $\mathcal V_\gamma'(0)=0$. Integrating over $[0,T]$ and using $e^{\beta t}\le e^{\beta T}$ proves \eqref{eq:frozen-volume-drift}. For the narrow-band estimate,
    \begin{align*}
        G_\gamma
        &=\int_D|M_\gamma'(\phi_0)|\,|\nabla\phi_0|\,\mathrm d\bm x\le\frac{C_M}{\gamma}|\{\bm x\in D:|\phi_0(\bm x)|\le\gamma\}|\le C_MC_{\rm band}.
    \end{align*}
    The estimate relative to $V_0$ follows from the triangle inequality and the $O(\gamma^2)$ geometric consistency estimate of Theorem~\ref{prop:volume-approximation}. \qed
    \end{proof}

    \begin{remark}
    The weighted constraint removes the first-order change of the regularized volume at the beginning of each frozen transport interval. The subsequent drift therefore starts at second order in the transport time. The result concerns one continuous transport interval only. Spatial DG errors, time-discretization errors, and the jump caused by reinitialization are separate numerical effects and are treated independently below.
    \end{remark}
    

	\section{Numerical Methods}\label{sect:NM}
	\subsection{Optimization algorithm}
	We use a standard gradient-descent-based level-set method to solve the topology optimization problem. The Algorithm \ref{4_mainAlgorithm} shows the main computational procedure. All numerical experiments are performed using FreeFEM (version 4.15) \cite{Hecht}. 
	
	\begin{algorithm} 
		\caption{Volume-controlled level-set method for topology optimization}  \label{4_mainAlgorithm}
		\KwIn
		{
			Level set function $\phi_0$ with $\int_D M_\gamma(\phi_0)\,\mathrm{d}\bm{x} = V_0$. 
		}
		
		\For{$n = 0,1, \cdots$, until convergence}
		{
			\begin{enumerate}
				\item[(1)] Solve the state and possible adjoint state equations.
				\item[(2)] Use the derivative $d\mathcal{J}(\phi_n; \bm{V})$ to obtain a descent direction $\bm{V}$ for $J(\phi_n)$ by solving the \\ volume-compatible extension-regularization problem \eqref{3_CLS_eq}.
				\item[(3)] Solve the level-set advection equation \eqref{3_levelset_eq} over time $(0,T_n)$ with $T_n=5\tau_n$, \\ where $\tau_n$ is the inner time step. 
				\item[(4)] Reinitialize $\phi^*_{n+1}(\bm{x}) = \phi(\tau_n, \bm{x})$ to a signed distance function $\phi_{n+1}(\bm{x})$ if necessary. 
				\item[(5)] New level-set function $\phi_{n+1}(\bm{x})$ and new shape $\Omega_{n+1}:=\lbrace \bm{x} \in D: \ \phi_{n+1}(\bm{x}) < 0 \rbrace$.
			\end{enumerate}
		}  
	\end{algorithm}

    In the reinitialization process of $\phi^*_{n+1}$, the FreeFEM's built-in ``distance'' command based on the fast marching method (see, e.g., \cite{Sethian}) is used first to reinitialize an arbitrary function $\phi^*_{n+1}$ into a continuous $\mathbb{P}_1$ finite element function $\phi^{\text{Re}}_{n+1}$. Then, $\phi^{\text{Re}}_{n+1}$ is directly interpolated into the corresponding DG finite element function $\phi_{n+1}$. The fast marching reinitialization is designed to preserve the zero level set and therefore the represented geometric interface up to numerical accuracy. It does not, however, preserve the regularized volume $\int_D M_\gamma(\phi)\,\mathrm d\bm x$ exactly, because the level-set profile away from the zero contour is modified. We therefore regard the reinitialization-induced change of the regularized volume as a separate numerical jump.
    
    \begin{remark}\label{rem:reinitialization}
	Accordingly, the transport estimates in Sections~2.5 and 3.3 apply before reinitialization. The additional regularized-volume jump caused by reinitialization is not included in those estimates and is monitored separately in the numerical experiments.
    \end{remark}

	\subsection{Finite element discretization}
	Firstly, we consider the finite element method for discretizing the volume-compatible extension-regularization problem \eqref{3_CLS_eq}.
	Let $\mathbb{P}_k$ ($k \in \mathbb{N}_{+}$) be the set of polynomials with degrees less than or equal to $k$. Let $\mathcal{T}_h$ be a family of shape regular simplicial meshes of the design domain $D$ with mesh size $h:=\max\limits_{K \in \mathcal{T}_h} h_K $, where $h_K$ denotes the diameter of the element $K$. Define the finite element space $\bm{S}_h := \left\lbrace \bm{v}_h \in [C^0(\overline{D})]^d  : \ \bm{v}_h \big|_K \in \mathbb{P}_1(K), \ \forall K \in \mathcal{T}_h \right\rbrace$ for the velocity.
	Set $\bm{S}_{0,h} = \bm{S}_h \bigcap \bm{H}$. The numerical scheme for \eqref{3_CLS_eq} is to find $(\bm{v}_h, p_h) \in \bm{S}_{0,h} \times \mathbb{R}$ such that 
	\begin{equation} \label{4_CLS_FEM}
		\begin{cases}
			a(\bm{v}_h,\bm{w}_h) + b(\bm{w}_h, p_h) = -d\mathcal{J}(\phi_h; \bm{w}_h)\quad \forall \bm{w}_h\in \bm{S}_{0,h}, \\
			b(\bm{v}_h,q_h) = 0\quad \forall q_h \in \mathbb{R}.
		\end{cases}
	\end{equation} 

    In the subsequent DG discretization of the level-set equation, the bilinear form $b(\bm{V},q)$ is modified as follows to control the regularized-volume drift:
    \begin{equation*}
        b(\bm{V},q):=q \sum_{K \in \mathcal{T}_h} \int_K M_\gamma(\phi_h)\,\nabla\cdot \bm V\,{\rm d}x,
    \end{equation*}
    where $\phi_h$ is the discrete level-set function.
    
	Assume that $\lbrace \bm{\psi}_1, \bm{\psi}_2, \cdots, \bm{\psi}_N \rbrace$ is the set of basis functions associated to the interior nodes. Set $\bm{v}_h = \sum\limits_{i=1}^N \bm{v}_i \bm{\psi}_i$. So we can obtain the linear system for \eqref{4_CLS_FEM}:
	\begin{equation}
		\begin{bmatrix}
			A^* & b \\
			b^T & 0
		\end{bmatrix}
		\begin{bmatrix}
			V \\
			p_h
		\end{bmatrix} = 
		\begin{bmatrix}
			f \\
			0
		\end{bmatrix},
	\end{equation}
	where
	\begin{align*}
		A^*=(a(\bm{\psi}_j , \bm{\psi}_i))_{i,j}, \quad
		b = \begin{bmatrix}
			b(\bm{\psi}_1, 1) & b(\bm{\psi}_2, 1) & \cdots & b(\bm{\psi}_N, 1)	
		\end{bmatrix}^T, \\
		V = \begin{bmatrix}
			\bm{v}_1 & \bm{v}_2 & \cdots & \bm{v}_N 
		\end{bmatrix}^T, \quad 
		f = \begin{bmatrix}
			-d\mathcal{J}(\phi_h;\bm{\psi}_1) & -d\mathcal{J}(\phi_h;\bm{\psi}_2) & \cdots & -d\mathcal{J}(\phi_h;\bm{\psi}_N)
		\end{bmatrix}^T.
	\end{align*}
    
	We use the DG method with the so-called dual-$\mathbb{P}_k^{\text{DC}}$ formulation \cite{ern2006discontinuous} for spatial discretizations  of the level-set equation \eqref{3_levelset_eq}. The DG finite element method has good conservative properties for discretizing transport equations due to its use of the  local conservation principle and numerical fluxes across element interfaces. Introduce a discontinuous finite element space 
	\begin{equation*}
		U_h := \left\lbrace v_h \in L^2(D)  : \ v_h \big|_K \in \mathbb{P}_k(K), \ \forall K \in \mathcal{T}_h \right\rbrace.
	\end{equation*}
	Given $\phi_h^0=\Pi_h\phi_0$ with $\Pi_h:L^2({D})\rightarrow U_h$ being an $L^2$-projection operator, the spatial semi-discrete formulation of the level-set equation reads: Find $\phi_h=\phi_h(t)(\cdot)\in U_h$ such that
	\begin{equation} \label{semidiscretize}
		\sum_{K\in \mathcal{T}_h}\int_{K} (\partial_t\phi_h + \bm{V}_h \cdot \nabla \phi_h ) v_h {\rm d}x + \sum\limits_{K \in \mathcal{T}_h} \int_{\partial K \cap E} \frac{1}{2} \left( \left| \bm{V}_h \cdot \bm{n} \right| - \bm{V}_h \cdot \bm{n} \right) \jump{\phi_h} v_h \dd s = 0\quad \forall v_h\in U_h,
	\end{equation}
	where $E$ is the set of interior edges and $\jump{\phi_h} := \phi_h^{\text{interior}} - \phi_h^{\text{exterior}}$ is the jump of $\phi_h$ across the edge, $\bm{n}$ is the unit outward  normal vector to the edge in 2D or to the surface in 3D. We note that there is no boundary contribution from the inflow boundary because we impose $\bm{V}_h = 0$ or $\bm{V}_h \cdot \bm{n} = 0$ on the boundary  for the velocity field $\bm{V}_h$. Given $\phi_h^n\in U_h$, the  fully discrete space-time weak formulation reads: Find $\phi_h^{n+1}\in U_h$ such that
	\begin{equation} \label{4_DG_equation}
		\sum_{K\in \mathcal{T}_h}\int_{K} \Big( \frac{\phi_h^{n+1} - \phi_h^n}{\tau} + \bm{V}_h \cdot \nabla \phi^{n+1}_h \Big) v_h {\rm d}x + \sum\limits_{K \in \mathcal{T}_h} \int_{\partial K \cap E} \frac{1}{2} \left( \left| \bm{V}_h \cdot \bm{n} \right| - \bm{V}_h \cdot \bm{n} \right) \jump{\phi_h^{n+1}} v_h \dd s = 0\quad \forall v_h\in U_h,
	\end{equation}
	where $\tau>0$ is a suitable pseudo-time step size. 
        
        Finally, we use $\mathbb{P}_1$ Lagrange finite elements to discretize \eqref{Model2_1_PDE}, \eqref{Model2_3_PDE} and Taylor--Hood $(\mathbb{P}_2$--$\mathbb{P}_1)$ finite elements to discretize \eqref{Model2_2_PDE}.
        
\subsection{Regularized-volume error for the spatial semi-discretization}
The regularized volume is a nonlinear functional of the DG solution, so the usual conservation argument cannot be applied directly: $M_\gamma'(\phi_h)$ is generally not an admissible test function in $U_h$, and the arithmetic average of its two traces is not, in general, the secant slope required by the chain rule across an interior face.  We therefore estimate the regularized-volume drift rather than assert an exact cancellation of numerical fluxes.

For this estimate, during one level-set update let $\bm V_h$ denote the velocity obtained from \eqref{4_CLS_FEM} and held fixed in time.  Write
\[
 \vartheta_h:=\|\bm V_h\|_{L^\infty(D)},\qquad
 \beta_h:=\|\nabla\cdot\bm V_h\|_{L^\infty(D)},\qquad
 L_h:=\|\nabla\bm V_h\|_{L^\infty(D)}.
\]
Let $X_h$ be the flow of $\bm V_h$.  We assume that $X_h(t,\cdot)$ is a bi-Lipschitz map of $D$ onto itself for $0\le t\le T$.

Set $\phi_0:=\phi(0,\cdot)$, and let $\Gamma_0=\{\bm x\in\overline D:\phi_0(\bm x)=0\}\Subset D$ be a compact $C^2$ hypersurface.  Assume $\phi_0\in W^{1,\infty}(D)\cap C(\overline D)$ and that $\phi_0$ agrees with the signed distance to $\Gamma_0$ in a fixed tubular neighborhood.  The continuous reference solution is characterized by
\begin{equation}\label{eq:flow-identity}
 \phi(t,X_h(t,\bm x))=\phi_0(\bm x),\qquad 0\le t\le T.
\end{equation}

\begin{lemma}[Narrow-band estimate]\label{lem:narrow-band}
There is a constant $C>0$, depending on $D$, the fixed interface $\Gamma_0$, the chosen radius of the tubular-neighborhood, and the positive separation of $|\phi_0|$ from zero outside that neighborhood, but independent of $h$, $\gamma$, and $t$, such that for every $r>0$ and $0\le t\le T$:
\begin{equation}\label{eq:narrow-band}
 |\{\bm x\in D: |\phi_0(\bm x)|\le r\}|\le Cr,
 \qquad
 |\{ \bm x\in D: |\phi(t,\bm x)|\le r\}|\le C e^{\beta_h t}r.
\end{equation}
\end{lemma}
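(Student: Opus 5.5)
The plan has two parts. First we prove the estimate for $\phi_0$ by splitting $D$ into the tubular neighborhood and its complement. Then we transfer it to $\phi(t,\cdot)$ through the flow identity \eqref{eq:flow-identity} and a Jacobian bound.

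\emph{Step 1 (initial band).} Let $a_0>0$ be the fixed tubular radius on which $\phi_0$ coincides with the signed distance $\psi$ to $\Gamma_0$. Set $\delta_0:=\inf\{|\phi_0(\bm x)|:\bm x\in D,\ {\rm dist}(\bm x,\Gamma_0)\ge a_0\}>0$, the assumed positive separation. Consider first $r<\min\{a_0,\delta_0\}$. Then $\{|\phi_0|\le r\}$ is contained in the tube, and on the tube it equals $\{|\psi|\le r\}$. Since $|\nabla\psi|=1$ there, the co-area formula used in the proof of Theorem~\ref{prop:volume-approximation} gives
\[
|\{|\psi|\le r\}|=\int_{-r}^{r}L(\ell)\,\mathrm d\ell .
\]
Here $L(\ell)$ is the measure of the parallel surface $\{\psi=\ell\}$. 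Using the expansion of $L(\ell)$ in the principal curvatures, which are bounded on the compact $C^2$ surface $\Gamma_0$, we get $L(\ell)\le L_{\Gamma_0}(1+\kappa_\infty a_0)^{d-1}$. Hence the band measure is at most $2L_{\Gamma_0}(1+\kappa_\infty a_0)^{d-1}r$. For $r\ge r_*:=\min\{a_0,\delta_0\}$ we use the trivial bound $|\{|\phi_0|\le r\}|\le|D|\le(|D|/r_*)\,r$. Taking $C$ to be the larger of the two constants proves the first inequality for every $r>0$, with $C$ depending only on $D$, $\Gamma_0$, $a_0$ and $\delta_0$.

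\emph{Step 2 (transported band).} By \eqref{eq:flow-identity}, $|\phi(t,\bm y)|\le r$ holds if and only if $\bm y=X_h(t,\bm x)$ with $|\phi_0(\bm x)|\le r$, so
\[
\{|\phi(t,\cdot)|\le r\}=X_h(t,\{|\phi_0|\le r\}).
\]
Since $X_h(t,\cdot)$ is bi-Lipschitz, the area formula applies:
\[
|X_h(t,A)|=\int_A\det\nabla X_h(t,\bm x)\,\mathrm d\bm x .
\]
Liouville's formula gives $\partial_t\det\nabla X_h=(\nabla\cdot\bm V_h)(X_h)\det\nabla X_h$ with $\det\nabla X_h(0)=1$. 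Hence $0<\det\nabla X_h(t,\bm x)\le e^{\beta_h t}$, and combining with Step 1 yields the second inequality.

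\emph{Main obstacle.} The delicate point is the regularity needed in Step 2. The velocity $\bm V_h$ is only piecewise $\mathbb P_1$ and continuous, hence Lipschitz, and its divergence is merely $L^\infty$. Liouville's formula must therefore be used in its a.e. (Lipschitz-flow) form. I would justify it through the weak identity $\frac{\mathrm d}{\mathrm dt}\int_D f(X_h(t,\bm x))\,\mathrm d\bm x$, applied to the pushforward density, or equivalently by the continuity equation for $J=\det\nabla X_h$. This uses only $\beta_h<\infty$ and the standing bi-Lipschitz assumption, so the constant remains independent of $h$, $\gamma$ and $t$ apart from the factor $e^{\beta_h t}$.
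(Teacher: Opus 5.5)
Your proposal is correct and follows the paper's proof essentially step for step. Both use the small-$r$/large-$r$ split with the trivial bound $|D|r/r_*$, a tube-measure estimate from bounded principal curvatures (you write it via the co-area formula, the paper via the normal-coordinate Jacobian $J_F$, which is the same computation), and the transfer to $\phi(t,\cdot)$ through the flow identity and the Liouville bound $J_X\le e^{\beta_h t}$. One detail to tighten: the curvature expansion of $L(\ell)$ as an identity needs the radius below ${\rm reach}(\Gamma_0)$, which the paper secures by shrinking to $\rho_1<\min\{\rho_0,{\rm reach}(\Gamma_0),{\rm dist}(\Gamma_0,\partial D)\}$. For your purposes you only need the upper bound, and that holds for every $\ell$ without any reach condition: $\{\psi=\ell\}\subset\{\bm y+\ell\bm n(\bm y):\bm y\in\Gamma_0\}$, and the area formula then gives $L(\ell)\le L_{\Gamma_0}(1+\kappa_\infty|\ell|)^{d-1}$.
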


\begin{proof}
Let $\rho_0>0$ be such that $\phi_0$ is the signed distance in the tubular neighborhood $N_{\rho_0}(\Gamma_0)$, and choose
\[
 0<\rho_1<\min\{\rho_0,{\rm reach}(\Gamma_0),{\rm dist}(\Gamma_0,\partial D)\}.
\]
By the positive-separation assumption in the statement, set
\[
 m_1:=\inf_{\bm x\in D\setminus N_{\rho_1/2}(\Gamma_0)}|\phi_0(\bm x)|>0.
\]
Set $r_*:=\frac12\min\{\rho_1,m_1\}$.  If $0<r\le r_*$, then $|\phi_0(\bm x)|\le r$ excludes $\bm x\in D\setminus N_{\rho_1/2}(\Gamma_0)$, and hence $\{\bm x\in D:|\phi_0(\bm x)|\le r\}\subset N_{\rho_1/2}(\Gamma_0)$.  In this neighborhood $\phi_0$ is the signed distance, so
\[
 \{\bm x\in D: |\phi_0(\bm x)|\le r\}
 =\{\bm y+s\bm n(\bm y):\bm y\in\Gamma_0,\ |s|\le r\}.
\]
The normal-coordinate map \cite{delfour2011shapes} $F(\bm y,s)=\bm y+s\bm n(\bm y)$ is one-to-one for $|s|<\rho_1$.  If $\kappa_1,\ldots,\kappa_{d-1}$ are the principal curvatures of $\Gamma_0$, its surface-volume Jacobian satisfies
\[
 J_F(\bm y,s)=\left|\prod_{i=1}^{d-1}(1+s\kappa_i(\bm y))\right|,
 \qquad
 C_J:=\sup_{\bm y\in\Gamma_0,\ |s|\le\rho_1/2}J_F(\bm y,s)<\infty.
\]
Consequently,
\[
 |\{\bm x\in D: |\phi_0(\bm x)|\le r\}|
 =\int_{\Gamma_0}\int_{-r}^{r}J_F(\bm y,s)\,\mathrm ds\,\mathrm dS(\bm y)
 \le 2C_J|\Gamma_0|r,
 \qquad 0<r\le r_*.
\]
For $r>r_*$, the elementary estimate
\[
 |\{\bm x\in D: |\phi_0(\bm x)|\le r\}|\le |D|\le \frac{|D|}{r_*}r
\]
extends the first assertion of this lemma to all $r>0$.

By \eqref{eq:flow-identity},
\[
 \{\bm x\in D:|\phi(t,\bm x)|\le r\}=X_h(t,\{\bm x\in D: |\phi_0(\bm x)|\le r\}).
\]
Indeed, the two inclusions of the set are derived from the characteristic identity and  invertibility of $X_h(t,\cdot)$.  Writing $J_X(t,\bm x):=\det D X_h(t,\bm x)$, the Jacobian equation is, for almost every $\bm x$,
\[
 \partial_tJ_X(t,\bm x)
 =(\nabla\cdot\bm V_h)(X_h(t,\bm x))J_X(t,\bm x),
 \qquad J_X(0,\bm x)=1.
\]
Thus, Liouville's formula \cite[Sec.~2]{ambrosio2014continuity} gives
\[
 0<J_X(t,\bm x)
 =\exp\!\left(\int_0^t(\nabla\cdot\bm V_h)(X_h(s,\bm x))\,\mathrm ds\right)
 \le e^{\beta_h t}.
\]
Applying the change-of-variables formula and the first assertion now yields
\[
 |\{\bm x\in D:|\phi(t,\bm x)|\le r\}|
 =\int_{\{\bm x\in D:|\phi_0(\bm x)|\le r\}}J_X(t,\bm x)\,\mathrm d\bm x
 \le e^{\beta_ht}|\{\bm x\in D:|\phi_0(\bm x)|\le r\}|
 \le Ce^{\beta_ht}r,
\]
which proves the second assertion. \qed
\end{proof}

Differentiating \eqref{eq:flow-identity}, using the inverse-flow estimate and the Jacobian bound from Liouville's formula, and applying Gronwall's inequality \cite[Sec.~2]{ambrosio2014continuity} yield
\begin{equation}\label{eq:gradient-bound}
 \sup_{0\le t\le T}\|\nabla\phi(t)\|_{L^2(D)}\le C e^{(L_h+\beta_h/2)T}.
\end{equation}
In particular, below no $H^2$ regularity of the signed-distance function is required.  We shall also use the following elementary consequence of the saturation of $M_\gamma$ outside $[-\gamma,\gamma]$.

\begin{lemma}[Material-function comparison]\label{lem:material-comparison}
For $z,\varphi\in L^2(D)$ and for all $\delta>0$,
\begin{equation}\label{eq:material-comparison}
 \int_D|M_\gamma(z)-M_\gamma(\varphi)|\,\mathrm d\bm x
 \le |\{\bm x\in D:|\varphi(\bm x)|\le\gamma+\delta\}|
      +\delta^{-2}\|z-\varphi\|_{L^2(D)}^2.
\end{equation}
\end{lemma}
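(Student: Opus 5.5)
The plan is a pointwise splitting of $D$ into a ``near-band'' set and its complement, using that $M_\gamma$ takes values in $[0,1]$ and is constant outside $[-\gamma,\gamma]$. Fix $\delta>0$ and set
\[
 B:=\{\bm x\in D:|\varphi(\bm x)|\le\gamma+\delta\},\qquad
 A:=D\setminus B=\{\bm x\in D:|\varphi(\bm x)|>\gamma+\delta\}.
\]
I will bound the integrand by $1$ on $B$ and by an indicator of a large deviation $|z-\varphi|$ on $A$, then apply Chebyshev's inequality.

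On $B$, the definition \eqref{eq:HeaviSide} gives $0\le H_\gamma\le1$, hence $0\le M_\gamma\le1$ and $|M_\gamma(z)-M_\gamma(\varphi)|\le1$ pointwise. The contribution of $B$ is therefore at most $|B|$, which is the first term of \eqref{eq:material-comparison}.

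On $A$, take a point with $\varphi(\bm x)>\gamma+\delta$, so $M_\gamma(\varphi(\bm x))=0$. If moreover $|z(\bm x)-\varphi(\bm x)|<\delta$, then $z(\bm x)>\gamma$, hence $M_\gamma(z(\bm x))=0$ and the integrand vanishes. The case $\varphi(\bm x)<-\gamma-\delta$ is symmetric, with both values equal to $1$. Consequently, on $A$,
\[
 |M_\gamma(z)-M_\gamma(\varphi)|\le \mathbf 1_{\{|z-\varphi|\ge\delta\}}.
\]
Chebyshev's inequality then gives
\[
 \int_A|M_\gamma(z)-M_\gamma(\varphi)|\,\mathrm d\bm x\le|\{|z-\varphi|\ge\delta\}|\le\delta^{-2}\|z-\varphi\|_{L^2(D)}^2.
\]
Adding the contributions of $A$ and $B$ yields \eqref{eq:material-comparison}.

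There is no real obstacle here. The only point needing care is the saturation step on $A$: the extra margin $\delta$ is exactly what guarantees that $z$ stays on the same side of $\pm\gamma$ as $\varphi$ whenever $|z-\varphi|<\delta$. Note that the integrand bound on $B$ is simply $1$, with no Lipschitz estimate of $M_\gamma$ needed, which is why the constant does not depend on $\gamma^{-1}$.
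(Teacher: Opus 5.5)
Your proposal is correct and follows essentially the same argument as the paper. The paper likewise uses $0\le M_\gamma\le 1$ and the saturation of $M_\gamma$ outside $[-\gamma,\gamma]$ to get the pointwise bound $|M_\gamma(z)-M_\gamma(\varphi)|\le\mathbf 1_{\{|\varphi|\le\gamma+\delta\}}+\mathbf 1_{\{|z-\varphi|>\delta\}}$, and then applies Chebyshev's inequality to the second set, exactly as in your band/complement split.
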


\begin{proof}
Set
\[
 A_\delta:=\{\bm x\in D:|\varphi(\bm x)|\le\gamma+\delta\},\qquad
 B_\delta:=\{\bm x\in D:|z(\bm x)-\varphi(\bm x)|>\delta\}.
\]
For almost every $\bm x\notin A_\delta\cup B_\delta$, we have
\begin{align*}
 \varphi(\bm x)>\gamma+\delta
 &\ \Longrightarrow\ z(\bm x)\ge\varphi(\bm x)-\delta>\gamma,\\
 \varphi(\bm x)<-\gamma-\delta
 &\ \Longrightarrow\ z(\bm x)\le\varphi(\bm x)+\delta<-\gamma.
\end{align*}
Thus, $z(\bm x)$ and $\varphi(\bm x)$ lie in the same saturated region of $M_\gamma$, so their material values coincide.  Since $0\le M_\gamma\le1$, it follows that
\[
 |M_\gamma(z)-M_\gamma(\varphi)|
 \le \mathbf{1}_{A_\delta}+\mathbf{1}_{B_\delta}
 \quad\text{almost everywhere in }D.
\]
Moreover, Chebyshev's inequality gives
\[
 |B_\delta|
 \le\frac{1}{\delta^2}\int_{B_\delta}|z-\varphi|^2\,\mathrm d\bm x
 \le\frac{1}{\delta^2}\|z-\varphi\|_{L^2(D)}^2.
\]
Consequently,
\begin{align*}
 \int_D|M_\gamma(z)-M_\gamma(\varphi)|\,\mathrm d\bm x
 &\le |A_\delta|+|B_\delta|\le |\{\bm x\in D: |\varphi(\bm x)|\le\gamma+\delta\}|
      +\delta^{-2}\|z-\varphi\|_{L^2(D)}^2,
\end{align*}
which proves \eqref{eq:material-comparison}. \qed
\end{proof}

We next turn to the spatial approximation.  For the analysis, take the discontinuous affine space
\[
 U_h:=\{v_h\in L^2(D):v_h|_K\in\mathbb P_1(K)\quad \forall K\in\mathcal T_h\},
\]
and let $\Pi_h$ be the elementwise $L^2$ projection onto $U_h$.  Fix an orientation $\bm n_e$ on every interior face $e$, put $a_e=\bm V_h\cdot\bm n_e$, and use the corresponding jumps and averages.  Define
\begin{align}
 B_h(z_h,v_h)
 := {}&\sum_{K\in\mathcal T_h}\int_K(\bm V_h\cdot\nabla z_h)v_h\,\mathrm d\bm x
 -\sum_e\int_e a_e\jump{z_h}\avg{v_h}\,\mathrm ds +\frac12\sum_e\int_e|a_e|\jump{z_h}\jump{v_h}\,\mathrm ds.
 \label{eq:Bh}
\end{align}
Separately, introduce the upwind jump seminorm
\begin{equation}\label{eq:Jh}
 J_h(t;z_h):=\sum_e\int_e|a_e|\,|\jump{z_h(t)}|^2\,\mathrm ds.
\end{equation}
The semi-discrete method reads: Find $\phi_h(t)\in U_h$ such that 
\begin{equation}\label{eq:semidiscrete}
 (\partial_t\phi_h,v_h)_D+B_h(\phi_h,v_h)=0
 \quad\forall v_h\in U_h,
 \qquad
 \phi_h(0)=\Pi_h\phi_0.
\end{equation}

\begin{lemma}[Semi-discrete level-set error]\label{lem:dg-error}
Let
\[
 \varepsilon_h(T):=
 \sup_{0\le t\le T}\|\phi_h(t)-\phi(t)\|_{L^2(D)}^2
 +\int_0^T J_h(t;\phi_h)\,\mathrm dt.
\]
Then
\begin{equation}\label{eq:epsilon-bound}
 \varepsilon_h(T)
 \le C e^{(\beta_h+2L_h)T}\bigl(h^2+h\vartheta_h T\bigr).
\end{equation}
\end{lemma}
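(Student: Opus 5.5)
We follow the standard energy argument for upwind DG applied to linear transport with a Lipschitz velocity, splitting the error through the elementwise $L^2$ projection. Write
\[
 \phi_h-\phi=(\phi_h-\Pi_h\phi)+(\Pi_h\phi-\phi)=:\xi_h+\eta,
\]
so $\xi_h\in U_h$ and $\xi_h(0)=0$.

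\textbf{Error equation and energy identity.} Since $\phi$ is continuous across faces (a.e., by the flow identity \eqref{eq:flow-identity} with $\phi_0\in C(\overline D)$), $\phi$ satisfies \eqref{eq:semidiscrete} consistently: $(\partial_t\phi,v_h)_D+B_h(\phi,v_h)=0$. This holds because the face terms vanish on $\jump{\phi}=0$ and $\partial_t\phi+\bm V_h\cdot\nabla\phi=0$ a.e. Subtracting gives
\[
 (\partial_t\xi_h,v_h)+B_h(\xi_h,v_h)=-(\partial_t\eta,v_h)-B_h(\eta,v_h).
\]
The term $(\partial_t\eta,v_h)$ vanishes by $L^2$-orthogonality. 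Take $v_h=\xi_h$. Integrating the volume term by parts elementwise with $\bm V_h\cdot\bm n=0$ on $\partial D$ yields the classical identity
\[
 B_h(z,z)=-\tfrac12\int_D(\nabla\cdot\bm V_h)z^2+\tfrac12 J_h(t;z).
\]
Hence
\[
 \tfrac12\tfrac{\mathrm d}{\mathrm dt}\|\xi_h\|^2+\tfrac12J_h(\xi_h)\le\tfrac{\beta_h}{2}\|\xi_h\|^2+|B_h(\eta,\xi_h)|.
\]

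\textbf{Consistency term.} This is the main obstacle. I would integrate $B_h(\eta,\xi_h)$ by parts to move the derivative onto $\xi_h$, giving
\[
 -\int_D\eta\,\bm V_h\cdot\nabla\xi_h-\int_D(\nabla\cdot\bm V_h)\eta\xi_h+\text{upwind face terms with }\eta^{\rm up}\jump{\xi_h}.
\]
Then:
\begin{itemize}
\item For the volume convection term, replace $\bm V_h$ by its elementwise mean $\bar{\bm V}_K$. Since $\bar{\bm V}_K\cdot\nabla\xi_h|_K\in\mathbb P_0\subset\mathbb P_1$, orthogonality kills that part, leaving $\|\bm V_h-\bar{\bm V}_K\|_{L^\infty(K)}\le hL_h$. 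Combined with an inverse estimate this gives $L_h\|\eta\|\,\|\xi_h\|$.
\item The divergence term is bounded by $\beta_h\|\eta\|\,\|\xi_h\|$.
\item For the face terms, write $|a_e|^{1/2}\eta\cdot|a_e|^{1/2}\jump{\xi_h}$ and use Young's inequality to absorb $\tfrac14J_h(\xi_h)$. The remainder is $\sum_e\int_e|a_e|\eta^2\le C\vartheta_h(h^{-1}\|\eta\|^2+h\|\nabla\eta\|_{h}^2)$ by the trace inequality.
\end{itemize}
The approximation estimates for the first-order projection, using only the $H^1$ regularity from \eqref{eq:gradient-bound}, give $\|\eta\|\le Ch\|\nabla\phi\|$ and $\|\nabla\eta\|_h\le C\|\nabla\phi\|$. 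The face remainder is therefore $C\vartheta_h h\|\nabla\phi(t)\|^2$. The difficulty is precisely that no $H^2$ regularity is available, so one only obtains the suboptimal $h^{1/2}$ rate in $L^2$; this is reflected in the $h\vartheta_hT$ term. If the inverse estimate in the convection term proves awkward, I would instead bound it by $\vartheta_h\|\eta\|\,\|\nabla\xi_h\|_h$ and control $\|\nabla\xi_h\|$ through the jump seminorm plus Young's inequality, which still lands in $h\vartheta_h$.

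\textbf{Gronwall and conclusion.} Collecting terms,
\[
 \tfrac{\mathrm d}{\mathrm dt}\|\xi_h\|^2+\tfrac12J_h(\xi_h)\le(\beta_h+2L_h)\|\xi_h\|^2+C(\beta_h+L_h)h^2\|\nabla\phi\|^2+C\vartheta_hh\|\nabla\phi\|^2.
\]
Insert \eqref{eq:gradient-bound}, i.e.\ $\|\nabla\phi\|^2\le Ce^{(2L_h+\beta_h)T}$, and apply Gronwall with $\xi_h(0)=0$. This bounds $\sup_t\|\xi_h\|^2+\int_0^TJ_h(\xi_h)$ by $Ce^{(\beta_h+2L_h)T}(h^2+h\vartheta_hT)$; the factor $(\beta_h+L_h)h^2T$ is absorbed into the exponential and the constant.

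It remains to pass from $\xi_h$ to $\phi_h-\phi$:
\begin{itemize}
\item $\|\eta\|^2\le Ch^2\|\nabla\phi\|^2$ contributes to the $h^2$ term.
\item Since $\jump{\phi}=0$, $J_h(\phi_h)\le2J_h(\xi_h)+2J_h(\eta)$, and $\int_0^TJ_h(\eta)\le C\vartheta_hhT\sup_t\|\nabla\phi\|^2$ by the trace estimate.
\end{itemize}
This yields \eqref{eq:epsilon-bound}.
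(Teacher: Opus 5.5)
Your skeleton is the paper's: the projection split, the error equation, the energy identity for $B_h(\xi_h,\xi_h)$, the upwind face terms absorbed into $J_h$ with an $O(h\vartheta_h\|\nabla\phi\|^2)$ trace remainder, Gronwall, and $J_h(\phi_h)\le 2J_h(\xi_h)+2J_h(\eta)$. The one place you diverge is the volume part of $B_h(\eta,\xi_h)$, and there you miss the paper's key observation. Since $\bm V_h$ is elementwise affine and $\xi_h|_K\in\mathbb P_1(K)$, both $\bm V_h\cdot\nabla\xi_h$ and $(\nabla\cdot\bm V_h)\xi_h$ lie in $\mathbb P_1(K)$; equivalently, $\nabla\cdot(\bm V_h\xi_h)|_K\in\mathbb P_1(K)$. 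Hence $\sum_K\int_K\eta\,\nabla\cdot(\bm V_h\xi_h)\,\mathrm d\bm x=0$ exactly, by $L^2$-orthogonality of $\eta=\Pi_h\phi-\phi$. In particular, the remainder $\int_K\eta(\bm V_h-\bar{\bm V}_K)\cdot\nabla\xi_h$ that you bound by $L_h\|\eta\|\,\|\xi_h\|$ is itself zero, and so is the divergence term you bound by $\beta_h\|\eta\|\,\|\xi_h\|$. Only the face terms survive.

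This matters, because your non-sharp bounds create an extra source $C(\beta_h+L_h)h^2\|\nabla\phi\|^2$. After Gronwall this gives a contribution $C(\beta_h+L_h)T\,h^2e^{(\beta_h+2L_h)T}$. That term cannot be ``absorbed into the exponential and the constant'' without changing the exponent, since $(\beta_h+L_h)T$ is unbounded. If you insist on bounding these terms, a workable fix is the following: use $\beta_h\le d\,L_h$ and the local inverse inequality $h_K\|\nabla\bm V_h\|_{L^\infty(K)}\le C\|\bm V_h\|_{L^\infty(K)}$. An elementwise Young inequality then gives at most $\tfrac{L_h}{2}\|\xi_h\|^2+Ch\vartheta_h\|\nabla\phi\|^2$, which merges with the face remainder.

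Your fallback does not work. $J_h$ controls only the $|a_e|$-weighted face jumps, not the broken gradient. Moreover, via the inverse estimate, $\vartheta_h\|\eta\|\,\|\nabla\xi_h\|_h$ is only $O(\vartheta_h\|\nabla\phi\|\,\|\xi_h\|)$, with no power of $h$.

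Finally, to land exactly on $e^{(\beta_h+2L_h)T}$, you must keep the time dependence $\|\nabla\phi(s)\|^2\le Ce^{(2L_h+\beta_h)s}$ inside the Gronwall convolution. For example, $\int_0^te^{(\beta_h+2L_h)(t-s)}e^{(\beta_h+2L_h)s}\,\mathrm ds=te^{(\beta_h+2L_h)t}$. Inserting the value at $T$ before applying Gronwall, as you wrote, squares the exponential. With the exact cancellation your Gronwall rate is only $\beta_h$, and the same convolution argument again yields $te^{(\beta_h+2L_h)t}$.
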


\begin{proof}
Set $\rho_h=\Pi_h\phi-\phi$ and $\xi_h=\phi_h-\Pi_h\phi$.  Standard local projection and trace estimates \cite[Lemmas~1.58--1.59]{DE} give
\begin{equation}\label{eq:projection-estimates}
 \|v-\Pi_hv\|_{L^2(K)}^2\le Ch_K^2\|\nabla v\|_{L^2(K)}^2,
 \qquad
 \|v-\Pi_hv\|_{L^2(\partial K)}^2\le Ch_K\|\nabla v\|_{L^2(K)}^2.
\end{equation}
Because $\Pi_h$ is time independent, $(\partial_t\rho_h,v_h)_D=0$ for $v_h\in U_h$.  Consistency of \eqref{eq:Bh} with the continuous transport equation therefore gives
\begin{equation}\label{eq:error-equation}
 (\partial_t\xi_h,v_h)_D+B_h(\xi_h,v_h)=-B_h(\rho_h,v_h).
\end{equation}
Elementwise integration by parts and the upwind flux imply
\begin{equation}\label{eq:energy-identity}
 B_h(\xi_h,\xi_h)
 =\frac12J_h(t;\xi_h)-\frac12\int_D(\nabla\cdot\bm V_h)\xi_h^2\,\mathrm d\bm x.
\end{equation}
For the residual, integrate the volume term in $B_h(\rho_h,\xi_h)$ by parts.  Since $\bm V_h$ and $\xi_h$ are elementwise affine,
$\nabla\cdot(\bm V_h\xi_h)\in\mathbb P_1(K)$, so its product with $\rho_h$ integrates to zero by $L^2$ orthogonality.  The remaining face terms are
\[
 B_h(\rho_h,\xi_h)
 =\sum_e\int_e\left(a_e\avg{\rho_h}+\frac12|a_e|\jump{\rho_h}\right)\jump{\xi_h}\,\mathrm ds.
\]
To estimate these face terms, set
\begin{align*}
 R_h(t)&:=\sum_e\int_e|a_e|
 \left(|\avg{\rho_h}|^2+|\jump{\rho_h}|^2\right)\,\mathrm ds\le C\vartheta_h\sum_{K\in\mathcal T_h}
 \|\rho_h\|_{L^2(\partial K)}^2
 \le Ch\vartheta_h\|\nabla\phi(t)\|_{L^2(D)}^2.
\end{align*}
Here the first inequality uses shape regularity and the finite overlap of the faces, while the second follows from the trace estimate in \eqref{eq:projection-estimates}.  Weighted Cauchy--Schwarz gives
\[
 |B_h(\rho_h,\xi_h)|
 \le C R_h(t)^{1/2}J_h(t;\xi_h)^{1/2}.
\]
Young's inequality therefore yields
\begin{equation}\label{eq:residual-bound}
 |B_h(\rho_h,\xi_h)|
 \le\frac18J_h(t;\xi_h)
 +Ch\vartheta_h\|\nabla\phi(t)\|_{L^2(D)}^2.
\end{equation}
Taking $v_h=\xi_h$ in \eqref{eq:error-equation} and using \eqref{eq:energy-identity}--\eqref{eq:residual-bound} gives
\[
 \frac12\frac{\mathrm d}{\mathrm dt}\|\xi_h(t)\|_{L^2(D)}^2
 +\frac38J_h(t;\xi_h)
 \le \frac{\beta_h}{2}\|\xi_h(t)\|_{L^2(D)}^2
 +Ch\vartheta_h\|\nabla\phi(t)\|_{L^2(D)}^2.
\]
By \eqref{eq:gradient-bound}, the last term is bounded by $Ch\vartheta_h e^{(2L_h+\beta_h)T}$ (and hence by its value at $t=T$).  Multiplying by $e^{-\beta_ht}$, integrating from $0$ to $t$, and using $\xi_h(0)=0$ give
\[
 \sup_{0\le t\le T}\|\xi_h(t)\|_{L^2(D)}^2
 +\int_0^TJ_h(t;\xi_h)\,\mathrm dt
 \le C e^{(\beta_h+2L_h)T}h\vartheta_h T.
\]
For the projection component, \eqref{eq:projection-estimates} gives
\[
 \sup_{0\le t\le T}\|\rho_h(t)\|_{L^2(D)}^2\le Ce^{(2L_h+\beta_h)T}h^2.
\]
Moreover,
\[
 J_h(t;\rho_h)
 \le C\vartheta_h\sum_{K\in\mathcal T_h}
 \|\rho_h(t)\|_{L^2(\partial K)}^2
 \le Ch\vartheta_h\|\nabla\phi(t)\|_{L^2(D)}^2,
\]
and hence
\[
 \int_0^TJ_h(t;\rho_h)\,\mathrm dt
 \le Ce^{(2L_h+\beta_h)T}h\vartheta_h T.
\]
Since $\phi$ is continuous across faces, $\jump{\Pi_h\phi}=\jump{\rho_h}$.  Combining the last three estimates proves \eqref{eq:epsilon-bound}. \qed
\end{proof}

It remains to convert the level-set error into a bound for the regularized volume.  Assume that the velocity is computed at the beginning of the interval from
\begin{equation}\label{eq:initial-constraint}
 \sum_{K\in\mathcal T_h}\int_K
 M_\gamma(\phi_h(0))\,\nabla\cdot\bm V_h\,\mathrm d\bm x=0.
\end{equation}
No corresponding identity is assumed at later times.  Define
\[
 \mathcal V_{\gamma,h}(t):=\int_D M_\gamma(\phi_h(t))\,\mathrm d\bm x.
\]

\begin{theorem}[Semi-discrete regularized-volume drift]\label{thm:volume-drift}
Under the preceding assumptions, for $0<h\le1$,
\begin{align}
 |\mathcal V_{\gamma,h}(T)-\mathcal V_{\gamma,h}(0)|
 \le {}& C(1+\beta_hT)e^{\beta_hT}\gamma+C e^{\beta_hT+\frac23L_hT}
       \bigl(h^2+h\vartheta_h T\bigr)^{1/3}\nonumber\\
       &+C\beta_hT e^{\frac23\beta_hT}
\bigl(h^2+e^{(2L_h+\beta_h)T}\vartheta_h^2T^2\bigr)^{1/3}.
 \label{eq:volume-drift}
\end{align}
\end{theorem}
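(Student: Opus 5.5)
The plan is to split the drift through the continuous reference solution $\phi$ of \eqref{eq:flow-identity}, transported by the same frozen discrete velocity $\bm V_h$. Write $\mathcal V_\gamma(t):=\int_D M_\gamma(\phi(t))\,\mathrm d\bm x$. Then
\[
 \mathcal V_{\gamma,h}(T)-\mathcal V_{\gamma,h}(0)
 =\bigl[\mathcal V_{\gamma,h}(T)-\mathcal V_\gamma(T)\bigr]
 +\bigl[\mathcal V_\gamma(T)-\mathcal V_\gamma(0)\bigr]
 +\bigl[\mathcal V_\gamma(0)-\mathcal V_{\gamma,h}(0)\bigr].
\]
The two outer brackets are spatial (DG) errors and produce the first two terms of \eqref{eq:volume-drift}. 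The middle bracket is a continuous drift and produces the third term. Every comparison of material functions goes through Lemma~\ref{lem:material-comparison}, always with the \emph{continuous} function $\varphi=\phi(t)$ as the reference, because only $\phi(t)$ satisfies the narrow-band bound of Lemma~\ref{lem:narrow-band}.

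\textbf{Outer brackets.} At time $t\in\{0,T\}$, apply Lemma~\ref{lem:material-comparison} with $z=\phi_h(t)$ and $\varphi=\phi(t)$. Lemma~\ref{lem:narrow-band} then gives
\[
 |\mathcal V_{\gamma,h}(t)-\mathcal V_\gamma(t)|\le Ce^{\beta_h t}(\gamma+\delta)+\delta^{-2}\varepsilon_h(T).
\]
Choose $\delta=\varepsilon_h(T)^{1/3}$, possibly rescaled by a power of $e^{\beta_h T}$ to balance the two terms. Lemma~\ref{lem:dg-error} gives $\varepsilon_h(T)^{1/3}\le Ce^{(\beta_h+2L_h)T/3}(h^2+h\vartheta_hT)^{1/3}$. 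The result is $Ce^{\beta_hT}\gamma$ plus a term of the form $Ce^{\beta_hT+\frac23L_hT}(h^2+h\vartheta_hT)^{1/3}$. The exponential bookkeeping here has to be done with care, choosing the rescaling of $\delta$ so the exponents come out as stated; I expect this to be routine but fiddly.

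\textbf{Middle bracket.} Integrating by parts, with $\bm V_h\cdot\bm n=0$ on $\partial D$, gives $\mathcal V_\gamma'(t)=\int_D M_\gamma(\phi(t))\nabla\cdot\bm V_h\,\mathrm d\bm x$. The compatibility condition \eqref{eq:initial-constraint} holds for $\phi_h(0)$, not for $\phi(0)$. Since $\bm V_h$ is continuous and piecewise affine, the elementwise sum is a genuine integral over $D$, so we may subtract it:
\[
 |\mathcal V_\gamma'(t)|\le\beta_h\int_D|M_\gamma(\phi(t))-M_\gamma(\phi_h(0))|\,\mathrm d\bm x .
\]
Next apply Lemma~\ref{lem:material-comparison} with $z=\phi_h(0)=\Pi_h\phi_0$ and $\varphi=\phi(t)$. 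The $L^2$ distance is controlled by the triangle inequality:
\[
 \|\Pi_h\phi_0-\phi(t)\|\le\|\Pi_h\phi_0-\phi_0\|+\|\phi_0-\phi(t)\|\le Ch+\vartheta_h t\sup_s\|\nabla\phi(s)\|.
\]
The first piece uses \eqref{eq:projection-estimates}. The second uses $\partial_t\phi=-\bm V_h\cdot\nabla\phi$ together with \eqref{eq:gradient-bound}. Hence the squared distance is at most $C(h^2+e^{(2L_h+\beta_h)T}\vartheta_h^2T^2)$. The narrow-band set contributes $Ce^{\beta_ht}(\gamma+\delta)$; this is what produces the $\beta_hTe^{\beta_hT}\gamma$ part of the first term. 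Optimizing $\delta$ as a cube root, weighted by $e^{\beta_h t}$ to give the factor $e^{\frac23\beta_hT}$, and integrating over $[0,T]$ yields $C\beta_hTe^{\frac23\beta_hT}\bigl(h^2+e^{(2L_h+\beta_h)T}\vartheta_h^2T^2\bigr)^{1/3}$.

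\textbf{Main obstacle.} The key conceptual point is the middle bracket: the constraint is imposed on $\phi_h(0)$ while the volume of $\phi(t)$ is what evolves. Two things make this work. First, Lemma~\ref{lem:material-comparison} never needs a narrow-band estimate for the discrete function. Second, the mismatch $\|\Pi_h\phi_0-\phi(t)\|$ is controlled without any $H^2$ regularity. The remaining risk is purely in the exponent accounting when choosing $\delta$.
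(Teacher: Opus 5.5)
Your proposal is correct and follows the paper's proof essentially step for step. It uses the same three-way split through the continuous reference solution (the paper's $A_T$, $A_c$, $A_0$), and applies Lemma~\ref{lem:material-comparison} always with $\varphi=\phi(t)$ as the reference. In the middle term it subtracts the initial constraint \eqref{eq:initial-constraint} and bounds $\|\Pi_h\phi_0-\phi(t)\|_{L^2(D)}$ by the triangle inequality, exactly as the paper does. Your weighted choices $\delta=e^{-\beta_h t/3}(\cdot)^{1/3}$ give exactly the stated exponents. The only cosmetic difference is at $t=0$: the paper uses the projection estimate directly with $\delta=h^{2/3}$ to get $C\gamma+Ch^{2/3}$, while you bound $\|\phi_h(0)-\phi_0\|_{L^2(D)}^2$ by $\varepsilon_h(T)$. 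Both are absorbed into the second term of \eqref{eq:volume-drift}.
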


\begin{proof}
Insert the continuous endpoint values and write
\begin{align*}
 |\mathcal V_{\gamma,h}(T)-\mathcal V_{\gamma,h}(0)|
 \le A_0+A_c+A_T,
\end{align*}
where
\begin{align*}
 A_0&:=\left|\int_D[M_\gamma(\phi_0)-M_\gamma(\phi_h(0))]\,\mathrm d\bm x\right|,\quad  A_c:=\left|\int_D[M_\gamma(\phi(T))-M_\gamma(\phi_0)]\,\mathrm d\bm x\right|,\\
 A_T&:=\left|\int_D[M_\gamma(\phi_h(T))-M_\gamma(\phi(T))]\,\mathrm d\bm x\right|.
\end{align*}

For $A_0$, apply Lemmas~\ref{lem:narrow-band} and \ref{lem:material-comparison}, together with
$\|\Pi_h\phi_0-\phi_0\|_{L^2(D)}^2\le Ch^2$.  Choosing $\delta=h^{2/3}$ gives
\begin{equation}\label{eq:A0}
 A_0\le C\gamma+Ch^{2/3}.
\end{equation}

For the continuous term, the transport equation and the boundary condition imply
\[
 \frac{\mathrm d}{\mathrm dt}\int_DM_\gamma(\phi(t))\,\mathrm d\bm x
 =\int_DM_\gamma(\phi(t))\nabla\cdot\bm V_h\,\mathrm d\bm x.
\]
Indeed, this identity follows by multiplying the transport equation by $M_\gamma'(\phi)$, applying the chain rule, and integrating by parts; the boundary term vanishes because $\bm V_h\cdot\bm n=0$ on $\partial D$.  Subtracting the fixed initial constraint \eqref{eq:initial-constraint} gives, for each $t$,
\[
 \left|\frac{\mathrm d}{\mathrm dt}\int_DM_\gamma(\phi(t))\,\mathrm d\bm x\right|
 \le\beta_h\int_D
 |M_\gamma(\phi(t))-M_\gamma(\phi_h(0))|\,\mathrm d\bm x.
\]
From the definition of $A_c$ and the above inequality we conclude that
\begin{equation}\label{eq:Ac-start}
 A_c\le\beta_h\int_0^T\int_D
 |M_\gamma(\phi(t))-M_\gamma(\phi_h(0))|\,\mathrm d\bm x\,\mathrm dt.
\end{equation}
To estimate the integrand, first use $\phi_h(0)=\Pi_h\phi_0$, the projection estimate, and $\partial_t\phi=-\bm V_h\cdot\nabla\phi$ to obtain
\begin{align*}
 \|\phi(t)-\phi_h(0)\|_{L^2(D)}
 &\le \|\phi_0-\Pi_h\phi_0\|_{L^2(D)}
 +\int_0^t\|\bm V_h\cdot\nabla\phi(s)\|_{L^2(D)}\,\mathrm ds\le Ch+Ce^{(L_h+\beta_h/2)T}\vartheta_h t.
\end{align*}
Consequently,
\[
 \|\phi(t)-\phi_h(0)\|_{L^2(D)}^2
 \le C\bigl(h^2+e^{(2L_h+\beta_h)T}\vartheta_h^2t^2\bigr).
\]
Apply Lemma~\ref{lem:material-comparison} with $z=\phi_h(0)$ and $\varphi=\phi(t)$, and then use Lemma~\ref{lem:narrow-band}.  With
\[
 \delta_c(t)=e^{-\beta_ht/3}
 \bigl(h^2+e^{(2L_h+\beta_h)T}\vartheta_h^2t^2\bigr)^{1/3},
\]
the narrow-band and Chebyshev contributions are balanced, so
\[
 \int_D|M_\gamma(\phi(t))-M_\gamma(\phi_h(0))|\,\mathrm d\bm x
 \le Ce^{\beta_ht}\gamma
 +Ce^{\frac23\beta_ht}
 \bigl(h^2+e^{(2L_h+\beta_h)T}\vartheta_h^2t^2\bigr)^{1/3}.
\]
Substitution into \eqref{eq:Ac-start}, followed by bounding the increasing factors at $t=T$, gives
\begin{equation}\label{eq:Ac}
 A_c\le C\beta_hTe^{\beta_hT}\gamma
 +C\beta_hTe^{\frac23\beta_hT}
  \bigl(h^2+e^{(2L_h+\beta_h)T}\vartheta_h^2T^2\bigr)^{1/3}.
\end{equation}

Finally, apply the same comparison lemma at $t=T$ with
$\delta_T=e^{-\beta_hT/3}\varepsilon_h(T)^{1/3}$.  Lemma~\ref{lem:dg-error} gives
\begin{equation}\label{eq:AT}
 A_T\le Ce^{\beta_hT}\gamma
 +Ce^{\beta_hT+\frac23L_hT}
   \bigl(h^2+h\vartheta_h T\bigr)^{1/3}.
\end{equation}
Combining \eqref{eq:A0}, \eqref{eq:Ac}, and \eqref{eq:AT}, and absorbing $h^{2/3}$ into the second term, proves \eqref{eq:volume-drift}. \qed
\end{proof}

\begin{corollary}[Error relative to the target volume]\label{cor:target-volume}
Under the assumptions of Theorem~\ref{thm:volume-drift}, suppose $0<\gamma\le1$, the initial geometric volume is $V_0$, and
\[
 \left|\int_DM_\gamma(\phi_0)\,\mathrm d\bm x-V_0\right|\le C\gamma^2.
\]
If, in addition, $\vartheta_h$, $\beta_h$, and $L_h$ are uniformly bounded in $h$, then
\begin{equation}\label{eq:target-volume-general}
 |\mathcal V_{\gamma,h}(T)-V_0|
 \le C e^{CT}\Bigl[\gamma+(h^2+hT)^{1/3}
                    +T(h^2+T^2)^{1/3}\Bigr].
\end{equation}
In particular, if $c_-h\le\gamma\le c_+h$ with constants $c_-,c_+>0$ independent of $h$, then
\begin{equation}\label{eq:target-volume-mesh}
 |\mathcal V_{\gamma,h}(T)-V_0|
 \le C e^{CT}\Bigl[h+(h^2+hT)^{1/3}
                    +T(h^2+T^2)^{1/3}\Bigr].
\end{equation}
Here and below, $C$ is independent of $h$, $\gamma$, and $T$.
\end{corollary}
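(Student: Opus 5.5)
The plan is a triangle inequality. We write
\[
|\mathcal V_{\gamma,h}(T)-V_0|\le |\mathcal V_{\gamma,h}(T)-\mathcal V_{\gamma,h}(0)|+|\mathcal V_{\gamma,h}(0)-\textstyle\int_D M_\gamma(\phi_0)\,\mathrm d\bm x|+|\textstyle\int_D M_\gamma(\phi_0)\,\mathrm d\bm x-V_0|.
\]
The last term is at most $C\gamma^2\le C\gamma$ by hypothesis, since $\gamma\le1$. The middle term is exactly the quantity $A_0$ from the proof of Theorem~\ref{thm:volume-drift}. By Lemmas~\ref{lem:narrow-band} and \ref{lem:material-comparison} with $\delta=h^{2/3}$, it is bounded by $C\gamma+Ch^{2/3}$. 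Since $h^{2/3}\le (h^2+hT)^{1/3}$ only when $T\ge h$, we instead absorb it as $h^{2/3}=(h^2)^{1/3}\le(h^2+hT)^{1/3}$, which always holds. So the middle term is $\le C[\gamma+(h^2+hT)^{1/3}]$.

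For the first term we invoke \eqref{eq:volume-drift} and simplify using the uniform bounds $\vartheta_h,\beta_h,L_h\le C_0$.

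\textbf{First summand.} We have $(1+\beta_hT)e^{\beta_hT}\gamma\le (1+C_0T)e^{C_0T}\gamma\le e^{2C_0T}\gamma$, using $1+x\le e^x$.

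\textbf{Second summand.} It is bounded by $Ce^{(C_0+\frac23C_0)T}(h^2+C_0hT)^{1/3}\le Ce^{CT}(h^2+hT)^{1/3}$, since the constant $C_0$ can be pulled out of the cube root when $C_0\ge1$.

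\textbf{Third summand.} The factor inside the cube root is $h^2+e^{(2L_h+\beta_h)T}\vartheta_h^2T^2\le e^{3C_0T}C_0^2\,(h^2+T^2)$. Its cube root is at most $Ce^{C_0T}(h^2+T^2)^{1/3}$. Together with the prefactor $\beta_hTe^{\frac23\beta_hT}\le C_0Te^{C_0T}$, this gives $CTe^{CT}(h^2+T^2)^{1/3}$.

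Collecting everything with a single constant $C$ in the exponent yields \eqref{eq:target-volume-general}. One point needs care: the constants in the lemmas depend only on $D$, $\Gamma_0$, the tube radius and the separation of $|\phi_0|$ outside it, and not on $h,\gamma,T$. Hence the final $C$ is independent of $h,\gamma,T$.

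For \eqref{eq:target-volume-mesh}, we substitute $\gamma\le c_+h$ into the $\gamma$ term and absorb $c_+$ into $C$. The lower bound $c_-h\le\gamma$ is not needed for the upper estimate; it only records the coupling regime.

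The main obstacle is purely bookkeeping. We must make sure that every exponential factor in \eqref{eq:volume-drift} can be dominated by $e^{CT}$, including the one hidden inside the cube root of the third term. We must also check that the $h^{2/3}$ coming from the initial projection is covered by the $(h^2+hT)^{1/3}$ term uniformly in $T$, including as $T\to0$. Both reduce to the monotonicity remarks above.
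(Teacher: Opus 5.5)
Your proposal is correct and follows the paper's proof. Like the paper, you bound $|\mathcal V_{\gamma,h}(0)-V_0|$ by the $A_0$ estimate $C\gamma+Ch^{2/3}$ plus the assumed $C\gamma^2$ consistency term, add the drift bound \eqref{eq:volume-drift}, and absorb the uniformly bounded $\vartheta_h,\beta_h,L_h$ and the factor $1+\beta_hT$ into $e^{CT}$; your bookkeeping is simply more explicit than the paper's. One small slip: $h^{2/3}=(h^2)^{1/3}\le(h^2+hT)^{1/3}$ holds for every $T\ge0$, so the clause ``only when $T\ge h$'' is wrong (that condition is the one comparing $h^{2/3}$ with $(hT)^{1/3}$), although the absorption you actually use is valid.
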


\begin{proof}
The initial projection estimate \eqref{eq:A0} and the assumed geometric consistency yield
\begin{align*}
 |\mathcal V_{\gamma,h}(0)-V_0|
 &\le\left|\int_D[M_\gamma(\phi_h(0))-M_\gamma(\phi_0)]\,\mathrm d\bm x\right|
      +\left|\int_DM_\gamma(\phi_0)\,\mathrm d\bm x-V_0\right|\\
 &\le C\gamma+Ch^{2/3}+C\gamma^2
 \le C\gamma+Ch^{2/3}.
\end{align*}
Adding this bound to \eqref{eq:volume-drift}, using the uniform bounds on
$\vartheta_h$, $\beta_h$, and $L_h$, and absorbing the polynomial factors in
$T$ into $e^{CT}$ gives \eqref{eq:target-volume-general}.  Substituting
$\gamma\le c_+h$ proves \eqref{eq:target-volume-mesh}.  If the initial
discrete regularized volume is calibrated to equal $V_0$, no initial mismatch
term is needed and Theorem~\ref{thm:volume-drift} applies directly. \qed
\end{proof}

\begin{remark}
The estimate above is a short-transport-interval regularized-volume drift bound rather than a fixed-\(T\) mesh-convergence theorem. In particular, if \(T>0\) is held fixed while \(h\to0\), the last contribution in the displayed estimate does not necessarily vanish. A vanishing bound follows under a joint refinement/short-time regime. For example, if \(\gamma\simeq h\) and \(T=O(h)\), then the right-hand side of \eqref{eq:target-volume-mesh} is \(O(h^{2/3})\) up to the displayed exponential factor.
\end{remark}

Theorem~\ref{thm:volume-drift} concerns one complete frozen-velocity update, not one inner time step. At outer iteration $n$, $\bm V_h^n$ is kept fixed on $[0,T_n]$, while \eqref{4_DG_equation} is applied $m$ times with $T_n=m\tau_n$ (here $m=5$). Writing
\[
V_h^j:=\int_D M_\gamma(\phi_h^j)\,dx,
\]
the fully discrete interval drift satisfies
\[
|V_h^m-V_h^0|
\le
|\mathcal V_{\gamma,h}(T_n)-\mathcal V_{\gamma,h}(0)|
+
|V_h^m-\mathcal V_{\gamma,h}(T_n)|.
\]
Combining the standard first-order backward-Euler error with a narrow-band argument analogous to Lemma~\ref{lem:material-comparison}, the second term is expected to be of order $O(\gamma+\tau_n^{2/3})$; reinitialization is performed afterwards and remains a separate numerical effect, as stated in Remark~\ref{rem:reinitialization}.

    \section{Numerical results}\label{sect:NR}
	In this section, we provide numerical results to verify the effectiveness of the level-set method. Algorithm \ref{4_mainAlgorithm} is terminated after a fixed number of iterations, the level-set equation is solved over five inner pseudo-time steps, i.e., $T=5\tau$, while the level-set function is reinitialized periodically for a fixed number of outer iterations. 
    To evaluate the volume-control performance of the algorithm, we quantify the numerical relative error $V^e:=\frac{1}{N}\sum_{n=1}^N|V_0-V_n|/V_0$ of the volume during iterations, where $V_n:=\int_{D} M_{\gamma}(\phi_h^n) {\rm d}x$.
    Some parameter values and numerical results of volume errors are provided in \autoref{tab:1}. The nonzero relative volume errors reported in \autoref{tab:1} are obtained after fully space-time discretizations and depend on the smoothing parameter $\gamma$, the spatial mesh size $h$, and the pseudo-time step size $\tau$, the latter of which may accumulate over successive level-set updates. We set $k=1$ for the discretization of the level-set function.
   
	\subsection{Optimization of structural compliance}
    Set the force $\bm{f}_s = \bm{0}$. Set the L\'ame constants $\lambda_L = 1, \mu_L = 8$ and $\lambda_L = \frac{15}{26}$, $\mu_L = \frac{5}{13}$ for 2D and 3D, respectively. Choose $\eta = 10^{-4}$ and $\eta = 5\times 10^{-3}$ for 2D and 3D, respectively. Choose the width of the interface $\gamma = 5\times 10^{-3}$ and $\gamma = 0.03$ for 2D and 3D, respectively.
	\begin{example} 
		\label{5_M1E1}
		(Cantilever: 2D \cite{wang2003level}) Set $D = (0,2) \times (0,1)$, Dirichlet boundary $\partial D_D = \lbrace (0,y): y \in [0,1] \rbrace$, and Neumann boundary $\partial D_N = \lbrace (2,y): y \in [0.45, 0.55] \rbrace$ (see the left of \autoref{5_M1_Ex1}). Set $\bm{g}_s = (0, -5)$. 
	\end{example}
    
In \autoref{Figure_M1E1_All}, we observe that the area remains close to its prescribed target value and the compliance decays monotonically during topological evolution.

Using the same two-dimensional cantilever setting as in Example~\ref{5_M1E1}, we now quantify how well the regularized volume is preserved by the transport step. For outer iteration $n$, let $V_n$ and $\widetilde V_n$ denote the regularized volumes immediately before and after transport, respectively, and define
\begin{equation}
    e_n:=\frac{|\widetilde V_n-V_n|}{V_n}.
\end{equation}
For a sampled sequence $\{e_n\}_{n\in\mathcal I}$ we report
\begin{equation}
    E_{\max}=\max_{n\in\mathcal I}e_n,
    \qquad
    E_{\mathrm{RMS}}=\left(\frac{1}{|\mathcal I|}\sum_{n\in\mathcal I}e_n^2\right)^{1/2},
    \qquad
    E_{\mathrm{end}}=e_{n_{\mathrm{last}}}.
\end{equation}
For adjacent refinement levels, the observed order is
\begin{equation}
    p=\frac{\log(E_c/E_f)}{\log(q_c/q_f)},
\end{equation}
where $E_c$ and $E_f$ are the values of the statistic under consideration on the coarser and finer levels, respectively, and $q_c$ and $q_f$ are the corresponding transport times; since $T$ is halved at each step, $q_c/q_f=2$.

Table~\ref{tab:transport-time-refinement} reports the pure transport error under time refinement, for which the mesh and $\gamma=0.005$ are kept fixed while $T=0.075$. All four tests start from the same stored states, and the listed orders $p$ are those between adjacent transport times.
\begin{table}[htbp]
  \centering
  \caption{Pure transport error under time refinement.}
  \label{tab:transport-time-refinement}
  \begin{tabular}{lcccc}
    \toprule
    Statistic & $T$ & $T/2$ & $T/4$ & $T/8$ \\
    \midrule
    $E_{\max}$          & $4.164\times10^{-2}$ & $1.195\times10^{-2}$ & $4.138\times10^{-3}$ & $2.034\times10^{-3}$ \\
    $E_{\mathrm{RMS}}$ & $5.116\times10^{-3}$ & $1.480\times10^{-3}$ & $5.098\times10^{-4}$ & $2.580\times10^{-4}$ \\
    $E_{\mathrm{end}}$ & $4.785\times10^{-5}$ & $2.417\times10^{-5}$ & $1.214\times10^{-5}$ & $6.087\times10^{-6}$ \\
    \midrule
    $p_{\max}$          & -- & $1.8012$ & $1.5298$ & $1.0246$ \\
    $p_{\mathrm{RMS}}$ & -- & $1.7900$ & $1.5371$ & $0.9827$ \\
    $p_{\mathrm{end}}$ & -- & $0.9856$ & $0.9928$ & $0.9964$ \\
    \bottomrule
  \end{tabular}
\end{table}
The drift is approximately first order in $T$. This is lower than the continuous $O(T^2)$ frozen-velocity rate, but is consistent with the fully discrete first-order time stepping and is more favorable than the fixed-mesh semi-discrete bound alone guarantees. 
	
	\vspace*{2ex}
	\begin{example} \label{5_M1E2}	
		(Bridge: 2D \cite{allaire2004structural})  Set $D = (0,2) \times (0,1)$, $\partial D_D = \lbrace (0,y), \ (2,y): y \in [0, 0.05] \rbrace \bigcup \lbrace (x,0): x \in [0, 0.05] \cup [1.95, 2] \rbrace$, and $\partial D_N = \lbrace (x,0):  x \in [0.95, 1.05] \rbrace$ (see the right plot of \autoref{5_M1_Ex1}). Set $\bm{g}_s = (0, -20)$. 
	\end{example}
    
	\autoref{Figure_M1E2_All} illustrates the topological evolution, changes in the approximate volume, and a decrease in the objective.
	
	\begin{figure}[htbp] 
		\centering
		\begin{tikzpicture}[scale=3] 
			\draw[thick] (0,0) rectangle (1.6,1);
			\node at (0.8,0.5) {$D$};  
			
			\draw[line width=3pt, blue] (0,0) -- (0, 1);
            \node[left=2pt, blue] at (0.3, 0.5) {$\partial D_D$};
			\def\N{10}
			\def\dx{0.05}
			\def\dy{0.05}
			\foreach \i in {0,1,...,\N}
			{
				\pgfmathsetmacro{\ypos}{1 - \i/\N}
				\draw[thick, blue] (0, \ypos) -- ++(-\dx, -\dy);
			}
			
			\draw[line width=3pt, blue] (1.6,0.45) -- (1.6,0.55);
			\node[right=1pt, blue] at (1.15,0.5) {$\partial D_N$};
            
			\tikzset{
				dim/.style={black, line width=0.4pt, |<->|, >=stealth},
				dim label/.style={midway, fill=white, inner sep=1.2pt}
			}
			
			\draw[<->, dim] (0,-0.1) -- (1.6,-0.1) 
			node[dim label] {2};
			
			\draw[<->, dim] (1.8,0) -- (1.8,1) 
			node[dim label] {1};
			\draw[<->, dim] (1.5,0.45) -- (1.5,0.55) 
			node[dim label] {0.1};
		\end{tikzpicture}
        \begin{tikzpicture}[scale=3] 
			\draw[thick] (0,0) rectangle (1.6,1);
			\node at (0.8,0.5) {$D$};  
			
			\draw[line width=3pt, blue] (0,0) -- (0, 0.05);
			\draw[line width=3pt, blue] (0,0) -- (0.05, 0);
			\node[right=1pt, blue] at (0, 0.08) {$\partial D_D$};
			\def\N{2}
			\def\dx{0.03}
			\def\dy{0.03}
			\foreach \i in {0,1,...,\N}
			{
				\pgfmathsetmacro{\ypos}{0.05*(1 - \i/\N)}
				\draw[thick, blue] (0, \ypos) -- ++(-\dx, -\dy);
				\draw[thick, blue] (\ypos, 0) -- ++(-\dx, -\dy);
			}
			
			\draw[line width=3pt, blue] (1.6,0) -- (1.6, 0.05);
			\draw[line width=3pt, blue] (1.6,0) -- (1.55, 0);
			\node[left=1pt, blue] at (1.6, 0.08) {$\partial D_D$};
			\def\N{2}
			\def\dx{0.03}
			\def\dy{0.03}
			\foreach \i in {0,1,...,\N}
			{
				\pgfmathsetmacro{\ypos}{0.05*(1 - \i/\N)}
				\draw[thick, blue] (1.6, \ypos) -- ++(\dx, -\dy);
				\draw[thick, blue] (1.6-\ypos, 0) -- ++(\dx, -\dy);
			}
			\draw[line width=3pt, blue] (0.75,0) -- (0.85,0);
			\node[blue] at (0.8,0.075) {$\partial D_N$};
			
			\tikzset{
				dim/.style={black, line width=0.4pt, |<->|, >=stealth},
				dim label/.style={midway, fill=white, inner sep=1.2pt}
			}
			
			\draw[<->, dim] (0,-0.1) -- (0.05,-0.1) 
			node[dim label] {0.05};
			\draw[<->, dim] (0.7,-0.1) -- (0.9,-0.1) 
			node[dim label] {0.1};
			\draw[<->, dim] (1.55,-0.1) -- (1.6,-0.1) 
			node[dim label] {0.05};
			\draw[<->, dim] (0,1.1) -- (1.6,1.1) 
			node[dim label] {2};
			
			\draw[<->, dim] (-0.16,0) -- (-0.16,0.05) 
			node[dim label] {0.05};
			\draw[<->, dim] (1.76,0) -- (1.76,0.05) 
			node[dim label] {0.05};
		\end{tikzpicture}
        
		\caption{The design domain $D$ and its boundary for Example \ref{5_M1E1} (left) and Example \ref{5_M1E2} (right).}
		\label{5_M1_Ex1}
	\end{figure}

	\begin{figure}[htbp]
		\centering
		\begin{subfigure}[b]{0.32\textwidth}
			\includegraphics[width=\linewidth]{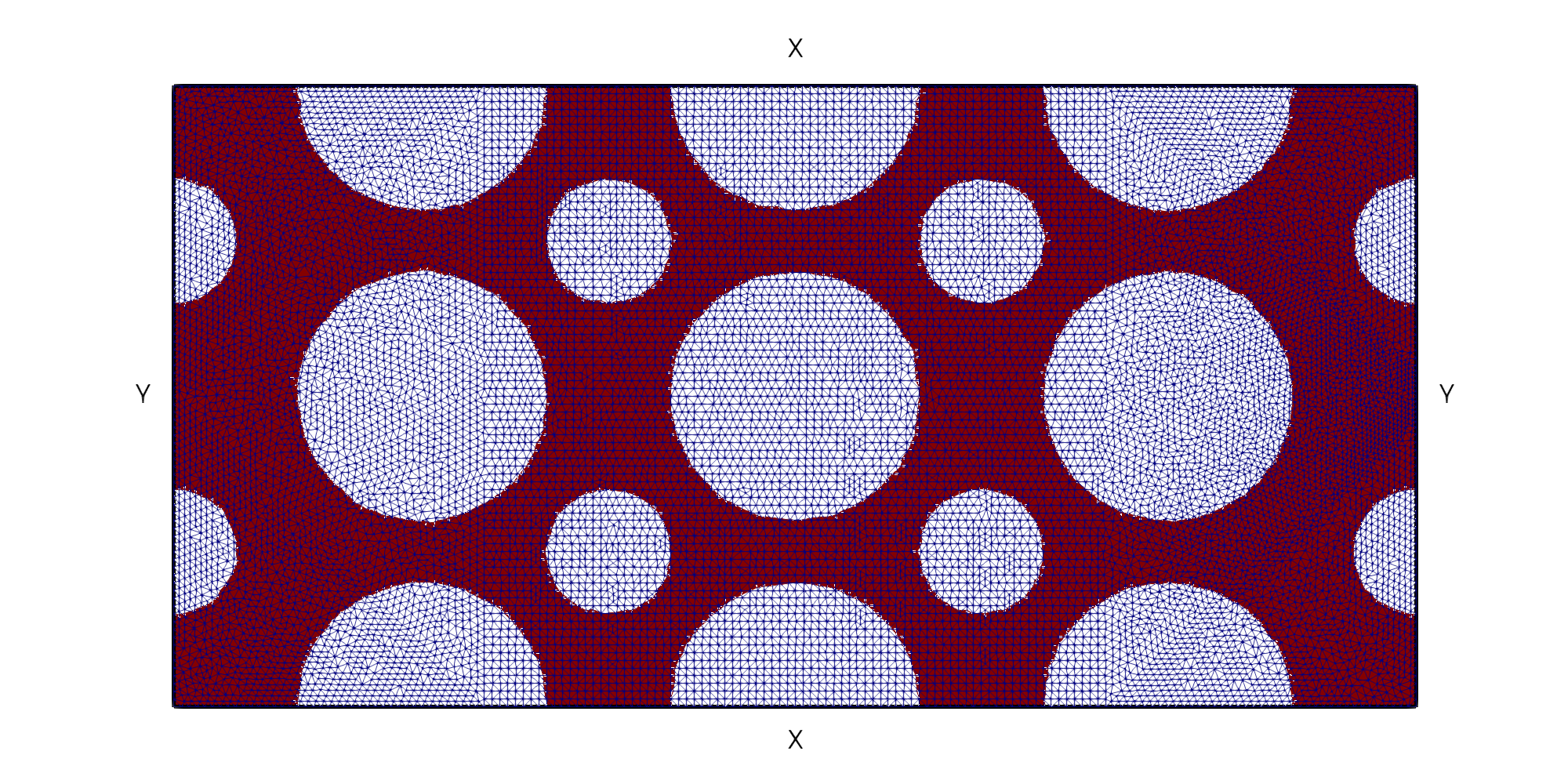} 
		\end{subfigure}
		\hfill 
		\begin{subfigure}[b]{0.32\textwidth}
			\includegraphics[width=\linewidth]{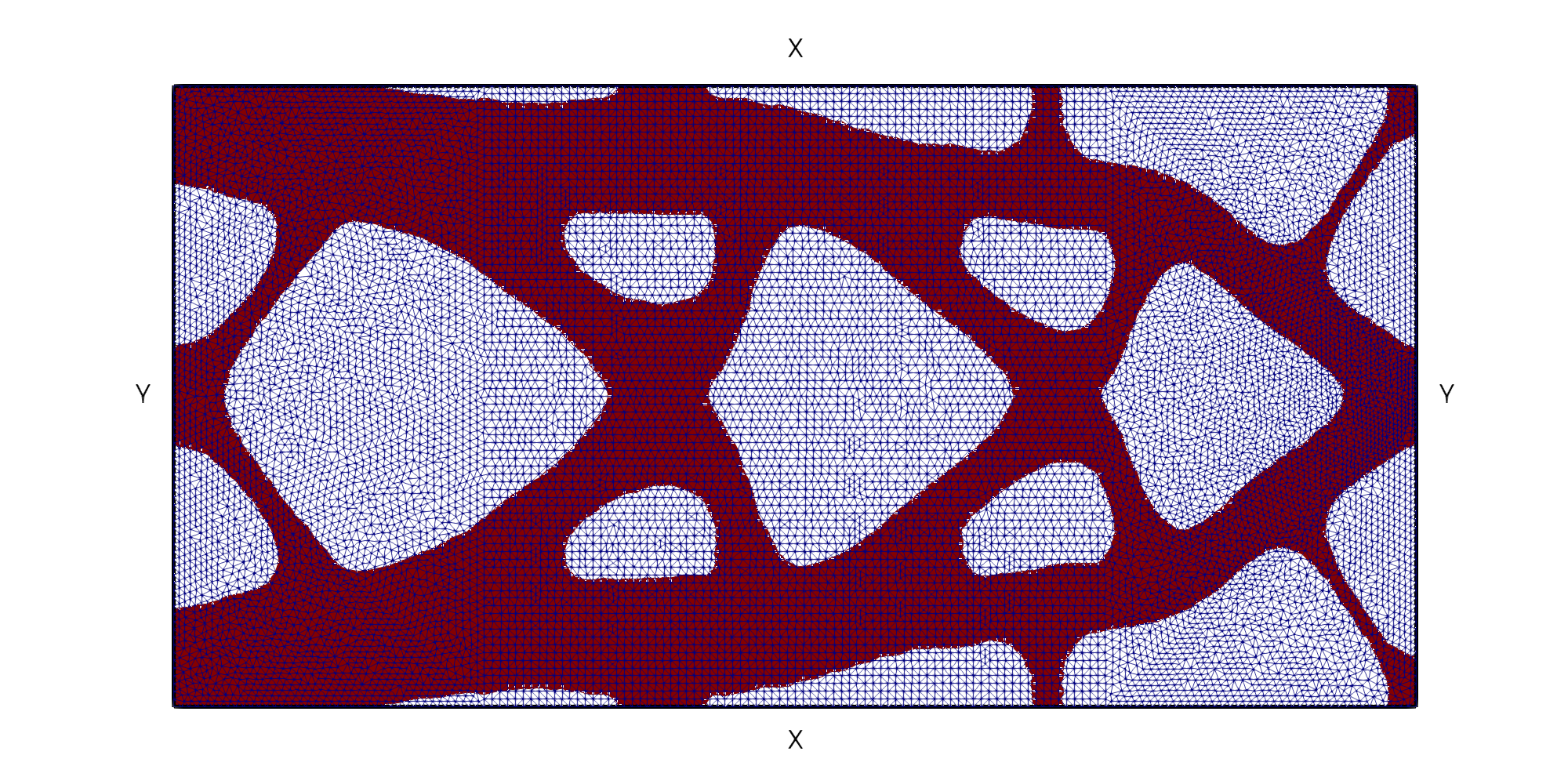}
		\end{subfigure}
        		\hfill
		\begin{subfigure}[b]{0.32\textwidth}
			\includegraphics[width=\linewidth]{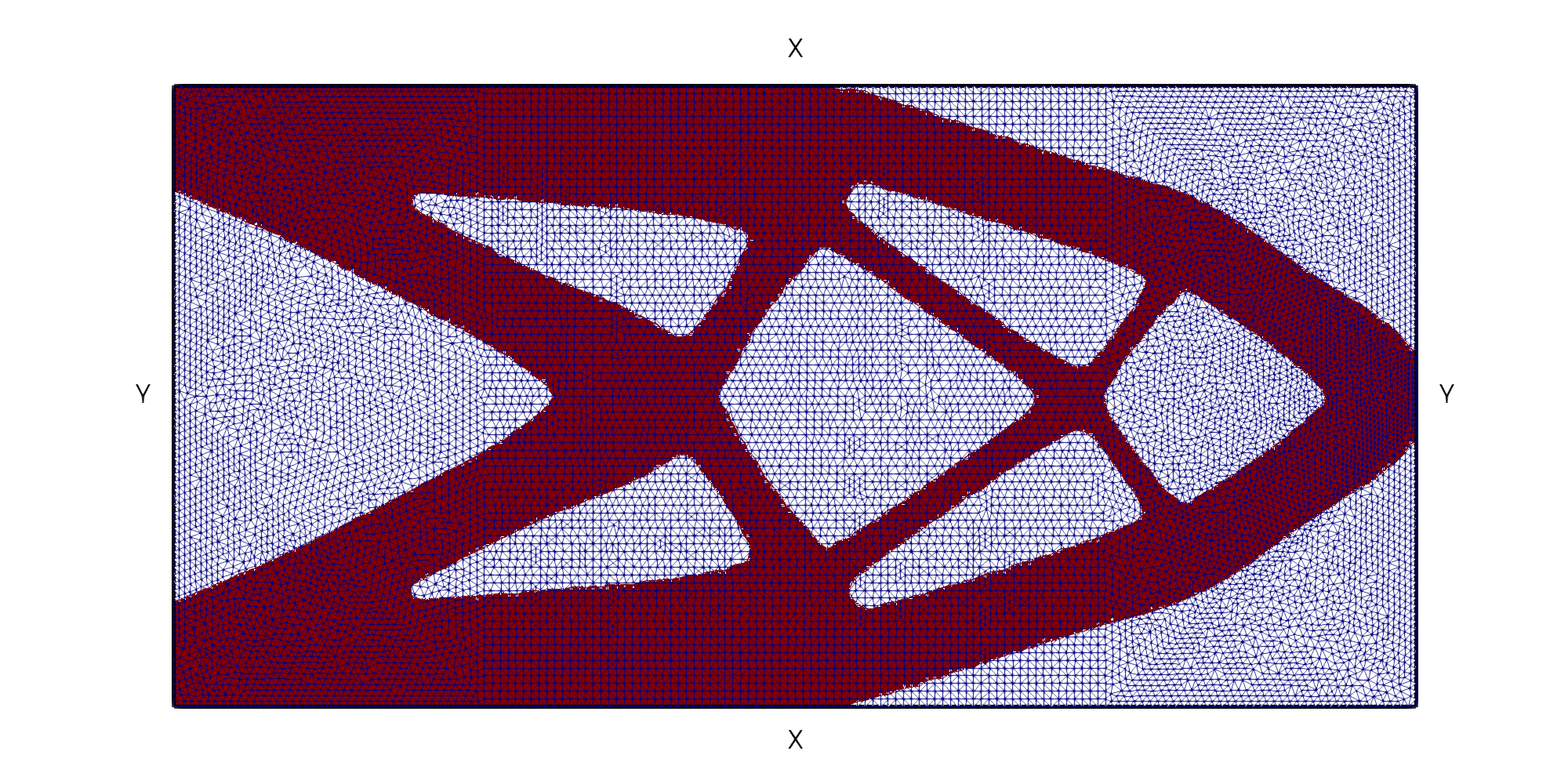}
		\end{subfigure}
        
		\begin{subfigure}[b]{0.350\textwidth}
                \centering			\includegraphics[width=\linewidth]{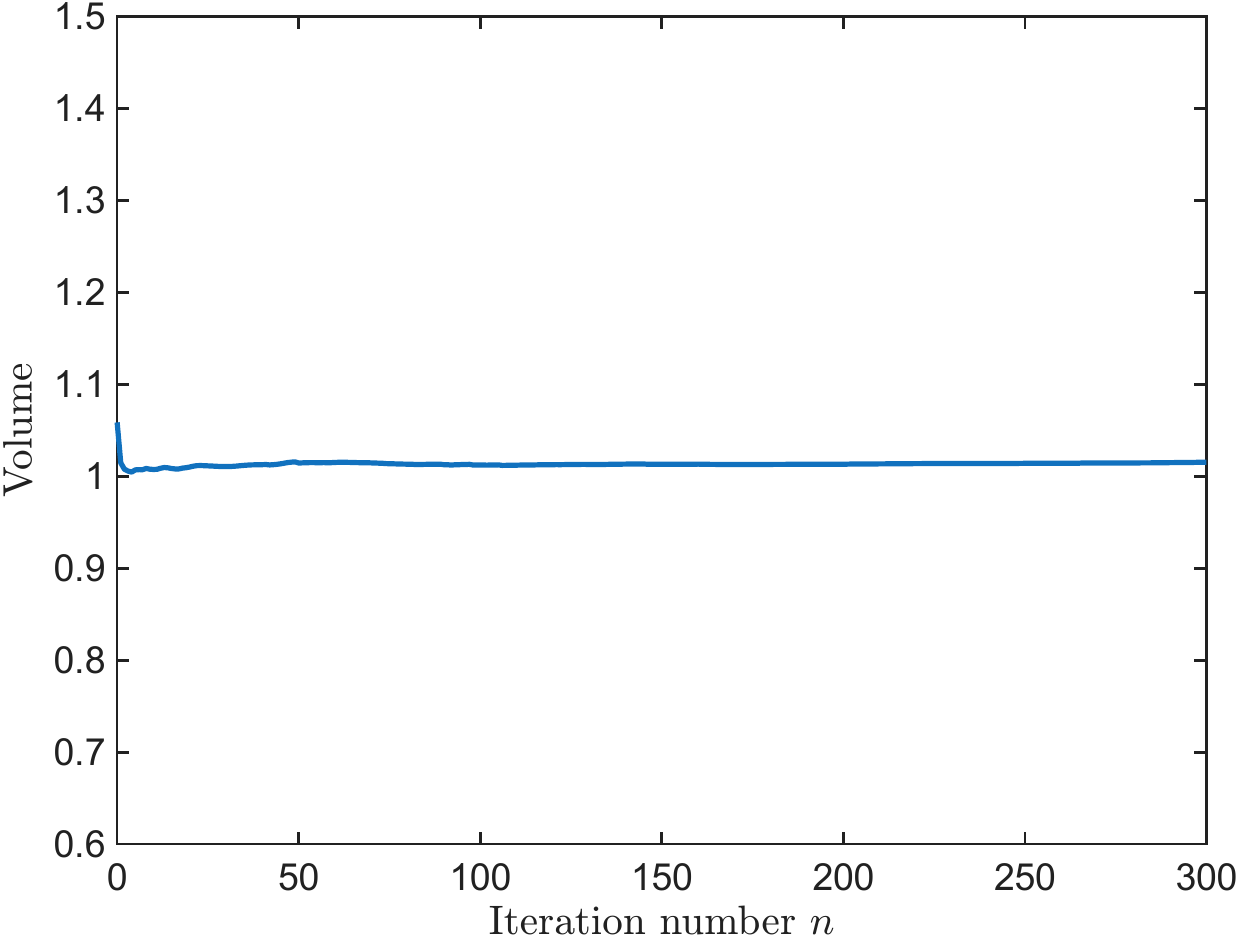} 
		\end{subfigure}
		\begin{subfigure}[b]{0.350\textwidth}			\includegraphics[width=\linewidth]{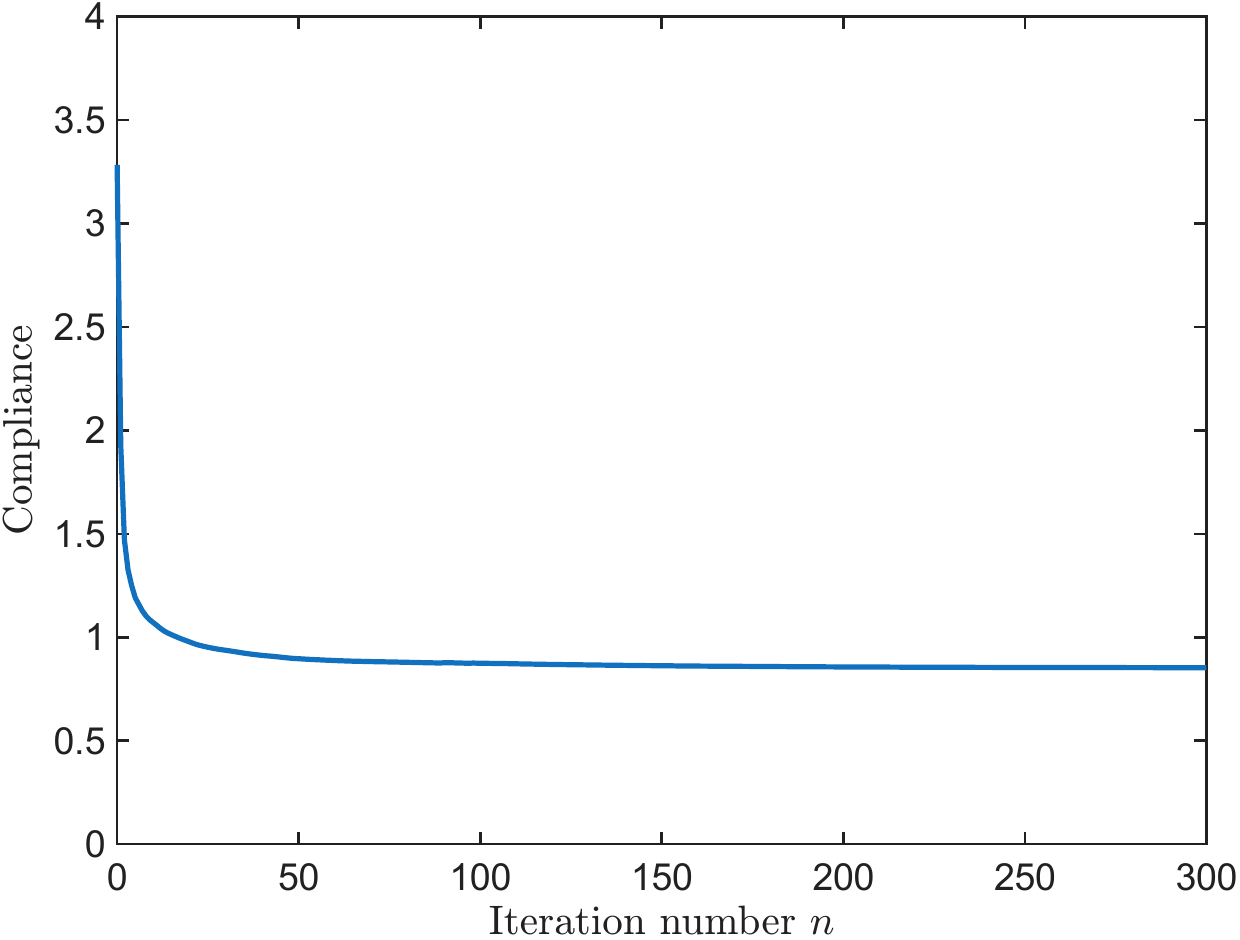}
		\end{subfigure}
		\caption{Evolution of intermediate designs for Example \ref{5_M1E1}: $n = 0,20,300$ from top left to top right and convergence histories of the volume (bottom left) and the compliance (bottom right).}
		\label{Figure_M1E1_All}
	\end{figure}

	\begin{figure}[htbp]
		\centering
		\begin{subfigure}[b]{0.32\textwidth}
			\centering
			\includegraphics[width=\linewidth]{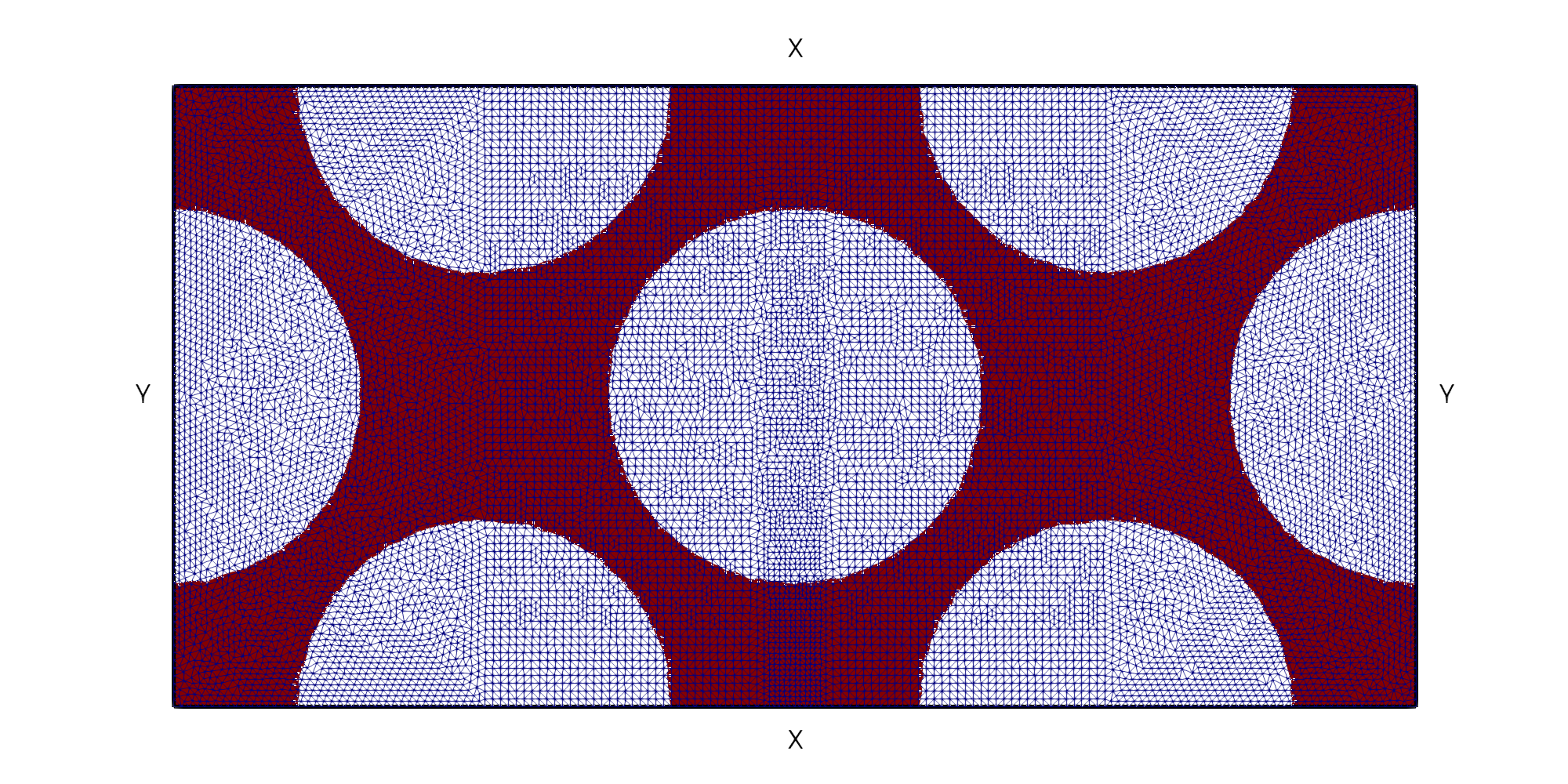} 
		\end{subfigure}
		\hfill 
		\begin{subfigure}[b]{0.32\textwidth}
			\centering
			\includegraphics[width=\linewidth]{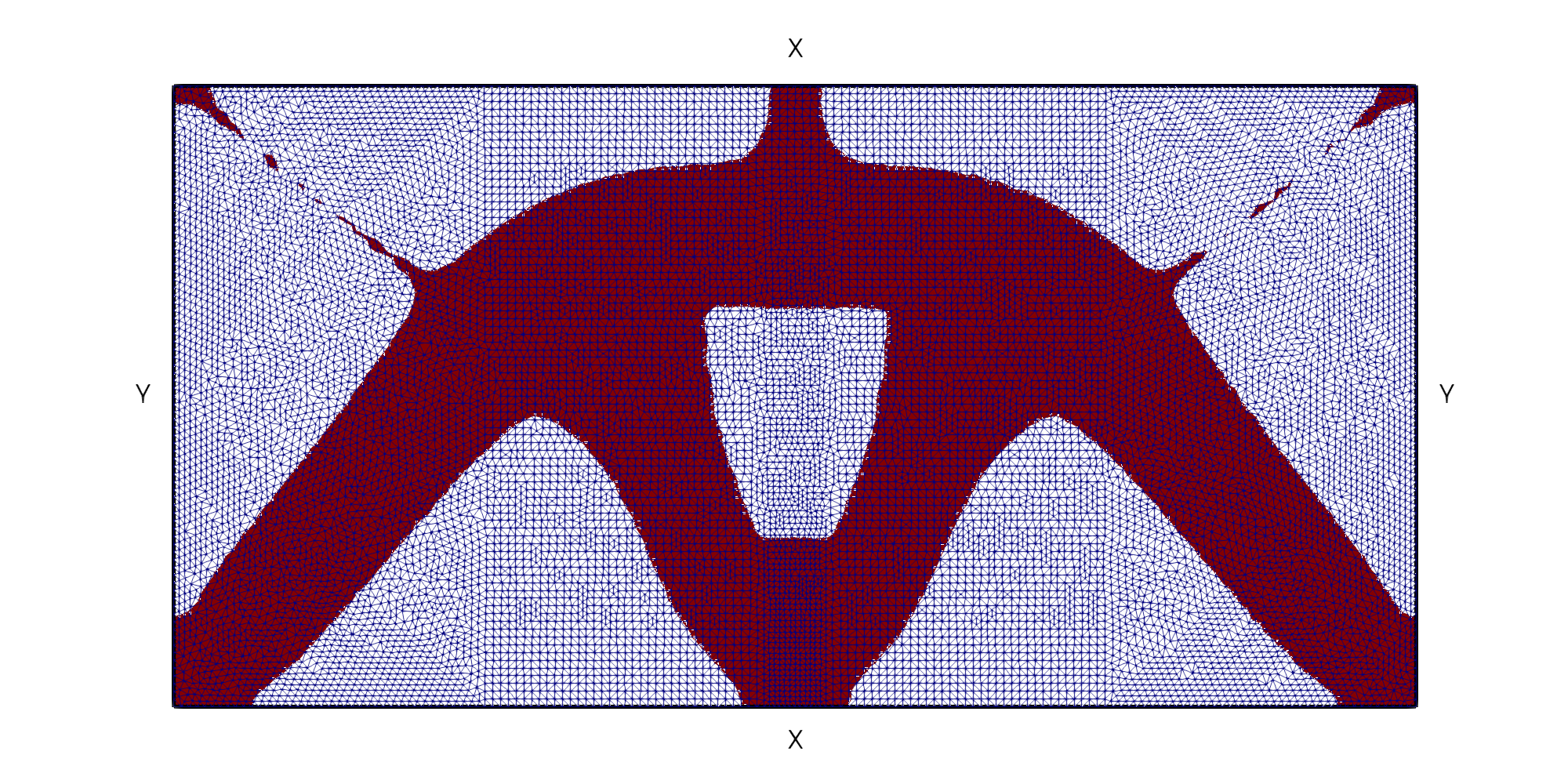}
		\end{subfigure}
		\hfill
		\begin{subfigure}[b]{0.32\textwidth}
			\centering
			\includegraphics[width=\linewidth]{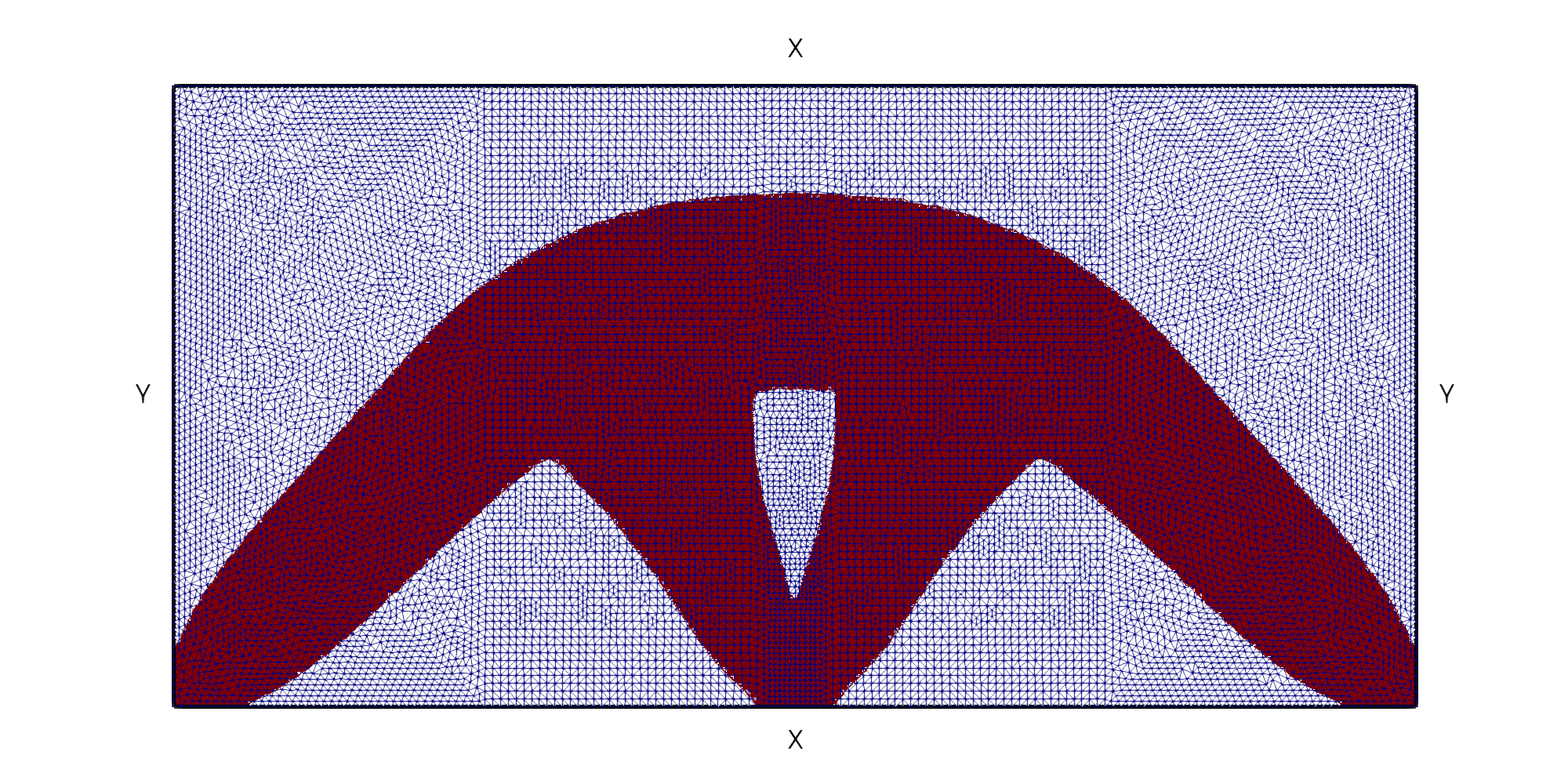}
		\end{subfigure}
		
		\begin{subfigure}[b]{0.36\textwidth}			\includegraphics[width=\linewidth]{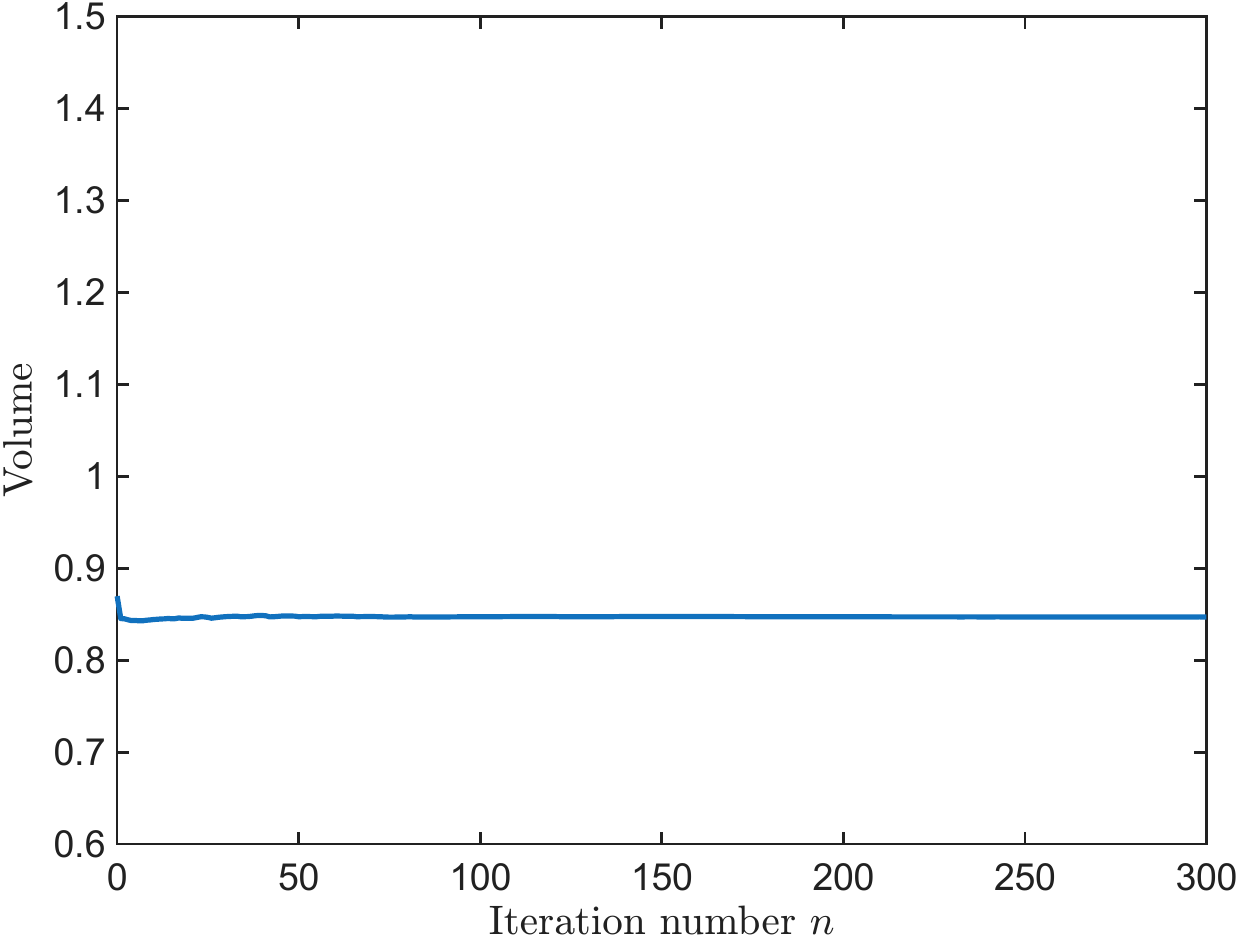} 
		\end{subfigure}
		\begin{subfigure}[b]{0.35\textwidth}			\includegraphics[width=\linewidth]{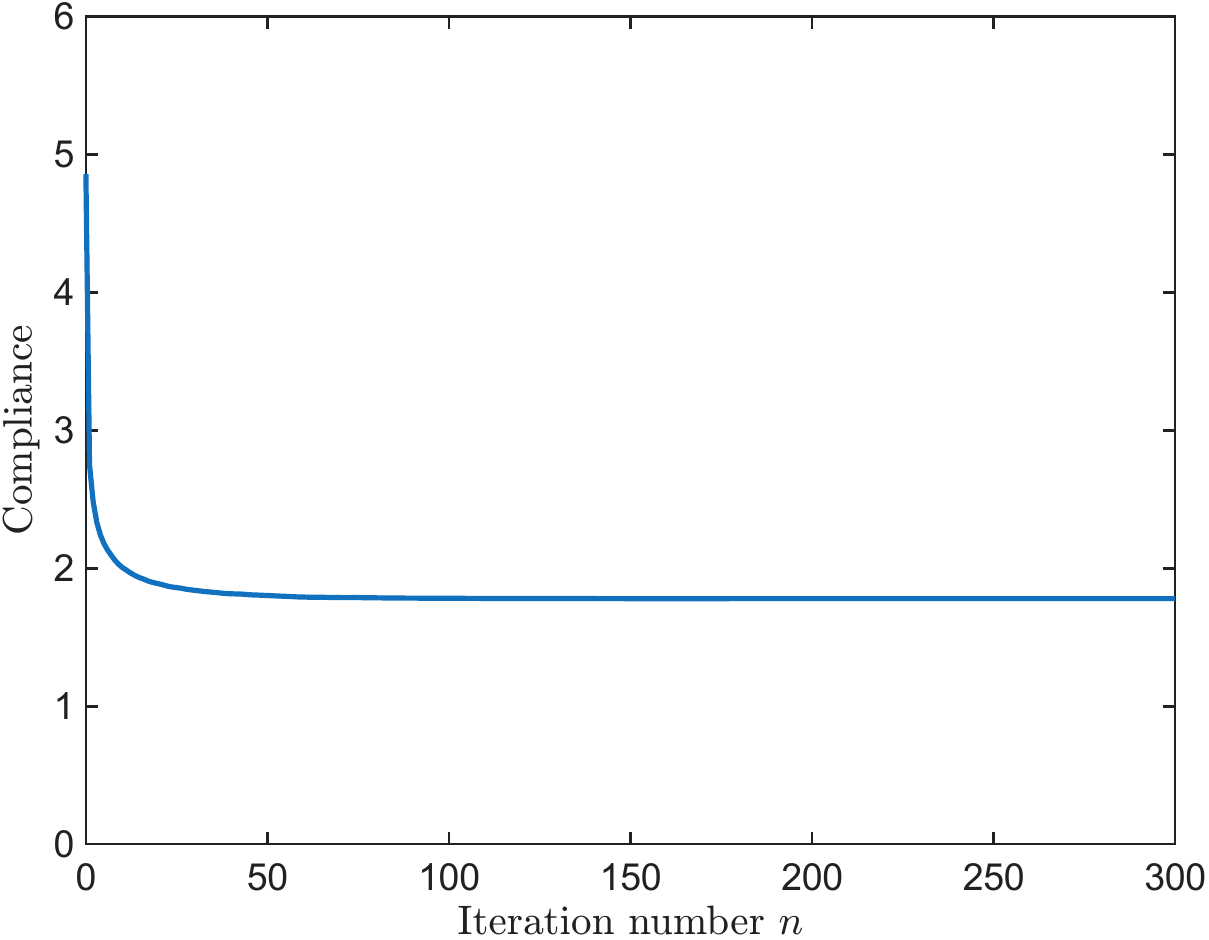}
		\end{subfigure}
		\caption{Evolution of intermediate designs for Example \ref{5_M1E2}: $n = 0,20,300$ from top left to top right and convergence histories of the volume (bottom left) and the compliance (bottom right).}
		\label{Figure_M1E2_All}
	\end{figure}
  
        \vspace*{2ex}
	\begin{example} \label{5_M1E3}	
		(MBB \cite{wang2003level})  Set $D = (0,2) \times (0,1)$, $\partial D_D = \lbrace (x,0): x \in [0, 0.05] \rbrace$, and $\partial D_N = \lbrace (x,0):  x \in [0.95, 1.05] \rbrace$. Set $\bm{u}_y = 0$ on $\partial D^*_D = \lbrace (x,0): x \in [1.95, 2] \rbrace$ (see top left plot of \autoref{Figure_M1E3_Rho}). Set $\bm{g}_s = (0, -20)$. 
	\end{example}

The topological evolution is shown in \autoref{Figure_M1E3_Rho}. \autoref{Figure_M1E3_VandJ} illustrates stable control of the volume around its prescribed target value and a monotonic decrease in the objective.
        
        \begin{figure}[htbp]
        \centering
        \begin{tikzpicture}[scale=2.5] 
            \draw[thick] (0,0) rectangle (2,1);
            \node at (1.0,0.5) {$D$};
            
		  \draw[line width=3pt, blue] (0,0) -- (0.05, 0);
		  \node[right=1pt, blue] at (0, 0.08) {$\partial D_D$};
		  \def\N{2}
		  \def\dx{0.03}
		  \def\dy{0.03}
		  \foreach \i in {0,1,...,\N}
		  {
				\pgfmathsetmacro{\ypos}{0.05*(1 - \i/\N)}
				\draw[thick, blue] (\ypos, 0) -- ++(-\dx, -\dy);
		  }
			
            \draw[line width=3pt, blue] (0,0) -- (0.05, 0);
            \node[right=1pt, blue] at (0, 0.08) {$\partial D_D$};

            \draw[line width=3pt, blue] (2.0,0) -- (1.95, 0);
            \node[left=1pt, blue] at (2.0, 0.08) {$\partial D^*_D$};

            \def\N{2}
            \foreach \i in {0,1,...,\N}
            {
                \pgfmathsetmacro{\ypos}{0.05*(1 - \i/\N)}
                \draw[blue, thick] (2.0-\ypos, -0.025) circle (0.01);
            }

            \draw[line width=3pt, blue] (0.95,0) -- (1.05,0);
            \node[blue] at (1.0,0.075) {$\partial D_N$};

            \tikzset{
                dim/.style={black, line width=0.4pt, |<->|, >=stealth},
                dim label/.style={midway, fill=white, inner sep=1.2pt}
            }

            \draw[<->, dim] (0,-0.1) -- (0.05,-0.1) 
            node[dim label] {0.05};

            \draw[<->, dim] (0.9,-0.1) -- (1.1,-0.1) 
            node[dim label] {0.1};

            \draw[<->, dim] (1.95,-0.1) -- (2.0,-0.1) 
            node[dim label] {0.05};

            \draw[<->, dim] (0,1.1) -- (2.0,1.1) 
            node[dim label] {2};

            \draw[<->, dim] (2.35,0) -- (2.35,1.0) 
            node[dim label] {1};

            \draw[<->, dim] (-0.16,0) -- (-0.16,0.05) 
            node[dim label] {0.05};

            \draw[<->, dim] (2.16,0) -- (2.16,0.05) 
            node[dim label] {0.05};
        \end{tikzpicture}
        \begin{subfigure}[b]{0.4\textwidth}
			\includegraphics[width=\linewidth]{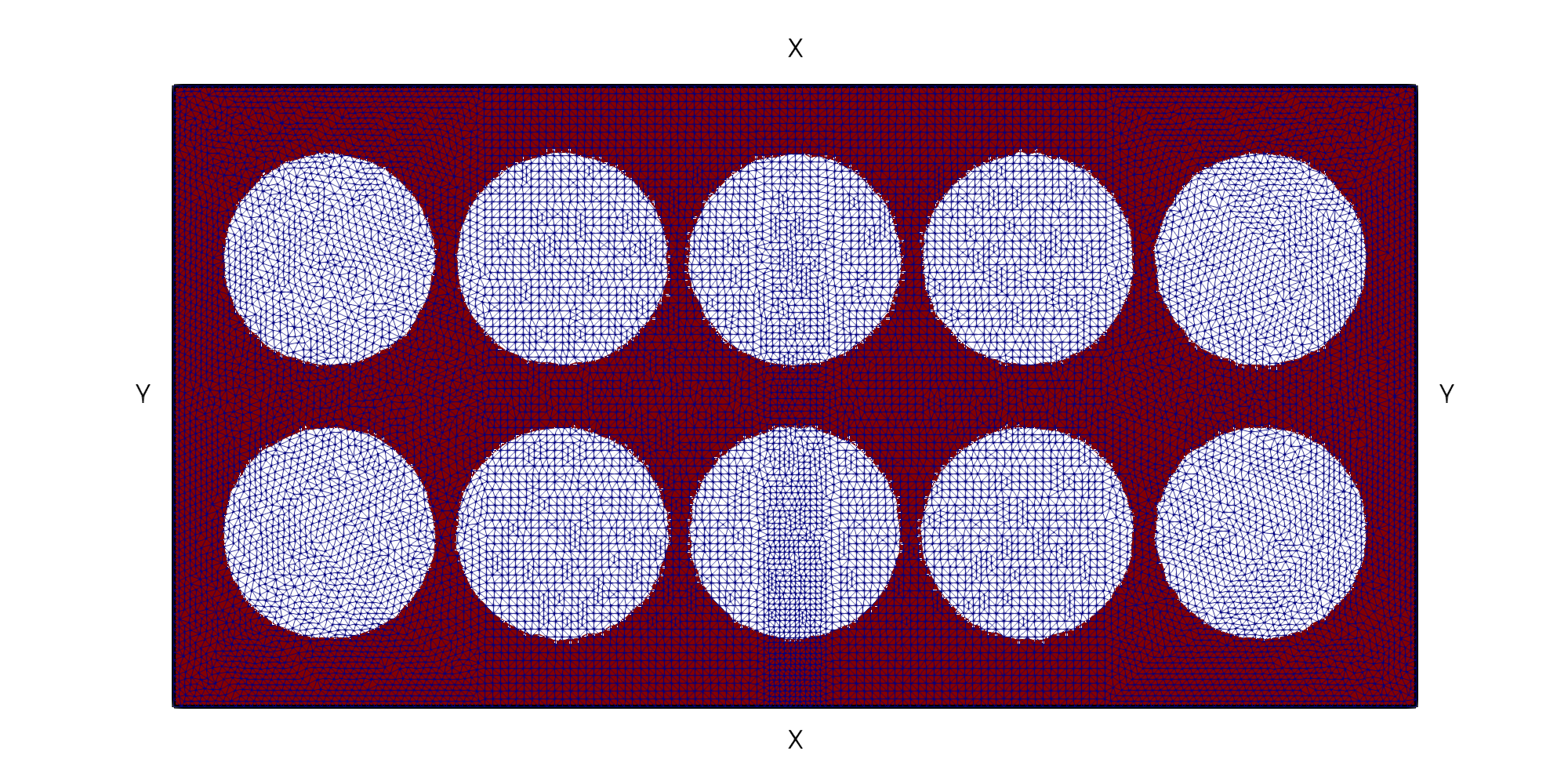}
		\end{subfigure}
        
        \begin{subfigure}[b]{0.4\textwidth}
			\includegraphics[width=\linewidth]{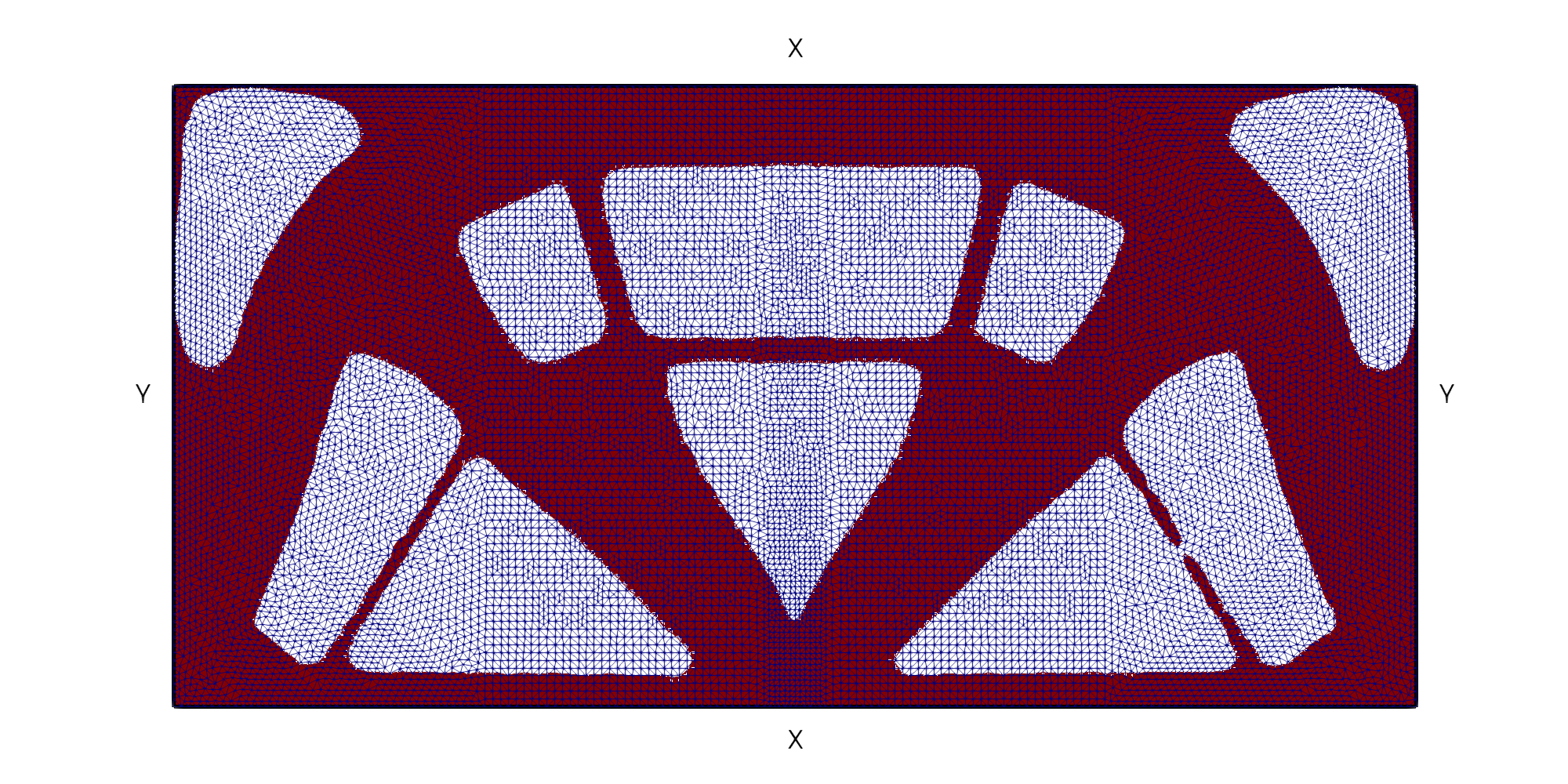}
		\end{subfigure}
        \hspace{0.04\textwidth}
        \begin{subfigure}[b]{0.4\textwidth}
			\includegraphics[width=\linewidth]{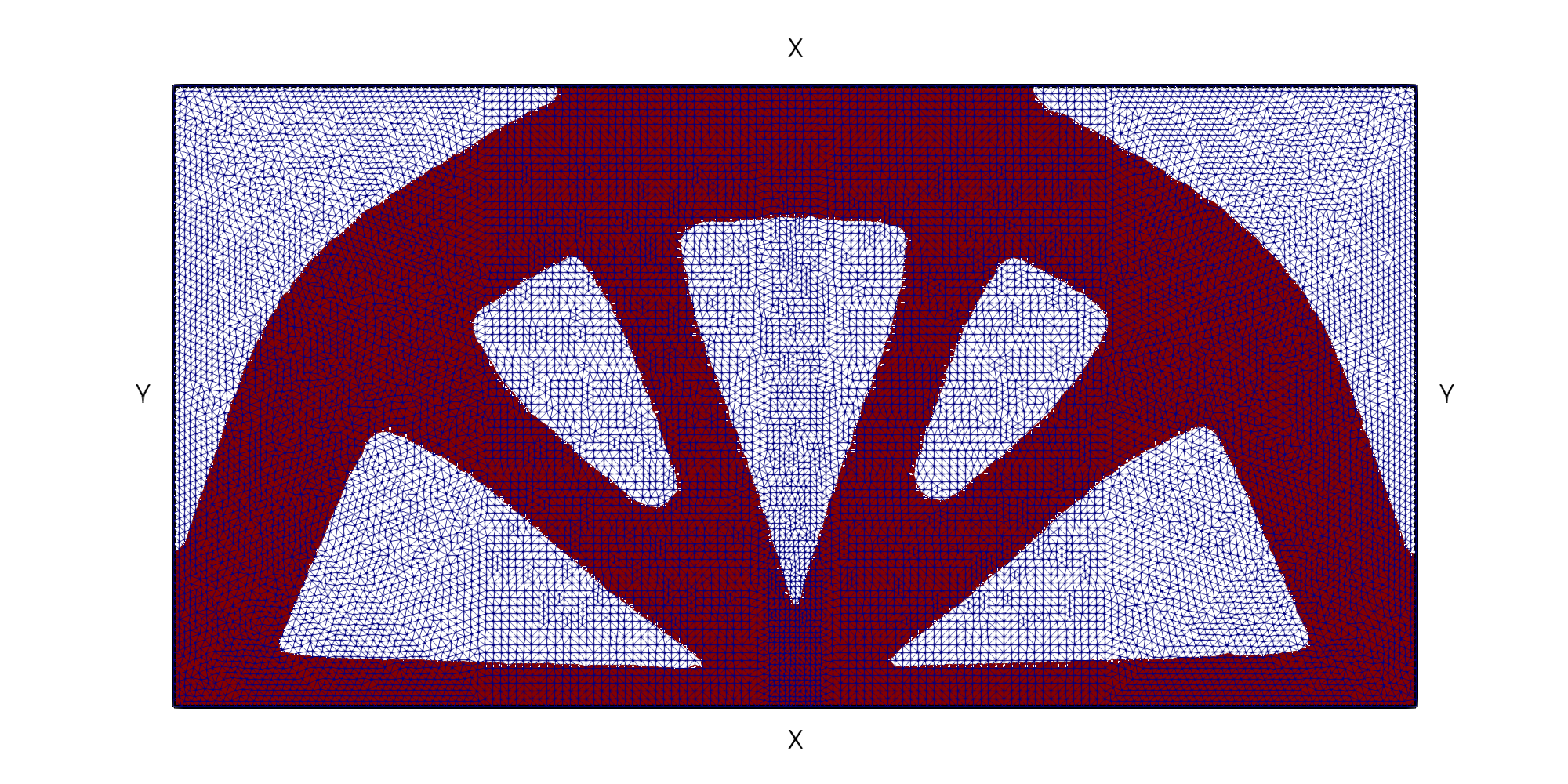}
		\end{subfigure}       
        \caption{Example \ref{5_M1E3}: The design domain $D$ and its boundary (top left), initial design (top right), intermediate design for $n = 250$ (bottom left) and final design (bottom right).}
        \label{Figure_M1E3_Rho}
        \end{figure}

        \begin{figure}[htbp]
		\centering
		\begin{subfigure}[b]{0.363\textwidth}			\includegraphics[width=\linewidth]{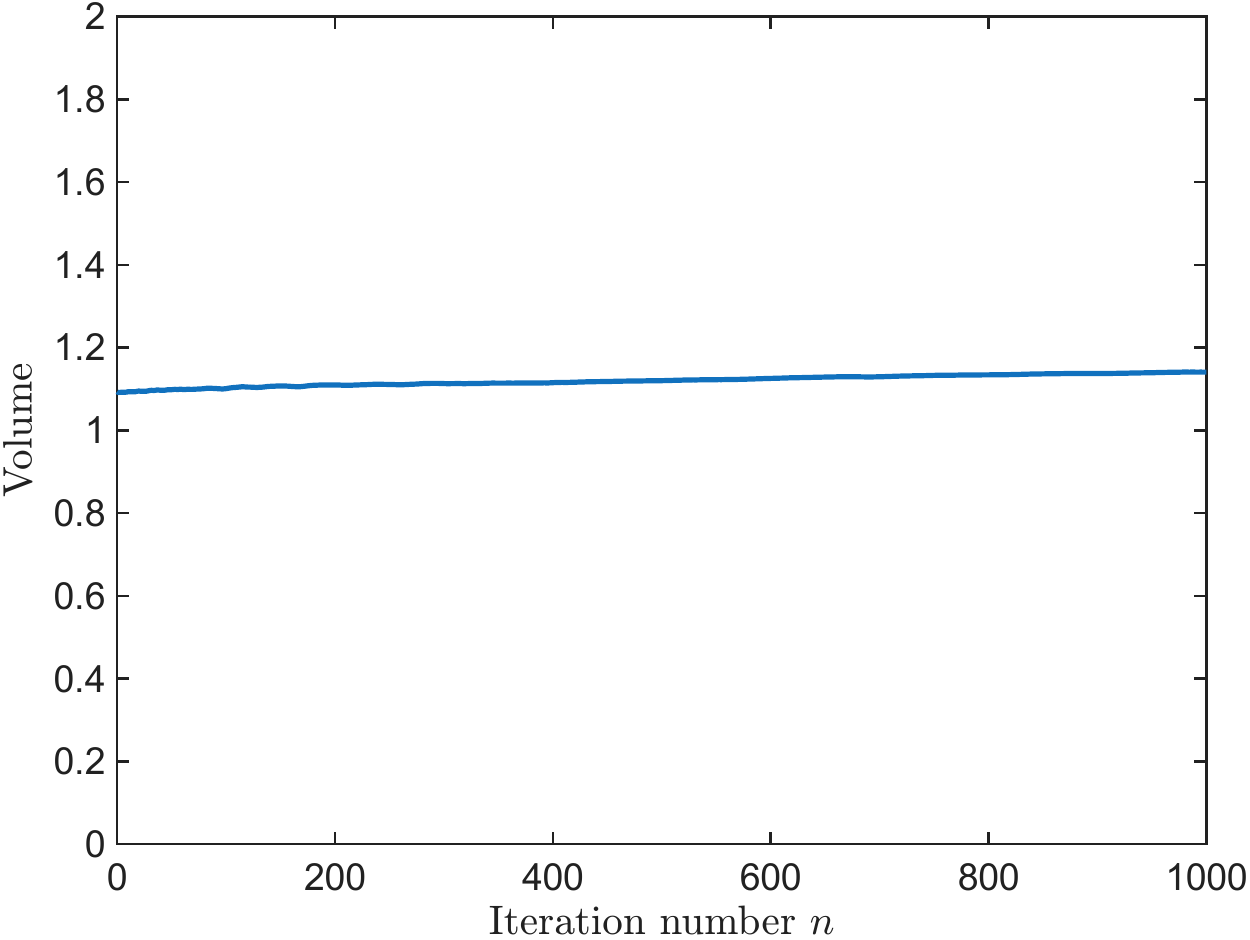} 
		\end{subfigure}
		\begin{subfigure}[b]{0.359\textwidth}			\includegraphics[width=\linewidth]{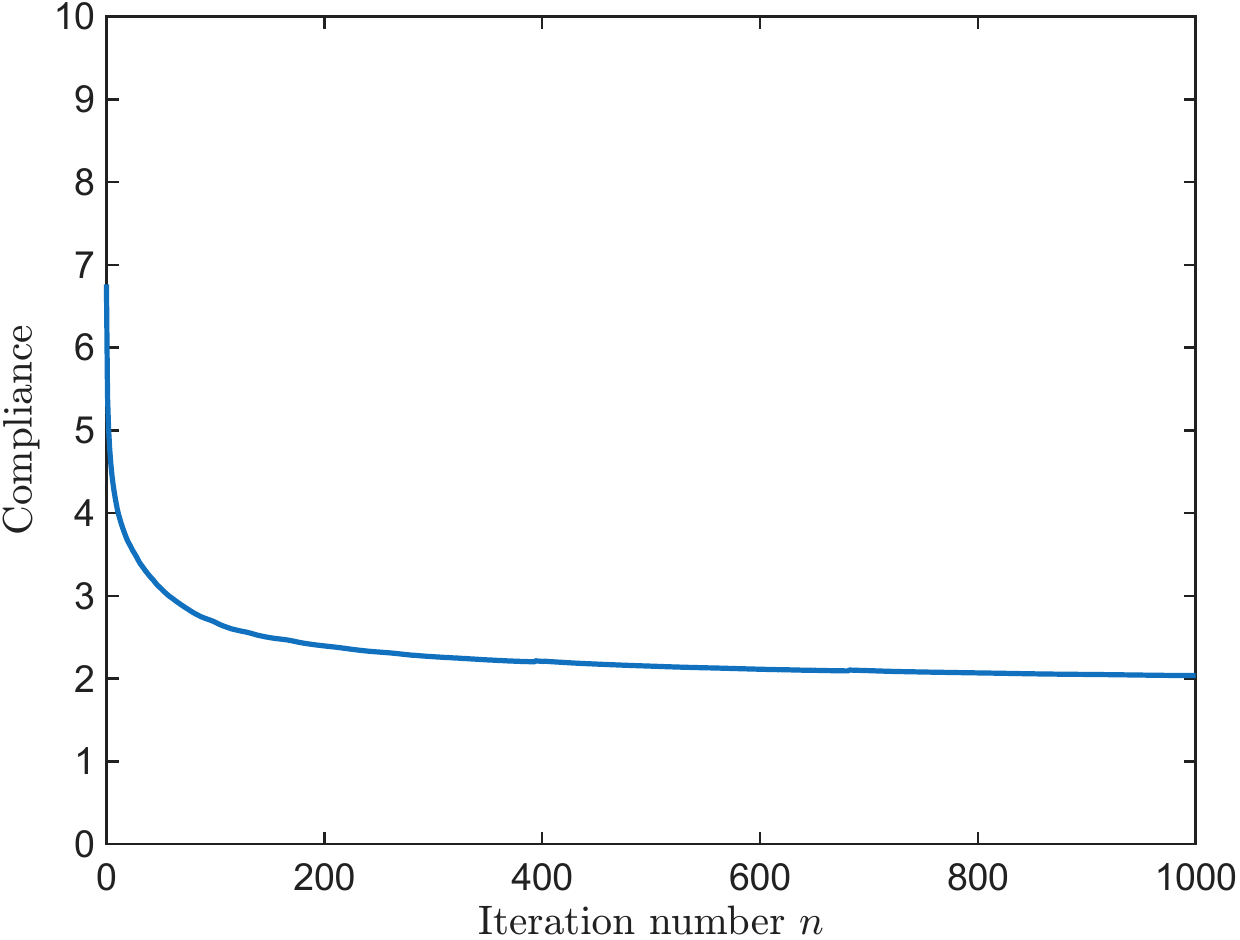}
		\end{subfigure}
		\caption{Convergence histories of the volume (left) and the compliance (right) for Example \ref{5_M1E3}.}
		\label{Figure_M1E3_VandJ}
	\end{figure}
        
	\begin{example}
		\label{5_M4E1}
		(Cantilever: 3D \cite{allaire2004structural}) Set $D = (0,1) \times (0,2) \times (0,1)$, $\partial D_D = \lbrace (x,0,z): x,z \in [0,1] \rbrace$ and $\partial D_N = \lbrace (x,1,z): (x-0.5)^2+(z-0.5)^2 \leqslant 0.01 \rbrace$. Set $\bm{g}_s = (0, 0, -100\pi)$ at the center of the right face of $D$. 
	\end{example}
    
	 \autoref{Figure_M4E1_Rho} shows the volume-control behavior and the decrease in energy during optimization for both cases. 
	
	\begin{figure}[htbp]
		\centering
		\begin{subfigure}[b]{0.325\textwidth}
			\includegraphics[width=\linewidth]{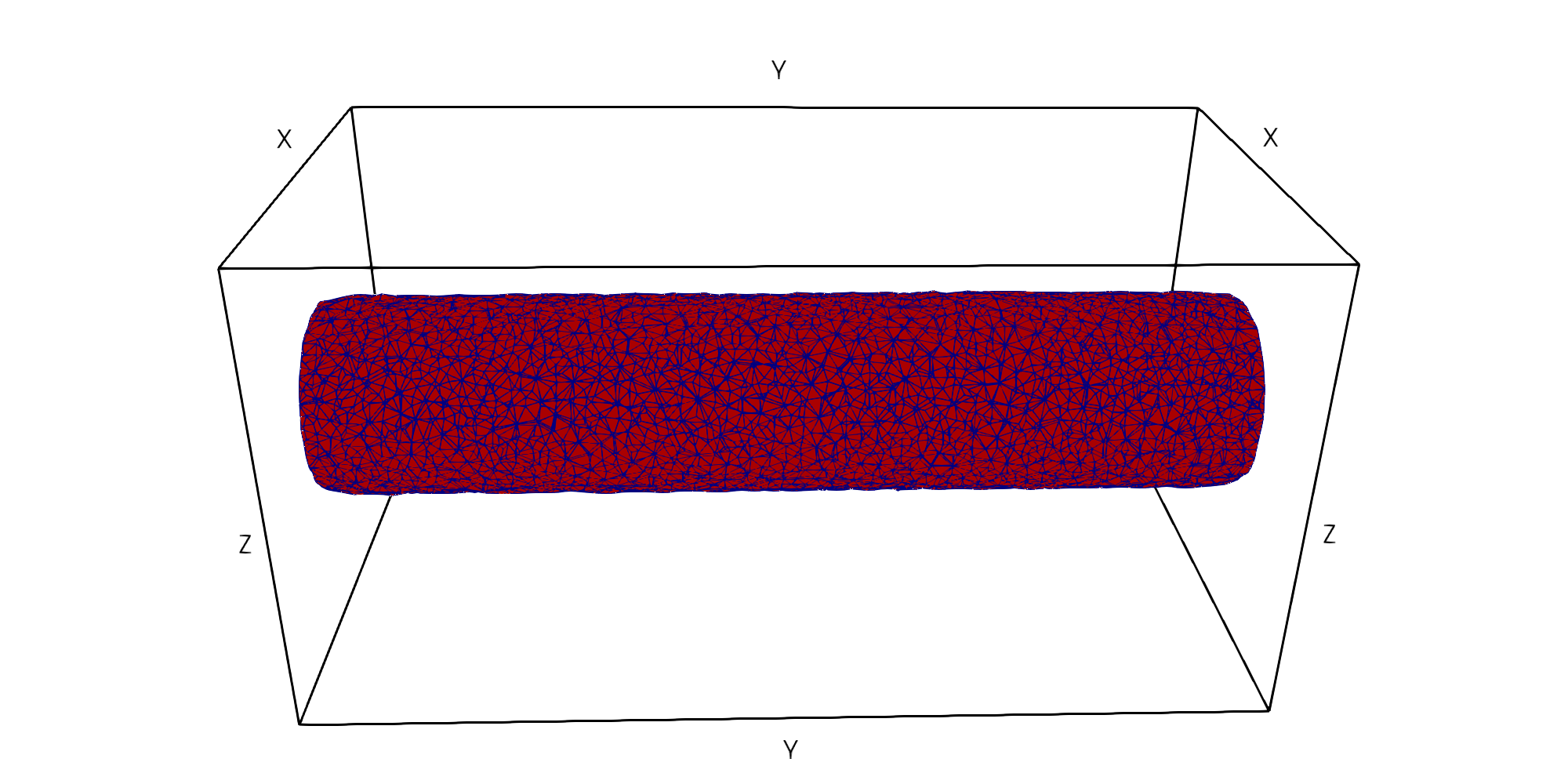} 
		\end{subfigure}
		\hfill 
		\begin{subfigure}[b]{0.325\textwidth}
			\includegraphics[width=\linewidth]{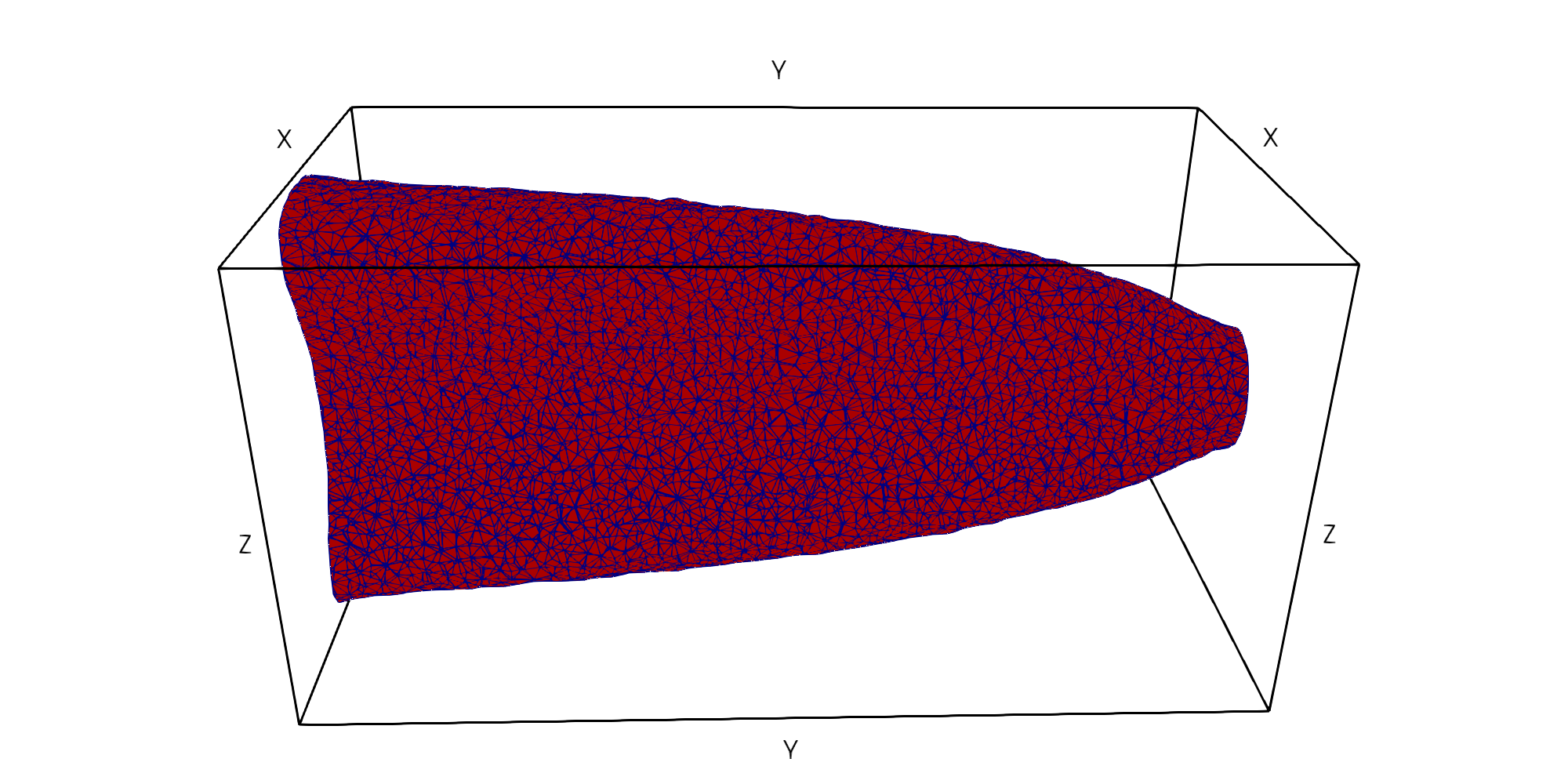}
		\end{subfigure}
		\hfill
		\begin{subfigure}[b]{0.325\textwidth}
			\includegraphics[width=\linewidth]{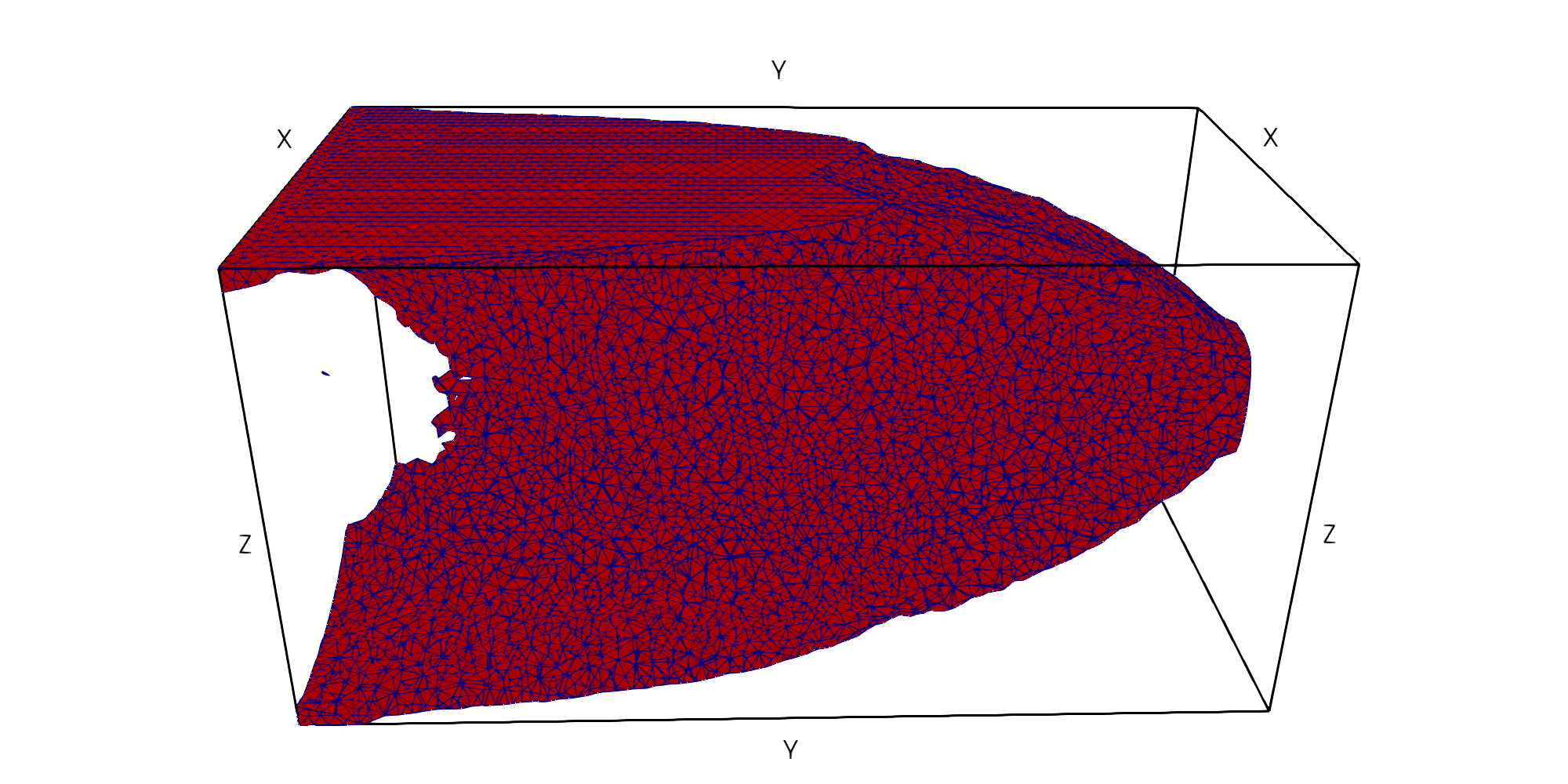}
		\end{subfigure}
       
		\begin{subfigure}[b]{0.325\textwidth}
			\includegraphics[width=\linewidth]{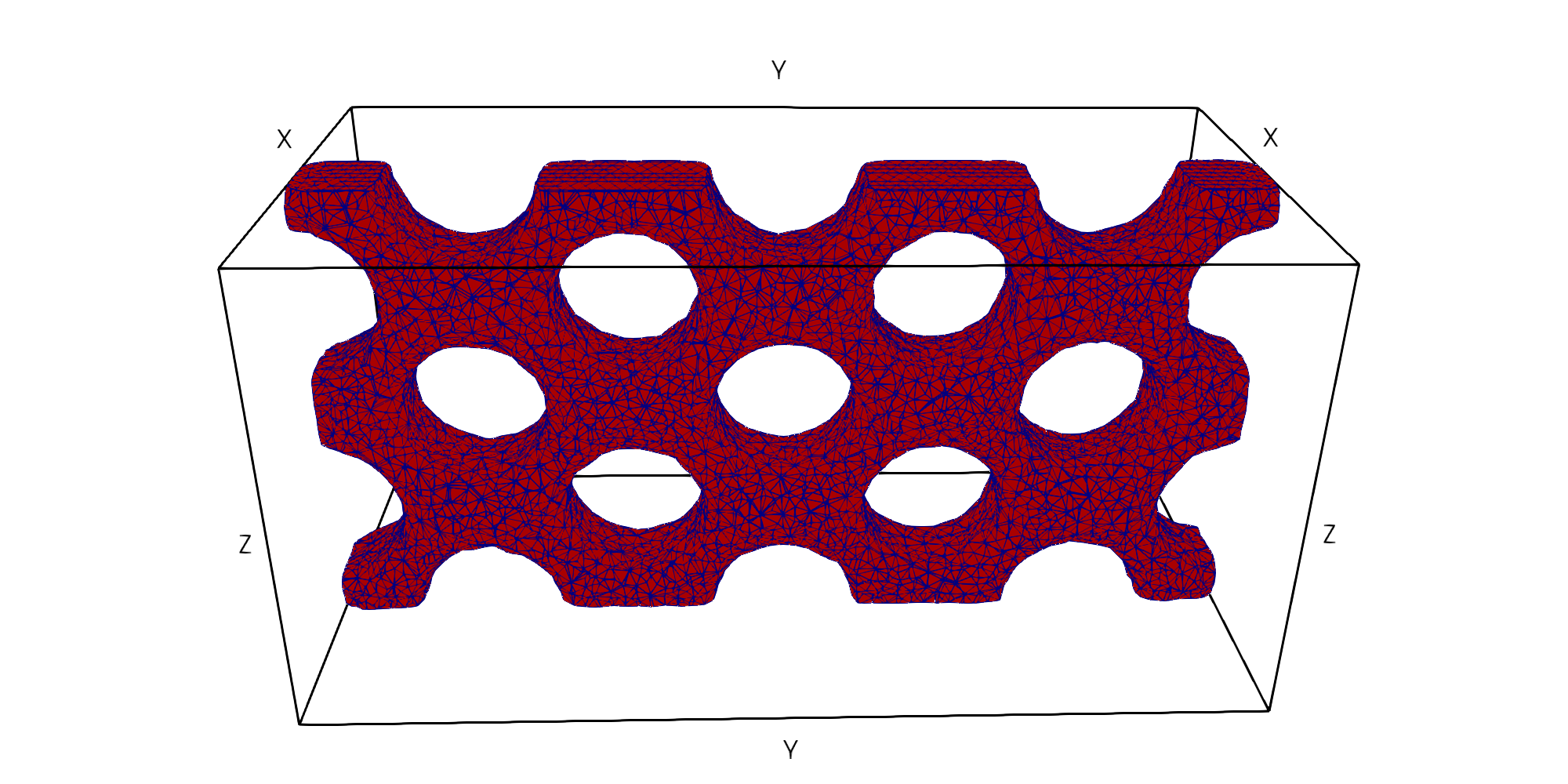} 
		\end{subfigure}
		\hfill 
		\begin{subfigure}[b]{0.325\textwidth}
			\includegraphics[width=\linewidth]{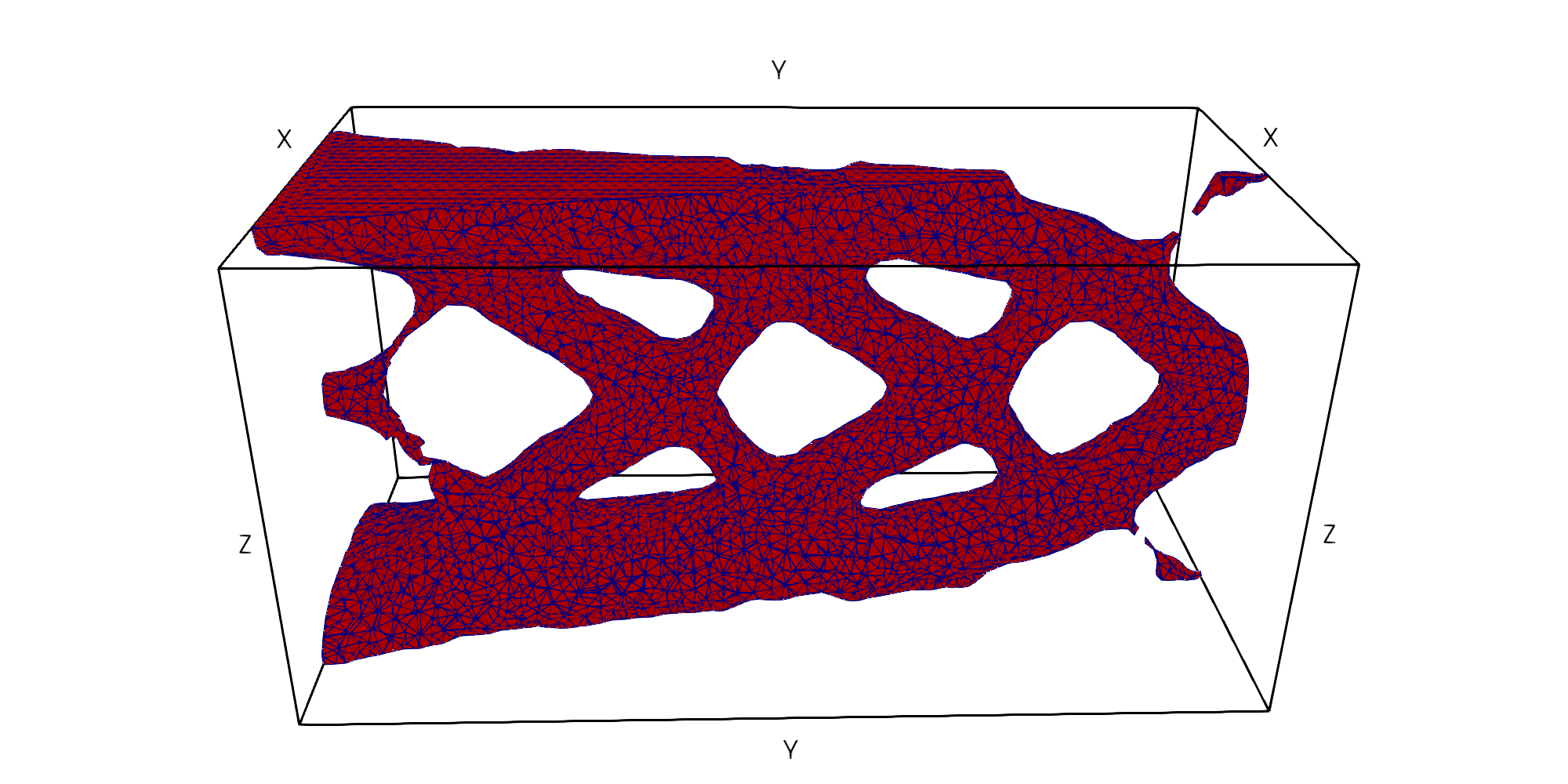}
		\end{subfigure}
		\hfill
		\begin{subfigure}[b]{0.325\textwidth}
			\includegraphics[width=\linewidth]{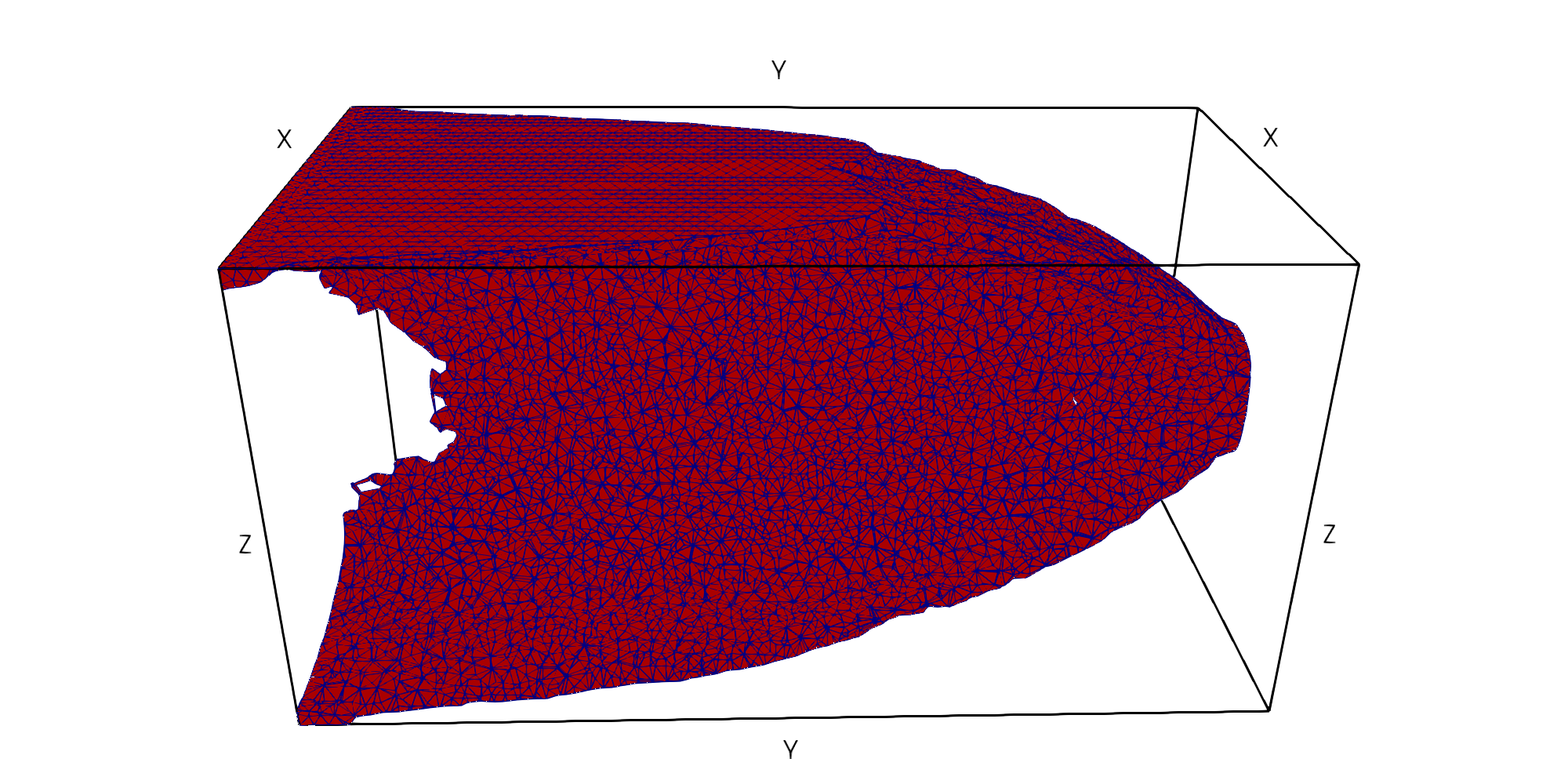}
		\end{subfigure}
     	\hfill   
        	\begin{subfigure}[b]{0.371\textwidth}			\includegraphics[width=\linewidth]{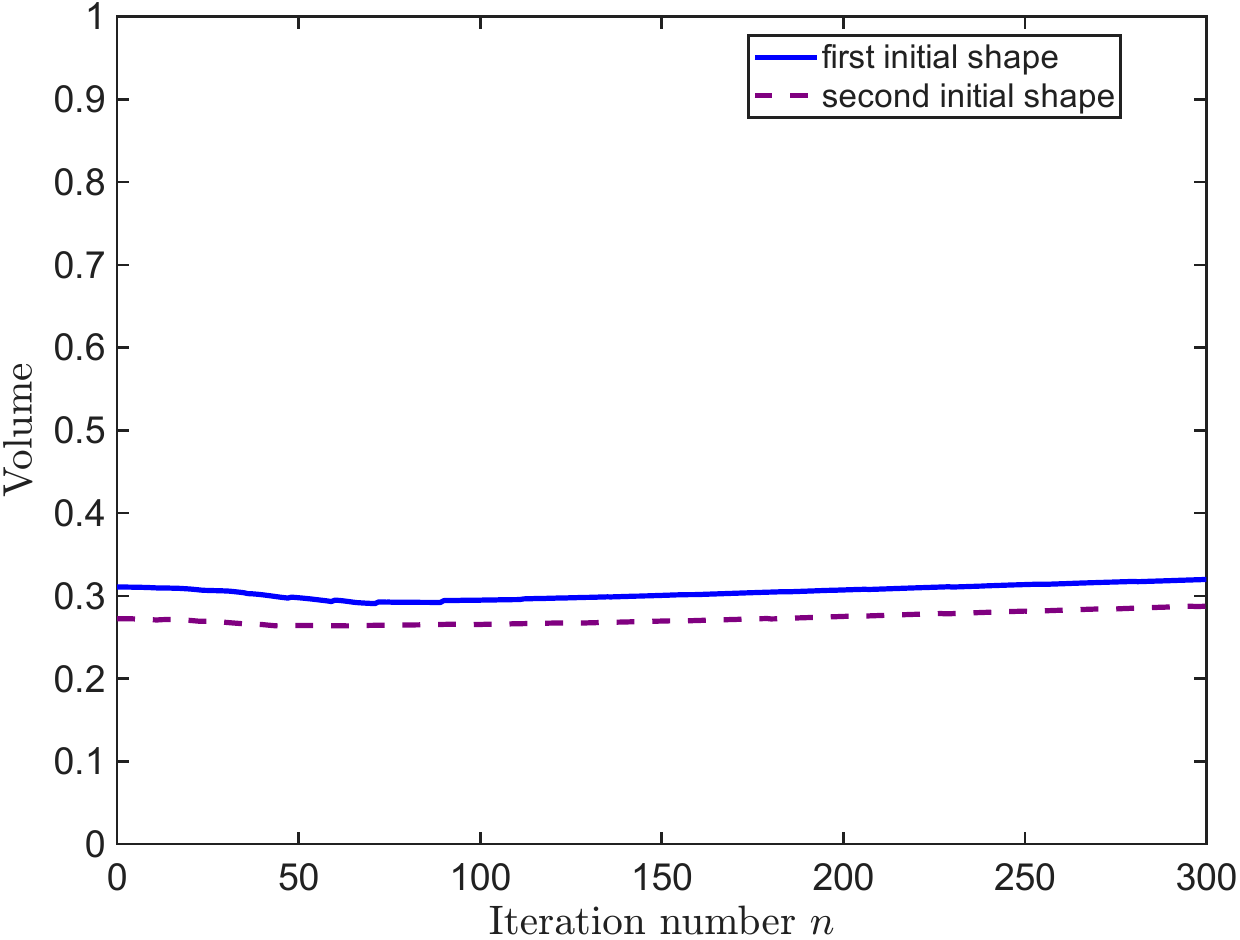} 
		\end{subfigure}
		\begin{subfigure}[b]{0.379\textwidth}			\includegraphics[width=\linewidth]{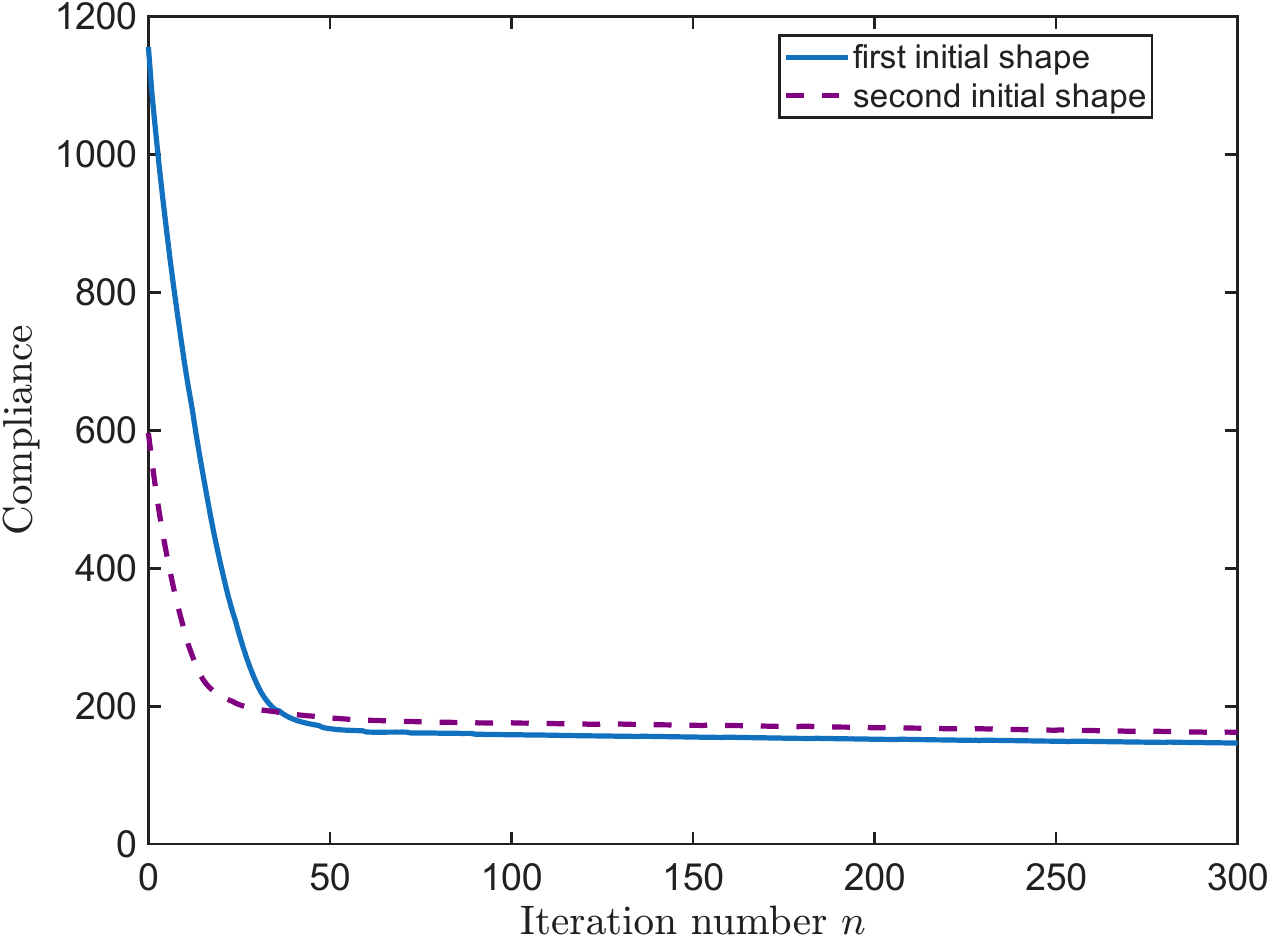}
		\end{subfigure}
		\caption{Evolutions of two initial designs for Example \ref{5_M4E1}: $n = 0,30,300$ (from left to right of the first two lines) and convergence histories of the volume (left) and the compliance (right) of line 3.}
		\label{Figure_M4E1_Rho}
	\end{figure}
	
    	
	\begin{example}
		\label{5_M4E2}
		(Bridge: 3D \cite{allaire2004structural}) Set $D = (0,1) \times (0,2) \times (0,1)$, $\partial D_D = \lbrace (x,0,0): x \in [0,1] \rbrace$ and  $\partial D_N = \lbrace (x,y,0): (x-0.5)^2+(y-1)^2 \leqslant 0.01 \rbrace$. Set $\bm{g}_s = (0, 0, -100\pi)$ at the center of the bottom face of $D$. 
	\end{example}
    
	\autoref{Figure_M4E2_Rho} shows small variations in the approximate volume and compliance, respectively, for two different initial shapes. In each case, the volume remains close to its prescribed target value, and the objective decays stably.
	
	\begin{figure}[htbp]
		\centering
		\begin{subfigure}[b]{0.325\textwidth}
			\includegraphics[width=\linewidth]{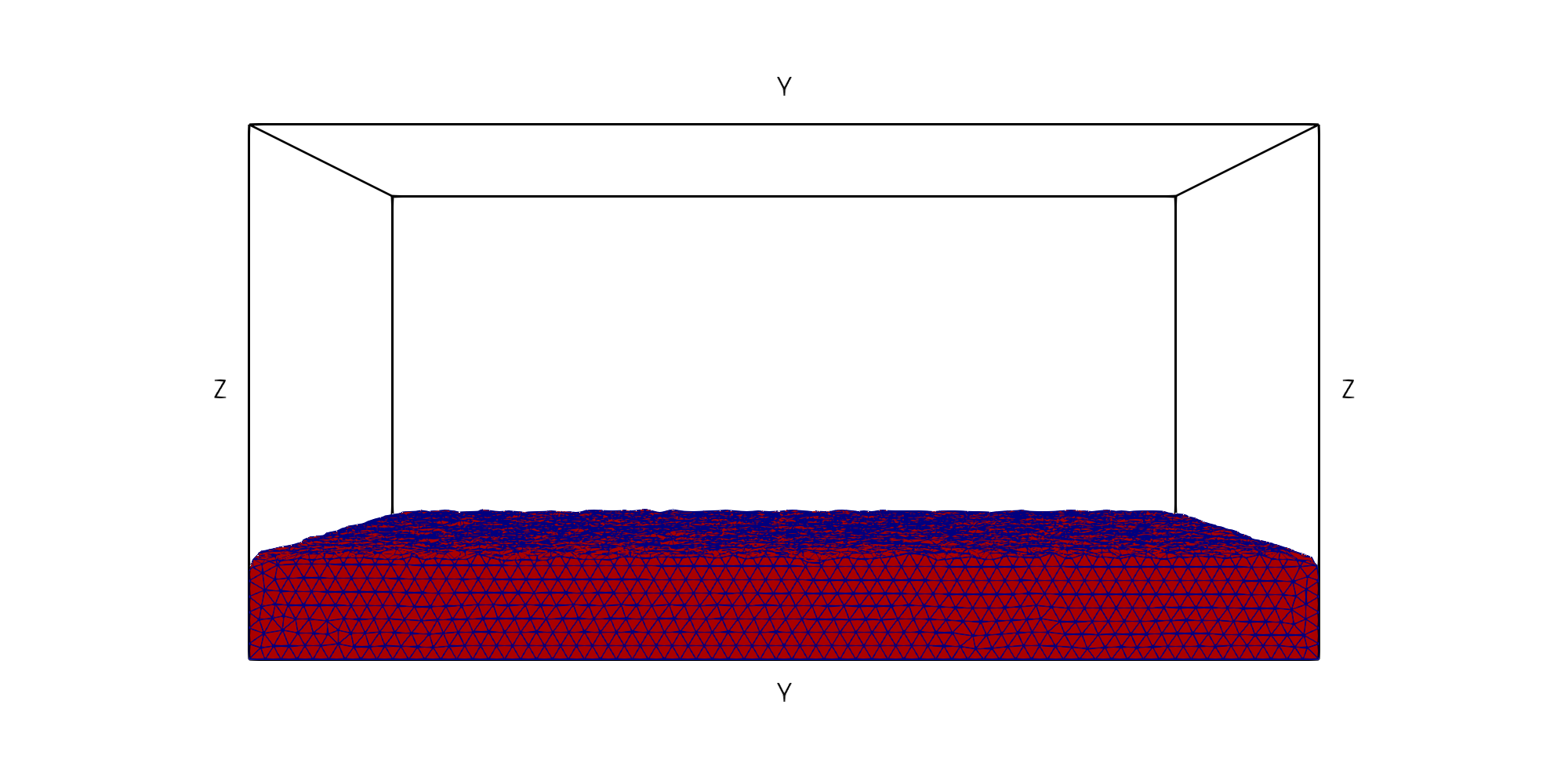} 
		\end{subfigure}
		\hfill 
		\begin{subfigure}[b]{0.325\textwidth}
			\includegraphics[width=\linewidth]{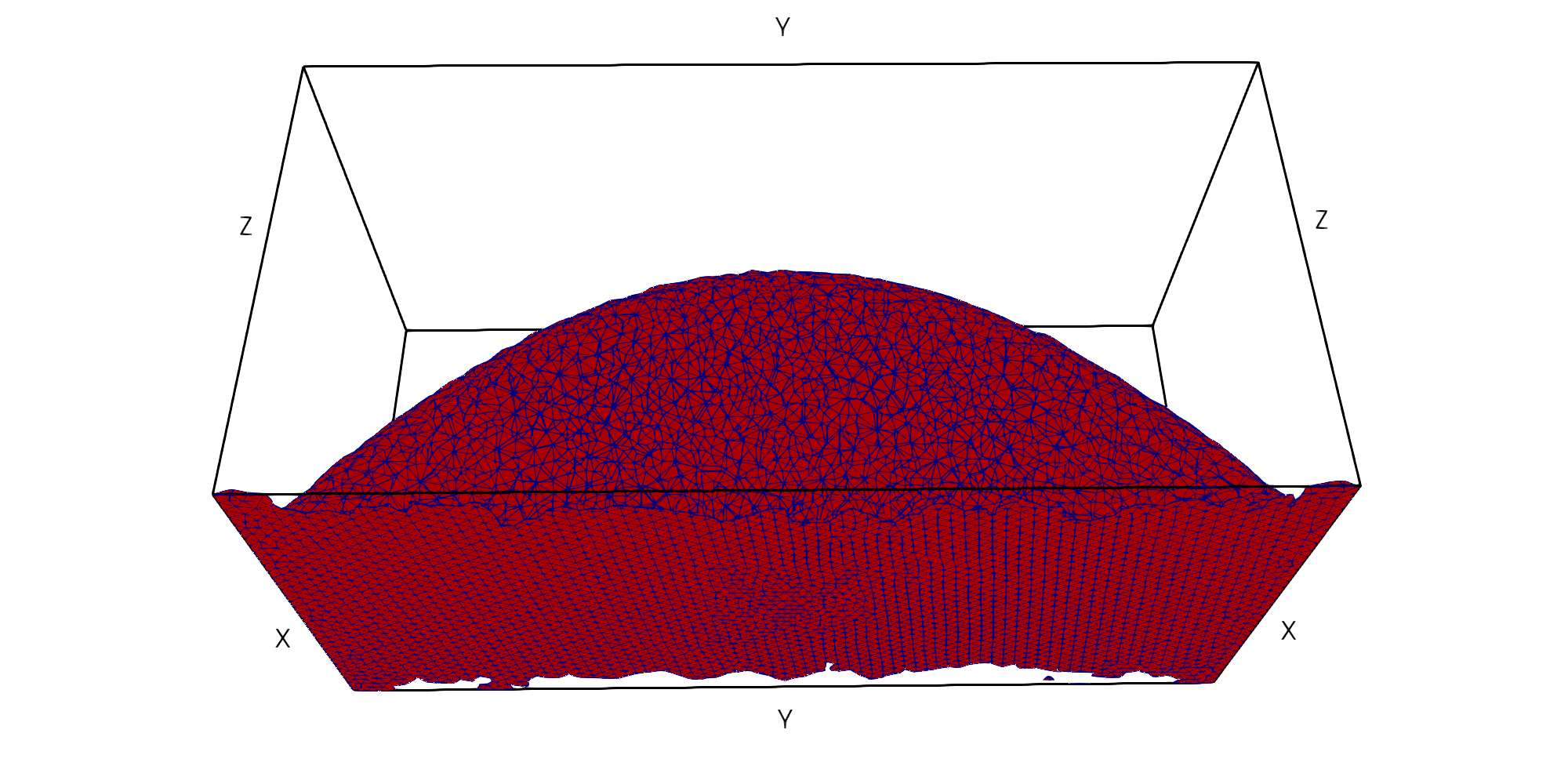}
		\end{subfigure}
		\hfill
		\begin{subfigure}[b]{0.325\textwidth}
			\includegraphics[width=\linewidth]{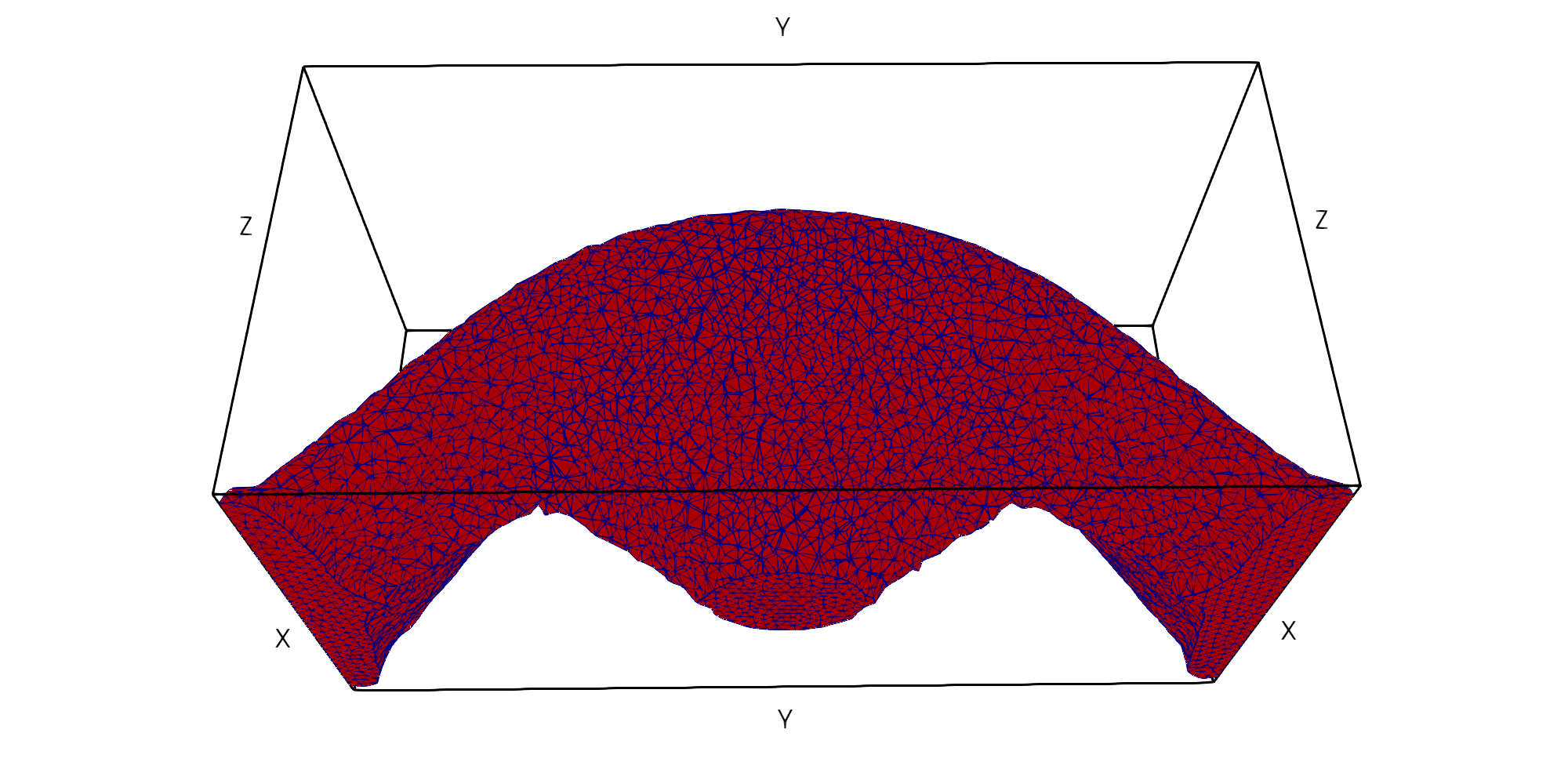}
		\end{subfigure}
		\hfill
				\begin{subfigure}[b]{0.325\textwidth}
			\includegraphics[width=\linewidth]{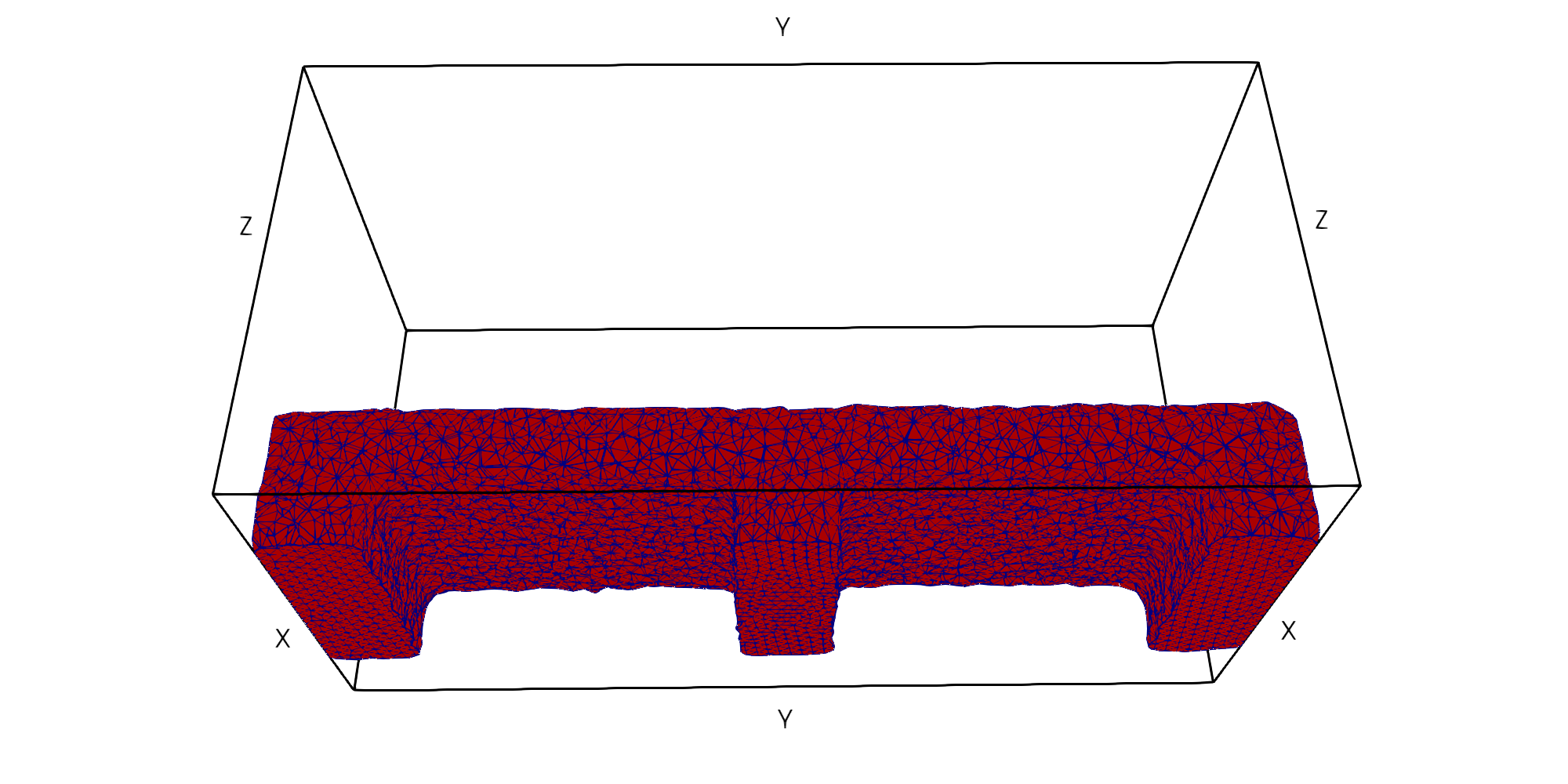} 
		\end{subfigure}
		\hfill 
		\begin{subfigure}[b]{0.325\textwidth}
			\includegraphics[width=\linewidth]{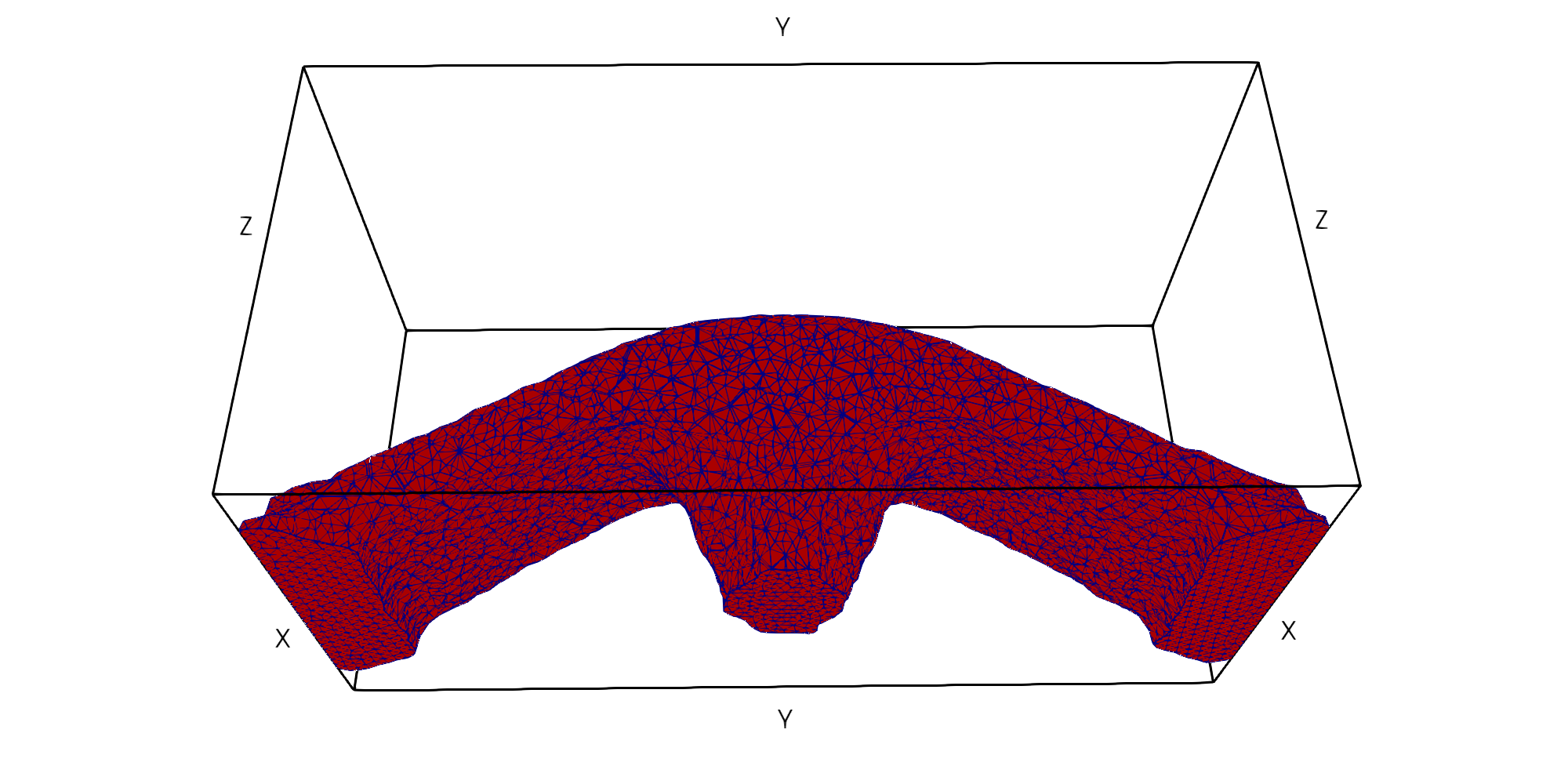}
		\end{subfigure}
		\hfill
		\begin{subfigure}[b]{0.325\textwidth}
			\includegraphics[width=\linewidth]{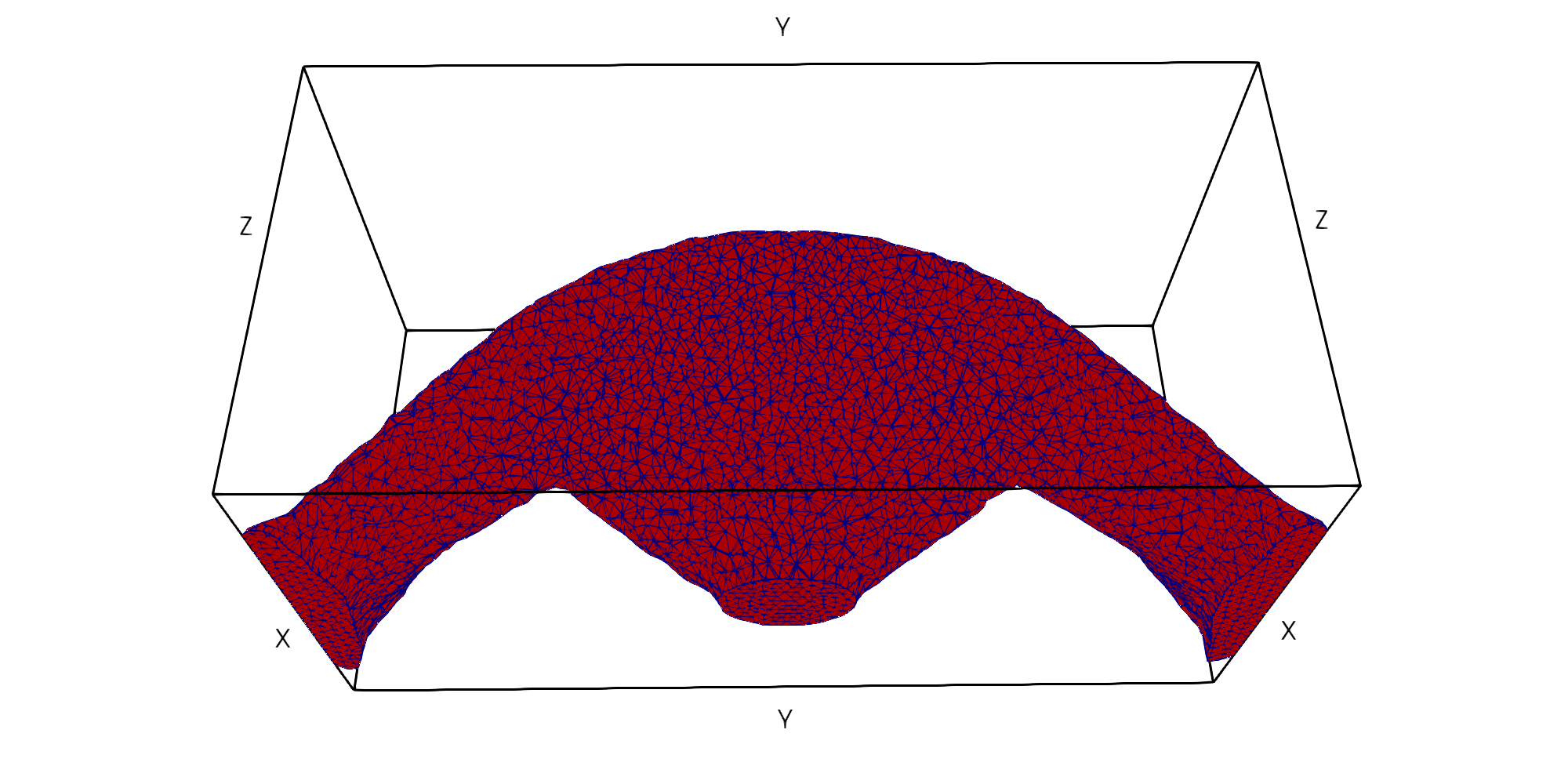}
		\end{subfigure}
        		\begin{subfigure}[b]{0.359\textwidth}			\includegraphics[width=\linewidth]{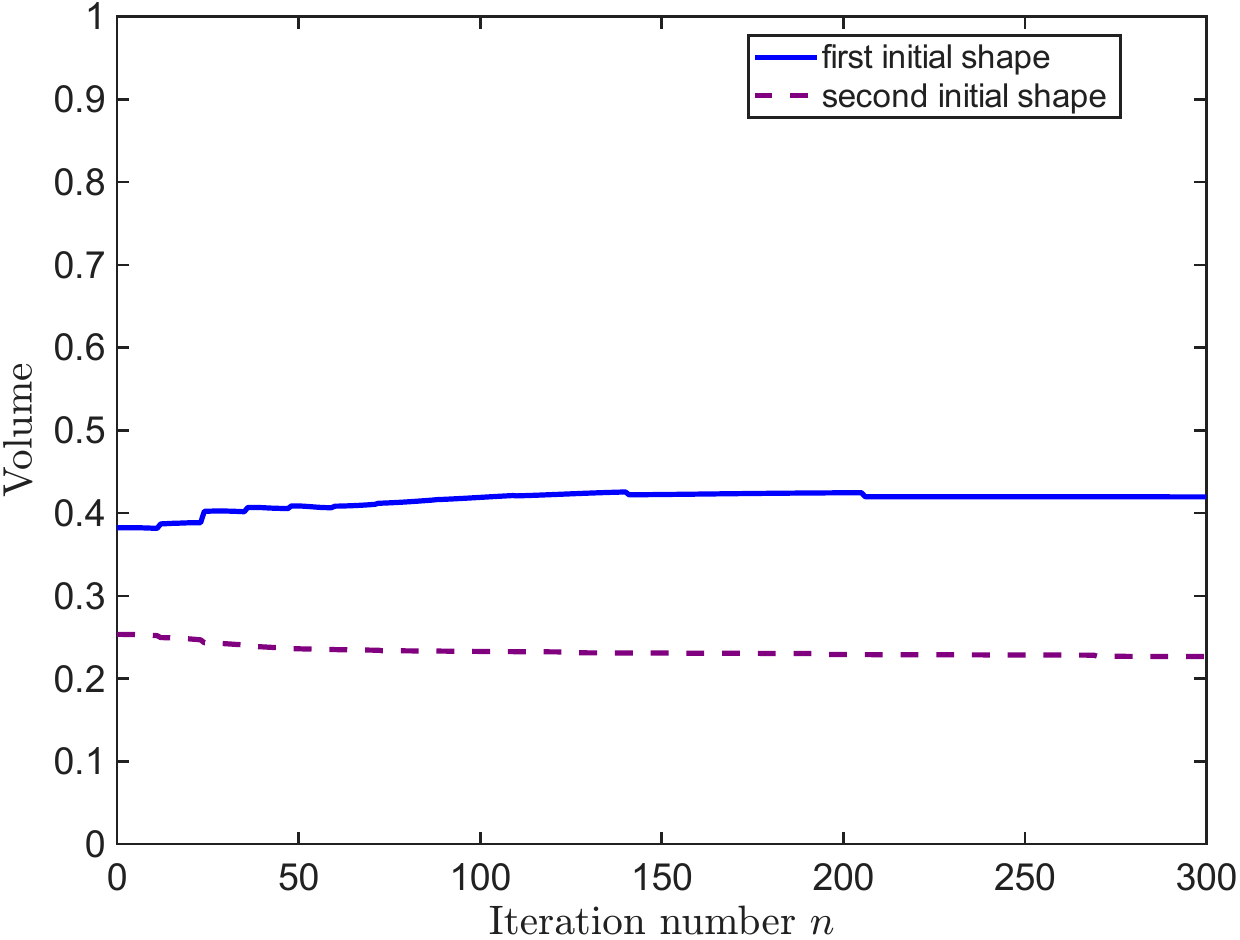} 
		\end{subfigure}
		\begin{subfigure}[b]{0.361\textwidth}			\includegraphics[width=\linewidth]{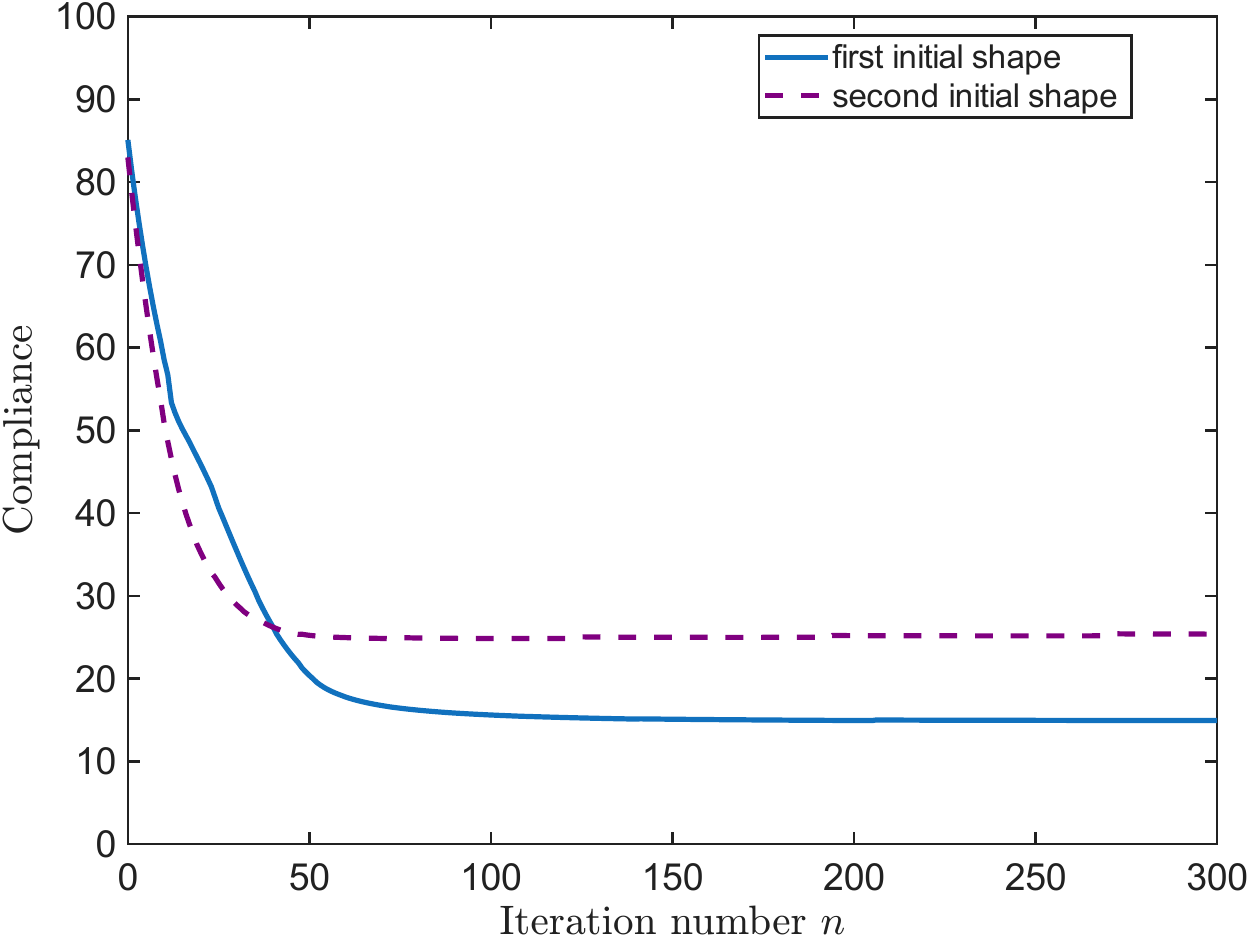}
		\end{subfigure}
		\caption{Evolution of two initial designs for Example \ref{5_M4E2}: $n = 0,60,300$ (from left to right of the first two lines) and convergence histories of the volume (left) and the compliance (right) of line 3.
        }
		\label{Figure_M4E2_Rho}
	\end{figure}
	
    
\subsection{Structural vibration}

Set the non-material stiffness $\eta_1 = 10^{-2}$ and density $\eta_2 = 10^{-4}$. The width of the interface is set to $\gamma = 5 \times 10^{-3}$.  Choose the L\'ame constants $\lambda_L = 1, \mu_L = 8$, and $\lambda_L = \frac{15}{26}$, $\mu_L = \frac{5}{13}$ for 2D and 3D, respectively. 
    \begin{figure}[htbp]
    \centering

    \begin{tikzpicture}[scale=2.]
        \draw[thick] (0,0) rectangle (1.6,1);
        \node at (0.8,0.5) {$D$};

        \draw[line width=3pt, blue] (0,0) -- (0,1);
        \node[left=2pt, blue] at (0.4,0.5) {$\partial D_D$};

        \def\N{10}
        \def\dx{0.05}
        \def\dy{0.05}
        \foreach \i in {0,1,...,\N}
        {
            \pgfmathsetmacro{\ypos}{1 - \i/\N}
            \draw[thick, blue] (0,\ypos) -- ++(-\dx,-\dy);
        }

        \fill[black] (1.5,0.45) rectangle (1.6,0.55);

        \tikzset{
            dim/.style={black, line width=0.4pt, |<->|, >=stealth},
            dim label/.style={midway, fill=white, inner sep=1.2pt}
        }

        \draw[<->, dim] (0,-0.1) -- (1.6,-0.1)
        node[dim label] {2};

        \draw[<->, dim] (1.8,0) -- (1.8,1)
        node[dim label] {1};

        \draw[<->, dim] (1.5,0.60) -- (1.6,0.60)
        node[dim label] {0.1};
    \end{tikzpicture}
    \hspace{0.6cm}
    \begin{tikzpicture}[scale=2.25]
        \draw[thick] (0,0) rectangle (3.2,0.8);
        \node at (1.8,0.4) {$D$};

        \fill[black] (1.52,0.32) rectangle (1.68,0.48);

        \tikzset{
            dim/.style={black, line width=0.4pt, |<->|, >=stealth},
            dim label/.style={midway, fill=white, inner sep=1.2pt}
        }

        \draw[line width=3pt, blue] (0,0) -- (0,0.8);
        \node[left=2pt, blue] at (0.4,0.4) {$\partial D_D$};

        \def\N{8}
        \def\dx{0.04}
        \def\dy{0.04}
        \foreach \i in {0,1,...,\N}
        {
            \pgfmathsetmacro{\ypos}{0.8*(1-\i/\N)}
            \draw[thick, blue] (0,\ypos) -- ++(-\dx,-\dy);
        }

        \draw[line width=3pt, blue] (3.2,0) -- (3.2,0.8);
        \node[right=2pt, blue] at (2.8,0.4) {$\partial D_D$};

        \foreach \i in {0,1,...,\N}
        {
            \pgfmathsetmacro{\ypos}{0.8*(1-\i/\N)}
            \draw[thick, blue] (3.2,\ypos) -- ++(\dx,-\dy);
        }

        \draw[<->, dim] (0,0.95) -- (3.2,0.95)
        node[dim label] {3.2};

        \draw[<->, dim] (3.35,0) -- (3.35,0.8)
        node[dim label] {0.8};

        \draw[<->, dim] (1.52,0.50) -- (1.68,0.50)
        node[dim label] {0.16};
    \end{tikzpicture}

    \caption{The design domain $D$ and its boundary for Example \ref{5_M5E1} (left) and Example \ref{5_M5E2} (right).}
    \label{5_M5_Ex1}
\end{figure}

    \begin{example} 
		\label{5_M5E1}
		(Cantilever: 2D \cite{allaire2005levelvibration}) Set $D = (0,2) \times (0,1)$ and $\partial D_D = \lbrace (0,y): y \in [0,1] \rbrace$ (see the left plot of \autoref{5_M5_Ex1}).
    \end{example}
	
	Set the block's mass $\rho_{\texttt{block}} = 35$. From \autoref{Figure_M5E1_All}, we can clearly see the merging of holes during the iteration, and the final shape is comparable to that in the literature. On the other hand, \autoref{Figure_M5E1_All} shows the change in the approximate volume is visually small. A stable and monotonic increase of the objective, i.e., the first eigenvalue, is also observed.

    \begin{figure}[htbp]
		\centering
		\begin{subfigure}[b]{0.4\textwidth}
			\includegraphics[width=\linewidth]{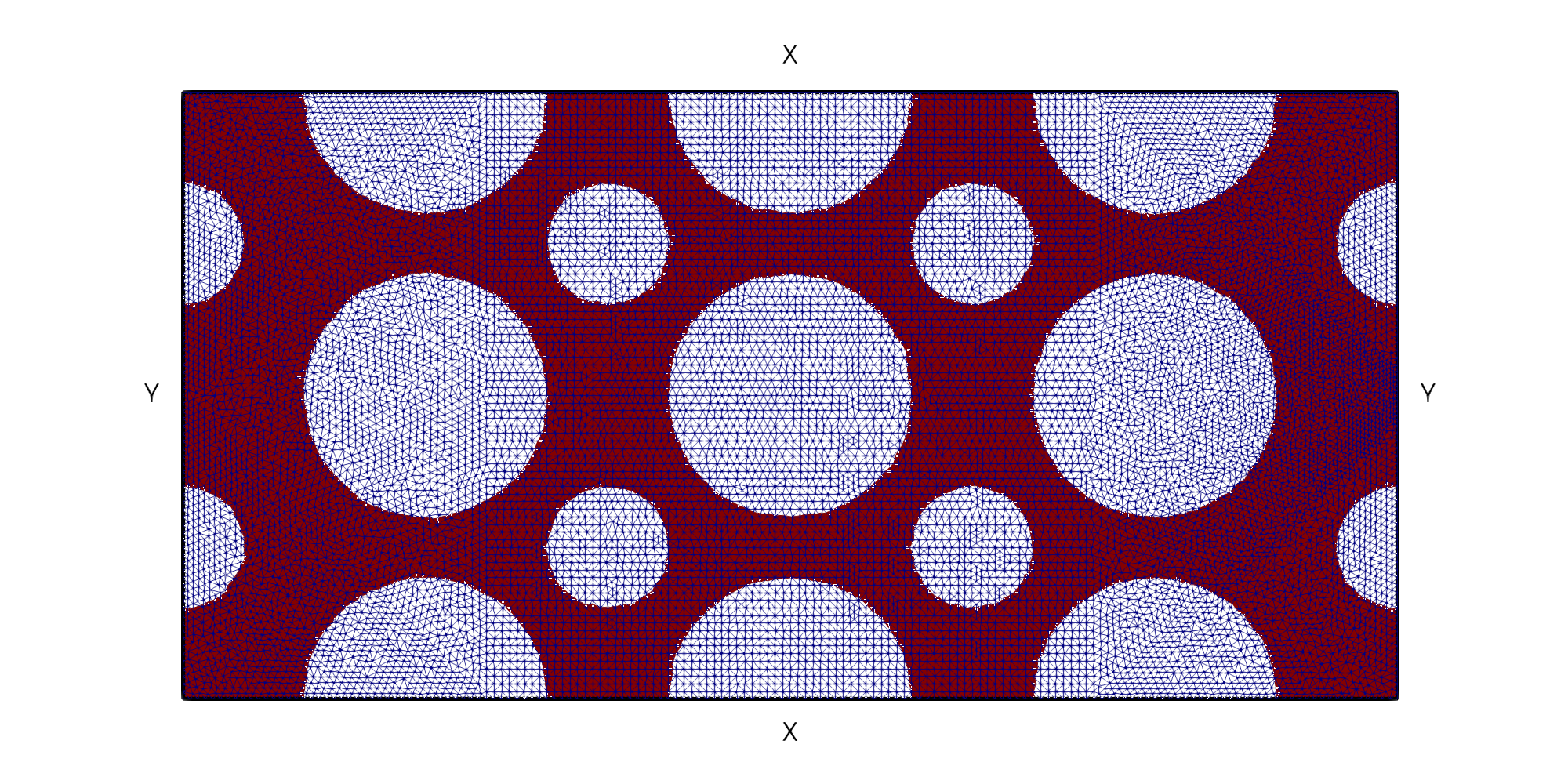} 
		\end{subfigure}		
            \hspace{0.06\textwidth}%
		\begin{subfigure}[b]{0.4\textwidth}
			\includegraphics[width=\linewidth]{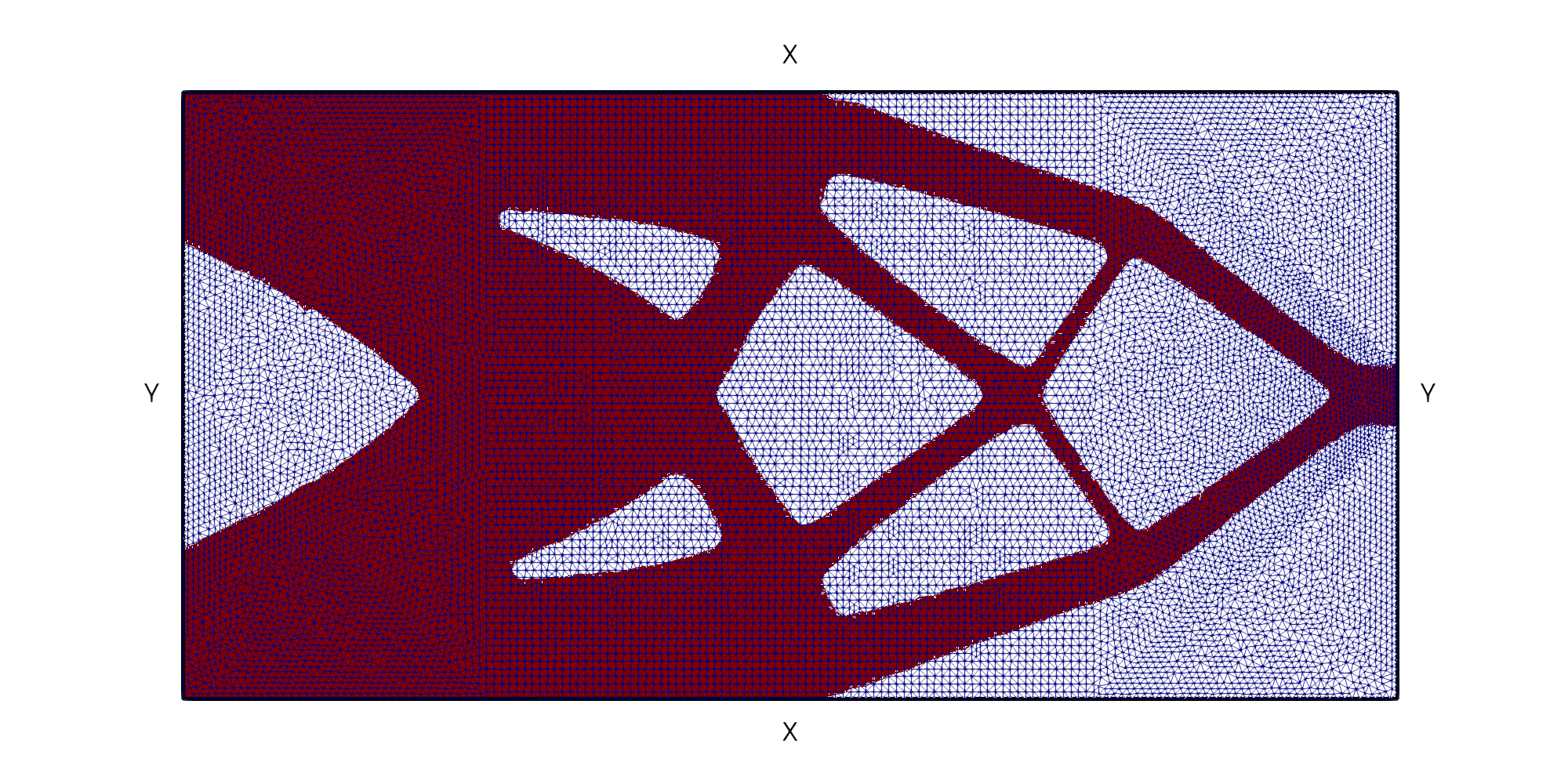}
		\end{subfigure}

		\begin{subfigure}[b]{0.351\textwidth}
                \centering			\includegraphics[width=\linewidth]{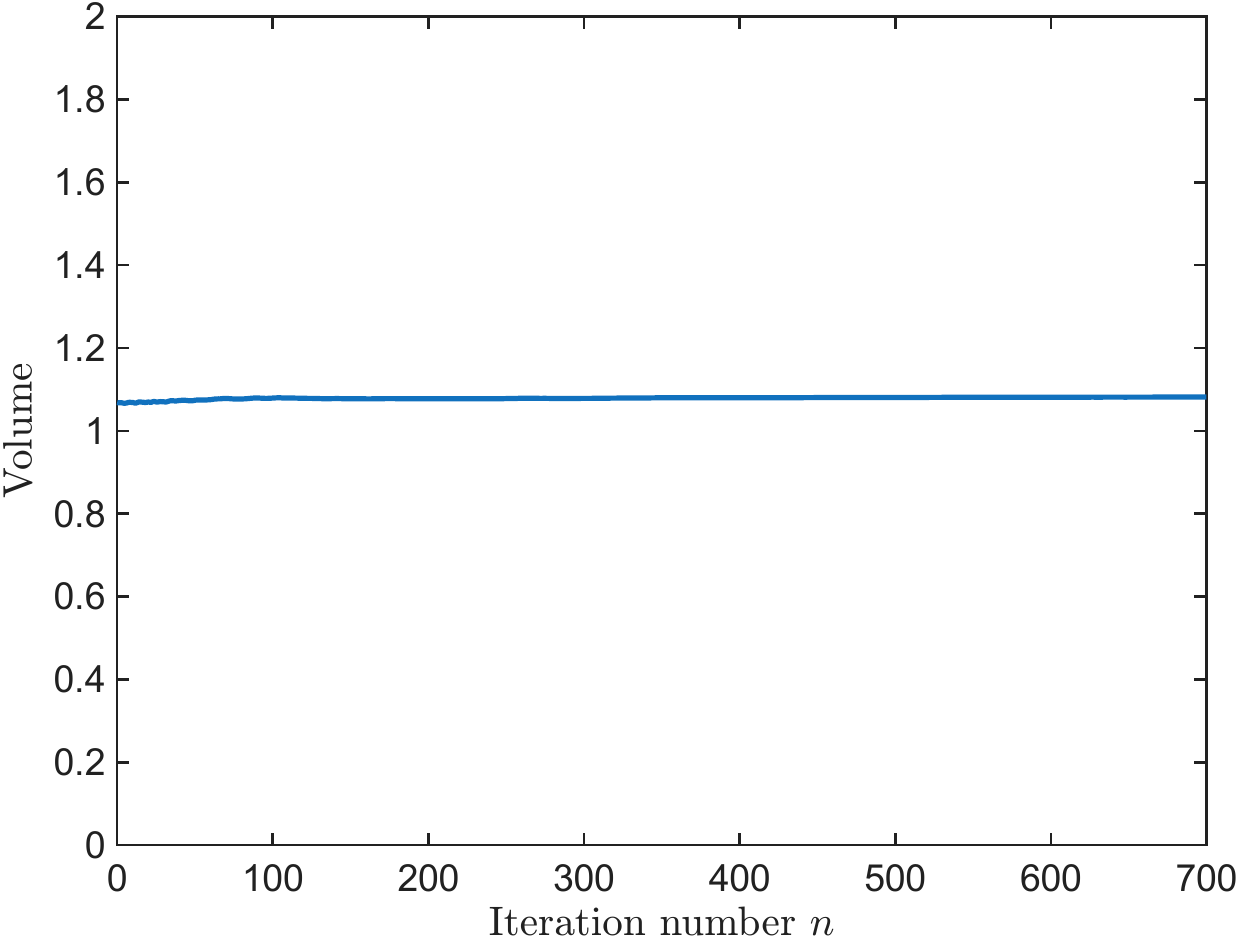} 
		\end{subfigure}
            \hspace{0.06\textwidth}%
		\begin{subfigure}[b]{0.349\textwidth}			\includegraphics[width=\linewidth]{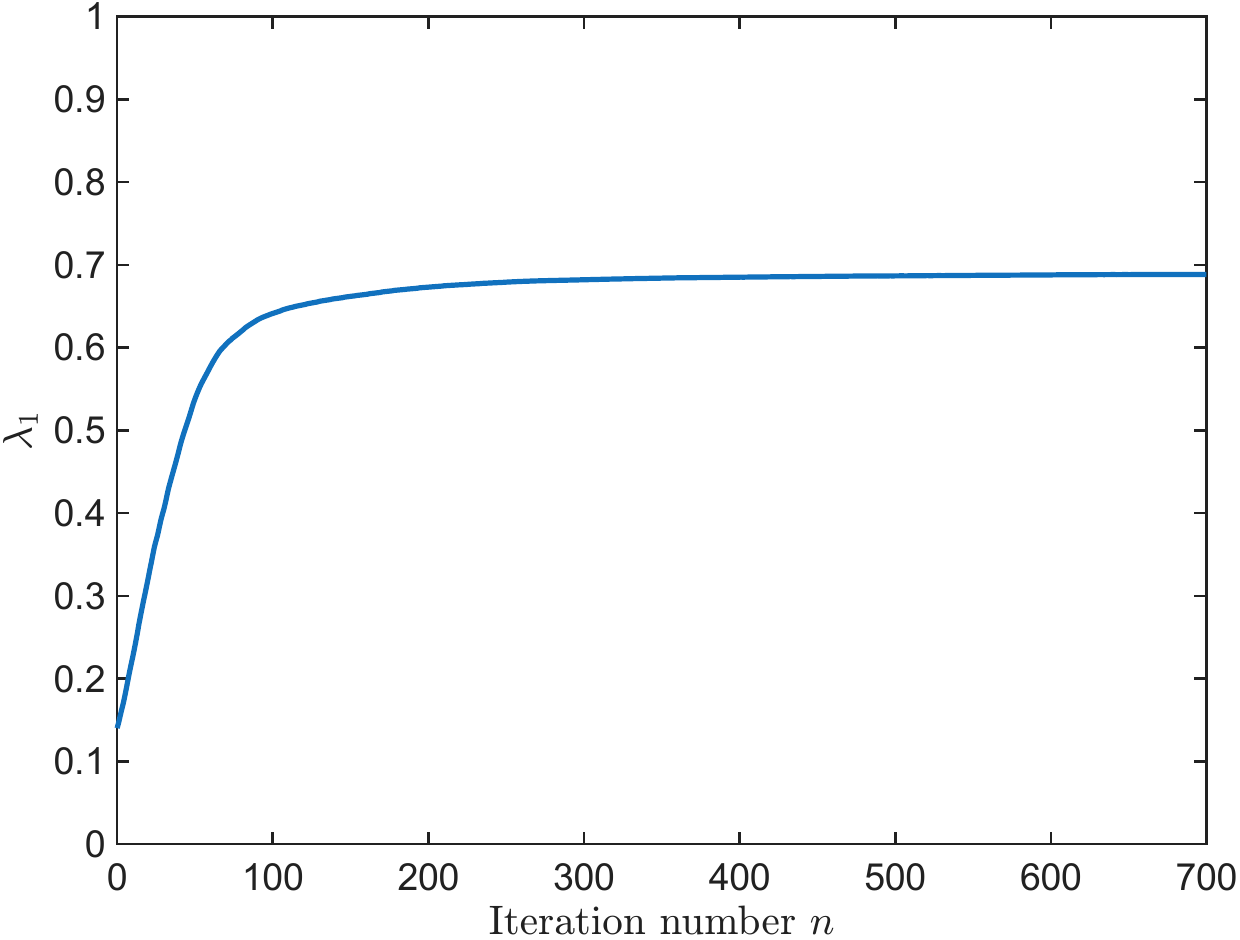}
		\end{subfigure}
		\caption{The initial design (top left), final design (top right), convergence histories of the volume (bottom left) and the first eigenvalue (bottom right) for Example \ref{5_M5E1}.}
		\label{Figure_M5E1_All}
    \end{figure}

    \begin{example} 
		\label{5_M5E2}
		(Double Cantilever: 2D \cite{allaire2005levelvibration}) Set $D = (0,3.2) \times (0,0.8)$ and $\partial D_D = \lbrace (0,y), \ (3.2,y): y \in [0,0.8] \rbrace$ (see the plot on the right of \autoref{5_M5_Ex1}).
    \end{example}
	
	Set $\rho_{\texttt{block}} = 25$. The initial and final shapes are shown in \autoref{Figure_M5E2_All}. \autoref{Figure_M5E2_All} moreover illustrates changes in the approximate volume and the first eigenvalue, respectively, showing stable volume control and a monotonic increase in the eigenvalue.

    \begin{figure}[htbp]
		\centering
		\begin{subfigure}[b]{0.48\textwidth}
			\includegraphics[width=\linewidth,
            trim=620bp 30bp 620bp 30bp,
            clip]{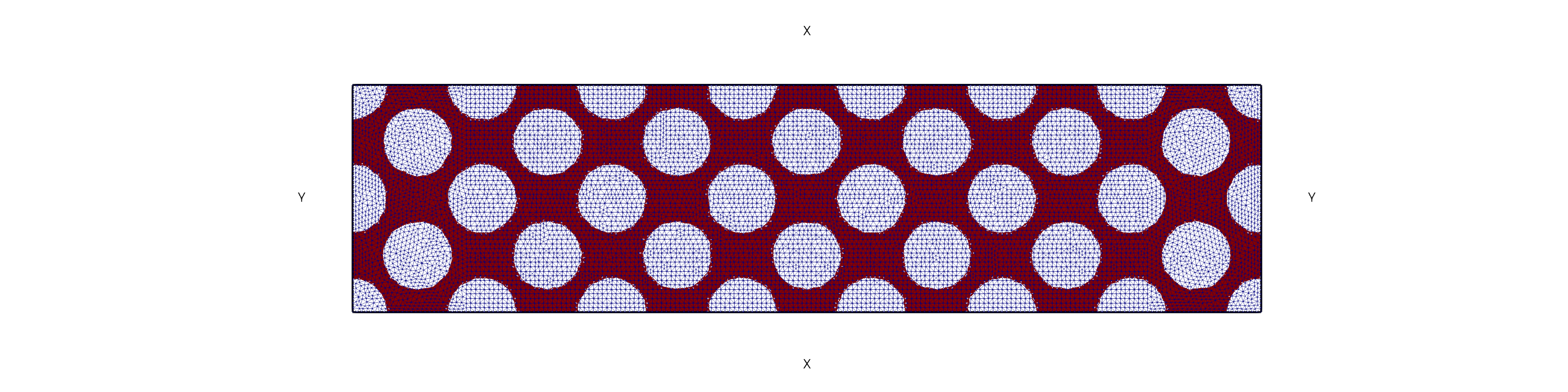} 
		\end{subfigure}		
            \hfill
		\begin{subfigure}[b]{0.48\textwidth}
			\includegraphics[width=\linewidth,
            trim=620bp 30bp 620bp 30bp,
            clip]{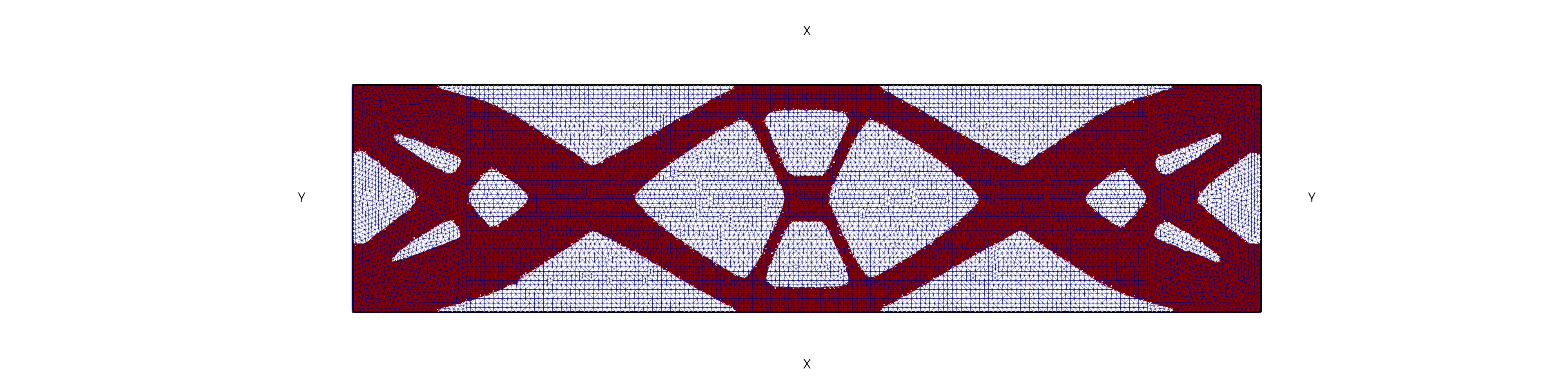}
		\end{subfigure}

		\begin{subfigure}[b]{0.351\textwidth}
                \centering			\includegraphics[width=\linewidth]{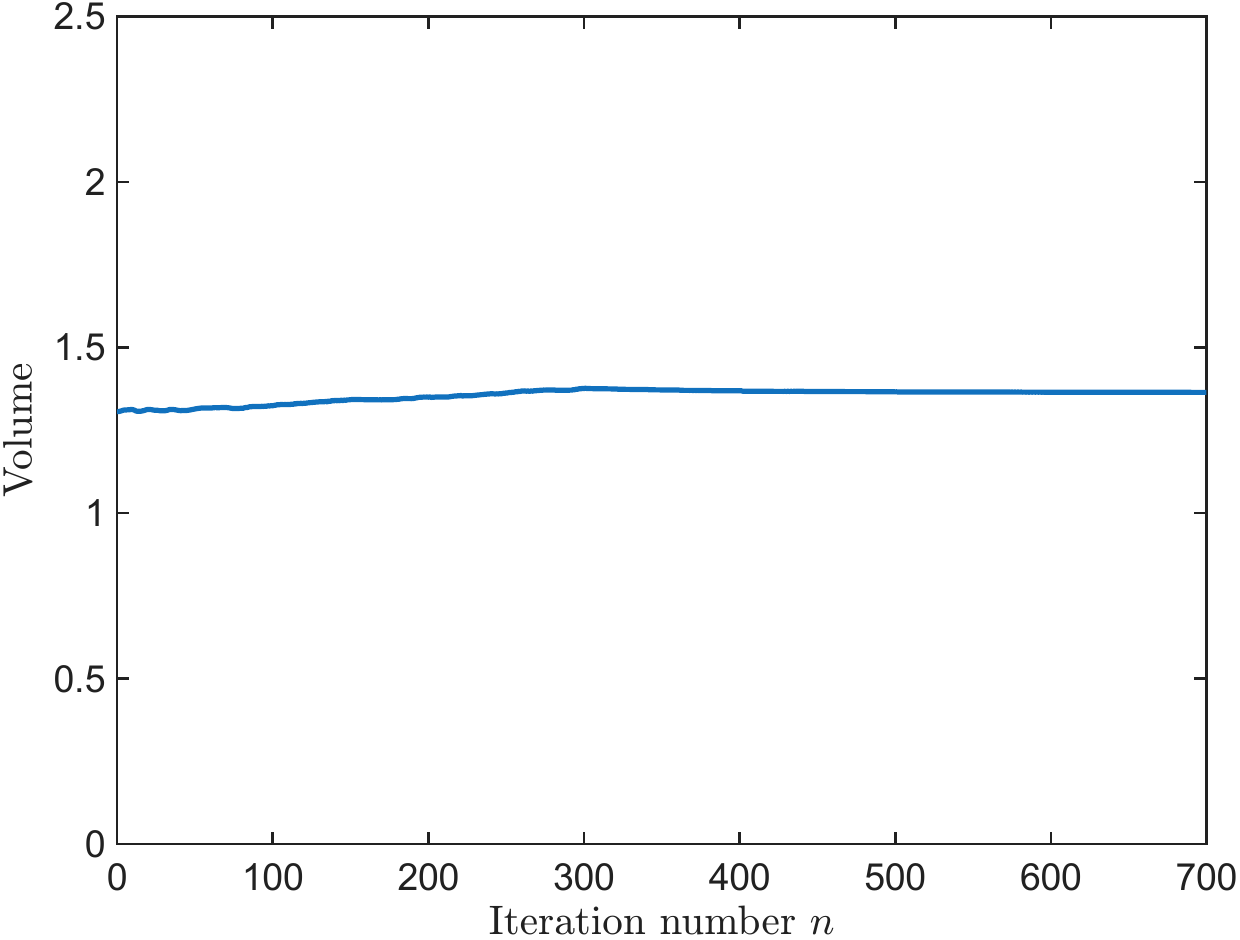} 
		\end{subfigure}
            \hspace{0.06\textwidth}%
		\begin{subfigure}[b]{0.349\textwidth}			\includegraphics[width=\linewidth]{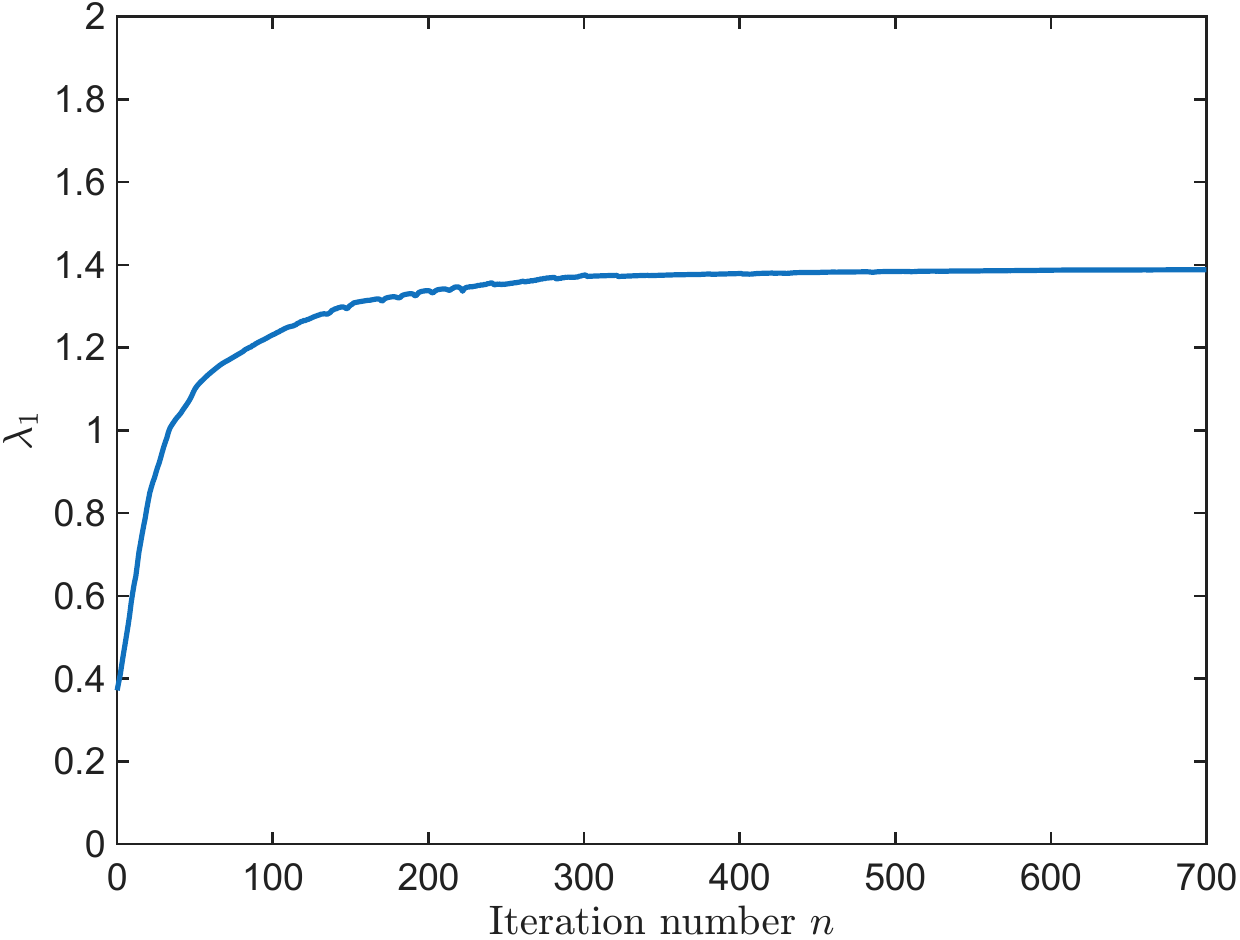}
		\end{subfigure}
		\caption{The initial design (top left), final design (top right), convergence histories of the volume (bottom left) and the first eigenvalue (bottom right) for Example \ref{5_M5E2}.}
		\label{Figure_M5E2_All}
    \end{figure}

    \begin{example} 
		\label{5_M5E3}
		(Cantilever: 3D \cite{allaire2005levelvibration}) Set $D = (0,1) \times (0,2) \times (0,1)$, $\partial D_D = \lbrace (0,y,z): y \in [0,2], \ z \in [0,1] \rbrace$ and block $D_{\texttt{block}} = (0.9,1) \times (0.9,1.1) \times (0.45,0.55)$.
    \end{example}
	
	Set $\rho_{\texttt{block}} = 333$.  \autoref{Figure_M5E3_All} illustrates topological changes and changes in the approximate volume and the first eigenvalue, showing stable volume control and a monotonic increase in the eigenvalue.

    \begin{figure}[htbp]
		\centering
		\begin{subfigure}[b]{0.3\textwidth}
			\includegraphics[width=\linewidth]{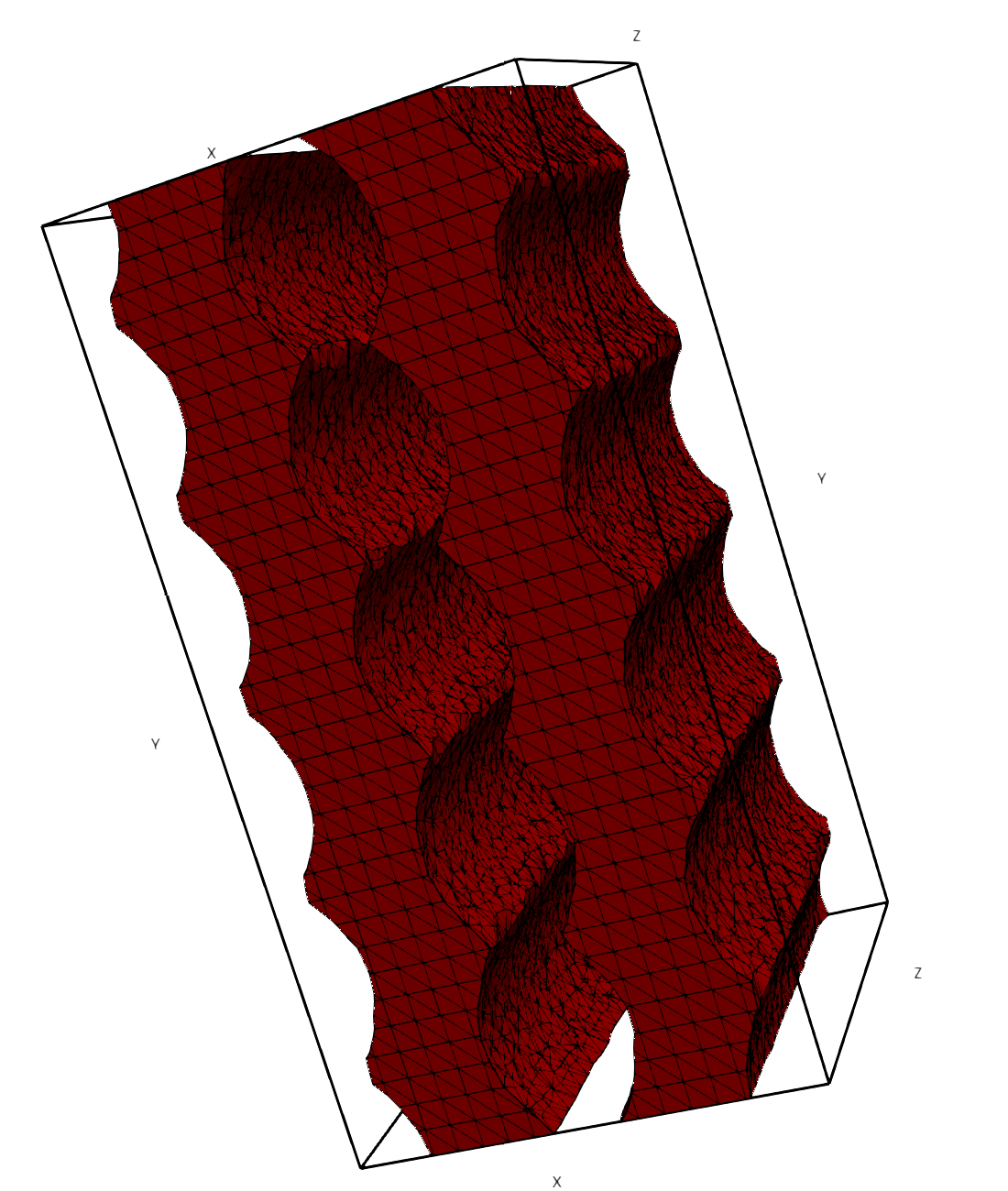} 
		\end{subfigure}		
            \hspace{0.06\textwidth}%
		\begin{subfigure}[b]{0.3\textwidth}
			\includegraphics[width=\linewidth]{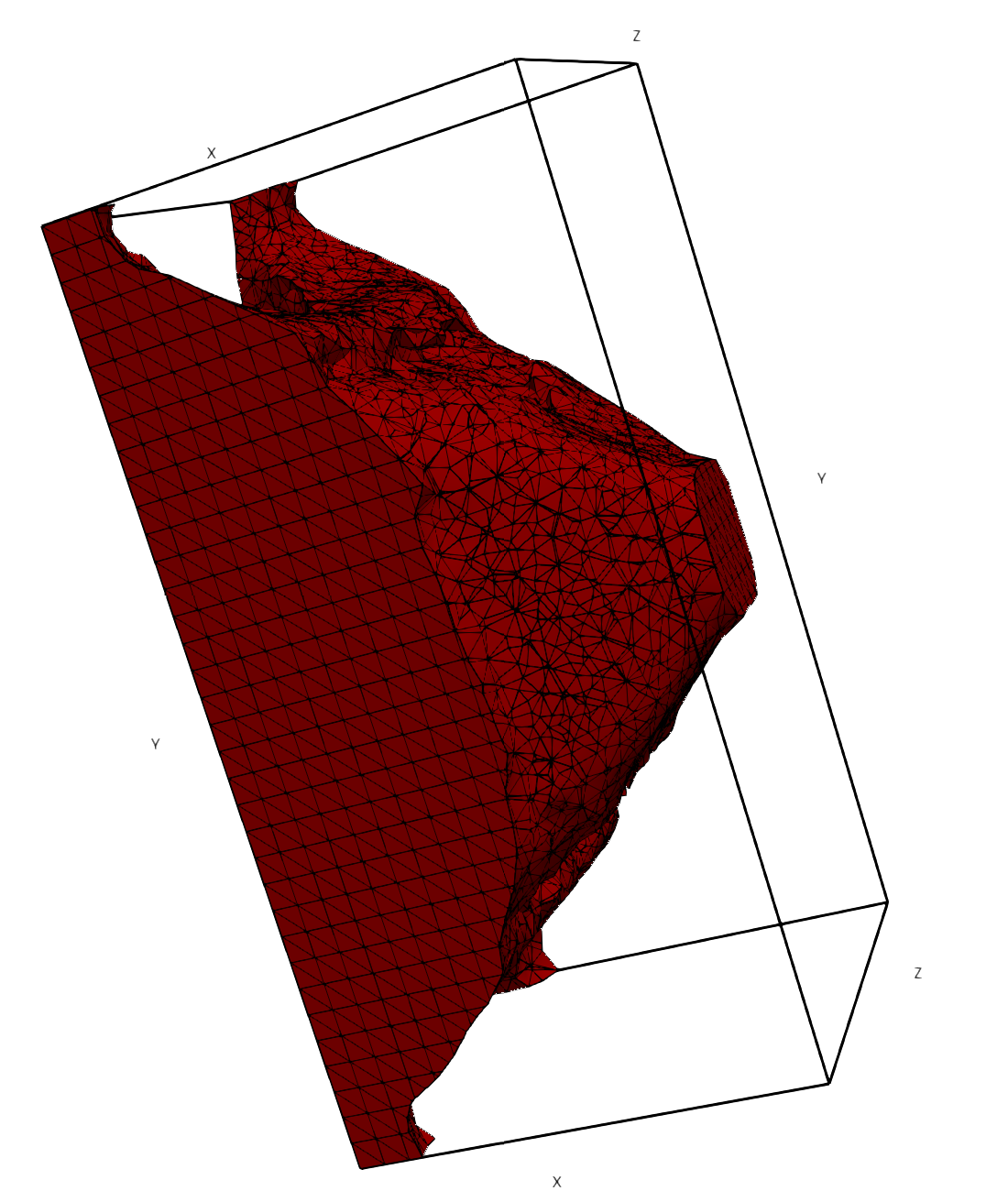}
		\end{subfigure}

		\begin{subfigure}[b]{0.351\textwidth}
                \centering			\includegraphics[width=\linewidth]{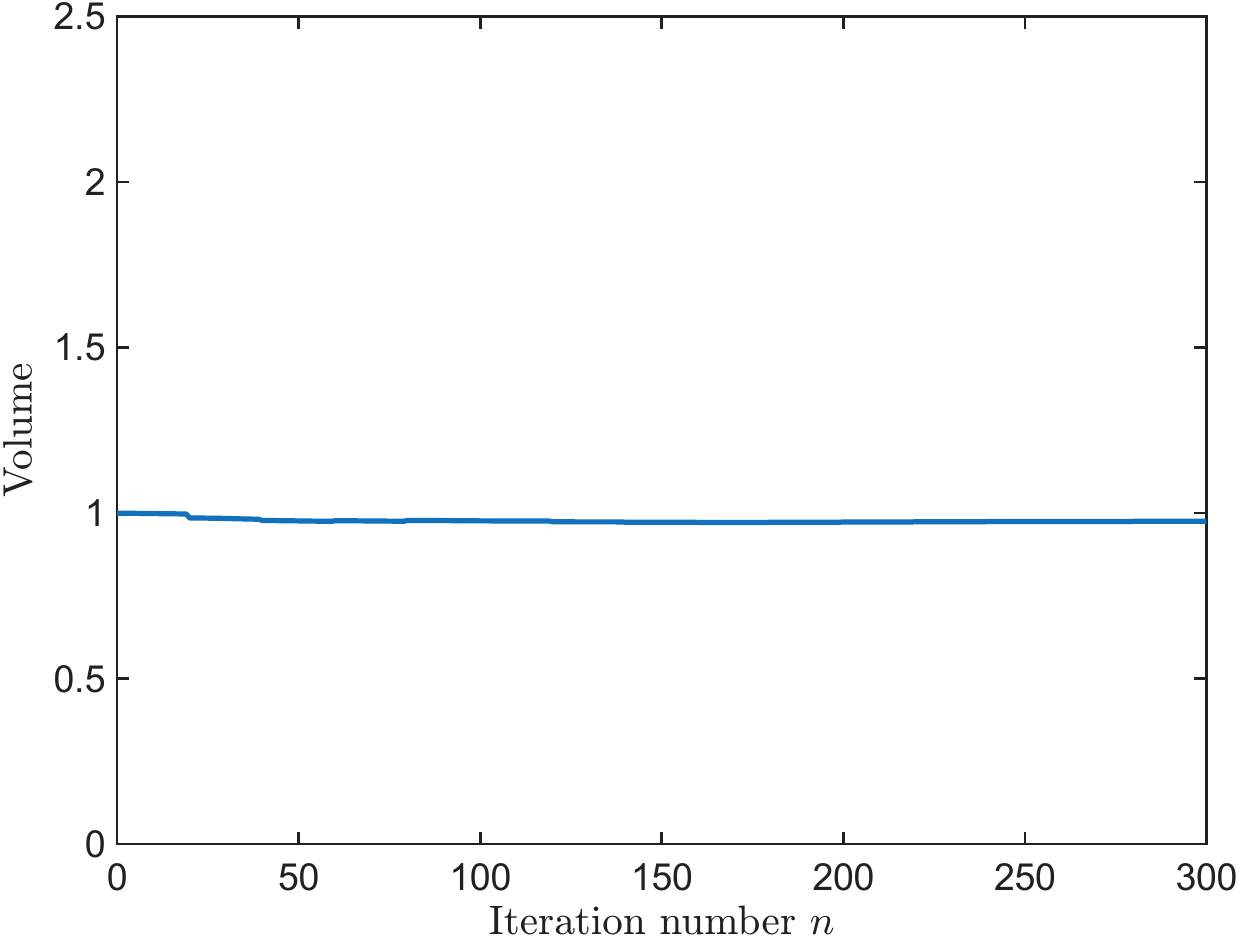} 
		\end{subfigure}
            \hspace{0.06\textwidth}%
		\begin{subfigure}[b]{0.349\textwidth}			\includegraphics[width=\linewidth]{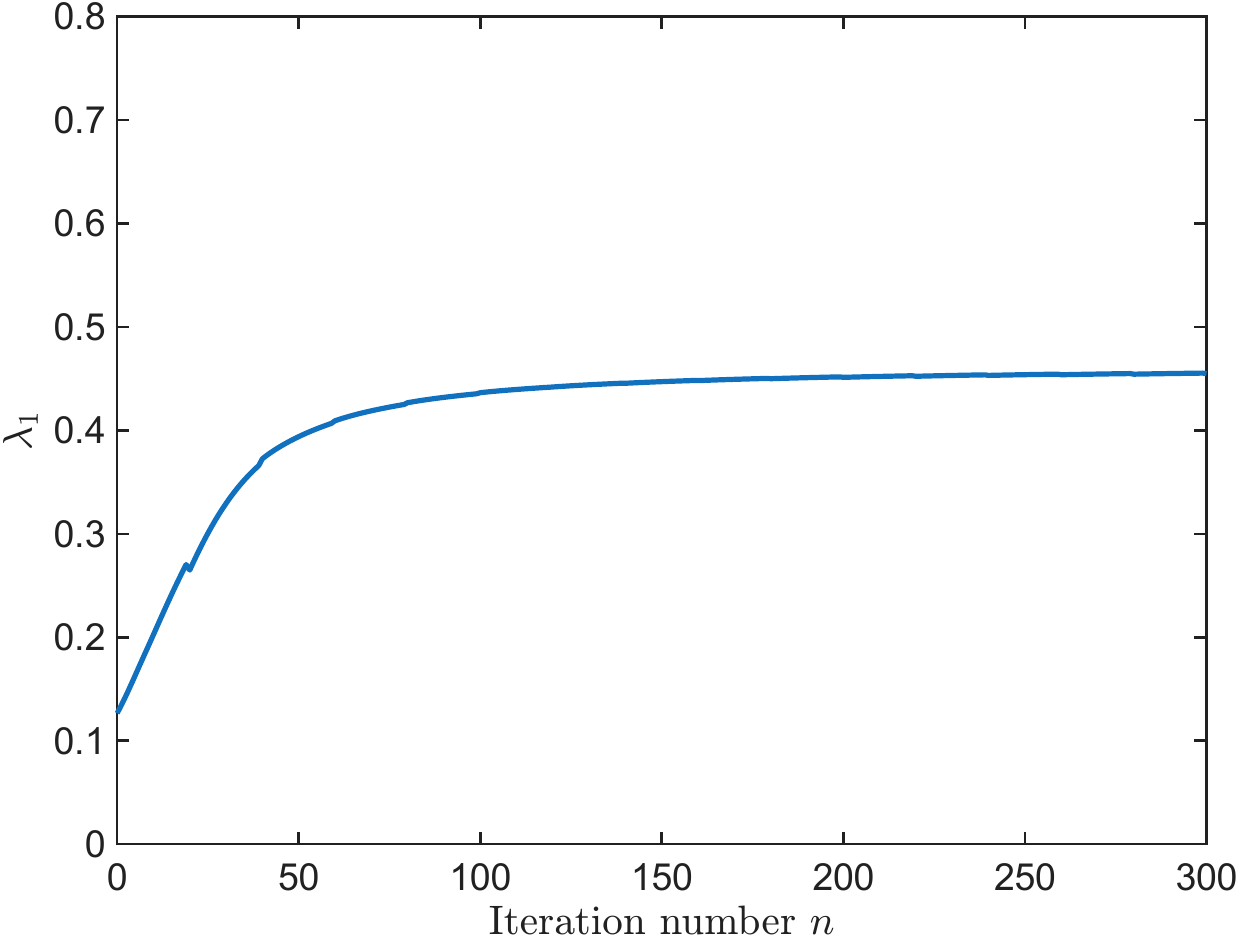}
		\end{subfigure}
		\caption{The initial design (top left), final design (top right), convergence histories of the volume (bottom left) and the first eigenvalue (bottom right) for Example \ref{5_M5E3}.}
		\label{Figure_M5E3_All}
    \end{figure}

\subsection{Design of Stokes flows}
	For the topology optimization of Stokes flows, we set $\nu_f = 1$ and use the following interpolation formula \cite{borrvall2001topology,BorrvallPetersson:2003}:
	\begin{equation}
		\alpha(\rho) = \overline{\alpha} + \left( \underline{\alpha} - \overline{\alpha} \right) \frac{\rho(q_{\alpha}+1)}{\rho + q_{\alpha}}.
	\end{equation}
    	Choose $\overline{\alpha} = 2.5 \times 10^{4}$, $\underline{\alpha} = 2.5 \times 10^{-4}$, $q_{\alpha} = 0.1$, and $\gamma = 5 \times 10^{-3}$. Set $\bm{f}_f = \bm{0}$ (except for \textbf{Example} \ref{5_M2E4}).
	\begin{example}
		\label{5_M2E1}
		(Double-inlet double-outlet channel \cite{BorrvallPetersson:2003}) Set $D = (0,1.5) \times (0,1)$ (see the plot on the left of \autoref{5_M2_Ex1}), 
        \begin{equation}
        \begin{aligned}
		   &\Gamma_{\rm{in}} = \left\lbrace (0,y): y \in \left[  \frac{1}{6}, \frac{1}{3} \right] \cup \left[  \frac{2}{3}, \frac{5}{6} \right] \right\rbrace \text{ (inflow boundary)},
           \\
           & \Gamma_{\rm{out}} = \left\lbrace (1.5,y): y \in \left[  \frac{1}{6}, \frac{1}{3} \right] \cup \left[  \frac{2}{3}, \frac{5}{6} \right] \right\rbrace \text{ (outflow boundary)}.
        \end{aligned}
		\end{equation}
        Set
	\begin{equation}
		\bm{g}_f = 
		\begin{cases}
			\Big(1 - 144 \Big( y - \dfrac{1}{4} \Big)^2, 0\Big)^T & x = 0 \text{ or } 1.5,\ y \in \left[ \frac{1}{6}, \frac{1}{3}\right], \\
			\Big(1 - 144 \Big( y - \dfrac{3}{4} \Big)^2, 0\Big)^T & x = 0 \text{ or } 1.5,\ y \in \left[ \frac{2}{3}, \frac{5}{6}\right], \\
			(0, 0)^T & \text{ otherwise on } \partial D.
		\end{cases}
	\end{equation}
    	\end{example}
        
	Shape changes and final velocity field are shown in \autoref{Figure_M2E1_VandJ}. The approximate volume remains close to its prescribed target value, and the convergence history of the objective is shown. 
	
	\vspace*{2ex}
	\begin{example}
		\label{5_M2E2}
		(2D diffuser \cite{BorrvallPetersson:2003})  Set $D = (0,1)^2$,  $\Gamma_{\text{in}} = \left\lbrace (0,y): y \in [0,1] \right\rbrace$ and  $\Gamma_{\text{out}} = \left\lbrace (1,y): y \in \left[  \frac{1}{3}, \frac{2}{3} \right]  \right\rbrace$ (see the right plot of  \autoref{5_M2_Ex1}). Set
	\begin{equation}
		\bm{g}_f = 
		\begin{cases}
			\Big(1 - \Big( 2y - 1 \Big)^2, 0\Big)^T & x = 0, \\
			\Big(3 - 3 \Big( 6y - 3 \Big)^2, 0\Big)^T & x = 1,\ y \in \left[ \frac{1}{3}, \frac{2}{3}\right], \\
			(0, 0)^T & \text{ otherwise on } \partial D.
		\end{cases}
	\end{equation}
    	\end{example}
        
Shape changes and final velocity field are shown in \autoref{Figure_M2E2_VandJ}. The approximate volume remains close to its prescribed target value, and the convergence history of the objective is shown. 
	
	\vspace*{2ex}
	\begin{example}
		\label{5_M2E3}
		(2D bend channel \cite{BorrvallPetersson:2003})  Set $D = (0,1)^2$, $\Gamma_{\text{in}} = \left\lbrace (0,y): y \in [0.7,0.9] \right\rbrace$ and $\Gamma_{\text{out}} = \left\lbrace (x,0): x \in [0.7,0.9] \right\rbrace$ (see the left plot of \autoref{5_M2_Ex3}).	Set
	\begin{equation}
		\bm{g}_f = 
		\begin{cases}
			\left(1 - \left( 10y - 8 \right)^2, 0\right)^T & x = 0,\ y \in [0.7, 0.9], \\
			\left(0, 1 - \left( 10x - 8 \right)^2\right)^T & x \in [0.7, 0.9],\ y = 0, \\
			(0, 0)^T & \text{ otherwise on } \partial D.
		\end{cases}
	\end{equation}
    	\end{example}
        
The evolutions of shape, volume, and drag are shown in \autoref{Figure_M2E3_VandJ}. 
	
	\vspace*{2ex}
	\begin{example}
		\label{5_M2E4}
		(2D flow channel design under body force \cite{BorrvallPetersson:2003}) Set $D = (0,1)^2$, $\Gamma_{\text{in}} = \big\lbrace (0,y): y \in \big[ \dfrac{7}{12},\dfrac{3}{4}\big] \big\rbrace$ and $\Gamma_{\text{out}} = \big\lbrace (1,y): y \in \big[ \dfrac{7}{12},\dfrac{3}{4}\big] \big\rbrace$ (see the right plot of \autoref{5_M2_Ex3}).
	We choose different forces $\bm{f}_f = (-1125, 0)^T$, $(562.5, 0)^T$, and $(1687.5, 0)^T$ in the circle centered on $\big(0.5, \dfrac{1}{3} \big)$ with radius $\dfrac{1}{12}$, while 
	\begin{equation}
		\bm{g}_f = 
		\begin{cases}
			\Big(1 - \left( 12y - 8 \right)^2, 0\Big)^T, & x = 0 \text{ or } 1,\ y \in \Big[ \dfrac{7}{12}, \dfrac{3}{4} \Big], \\
			(0, 0)^T, & \text{ otherwise on } \partial D.
		\end{cases}
	\end{equation}
    	\end{example}
        
	Starting from the same initial design, \autoref{c} illustrates the final shapes agree well with those in \cite{BorrvallPetersson:2003}. \autoref{Figure_M2E4_VandJ} shows the convergence histories of drag and the approximate volume, which remains close to its target value.
	
	\begin{figure}[htbp] 
		\centering
		\begin{tikzpicture}[scale=3.6] 
			\draw[thick] (0,0) rectangle (1.5,1);
			\node at (0.75,0.5) {$D$};  
			
			\draw[line width=3pt, blue] (0, 1.0/6.0) -- (0, 1.0/3.0);
			\node[right=1.5pt, blue] at (0, 0.25) {$\Gamma_{\text{in}}$};
			\def\N{3}
			\def\dx{0.05}
			\def\dy{0}
			\foreach \i in {0,1,...,\N}
			{
				\pgfmathsetmacro{\ypos}{1.0/3.0 - \i/(6*\N)}
				\draw[->, thick, blue] (-\dx, \ypos) -- ++(\dx, 0);
			}
			\draw[line width=3pt, blue] (0, 2.0/3.0) -- (0, 5.0/6.0);
			\node[right=1.5pt, blue] at (0, 0.75) {$\Gamma_{\text{in}}$};
			\def\N{3}
			\def\dx{0.05}
			\def\dy{0}
			\foreach \i in {0,1,...,\N}
			{
				\pgfmathsetmacro{\ypos}{5.0/6.0 - \i/(6*\N)}
				\draw[->, thick, blue] (-\dx, \ypos) -- ++(\dx, 0);
			}
			\draw[line width=3pt, blue] (1.5, 1.0/6.0) -- (1.5, 1.0/3.0);
			\node[left=1.5pt, blue] at (1.5, 0.25) {$\Gamma_{\text{in}}$};
			\def\N{3}
			\def\dx{0.05}
			\def\dy{0}
			\foreach \i in {0,1,...,\N}
			{
				\pgfmathsetmacro{\ypos}{1.0/3.0 - \i/(6*\N)}
				\draw[->, thick, blue] (1.5, \ypos) -- ++(\dx, 0);
			}
			\draw[line width=3pt, blue] (1.5, 2.0/3.0) -- (1.5, 5.0/6.0);
			\node[left=1.5pt, blue] at (1.5, 0.75) {$\Gamma_{\text{in}}$};
			\def\N{3}
			\def\dx{0.05}
			\def\dy{0}
			\foreach \i in {0,1,...,\N}
			{
				\pgfmathsetmacro{\ypos}{5.0/6.0 - \i/(6*\N)}
				\draw[->, thick, blue] (1.5, \ypos) -- ++(\dx, 0);
			}
			
			\tikzset{
				dim/.style={black, line width=0.4pt, |<->|, >=stealth},
				dim label/.style={midway, fill=white, inner sep=1.2pt}
			}
			
			\draw[<->, dim] (0,-0.1) -- (1.5,-0.1) 
			node[dim label] {1.5};
			
			\draw[<->, dim] (0.4, 0) -- (0.4, 1.0/6.0) 
			node[dim label] {$\frac{1}{6}$};
			\draw[<->, dim] (0.4, 1.0/6.0) -- (0.4, 1.0/3.0) 
			node[dim label] {$\frac{1}{6}$};
			\draw[<->, dim] (0.4, 1.0/3.0) -- (0.4, 2.0/3.0) 
			node[dim label] {$\frac{1}{3}$};
			\draw[<->, dim] (0.4, 2.0/3.0) -- (0.4, 5.0/6.0) 
			node[dim label] {$\frac{1}{6}$};
			\draw[<->, dim] (0.4, 5.0/6.0) -- (0.4, 1.0) 
			node[dim label] {$\frac{1}{6}$};
   		\draw[<->, dim] (1.7, 0) -- (1.7, 1)  node[dim label] {1};
		\end{tikzpicture}
        \hspace{0.2cm}
        \begin{tikzpicture}[scale=3.6] 
			\draw[thick] (0,0) rectangle (1,1);
			\node at (0.5,0.5) {$D$};  
			
			\draw[line width=3pt, blue] (0, 0) -- (0, 1.0);
			\node[right=1.5pt, blue] at (0, 0.5) {$\Gamma_{\text{in}}$};
			\def\N{11}
			\def\dx{0.05}
			\def\dy{0}
			\foreach \i in {0,1,...,\N}
			{
				\pgfmathsetmacro{\ypos}{1.0 - \i/\N}
				\draw[->, thick, blue] (-\dx, \ypos) -- ++(\dx, 0);
			}
			
			\draw[line width=3pt, blue] (1, 1.0/3.0) -- (1, 2.0/3.0);
			\node[left=1.5pt, blue] at (1, 0.5) {$\Gamma_{\text{out}}$};
			\def\N{3}
			\def\dx{0.05}
			\def\dy{0}
			\foreach \i in {0,1,...,\N}
			{
				\pgfmathsetmacro{\ypos}{2.0/3.0 - \i/(3.0*\N)}
				\draw[->, thick, blue] (1, \ypos) -- ++(\dx, 0);
			}
			
			\tikzset{
				dim/.style={black, line width=0.4pt, |<->|, >=stealth},
				dim label/.style={midway, fill=white, inner sep=1.2pt}
			}
			
			\draw[<->, dim] (0,-0.1) -- (1,-0.1) node[dim label] {1};
			
			\draw[<->, dim] (0.7, 0) -- (0.7, 1.0/3.0) 
			node[dim label] {$\frac{1}{3}$};
			\draw[<->, dim] (0.7, 1.0/3.0) -- (0.7, 2.0/3.0) 
			node[dim label] {$\frac{1}{3}$};
			\draw[<->, dim] (0.7, 2.0/3.0) -- (0.7, 1.0) 
			node[dim label] {$\frac{1}{3}$};
		\end{tikzpicture}
		\caption{The design domain $D$ and its boundary for Example \ref{5_M2E1} (left) and Example \ref{5_M2E2} (right).}
		\label{5_M2_Ex1}
	\end{figure}
	
	\begin{figure}[htbp]
		\centering
            \begin{subfigure}[b]{0.32\textwidth}			\includegraphics[width=\linewidth]{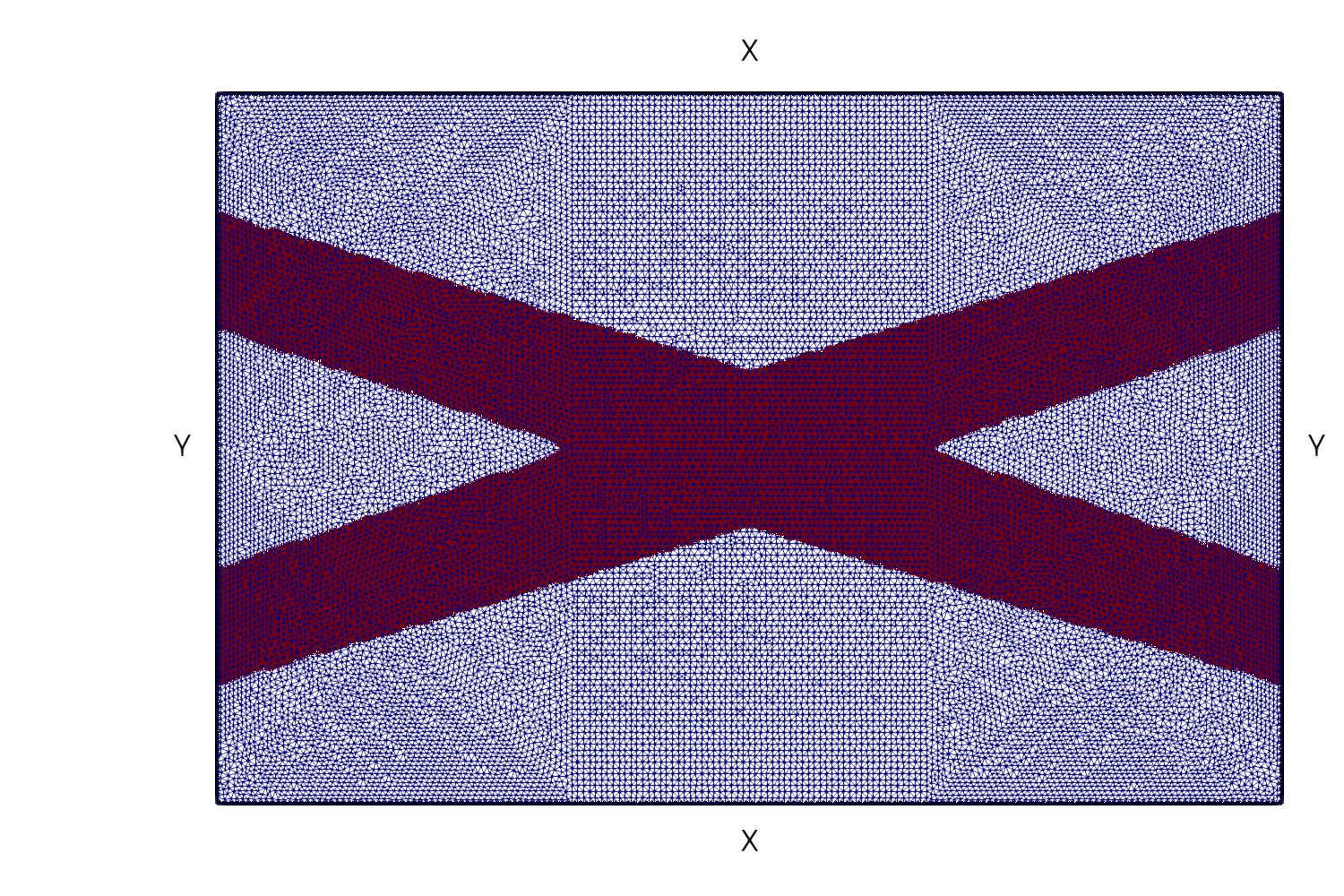} 
		\end{subfigure}       
            \begin{subfigure}[b]{0.32\textwidth}			\includegraphics[width=\linewidth]{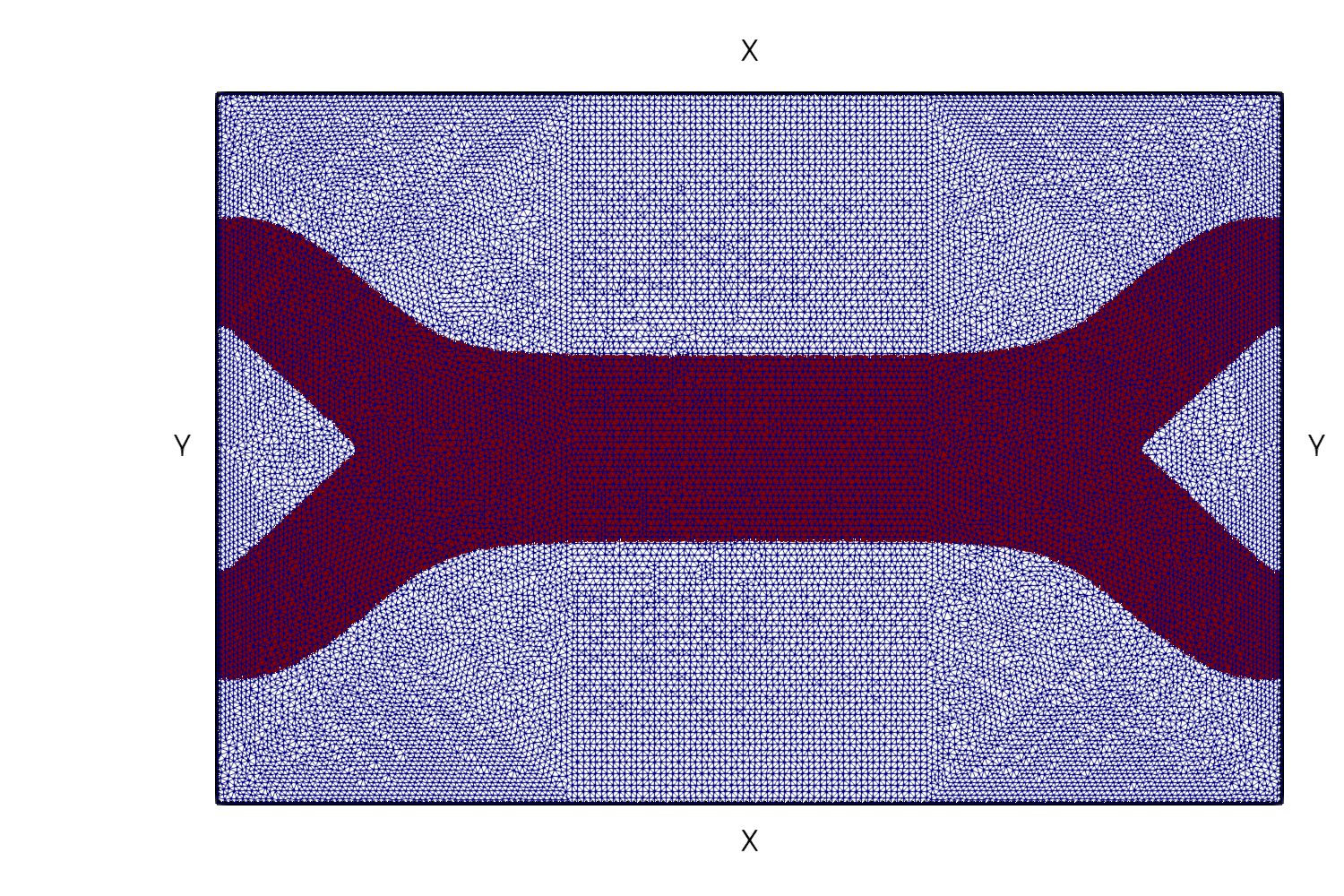}
		\end{subfigure}
            \begin{subfigure}[b]{0.32\textwidth}			\includegraphics[width=\linewidth]{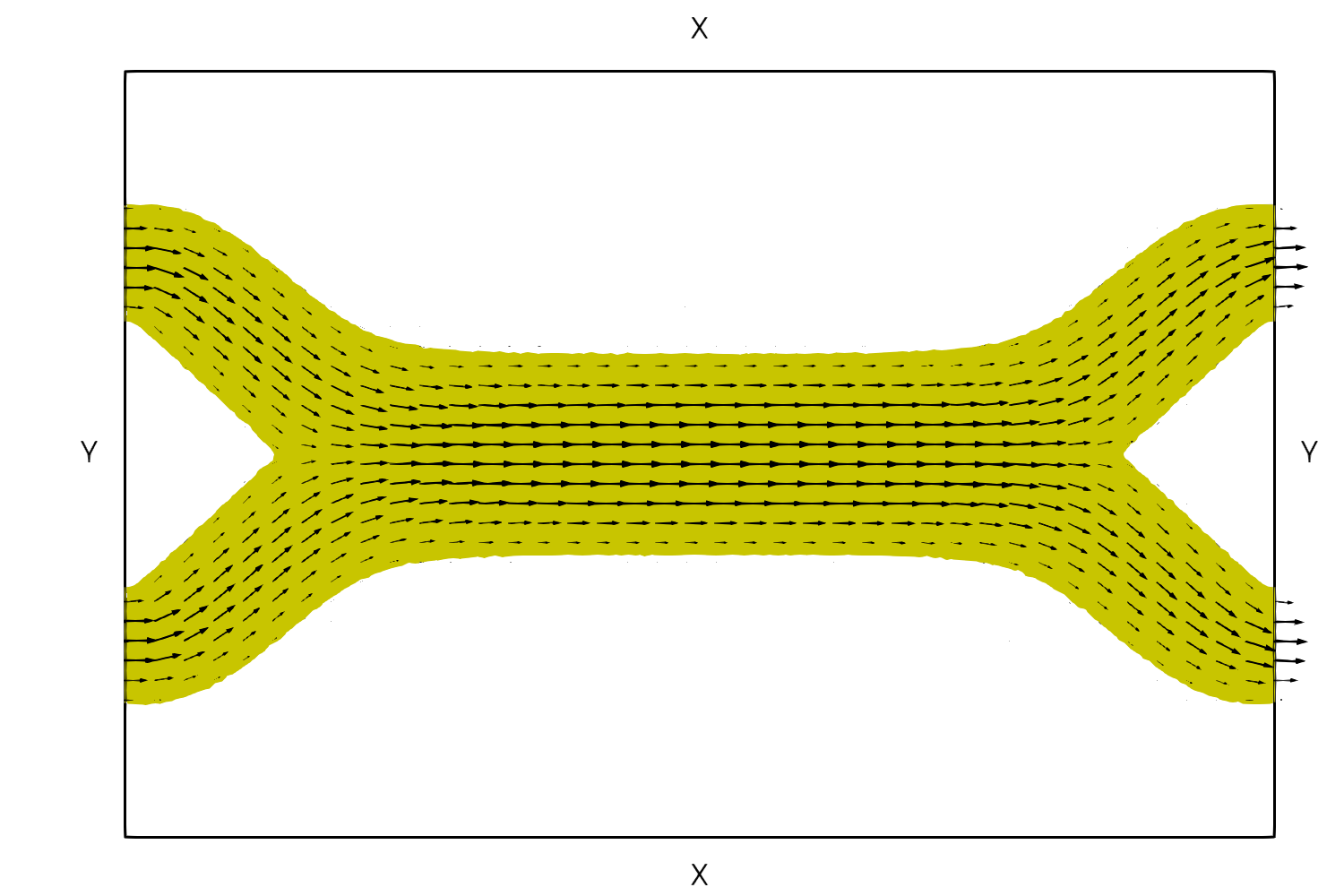}
		\end{subfigure}
        
        \centering
		\begin{subfigure}[b]                  {0.342\textwidth}
        \includegraphics[width=\linewidth]    {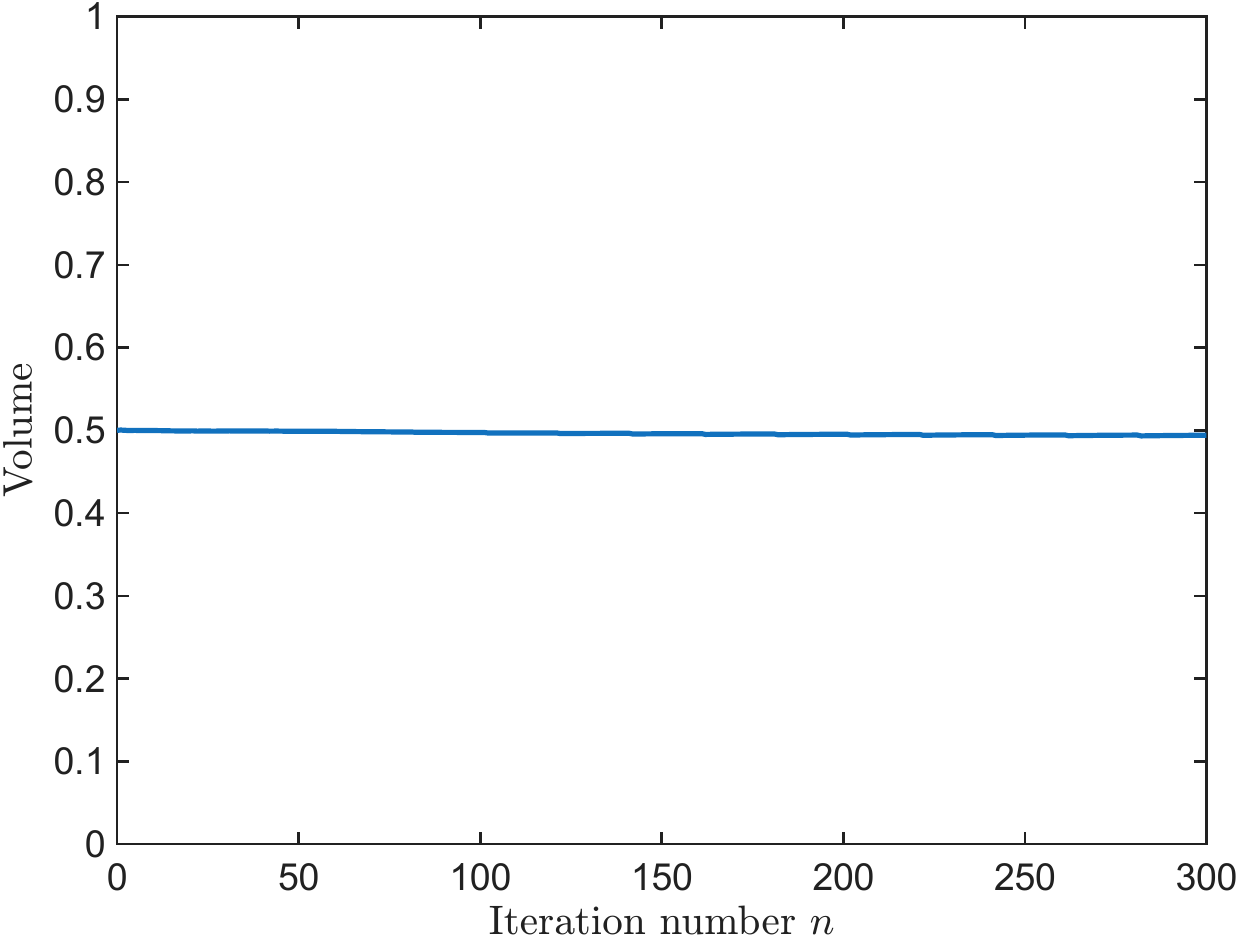} 
		\end{subfigure}
		\begin{subfigure}[b]{0.338\textwidth}		\includegraphics[width=\linewidth]{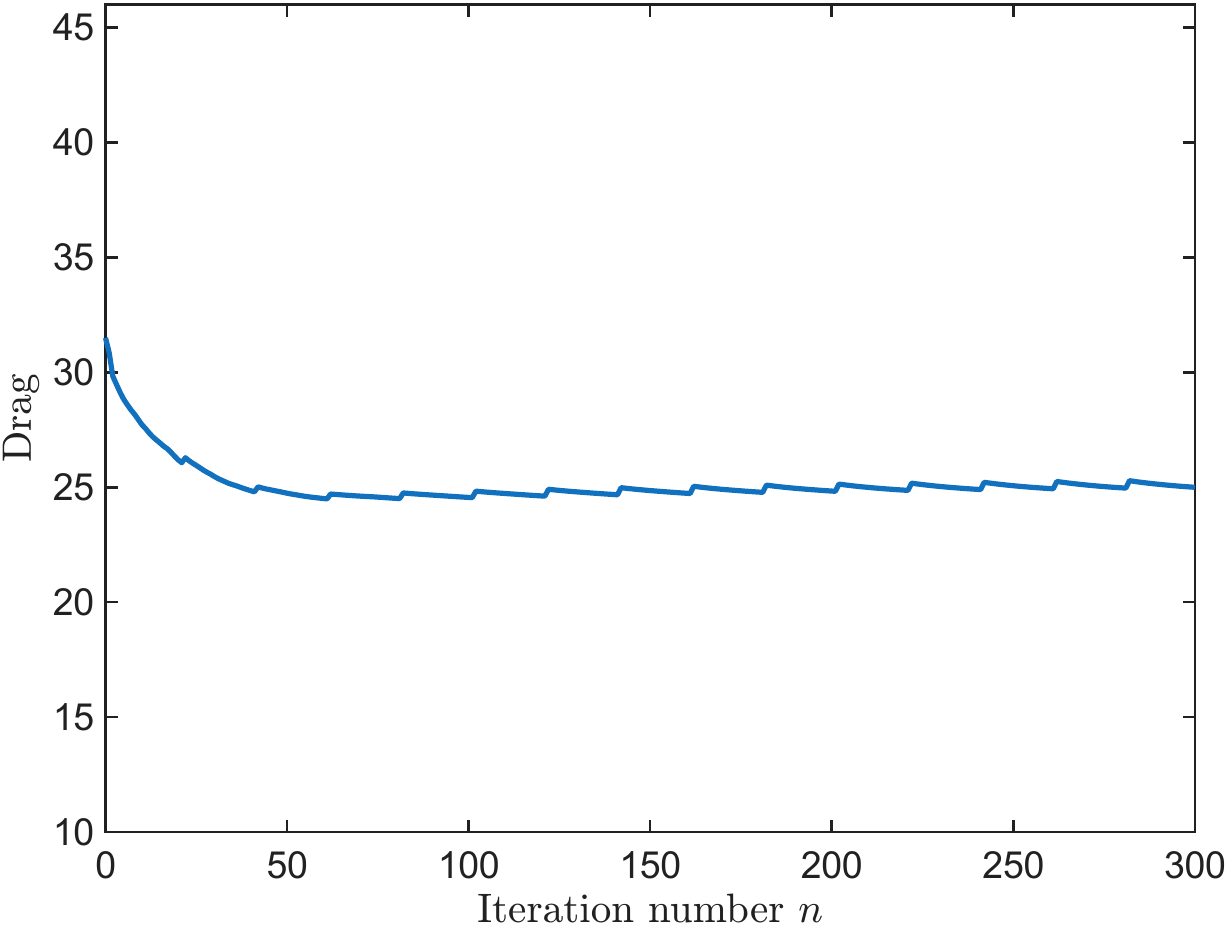}
		\end{subfigure}
		\caption{The initial design (top left), final design (top mid), stream lines of $\bm{u}$ (top right) and convergence histories of the volume (bottom left) and the energy (bottom right) for Example \ref{5_M2E1}.}
		\label{Figure_M2E1_VandJ}
	\end{figure}
	
	\begin{figure}[htbp]
		\centering
		\begin{subfigure}[b]{0.22\textwidth}
	    \includegraphics[width=\linewidth]{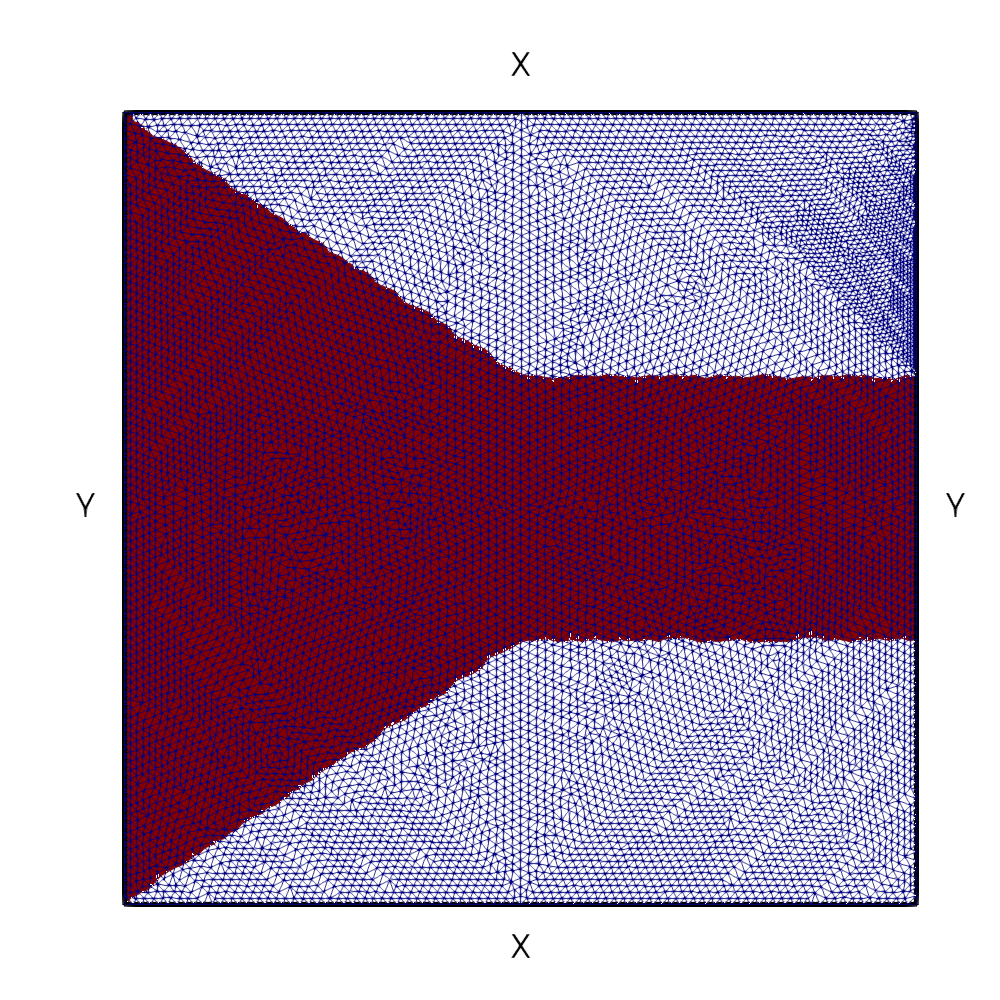} 
		\end{subfigure}
        \centering
        \begin{subfigure}[b]{0.22\textwidth}
		\includegraphics[width=\linewidth]{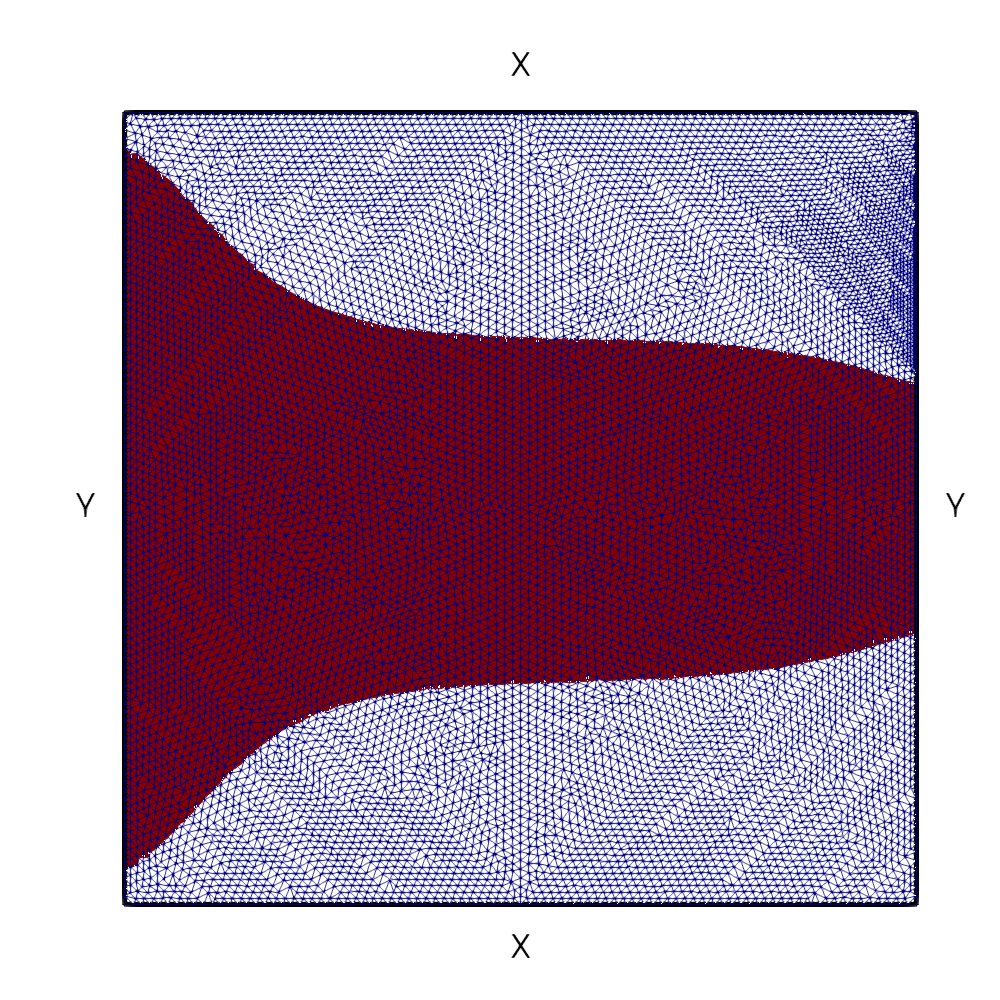}
		\end{subfigure}
        \begin{subfigure}[b]{0.22\textwidth}
		\includegraphics[width=\linewidth]{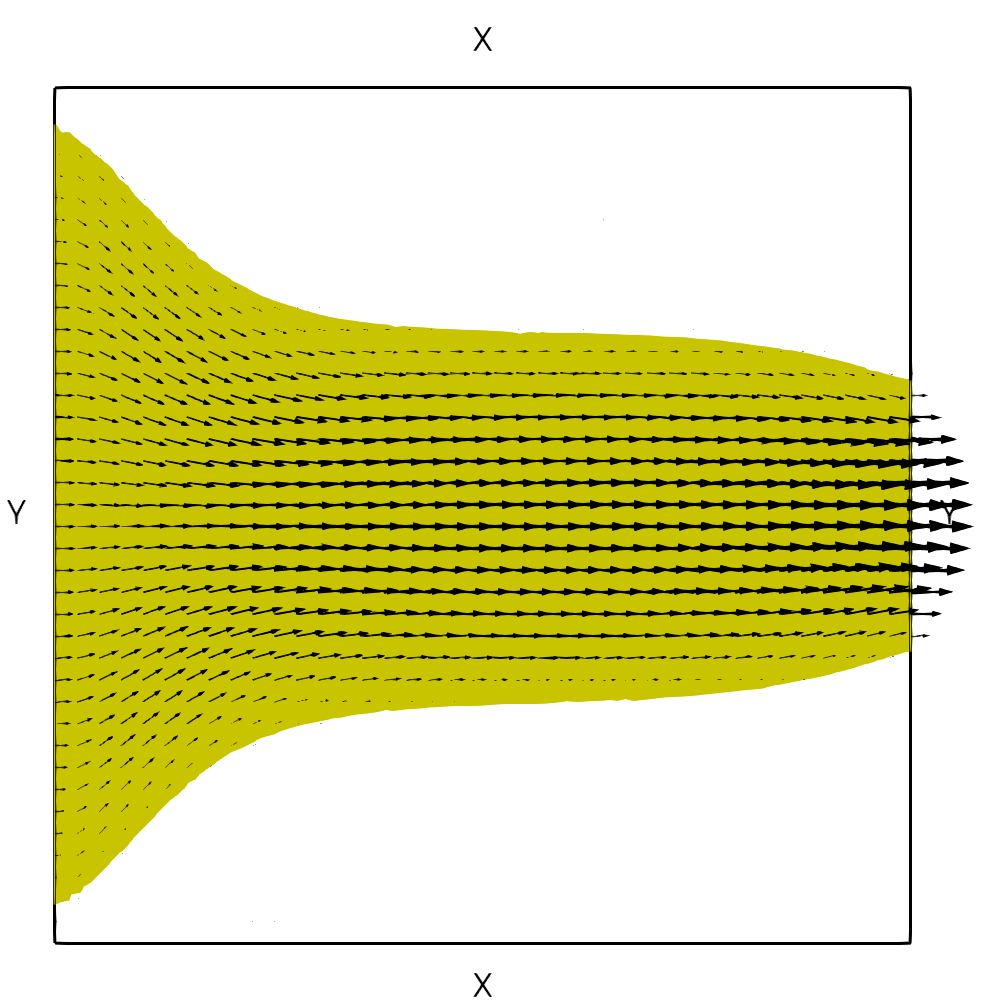}
		\end{subfigure}
        \\
		\begin{subfigure}[b]{0.332\textwidth}			\includegraphics[width=\linewidth]{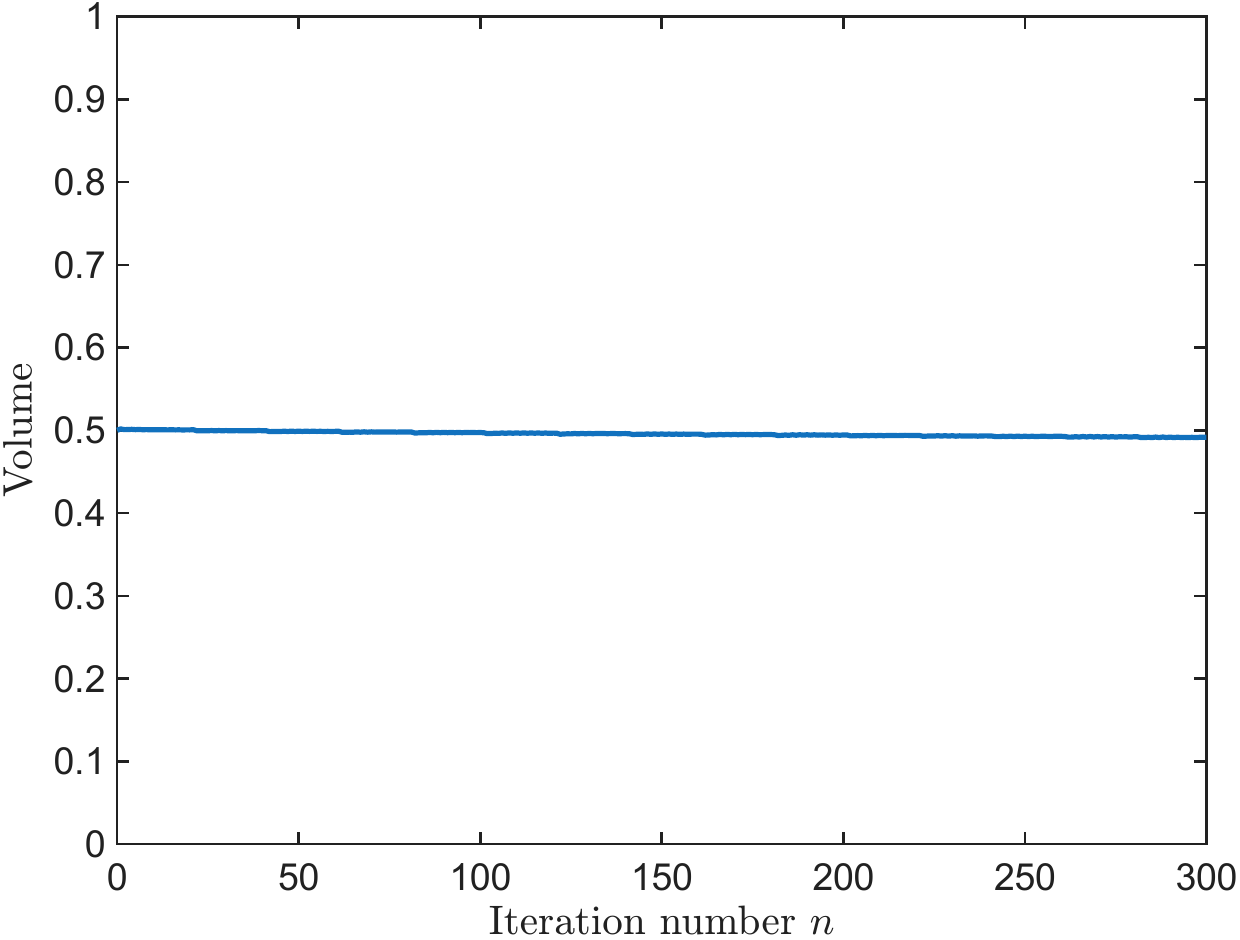} 
		\end{subfigure}
		\begin{subfigure}[b]{0.328\textwidth}		\includegraphics[width=\linewidth]{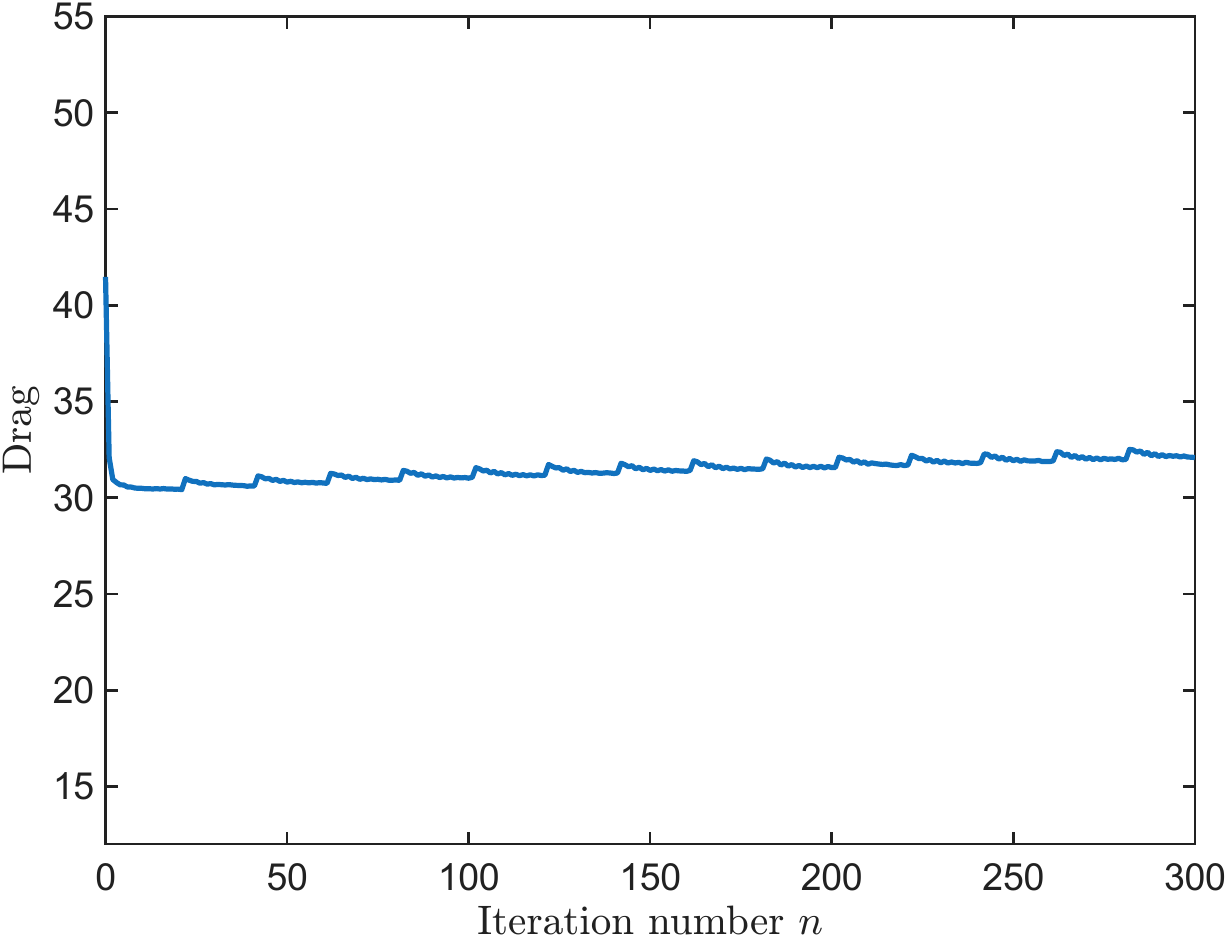}
		\end{subfigure}
		\caption{The initial design (top left), final design (top mid), stream lines of $\bm{u}$ (top right), and convergence histories of the volume (bottom left) and the energy (bottom right) for Example \ref{5_M2E2}.}
		\label{Figure_M2E2_VandJ}
	\end{figure}

	\begin{figure}[htbp] 
		\centering
		\begin{tikzpicture}[scale=3.6] 
			\draw[thick] (0,0) rectangle (1,1);
			\node at (0.5,0.5) {$D$};  
			
			\draw[line width=3pt, blue] (0, 0.7) -- (0, 0.9);
			\node[right=1.5pt, blue] at (0, 0.8) {$\Gamma_{\text{in}}$};
			\def\N{3}
			\def\dx{0.05}
			\def\dy{0}
			\foreach \i in {0,1,...,\N}
			{
				\pgfmathsetmacro{\ypos}{0.9 - \i/(5*\N)}
				\draw[->, thick, blue] (-\dx, \ypos) -- ++(\dx, 0);
			}
			
			\draw[line width=3pt, blue] (0.7, 0) -- (0.9, 0);
			\node[blue] at (0.8, 0.075) {$\Gamma_{\text{out}}$};
			\def\N{3}
			\def\dx{0}
			\def\dy{0.05}
			\foreach \i in {0,1,...,\N}
			{
				\pgfmathsetmacro{\xpos}{0.9 - \i/(5*\N)}
				\draw[->, thick, blue] (\xpos, 0) -- ++(0, -\dy);
			}
			
			\tikzset{
				dim/.style={black, line width=0.4pt, |<->|, >=stealth},
				dim label/.style={midway, fill=white, inner sep=0.6pt}
			}
			
			\draw[<->, dim] (0.9, 1.1) -- (1.05, 1.1) 
			node[dim label] {$0.1$};
			\draw[<->, dim] (0.7, 1.1) -- (0.9, 1.1) 
			node[dim label] {$0.2$};
			\draw[<->, dim] (0.0, 1.1) -- (0.7, 1.1) 
			node[dim label] {$0.7$};
			\draw[<->, dim] (0,-0.1) -- (1,-0.1) 
			node[dim label] {1};
			
			\draw[<->, dim] (1.1, 0.9) -- (1.1, 1.0) 
			node[dim label] {$0.1$};
			\draw[<->, dim] (1.1, 0.7) -- (1.1, 0.9) 
			node[dim label] {$0.2$};
			\draw[<->, dim] (1.1, 0.0) -- (1.1, 0.7) 
			node[dim label] {$0.7$};
			\draw[<->, dim] (-0.1, 0) -- (-0.1, 1) 
			node[dim label] {1};
		\end{tikzpicture}
        \hspace{1cm}
        \begin{tikzpicture}[scale=3.6] 
			\draw[thick] (0,0) rectangle (1,1);
			\node at (0.5,0.5) {$D$};  
			
			\draw[line width=3pt, blue] (0, 7.0/12.0) -- (0, 0.75);
			\node[right=1.5pt, blue] at (0, 2.0/3.0) {$\Gamma_{\text{in}}$};
			\def\N{3}
			\def\dx{0.05}
			\def\dy{0}
			\foreach \i in {0,1,...,\N}
			{
				\pgfmathsetmacro{\ypos}{0.75 - \i/(6*\N)}
				\draw[->, thick, blue] (-\dx, \ypos) -- ++(\dx, 0);
			}
			
			\draw[line width=3pt, blue] (1, 7.0/12.0) -- (1, 0.75);
			\node[right=1.5pt, blue] at (1, 2.0/3.0) {$\Gamma_{\text{out}}$};
			\def\N{3}
			\def\dx{0.05}
			\def\dy{0}
			\foreach \i in {0,1,...,\N}
			{
				\pgfmathsetmacro{\ypos}{0.75 - \i/(6*\N)}
				\draw[->, thick, blue] (1, \ypos) -- ++(\dx, 0);
			}
			
			\draw[black] (0.5, 1.0/3.0) circle (1.0/12.0);
			\draw[->, thick, black] (0.5, 1.0/3.0) -- ++(0.25, 0);
			\tikzset{
				dim/.style={black, line width=0.4pt, |<->|, >=stealth},
				dim label/.style={midway, fill=white, inner sep=0.6pt}
			}
			
			\draw[<->, dim] (0.0, 1.1) -- (5.0/12.0, 1.1) 
			node[dim label] {$\frac{5}{12}$};
			\draw[<->, dim] (5.0/12.0, 1.1) -- (7.0/12.0, 1.1) 
			node[dim label] {$\frac{1}{6}$};
			\draw[<->, dim] (7.0/12.0, 1.1) -- (1.0, 1.1) 
			node[dim label] {$\frac{5}{12}$};
			\draw[<->, dim] (0,-0.1) -- (1,-0.1) 
			node[dim label] {1};
			
			\draw[<->, dim] (0.3, 0.0) -- (0.3, 0.25) 
			node[dim label] {$\frac{1}{4}$};
			\draw[<->, dim] (0.3, 0.25) -- (0.3, 5.0/12.0) 
			node[dim label] {$\frac{1}{6}$};
			\draw[<->, dim] (0.3, 5.0/12.0) -- (0.3, 7.0/12.0) 
			node[dim label] {$\frac{1}{6}$};
			\draw[<->, dim] (0.3, 7.0/12.0) -- (0.3, 0.75) 
			node[dim label] {$\frac{1}{6}$};
			\draw[<->, dim] (0.3, 0.75) -- (0.3, 1.0) 
			node[dim label] {$\frac{1}{4}$};
			\draw[<->, dim] (-0.1, 0) -- (-0.1, 1) 
			node[dim label] {1};
		\end{tikzpicture}
		\caption{The design domain $D$ and its boundary for Example \ref{5_M2E3} (left) and Example \ref{5_M2E4} (right).}
		\label{5_M2_Ex3}
	\end{figure}
	
	\begin{figure}[htbp]
		\centering
		\begin{subfigure}[b]{0.32\textwidth}
			\centering
			\includegraphics[width=0.75\linewidth]{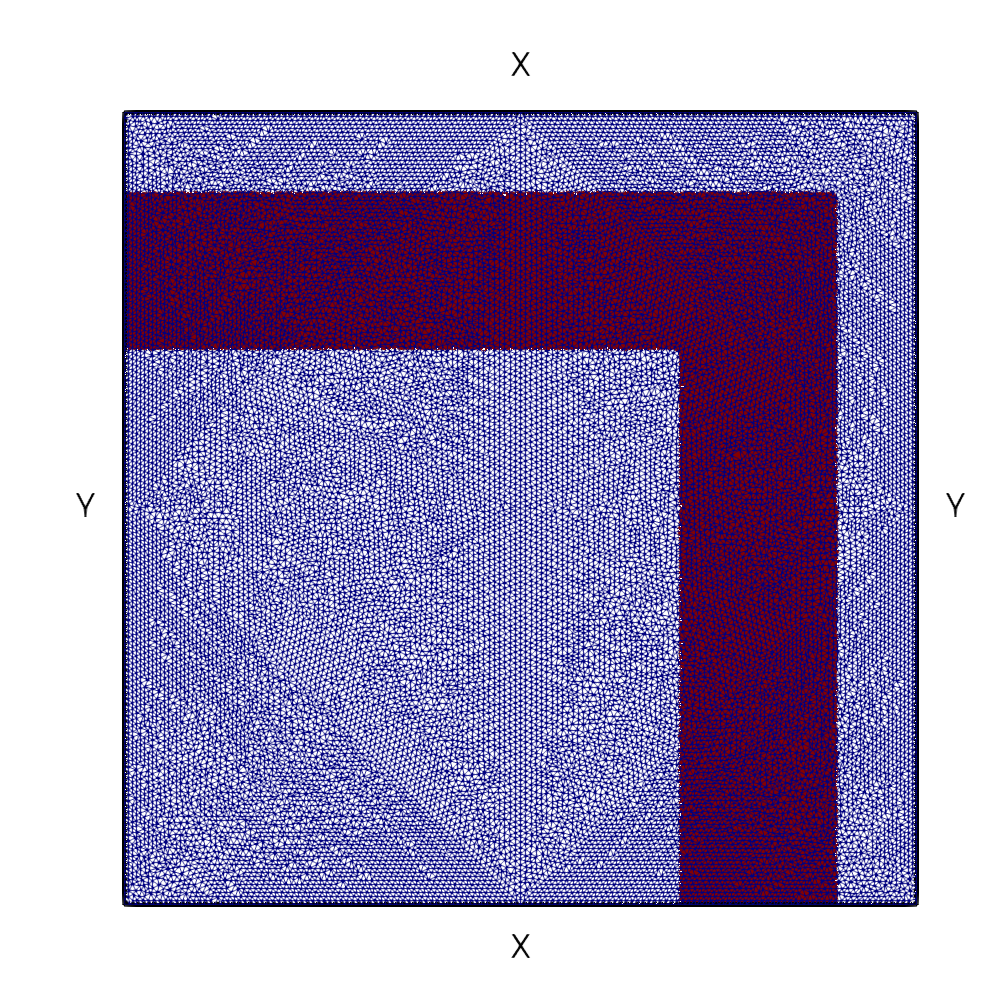} 
		\end{subfigure}
		\begin{subfigure}[b]{0.32\textwidth}
			\centering
			\includegraphics[width=0.75\linewidth]{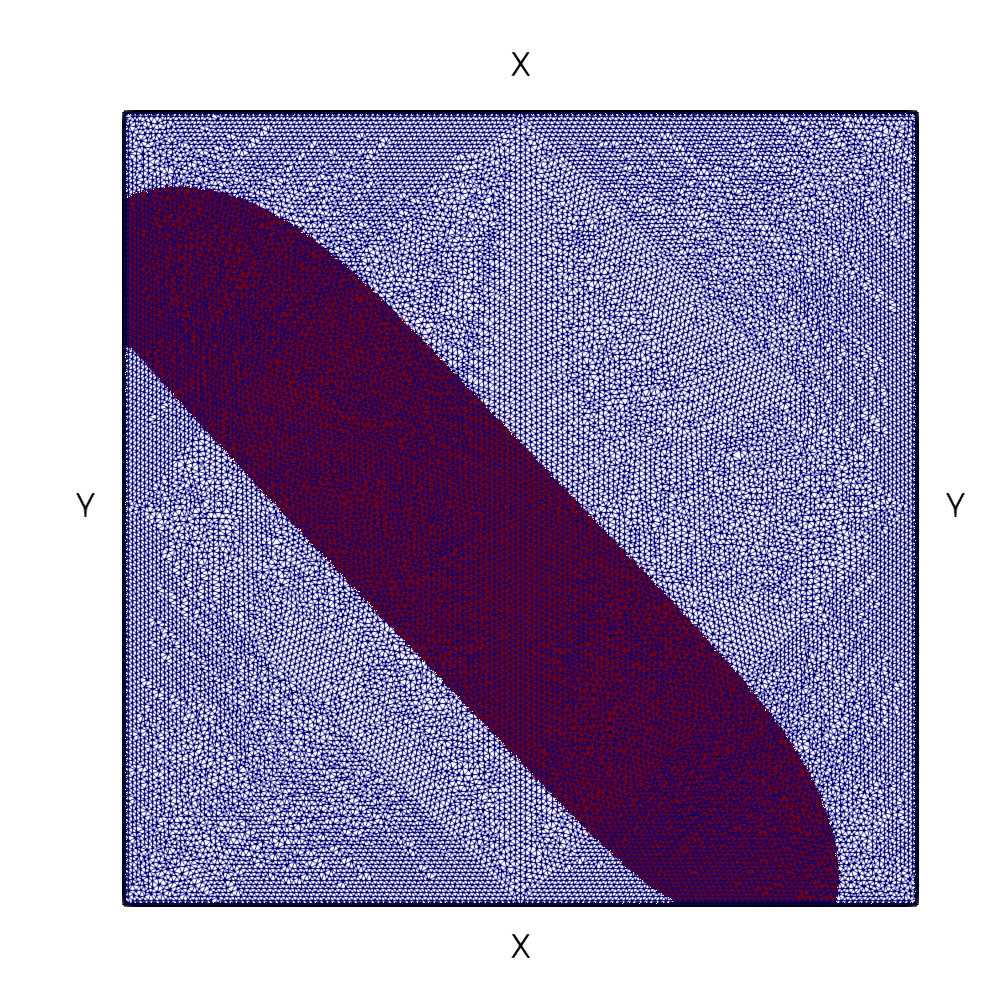}
		\end{subfigure}
            \begin{subfigure}[b]{0.32\textwidth}
			\centering
			\includegraphics[width=0.75\linewidth]{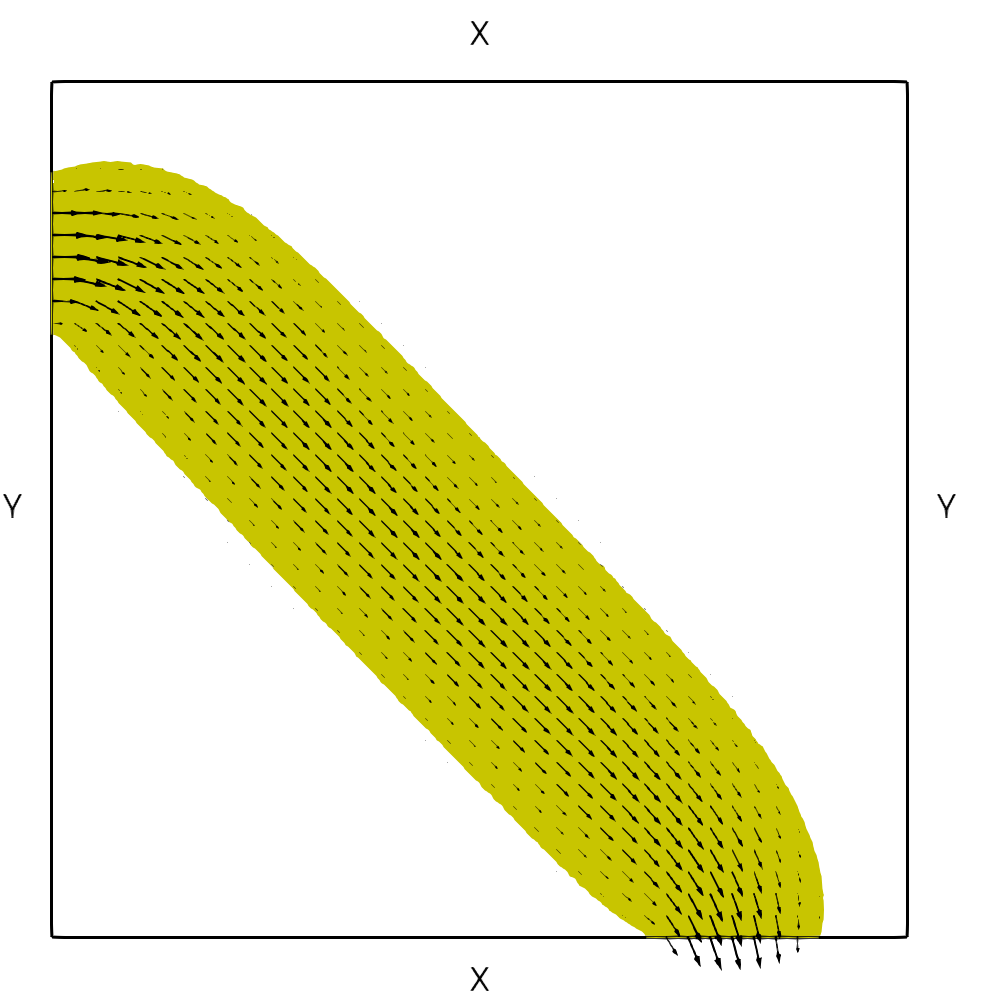}
		\end{subfigure}
        
		\begin{subfigure}[b]{0.352\textwidth}			\includegraphics[width=\linewidth]{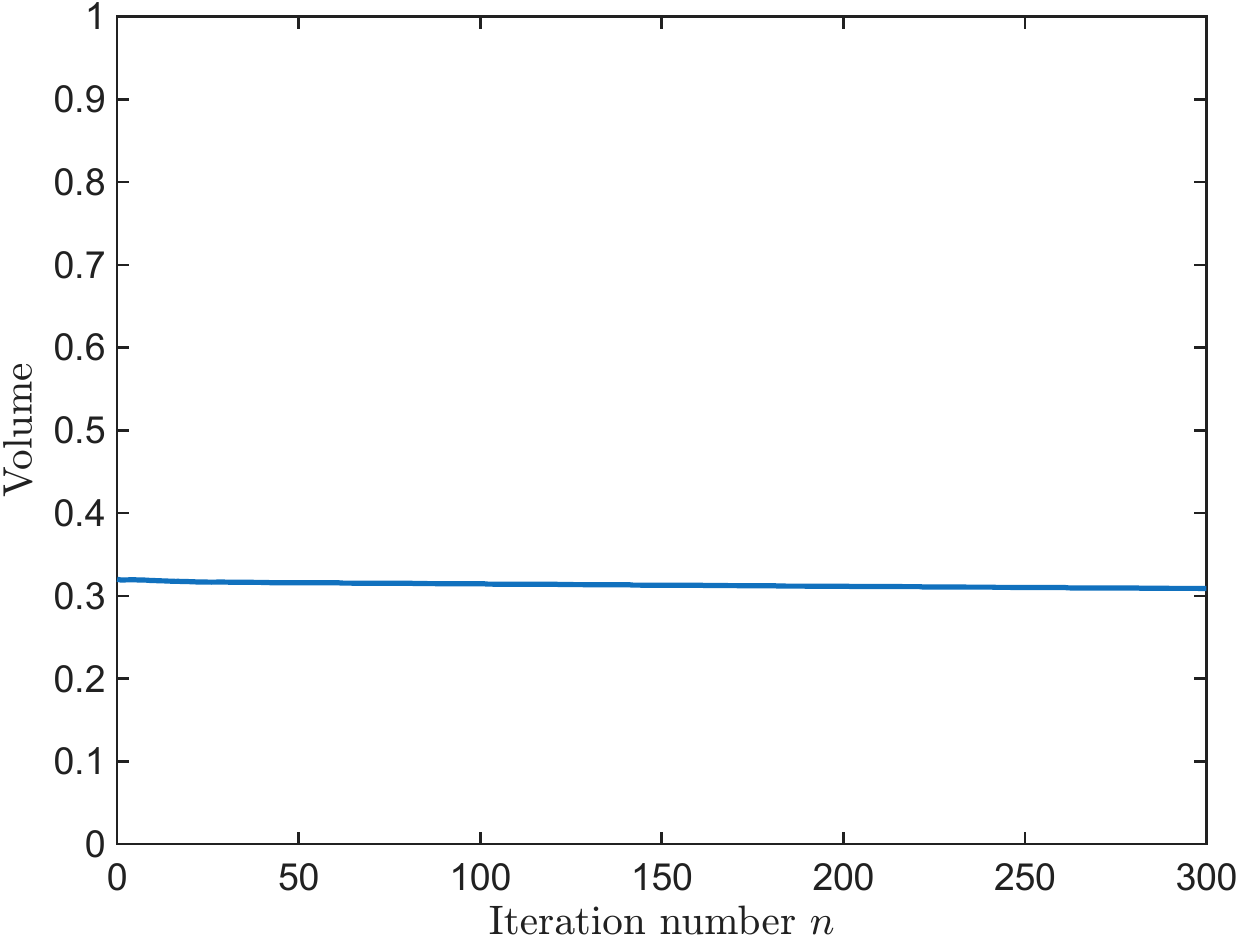} 
		\end{subfigure}
		\begin{subfigure}[b]{0.348\textwidth}			\includegraphics[width=\linewidth]{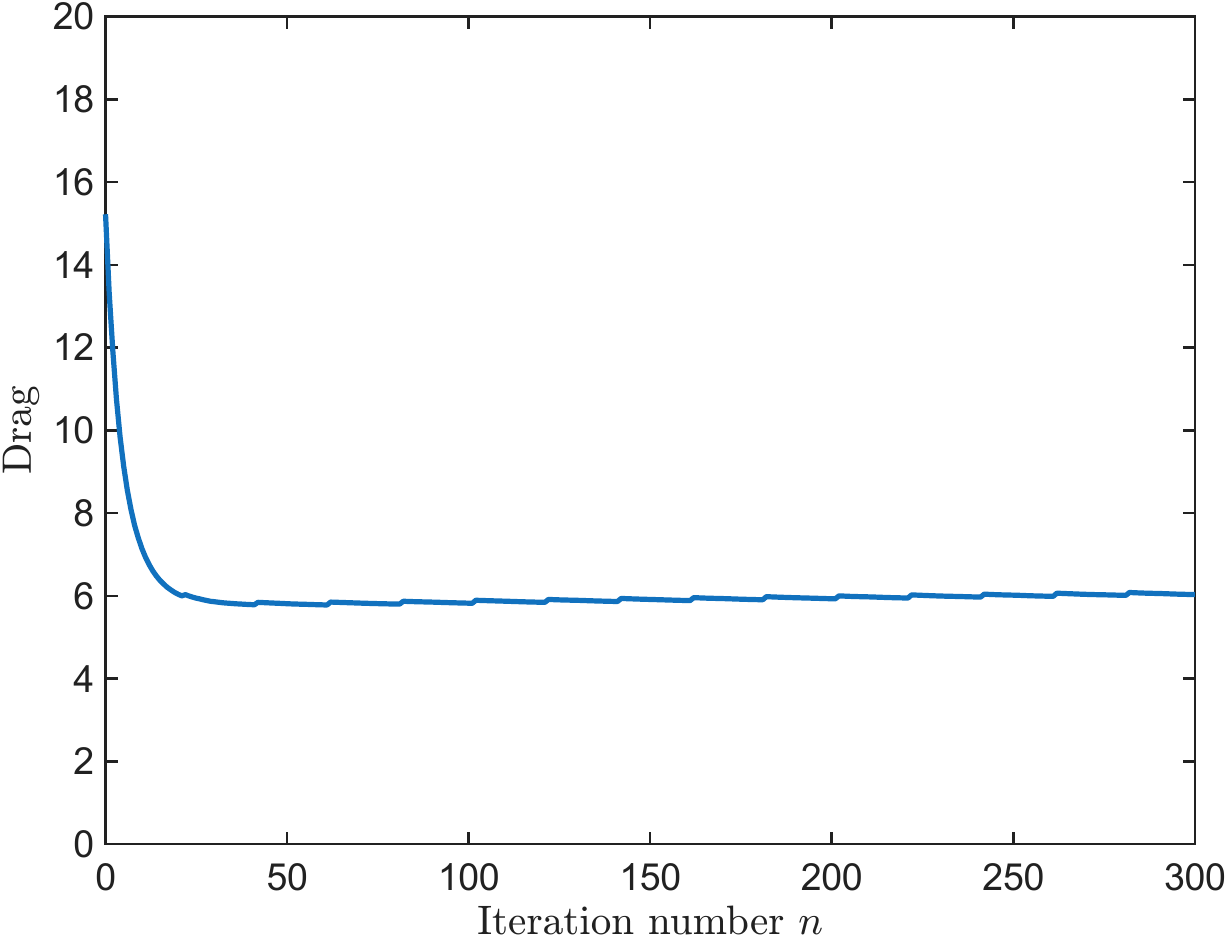}
		\end{subfigure}
		\caption{The initial design (top left), final design (top mid), stream lines of $\bm{u}$ (top right), and convergence histories of the volume (bottom left) and the energy (bottom right) for Example \ref{5_M2E3}.}
		\label{Figure_M2E3_VandJ}
	\end{figure}
		
	\begin{figure}[htbp]
		\centering
            \captionsetup[subfigure]{font=scriptsize}
		\begin{subfigure}[b]{0.24\textwidth}
			\includegraphics[width=\linewidth]{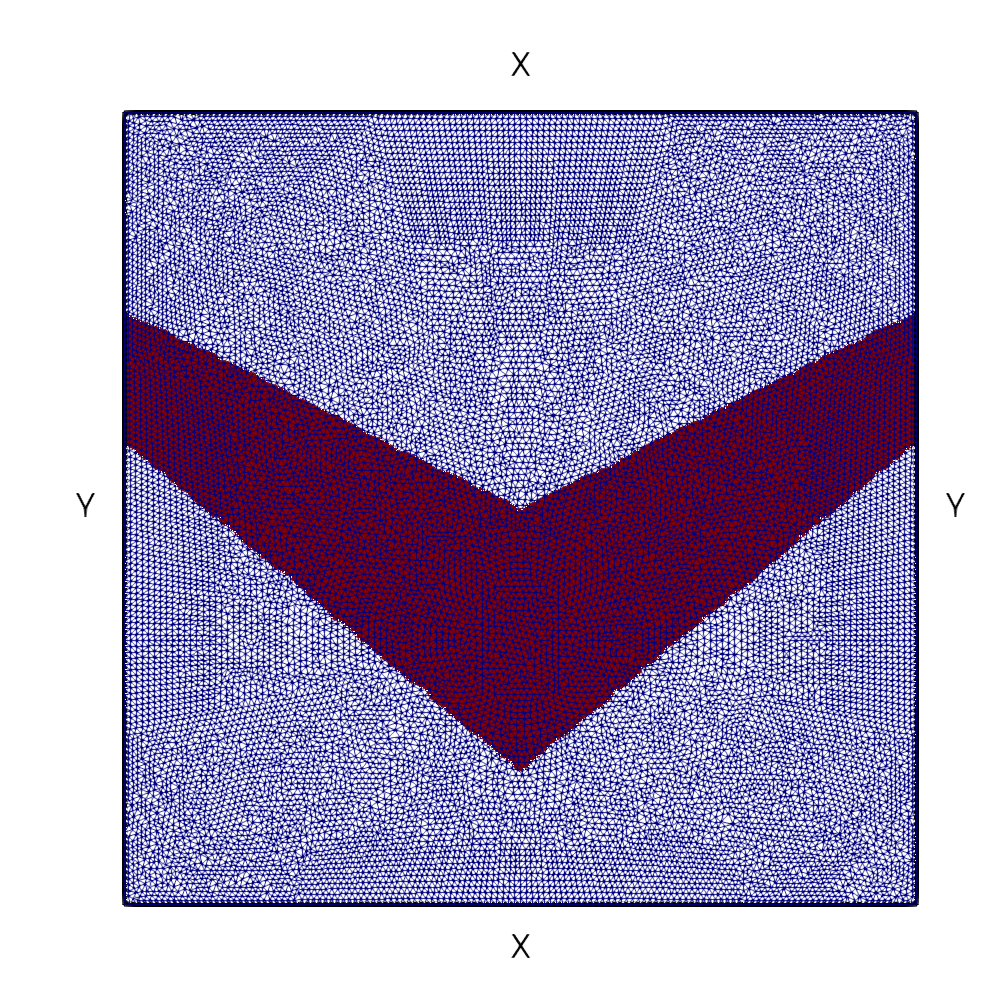} 
			\caption{Initial design}
			\label{Figure_M2E4_Rho_a}
		\end{subfigure}
		\begin{subfigure}[b]{0.24\textwidth}
			\includegraphics[width=\linewidth]{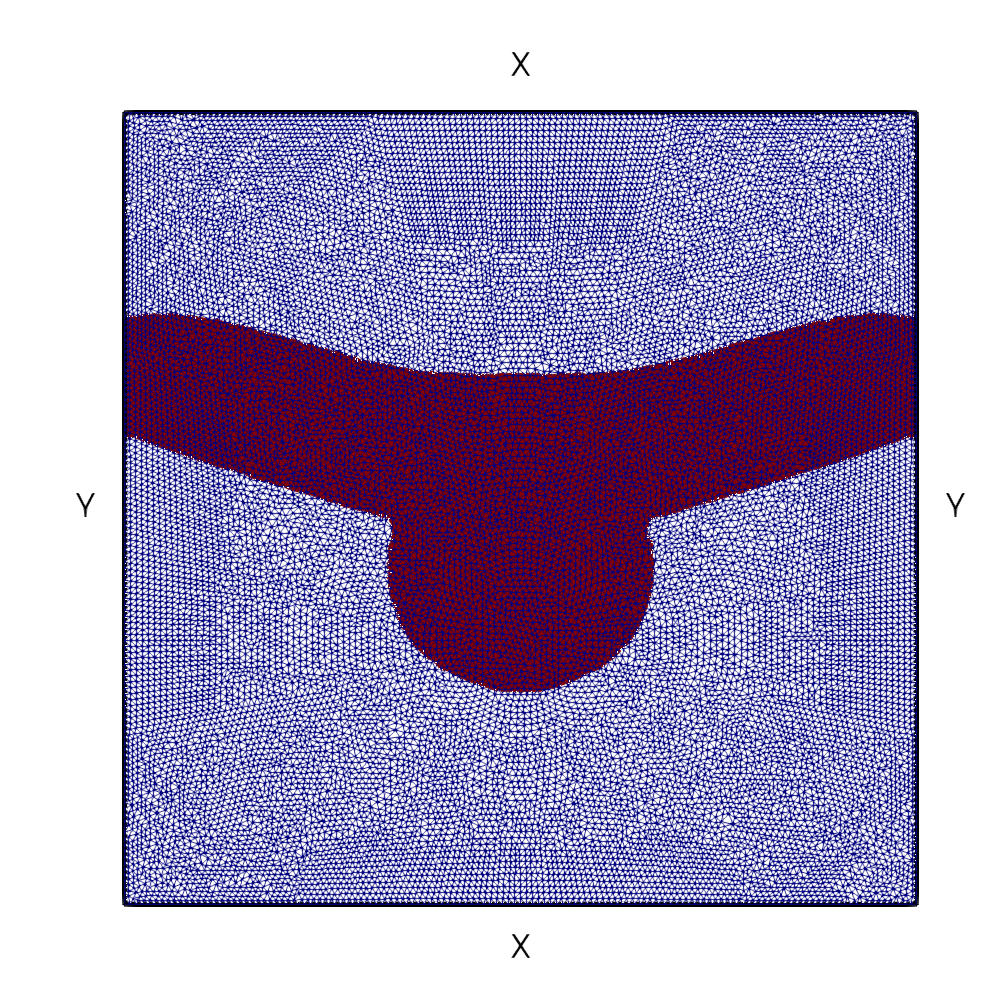}
			\caption{Final design: $\bm{f} = (-1125, 0)^T$}
			\label{Figure_M2E4P1_Rho_d}
		\end{subfigure}
		\begin{subfigure}[b]{0.24\textwidth}
			\includegraphics[width=\linewidth]{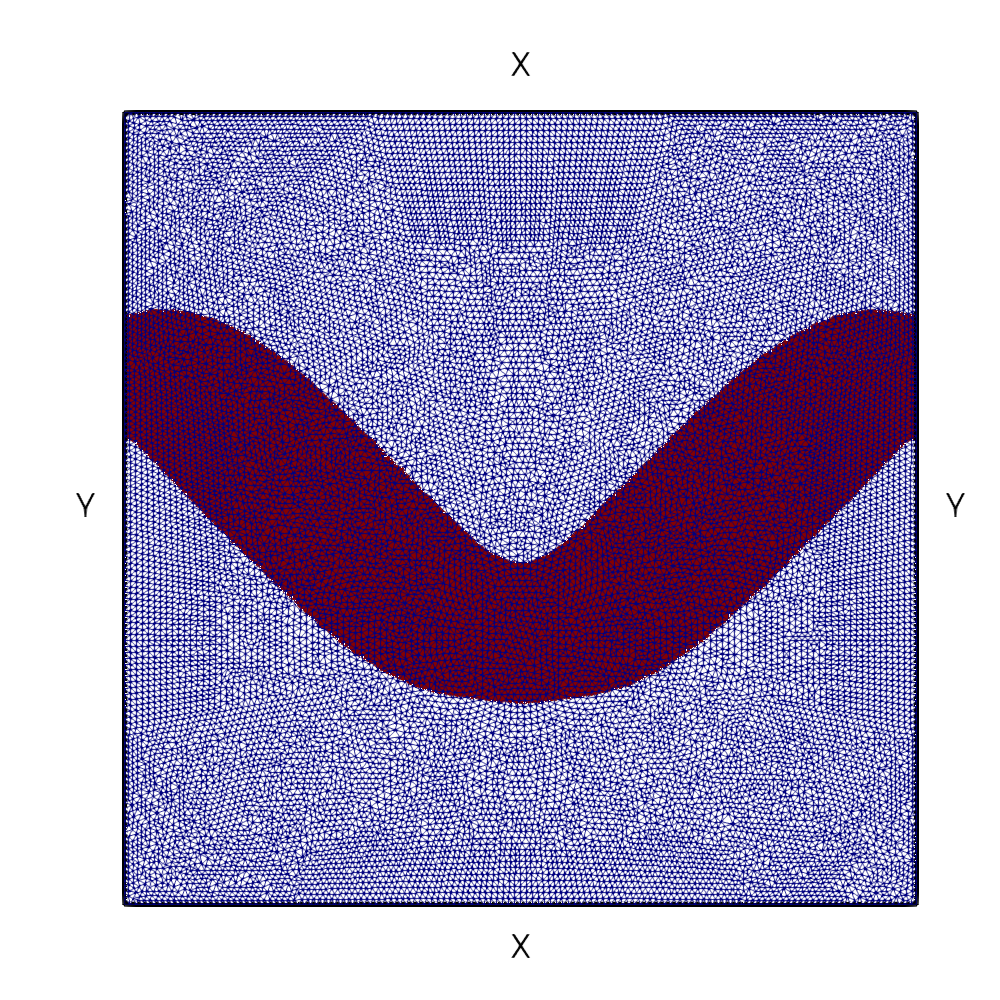}
			\caption{Final design: $\bm{f} = (562.5, 0)^T$}
			\label{Figure_M2E4P2_Rho_d}
		\end{subfigure}
		\begin{subfigure}[b]{0.24\textwidth}
			\includegraphics[width=\linewidth]{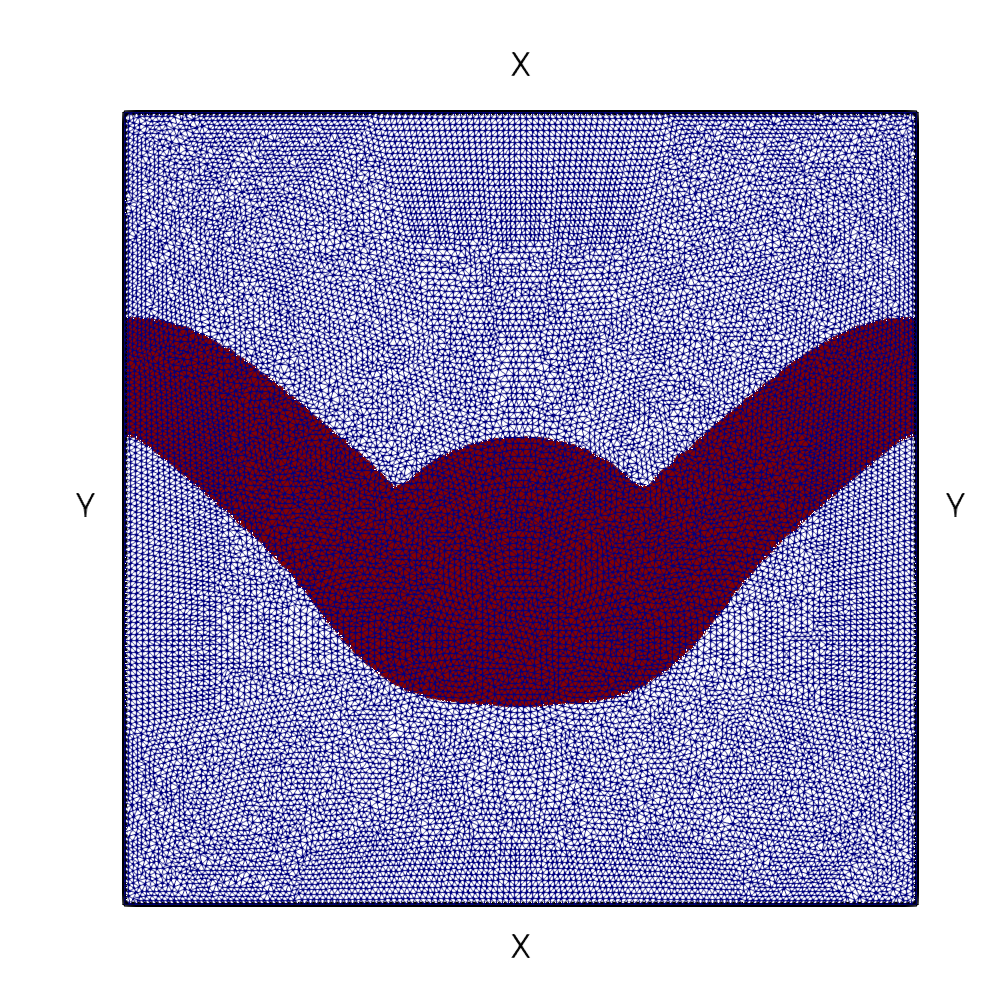}
			\caption{Final design: $\bm{f} = (1687.5, 0)^T$}
			\label{Figure_M2E4P3_Rho_d}
		\end{subfigure}
		\caption{Three cases for Example \ref{5_M2E4}.}
		\label{c}
	\end{figure}
	
	\begin{figure}[htbp]
		\centering
		\begin{subfigure}[b]{0.317\textwidth}			\includegraphics[width=\linewidth]{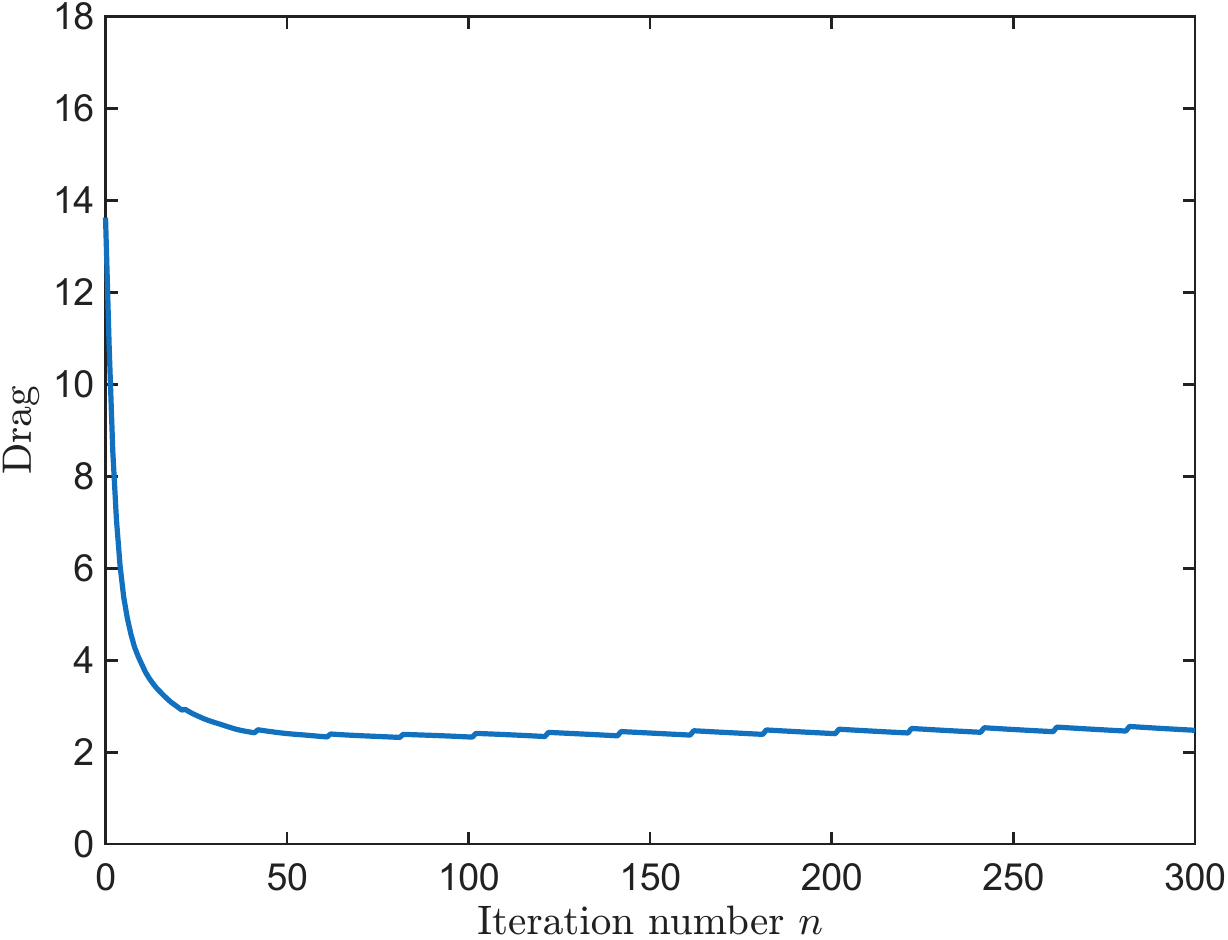} 
		\end{subfigure}
		\hfill 
		\begin{subfigure}[b]{0.311\textwidth}			\includegraphics[width=\linewidth]{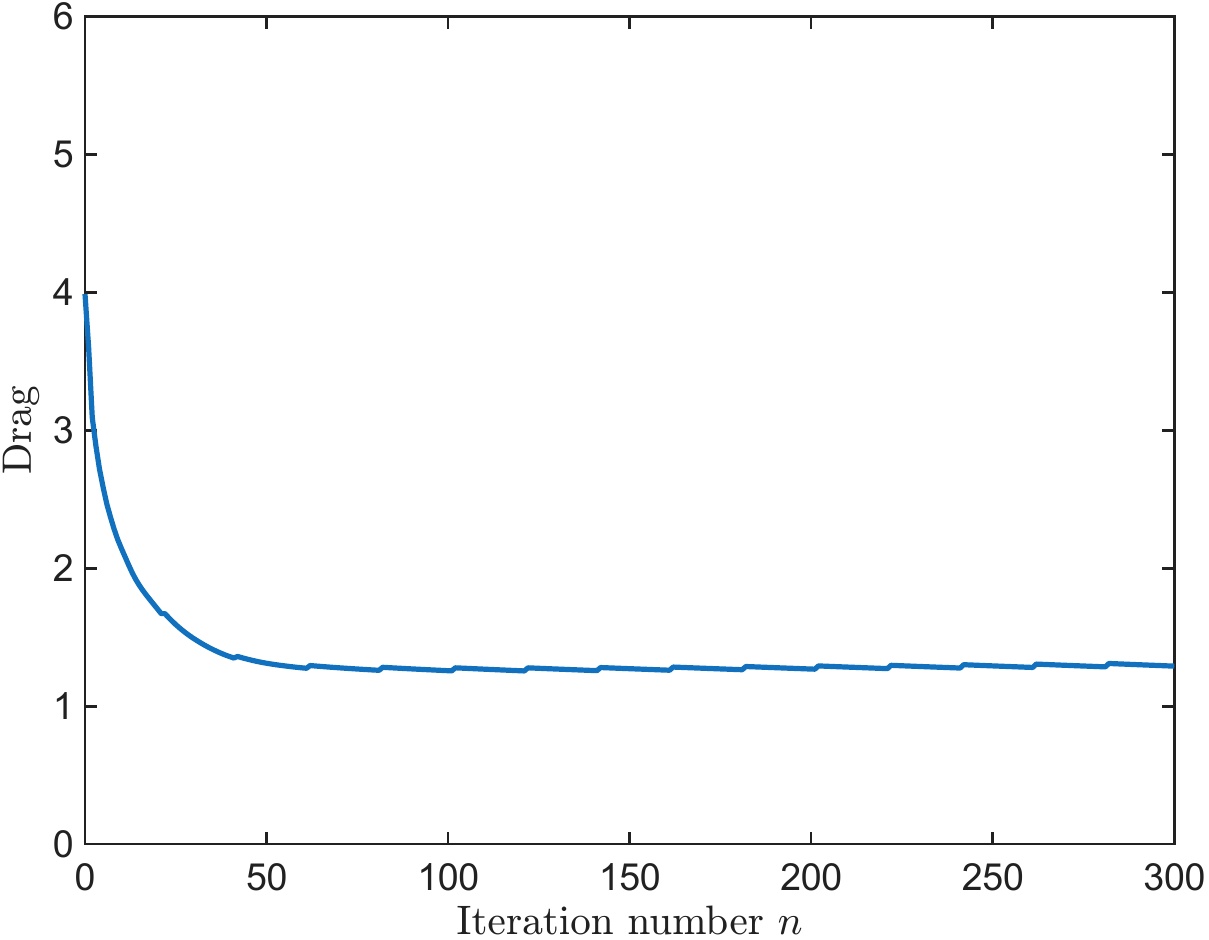}
		\end{subfigure}
		\hfill
		\begin{subfigure}[b]{0.32\textwidth}
			\includegraphics[width=\linewidth]{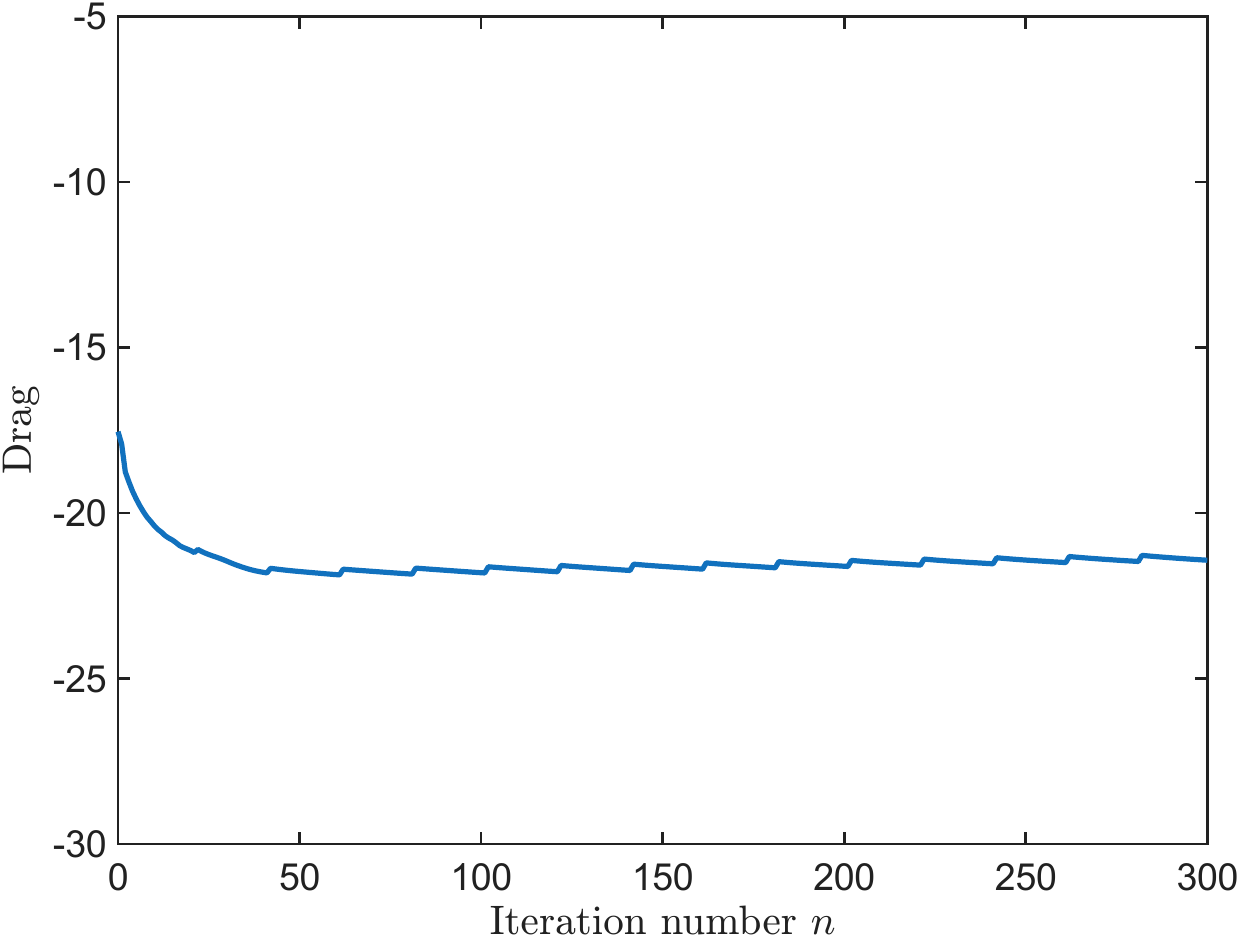}
		\end{subfigure}
		\centering
		\begin{subfigure}[b]{0.32\textwidth}
			\includegraphics[width=\linewidth]{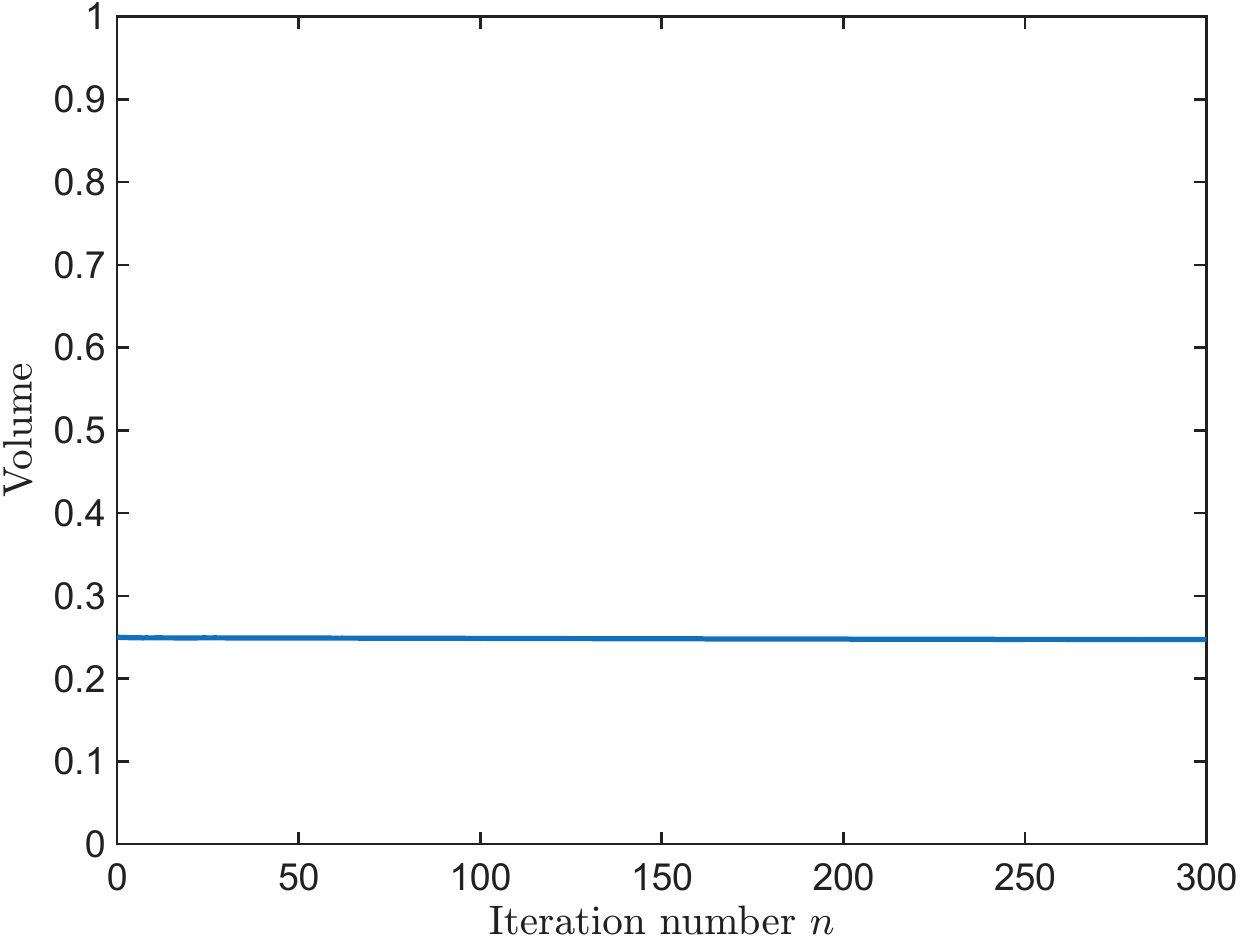} 
		\end{subfigure}
		\hfill 
		\begin{subfigure}[b]{0.32\textwidth}
			\includegraphics[width=\linewidth]{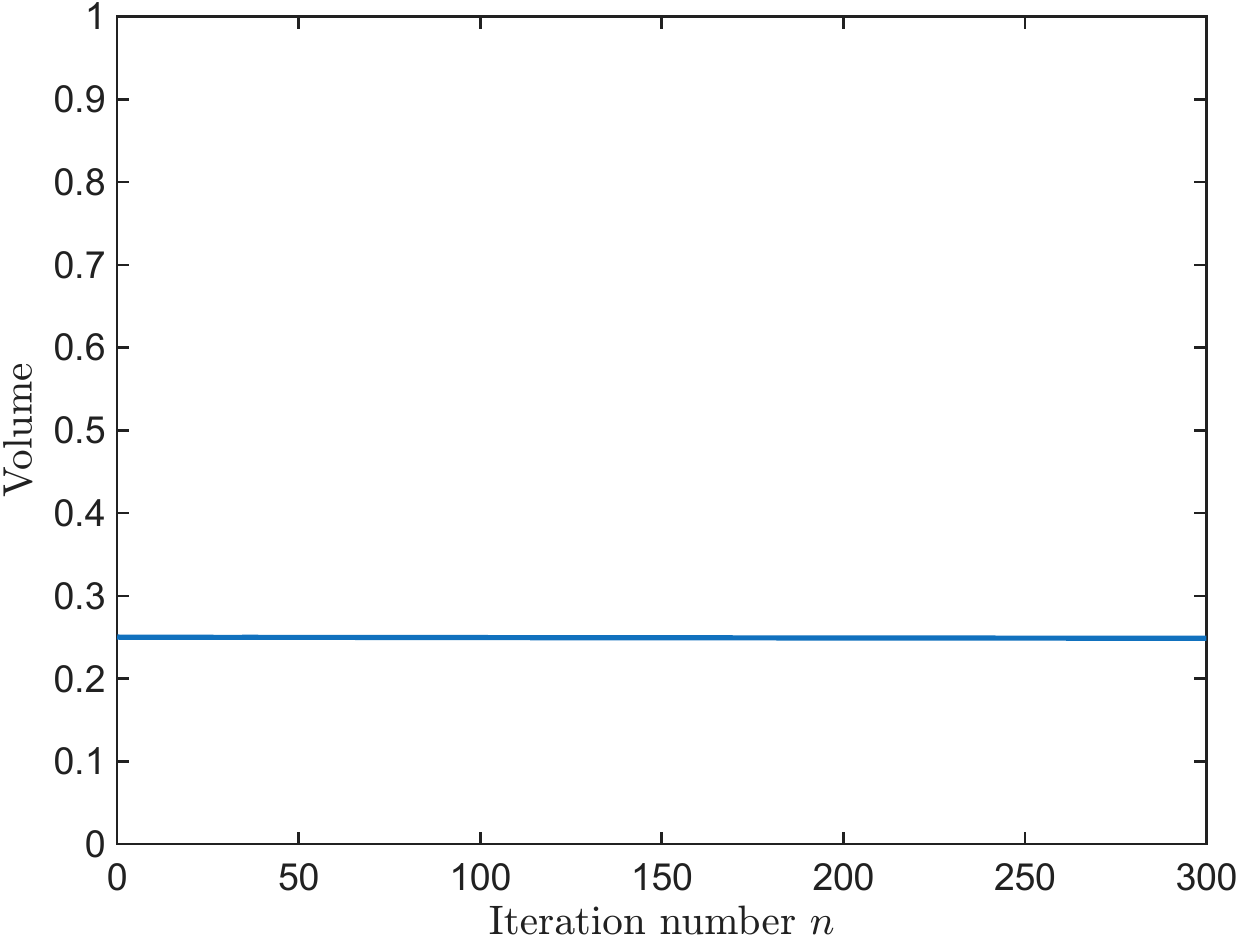}
		\end{subfigure}
		\hfill
		\begin{subfigure}[b]{0.32\textwidth}
			\includegraphics[width=\linewidth]{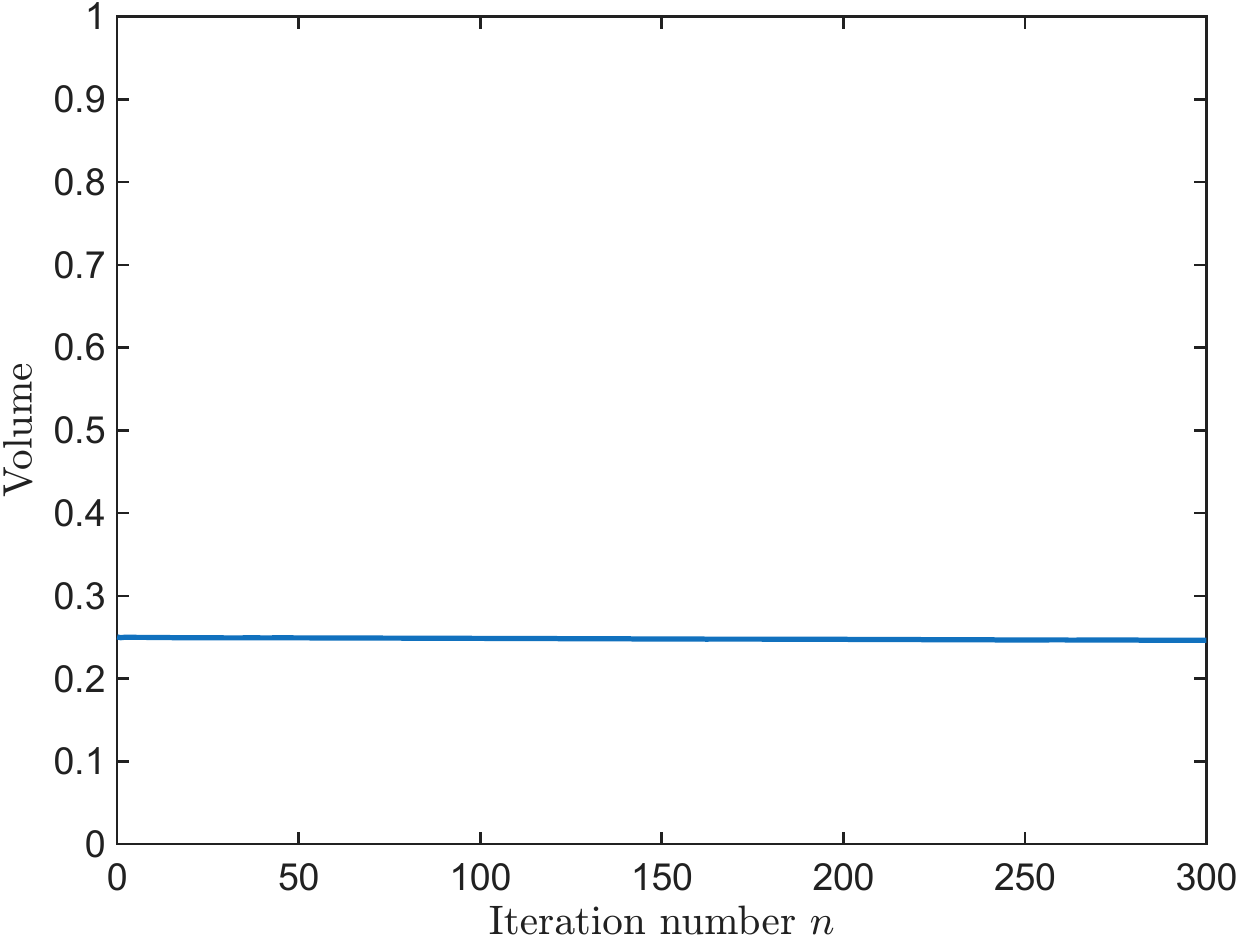}
		\end{subfigure}
		\caption{Convergence histories of the objective functional (top) and the volume (bottom) for Example \ref{5_M2E4}: Case 1, Case 2, and Case 3 from left to right.}
		\label{Figure_M2E4_VandJ}
	\end{figure}

    \begin{example}
        \label{5_M2E5}
        (2D flows past an obstacle \cite{pironneau1973optimum_1})  Set $D = (0,2) \times (0,1)$, $\Gamma_{\rm{in}} = \lbrace (0,y): 0 \leqslant y \leqslant 1 \rbrace$ and  $\Gamma_{\rm{out}} = \left\lbrace (2,y): 0 \leqslant y \leqslant 1 \right\rbrace$ (see the right plot of \autoref{5_M4_Ex3}). Set
	\begin{equation}
		\bm{g}_f = \left\{
		\begin{aligned}
			\Big(4y(1-y), 0\Big)^T,\ & x = 0 \text{ or } 2, 0 \leqslant y \leqslant 1, \\
			(0, 0)^T,\ & \text{ otherwise on } \partial D.
		\end{aligned}
        \right.
	\end{equation}
        \end{example}
        
The initial shape with a circular obstacle is shown in \autoref{5_M4_Ex3}, where the final shape can also be found. \autoref{Figure_M2E5_VandJ} shows the variations of the approximate volume and the drag, respectively.

    \begin{figure}[htbp]
    \centering
        \begin{subfigure}[b]{0.32\textwidth}
			\centering
			\includegraphics[width=2.5in]{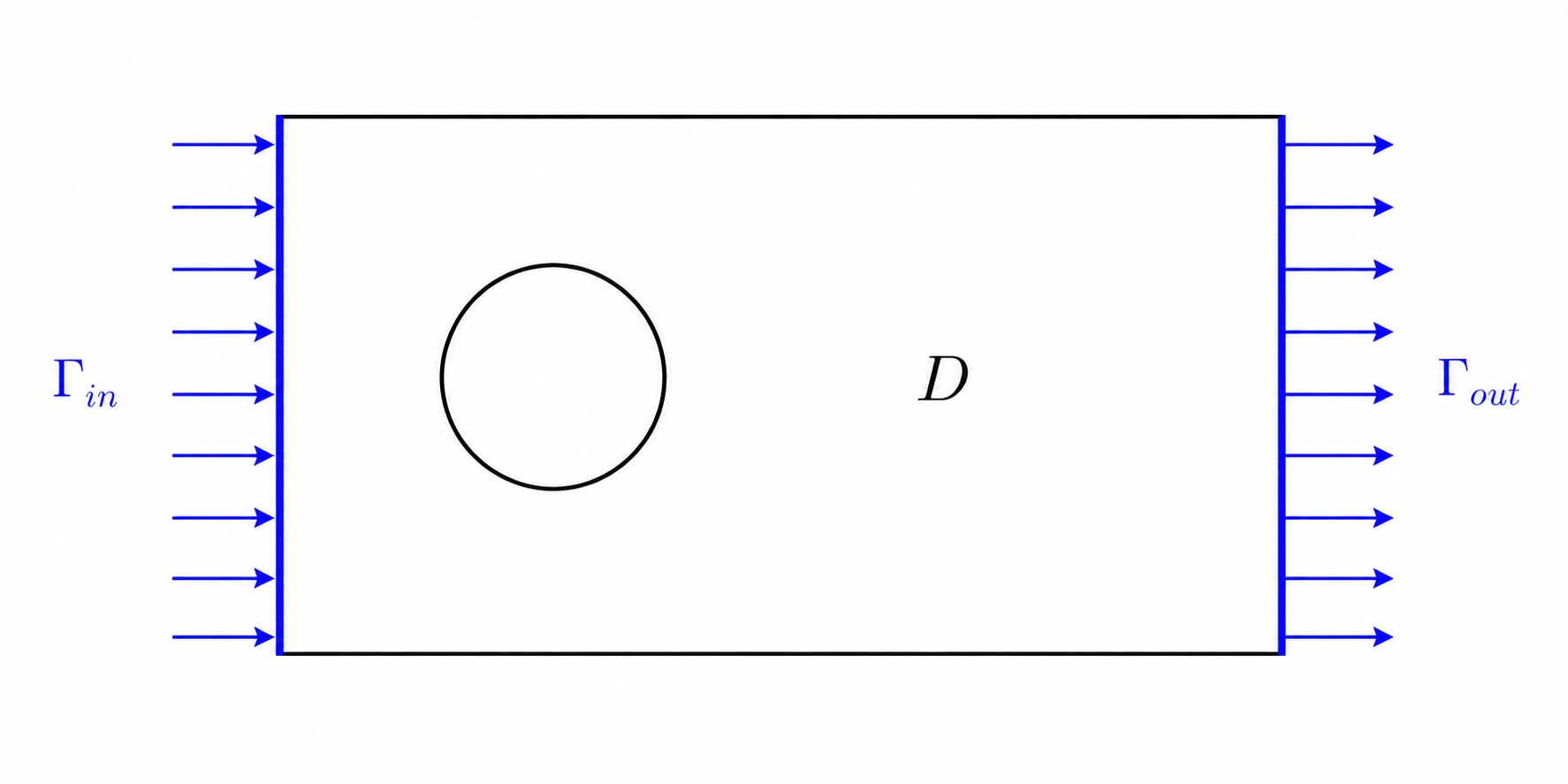} 
	\end{subfigure}
        \begin{subfigure}[b]{0.32\textwidth}
			\centering
			\includegraphics[width=2.3in]{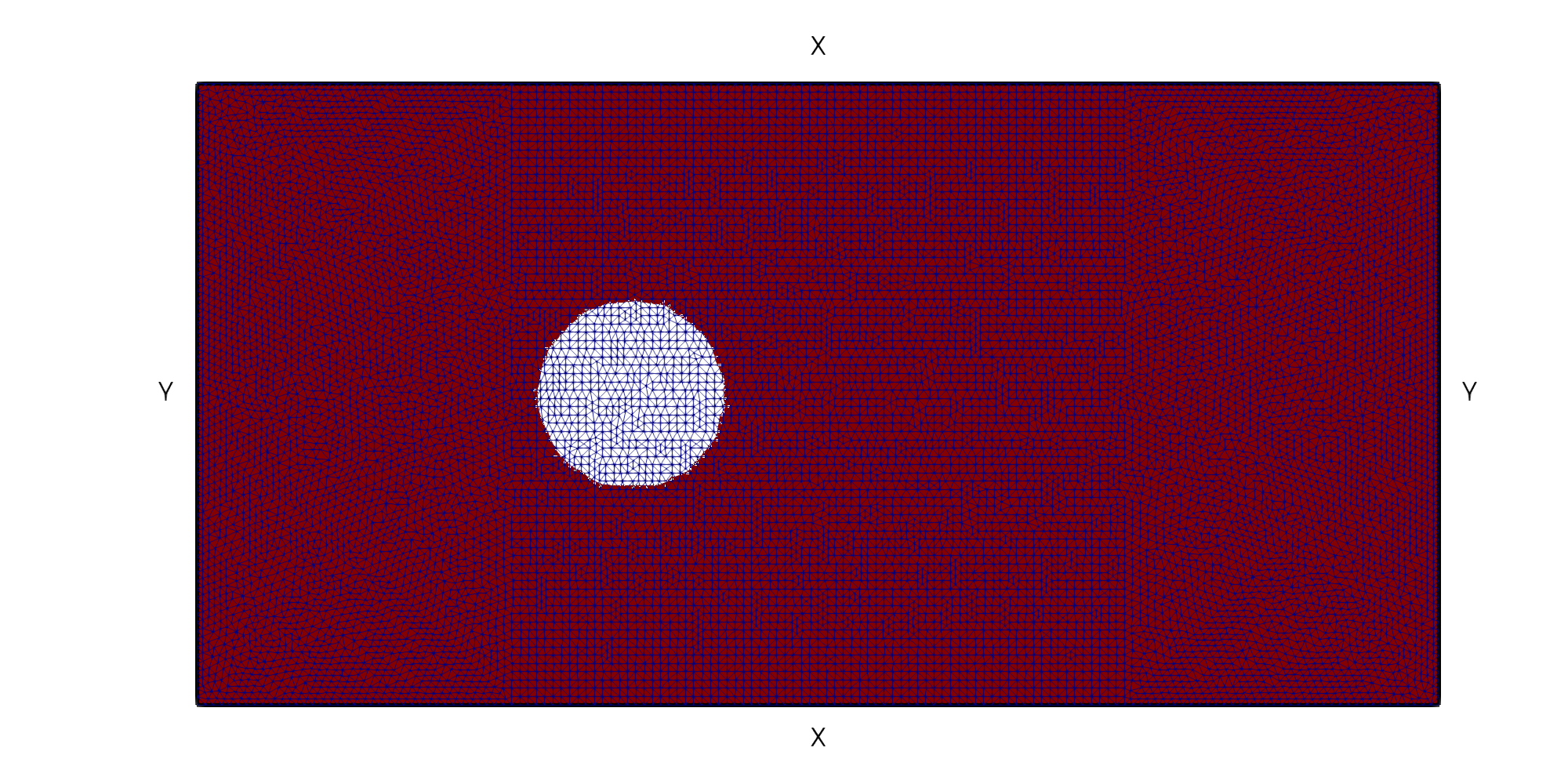} 
	\end{subfigure}
        \begin{subfigure}[b]{0.32\textwidth}
			\centering
			\includegraphics[width=2.5in]{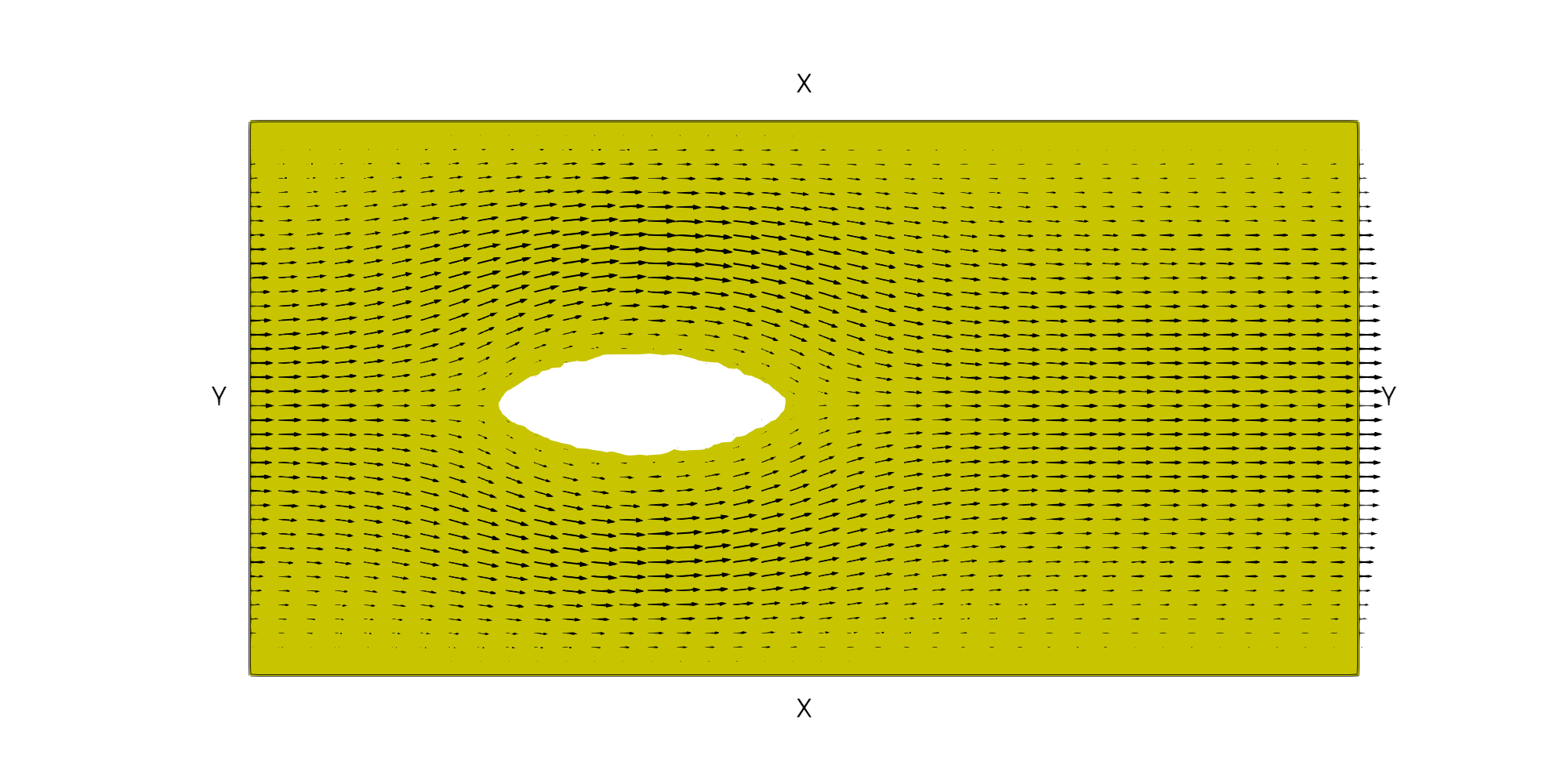}
	\end{subfigure}
    \caption{Problem setting, initial design, final design and streamlines of $\bm{u}$ (from left to right) for Example \ref{5_M2E5}.}
    \label{5_M4_Ex3}
    \end{figure}
	
    \begin{figure}[htbp]
		\centering
		\begin{subfigure}[b]{0.343\textwidth}			\includegraphics[width=\linewidth]{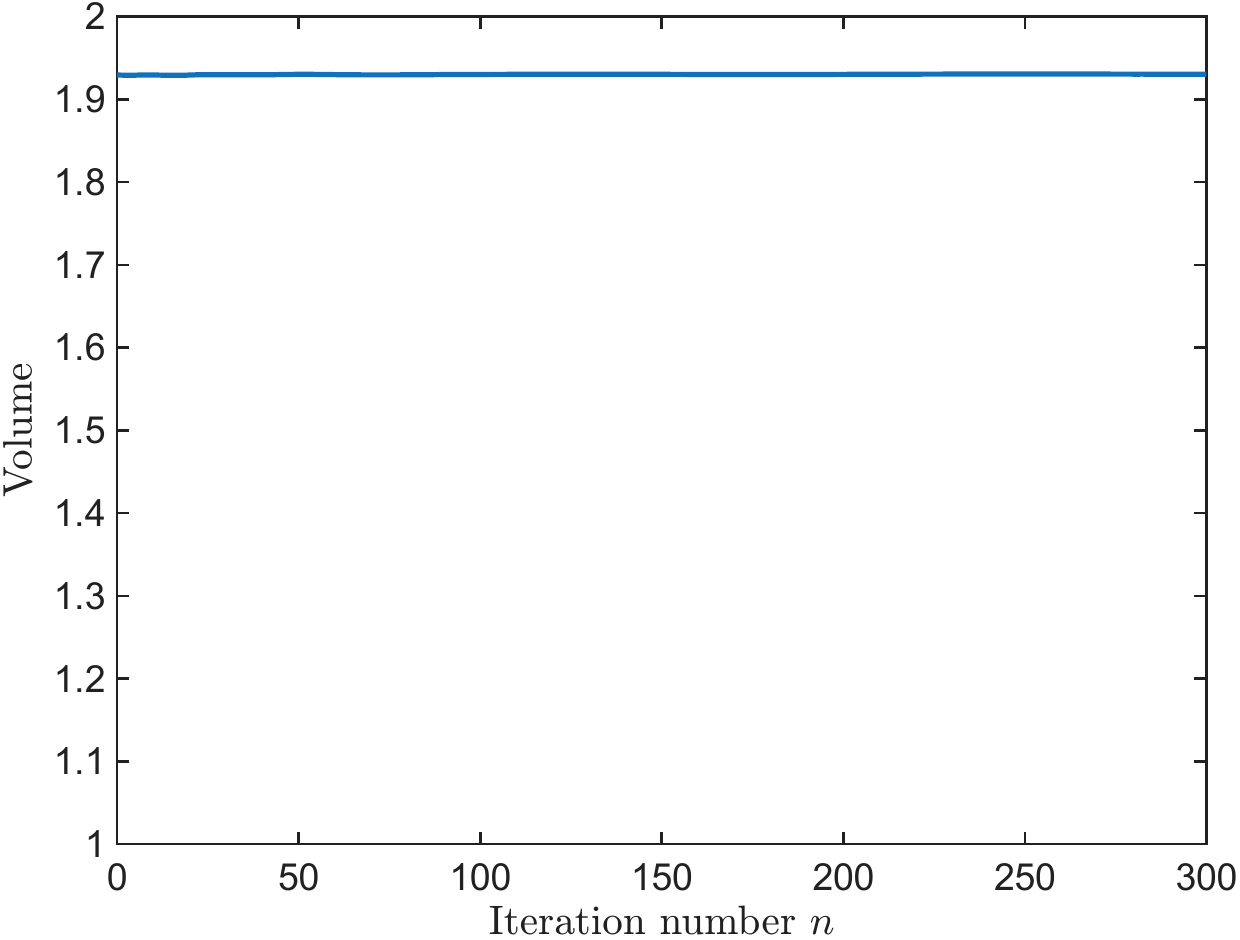}
		\end{subfigure}
		\begin{subfigure}[b]{0.337\textwidth}		\includegraphics[width=\linewidth]{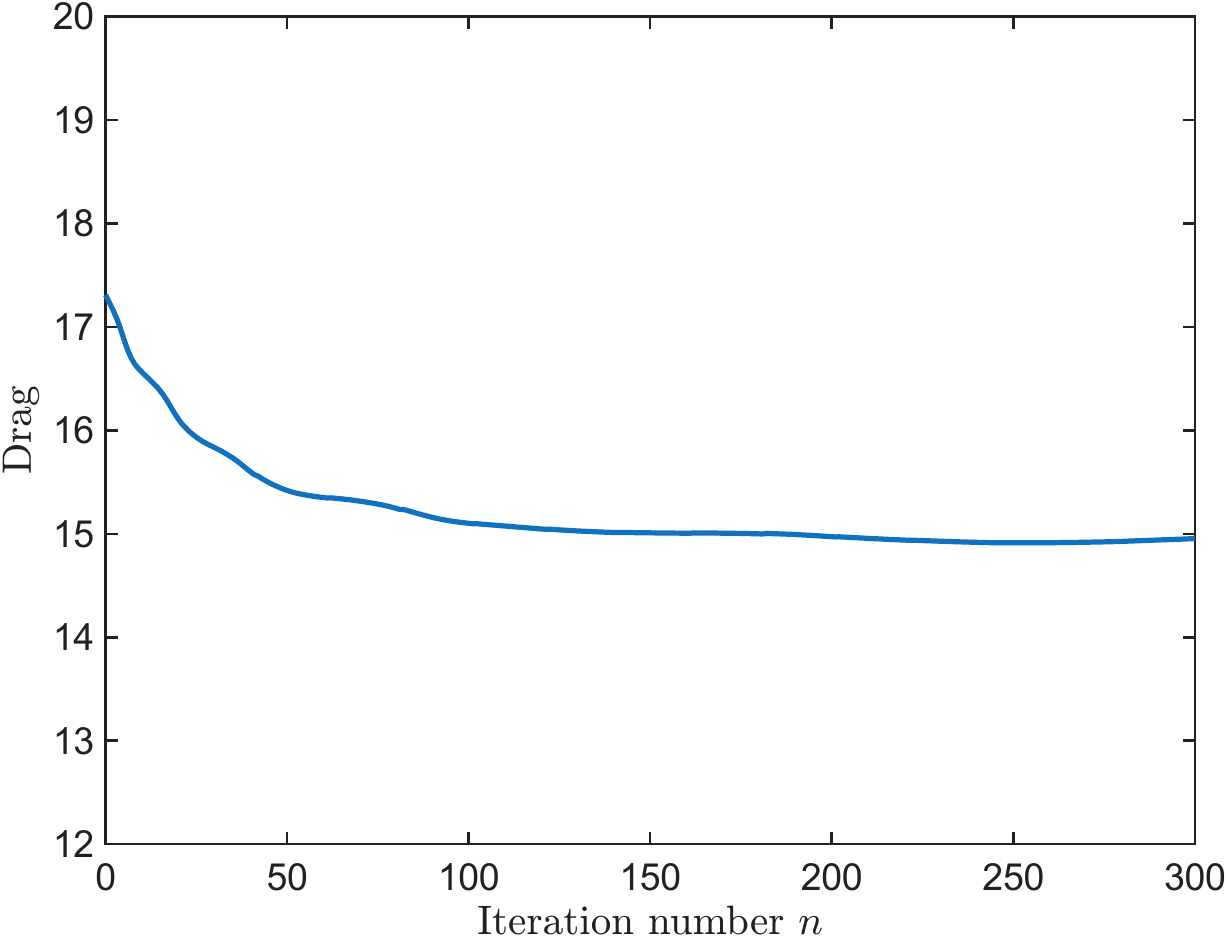}
		\end{subfigure}
		\caption{Convergence histories of the volume (left) and the energy (right) for Example \ref{5_M2E5}.}
		\label{Figure_M2E5_VandJ}
    \end{figure}
  
	\begin{example}
		\label{5_M3E1}
		(3D flow channel design \cite{BorrvallPetersson:2003}) Set $D = (0,1)^3$, $\Gamma_{\rm{in}} = \left\lbrace (x,0,z): (x,0,z) \in D \right\rbrace$, $\bm{g}_f = (0,1,0)^T$, and
        $\Gamma_{\rm{out}} = \left\lbrace (x,1,z): (x-0.5)^2+(z-0.5)^2 \leqslant 0.04 \right\rbrace$.
	\end{example}
	
\autoref{Figure_M3E1_VandJ} shows topological changes, volume-control behavior, and decrease in drag, respectively.
	
	\begin{example}
		\label{5_M3E2}
		(3D multi-inlet flow channel design \cite{BorrvallPetersson:2003})  Set $D = (0,1)^3$, 
        \[
        \Gamma_{\rm{in}} = \left\lbrace (x,0,z),(x,1,z): (x-0.5)^2+(z-0.7)^2
        \leqslant 0.04 \right\rbrace \bigcup \left\lbrace (0,y,z),(1,y,z): (y-0.5)^2+(z-0.7)^2  \leqslant 0.04 \right\rbrace
        \]\noindent
        and $\Gamma_{\rm{out}} = \left\lbrace (x,y,0): (x-0.5)^2+(y-0.5)^2 \leqslant 0.04  \right\rbrace$. 
Set	\begin{equation}
		\bm{g}_f = 
		\begin{cases}
			(0,1,0)^T, & y = 0, \\
			(0,-1,0)^T, & y = 1, \\
			(1,0,0)^T, & x = 0, \\
			(-1,0,0)^T, & x = 1.
		\end{cases}
	\end{equation} 
    	\end{example}
        
	\autoref{Figure_M3E2_VandJ} shows shape changes, volume-control behavior, and a monotonic decrease in drag.

    \begin{example}
        \label{5_M3E3}
        (3D flows past an obstacle \cite{pironneau1973optimum_1})  Set $D = (0,1) \times (0,2) \times (0,1)$,  $\Gamma_{\rm{in}} = \lbrace (x,0,z): 0 \leqslant x,z \leqslant 1 \rbrace$ and  $\Gamma_{\rm{out}} = \left\lbrace (x,2,z): 0 \leqslant x,z \leqslant 1 \right\rbrace$ (see the left plot of \autoref{5_M4_Ex3}).
   Set
	\begin{equation}
		\bm{g}_f = \left\{
		\begin{aligned}
			\Big(0,48x(1-x)z(1-z), 0\Big)^T, \ & y = 0\ {\rm or }\ 2, 0 \leqslant x,z \leqslant 1, \\
			(0,0, 0)^T, \ & {\rm otherwise\ on\ } \partial D.
		\end{aligned}
        \right.
	\end{equation}
        \end{example}
        
 \autoref{Figure_M3E3_VandJ} shows the design process, the evolution of the approximate volume, and the convergence history of drag.
	
	\begin{figure}[htbp]
        \centering
		\begin{subfigure}[b]{0.24\textwidth}
			\includegraphics[width=\linewidth]{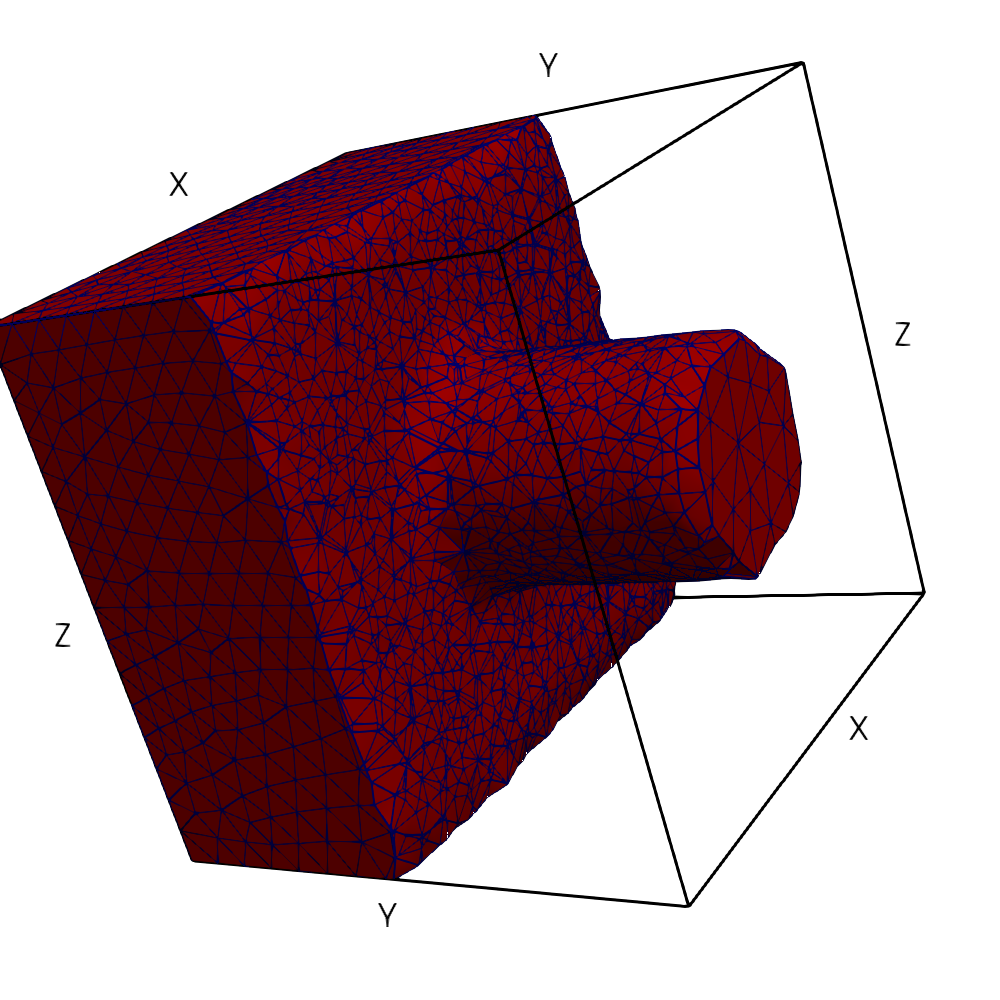} 
		\end{subfigure}
		\begin{subfigure}[b]{0.24\textwidth}
			\includegraphics[width=\linewidth]{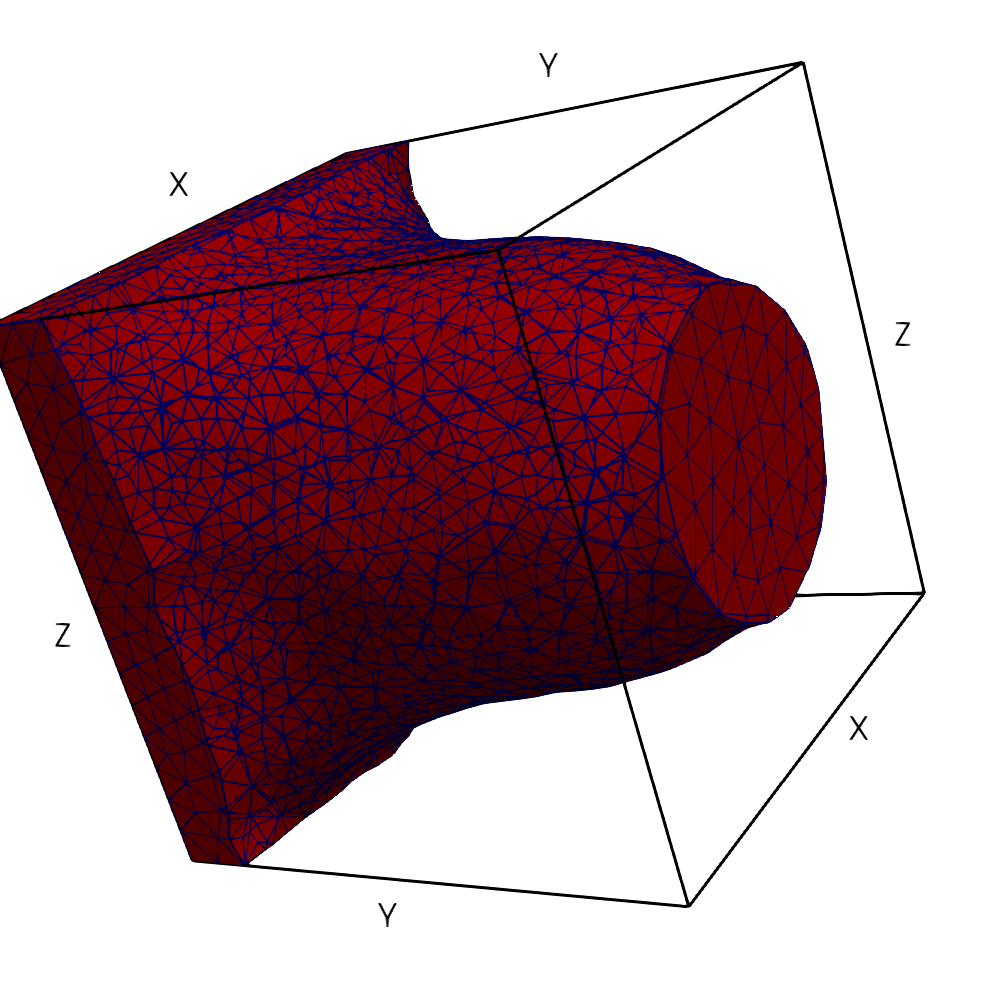}
		\end{subfigure}
        \begin{subfigure}[b]{0.24\textwidth}
			\includegraphics[width=\linewidth]{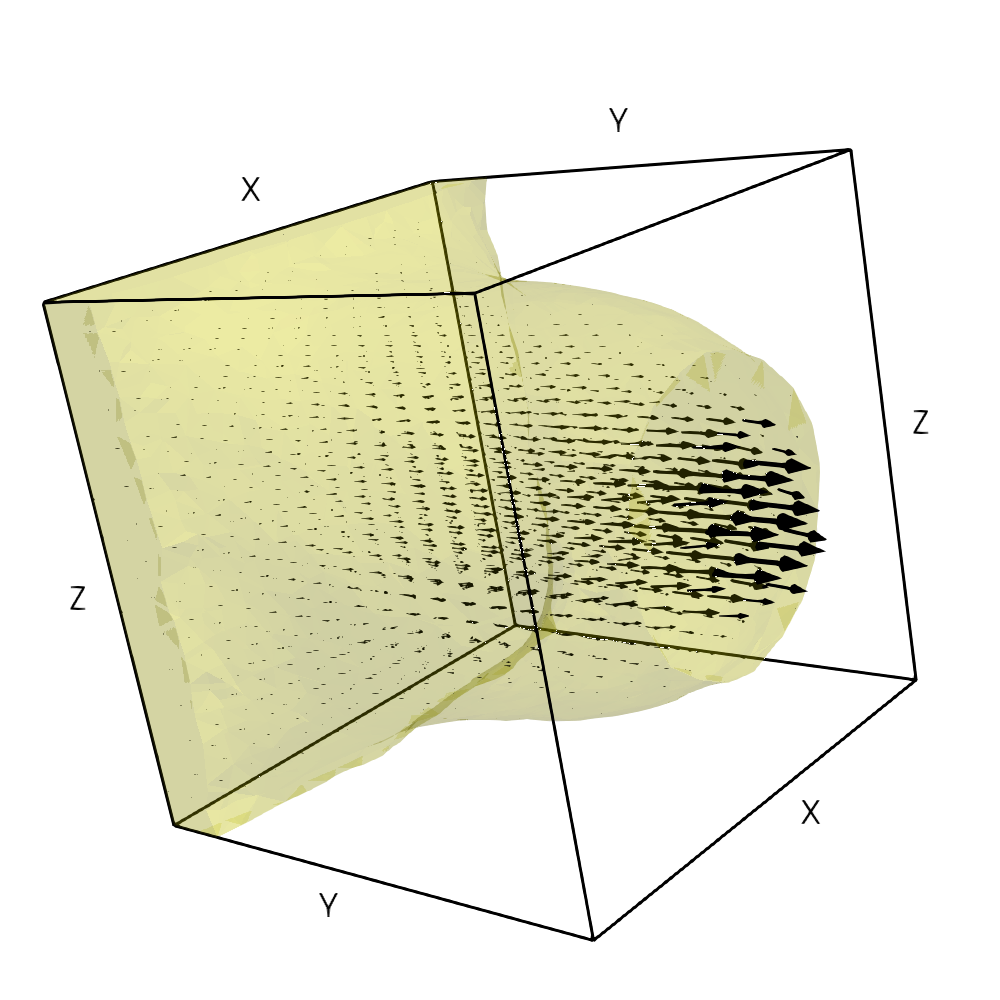}
		\end{subfigure}
		\begin{subfigure}[b]{0.348\textwidth}		\includegraphics[width=\linewidth]{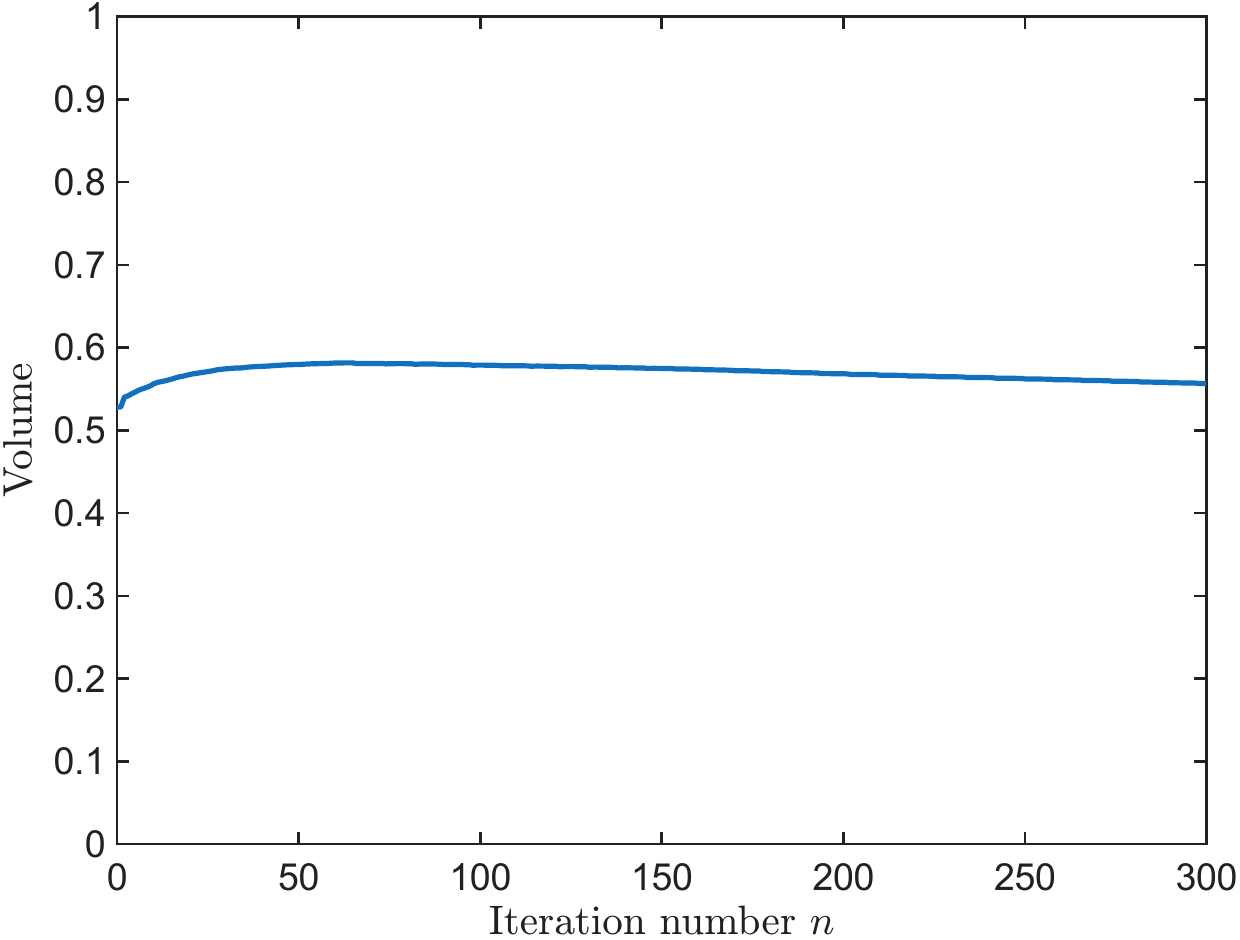} 
		\end{subfigure}
		\begin{subfigure}[b]{0.351\textwidth}		\includegraphics[width=\linewidth]{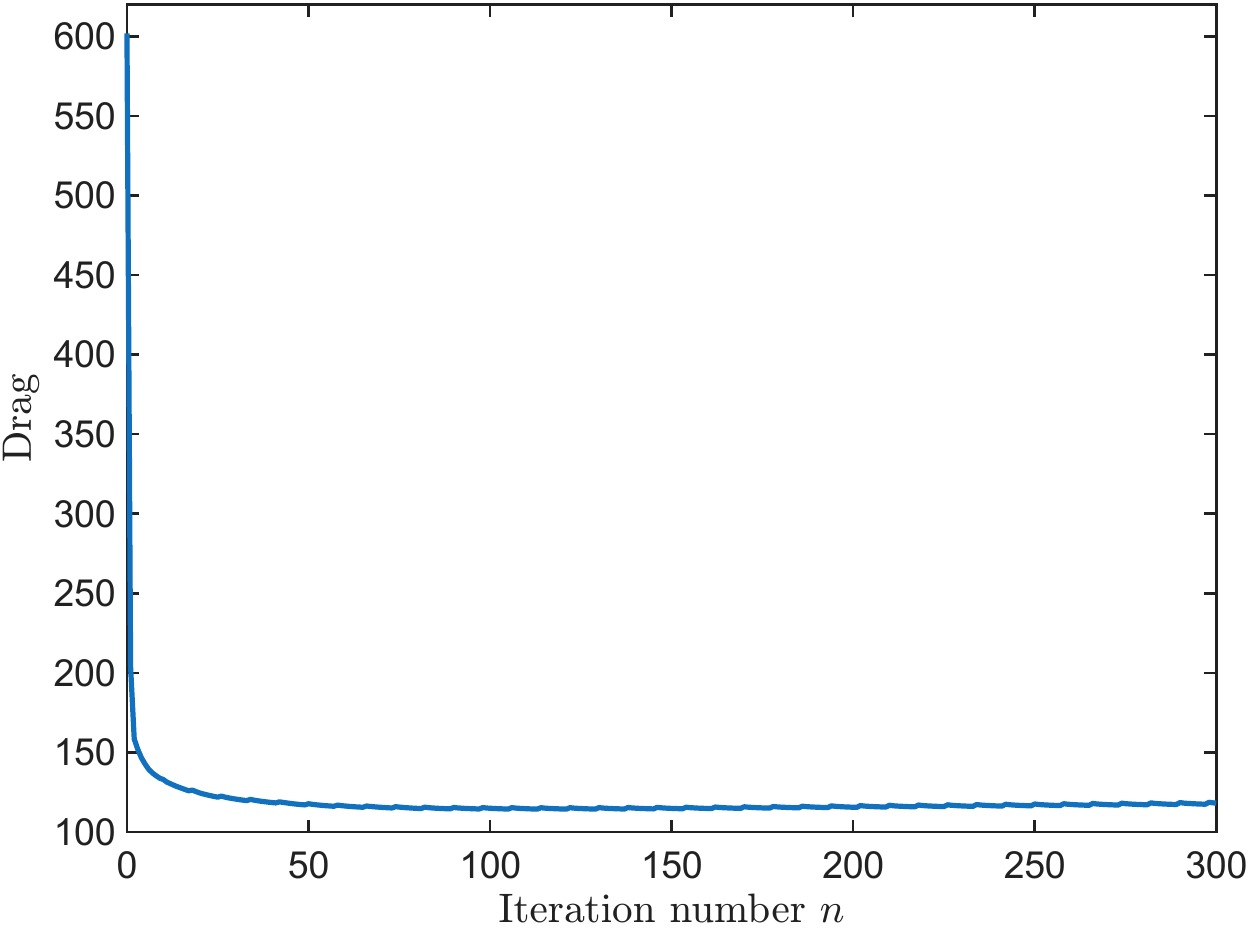}
		\end{subfigure}
		\caption{Evolutions of the design ($n = 0,10,300$) from left to right (first row), streamlines of $\bm{u}$ (top right) and convergence histories of the volume (bottom left) and energy (bottom right) for Example \ref{5_M3E1}.}
		\label{Figure_M3E1_VandJ}
	\end{figure}
	
	\begin{figure}[htbp]
		\centering
        		\begin{subfigure}[b]{0.24\textwidth}
			\includegraphics[width=\linewidth]{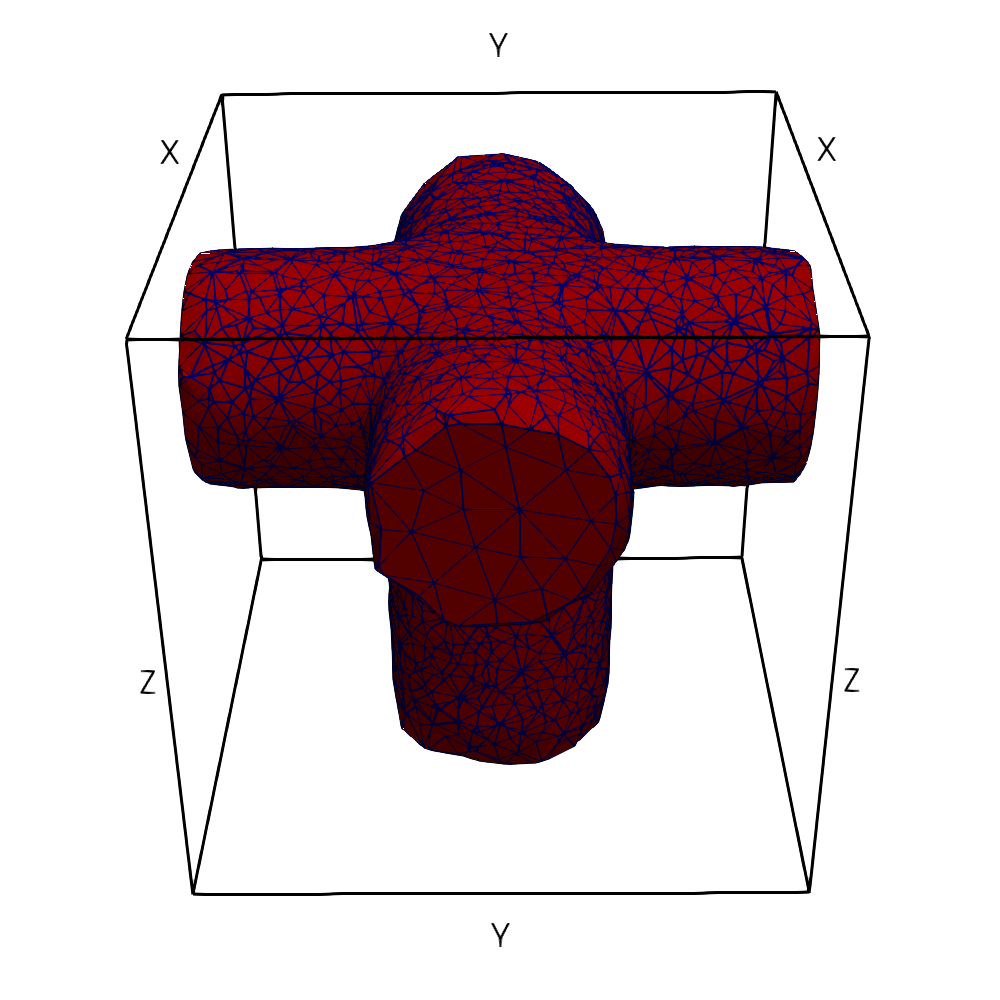} 
		\end{subfigure}
        \begin{subfigure}[b]{0.24\textwidth}
			\includegraphics[width=\linewidth]{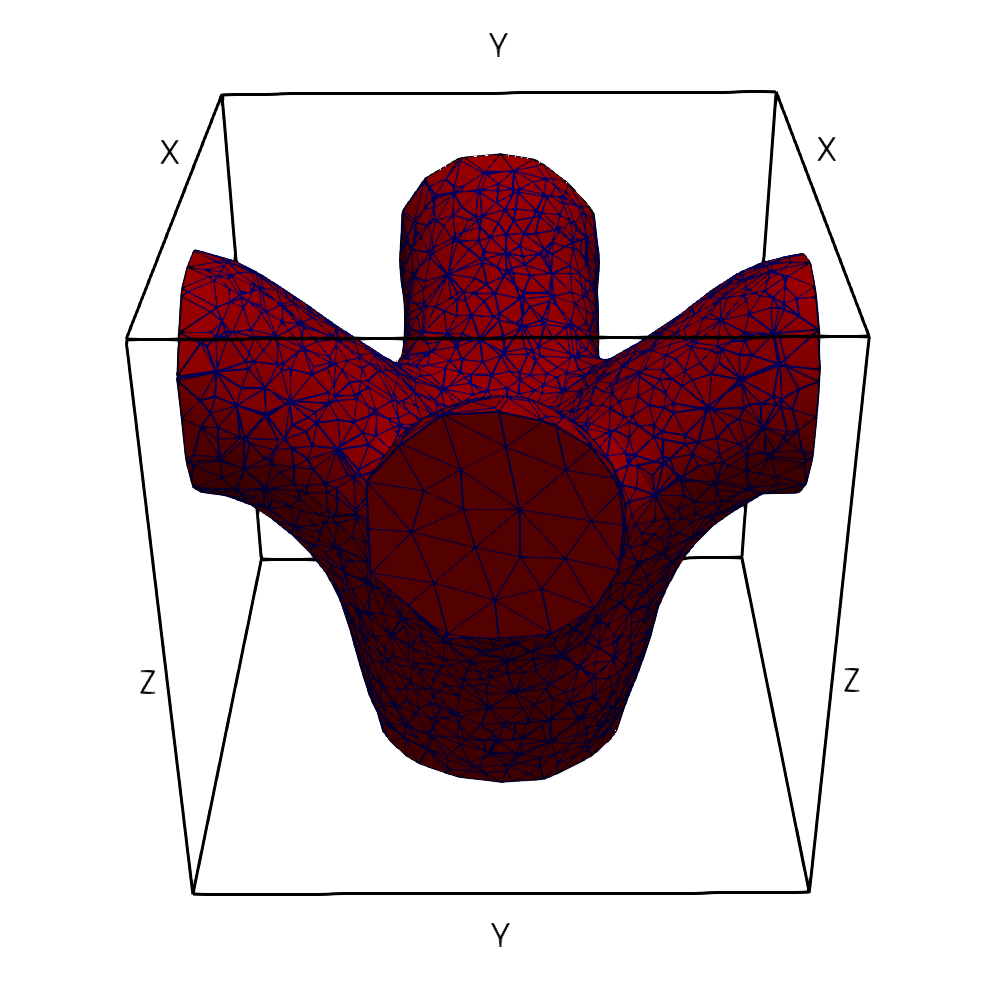}
		\end{subfigure}
        \begin{subfigure}[b]{0.24\textwidth}
			\includegraphics[width=\linewidth]{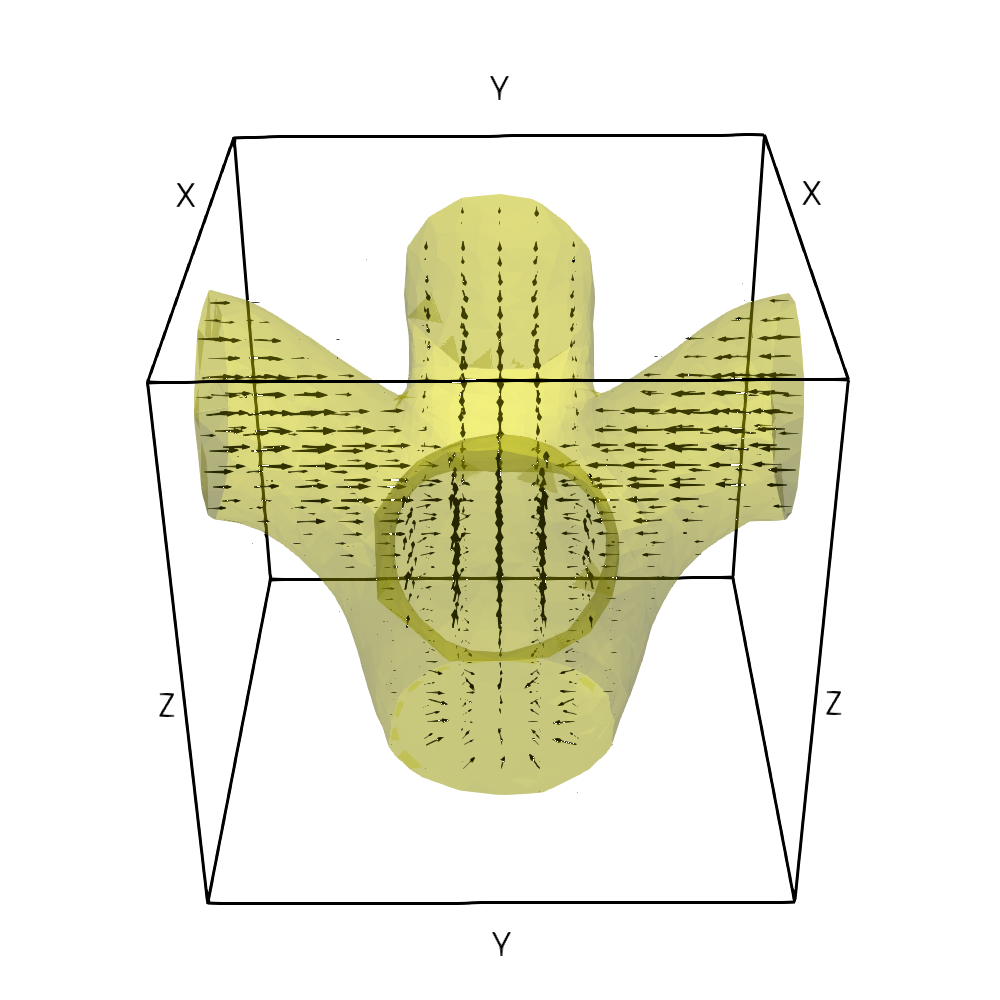}
		\end{subfigure}
        
		\begin{subfigure}[b]{0.33\textwidth}			\includegraphics[width=\linewidth]{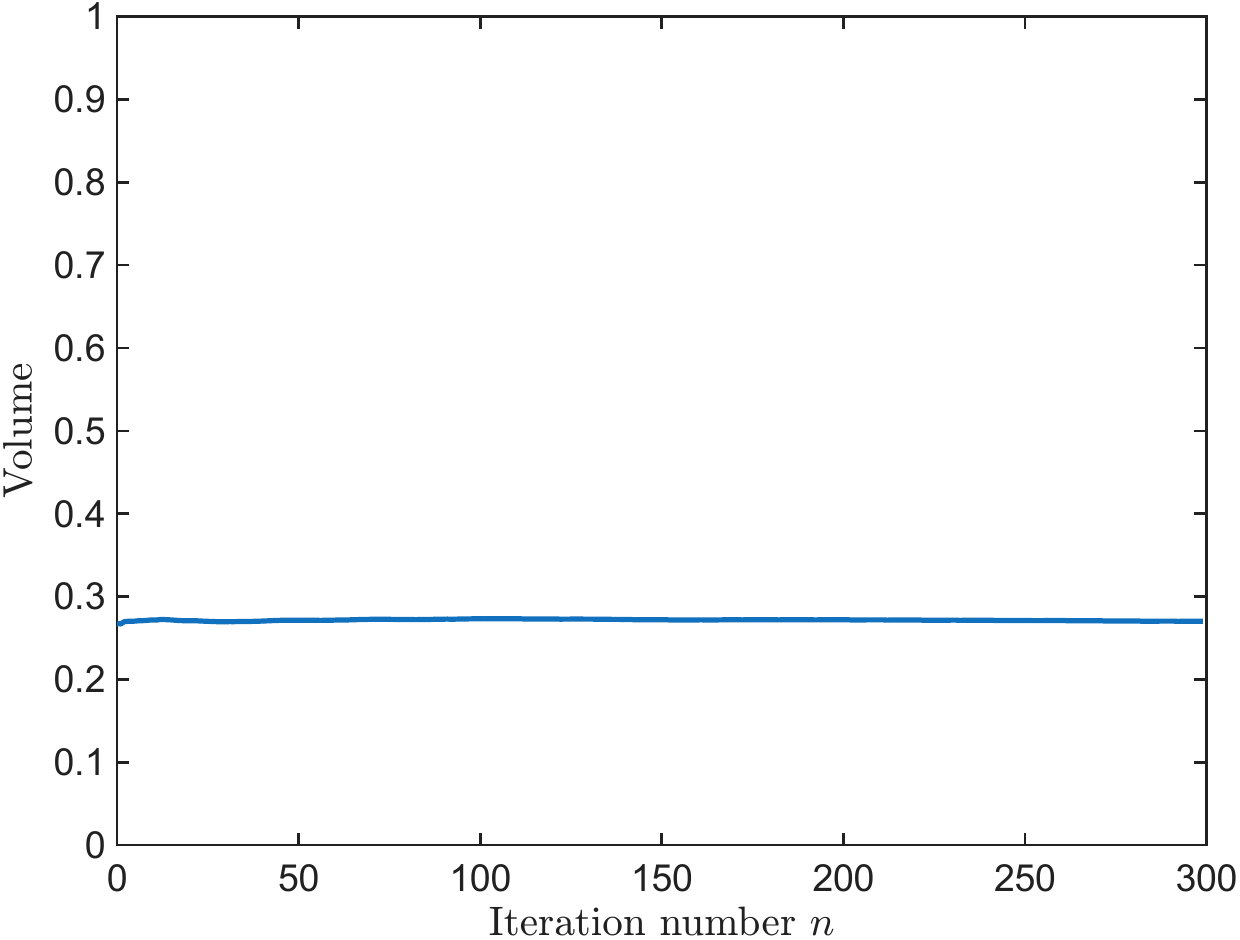} 
		\end{subfigure}
		\begin{subfigure}[b]{0.33\textwidth}			\includegraphics[width=\linewidth]{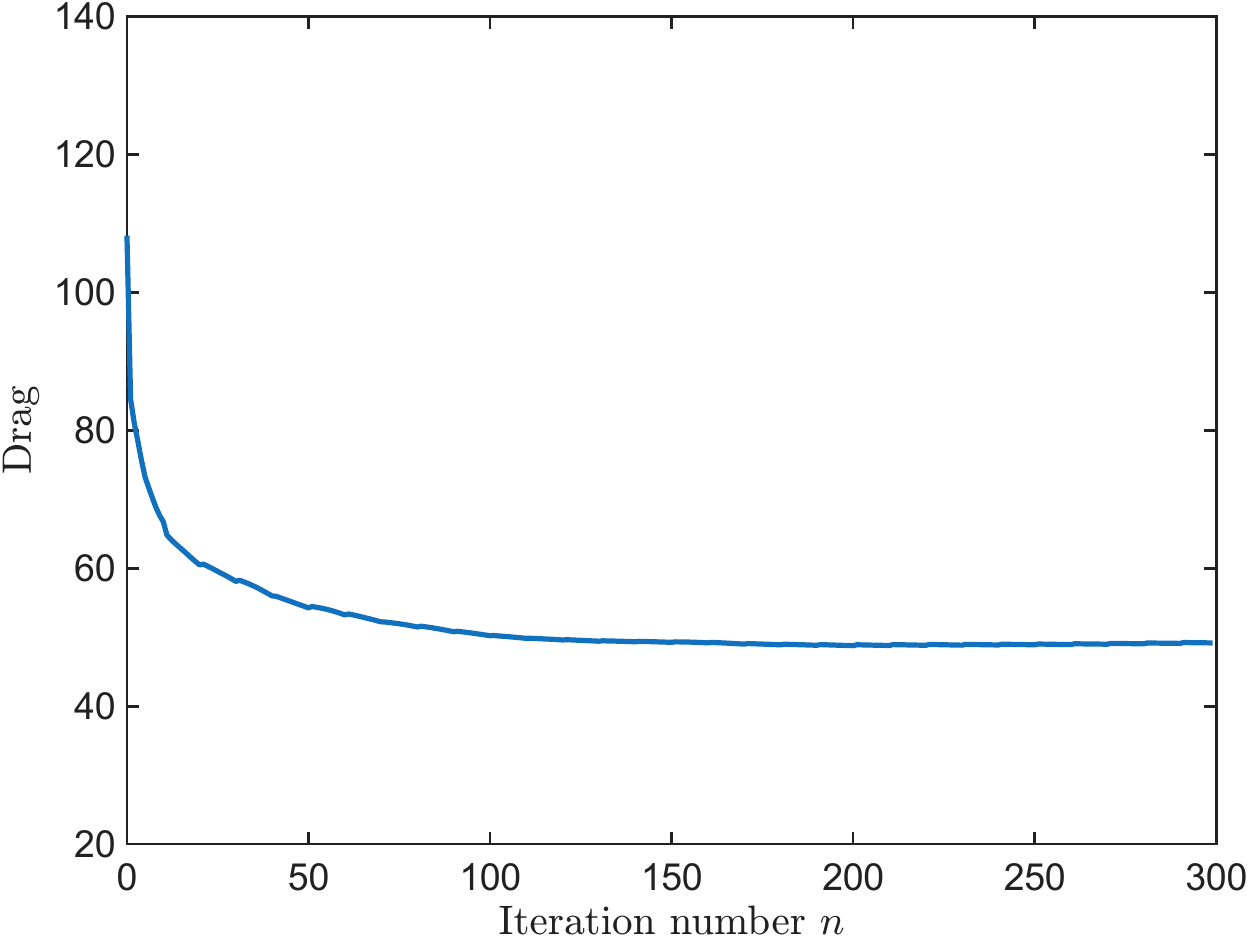}
		\end{subfigure}
		\caption{The initial design (top left), final design (top mid), streamlines of $\bm{u}$ (top right), and convergence histories of the volume (bottom left) and energy (bottom right) for Example \ref{5_M3E2}.}
		\label{Figure_M3E2_VandJ}
	\end{figure}

\begin{figure}[htbp]
    \centering

    \makebox[\textwidth][c]{%
    \begin{subfigure}[b]{0.315\textwidth}
        \centering
        \includegraphics[
            width=\linewidth,
            trim=390bp 90bp 390bp 130bp,
            clip
        ]{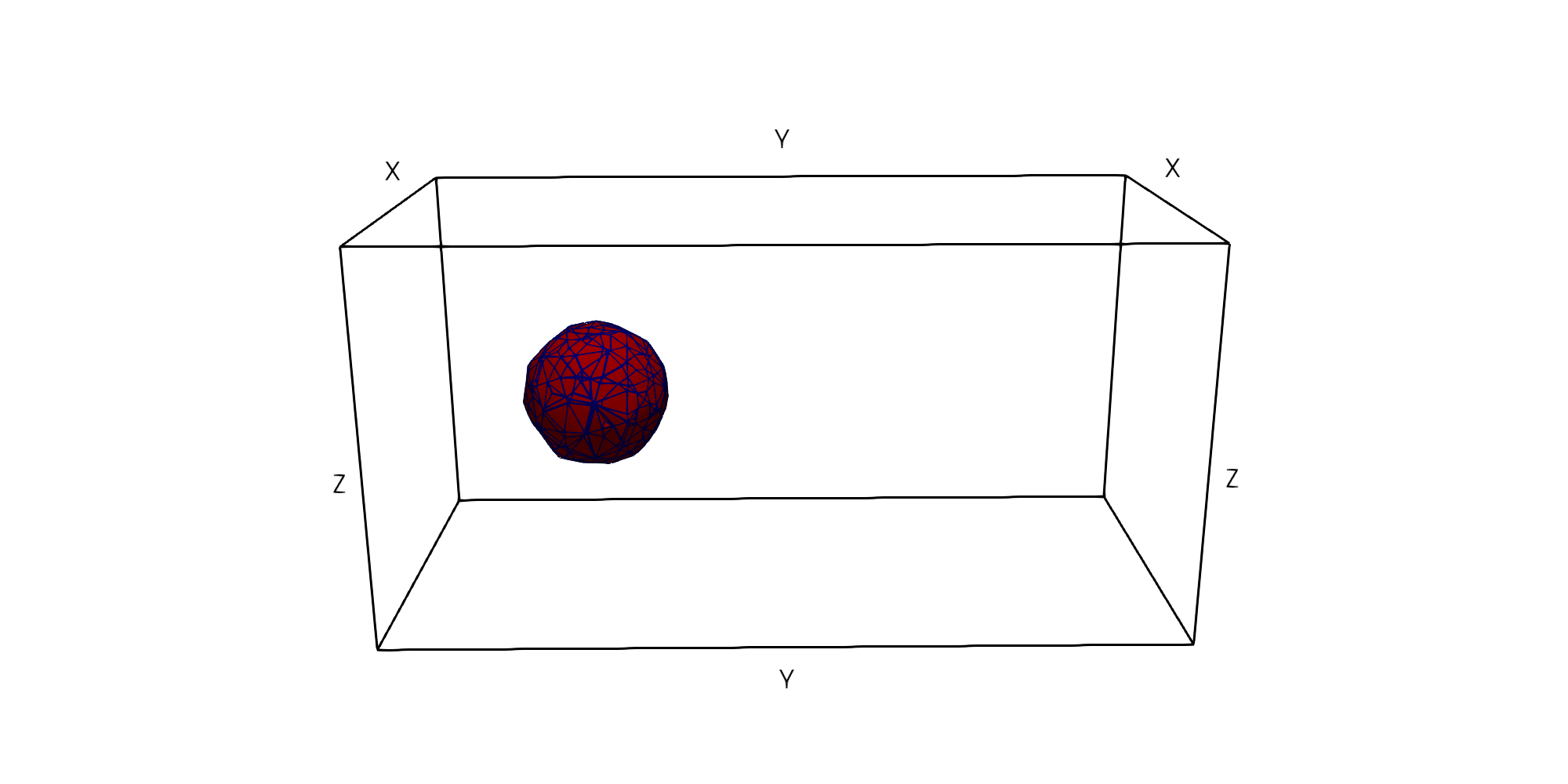}
    \end{subfigure}%
    \hspace{0.005\textwidth}%
    \begin{subfigure}[b]{0.315\textwidth}
        \centering
        \includegraphics[
            width=\linewidth,
            trim=390bp 90bp 390bp 130bp,
            clip
        ]{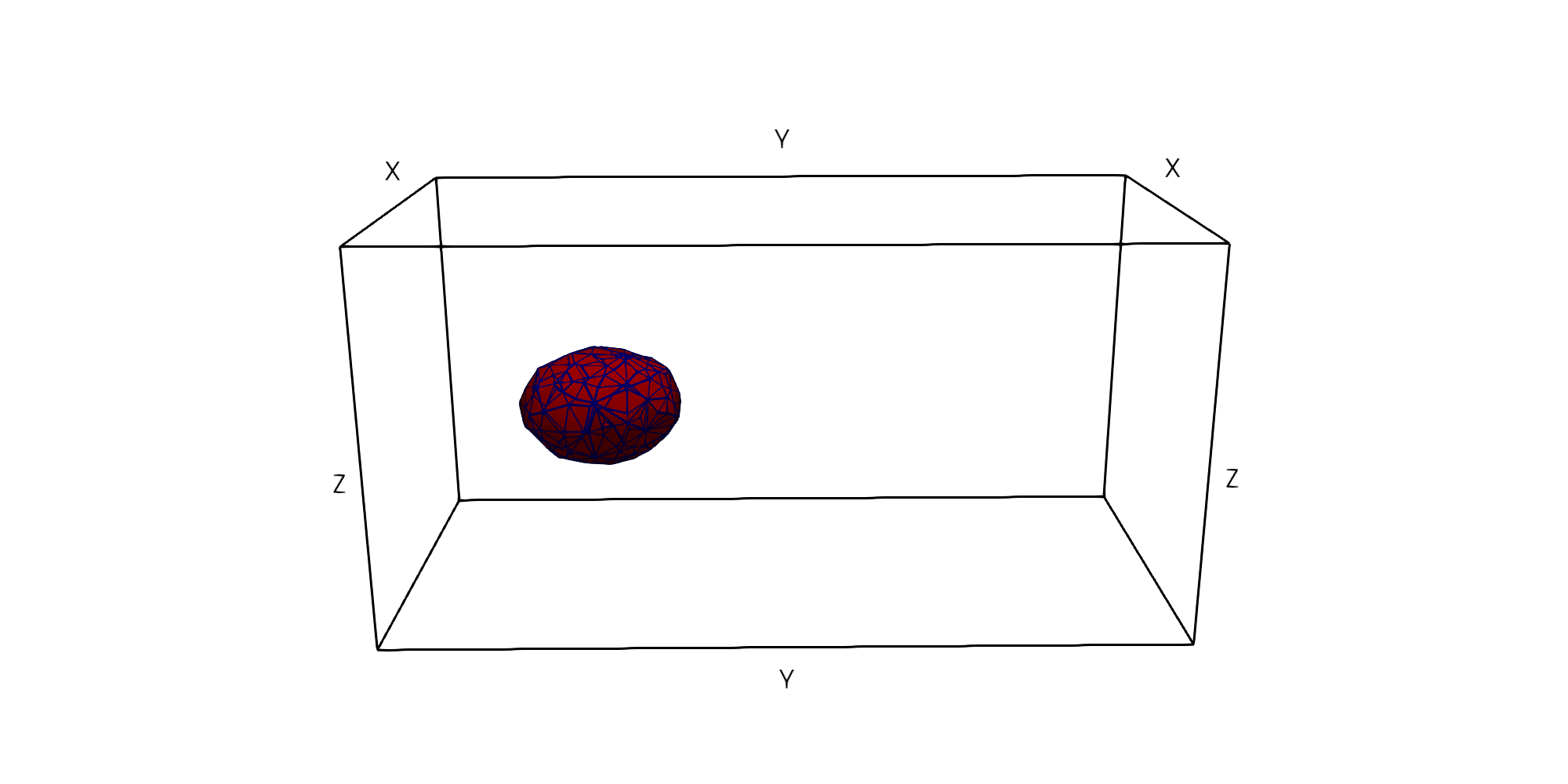}
    \end{subfigure}%
    \hspace{0.005\textwidth}%
    \begin{subfigure}[b]{0.315\textwidth}
        \centering
        \includegraphics[
            width=\linewidth,
            trim=390bp 90bp 390bp 130bp,
            clip
        ]{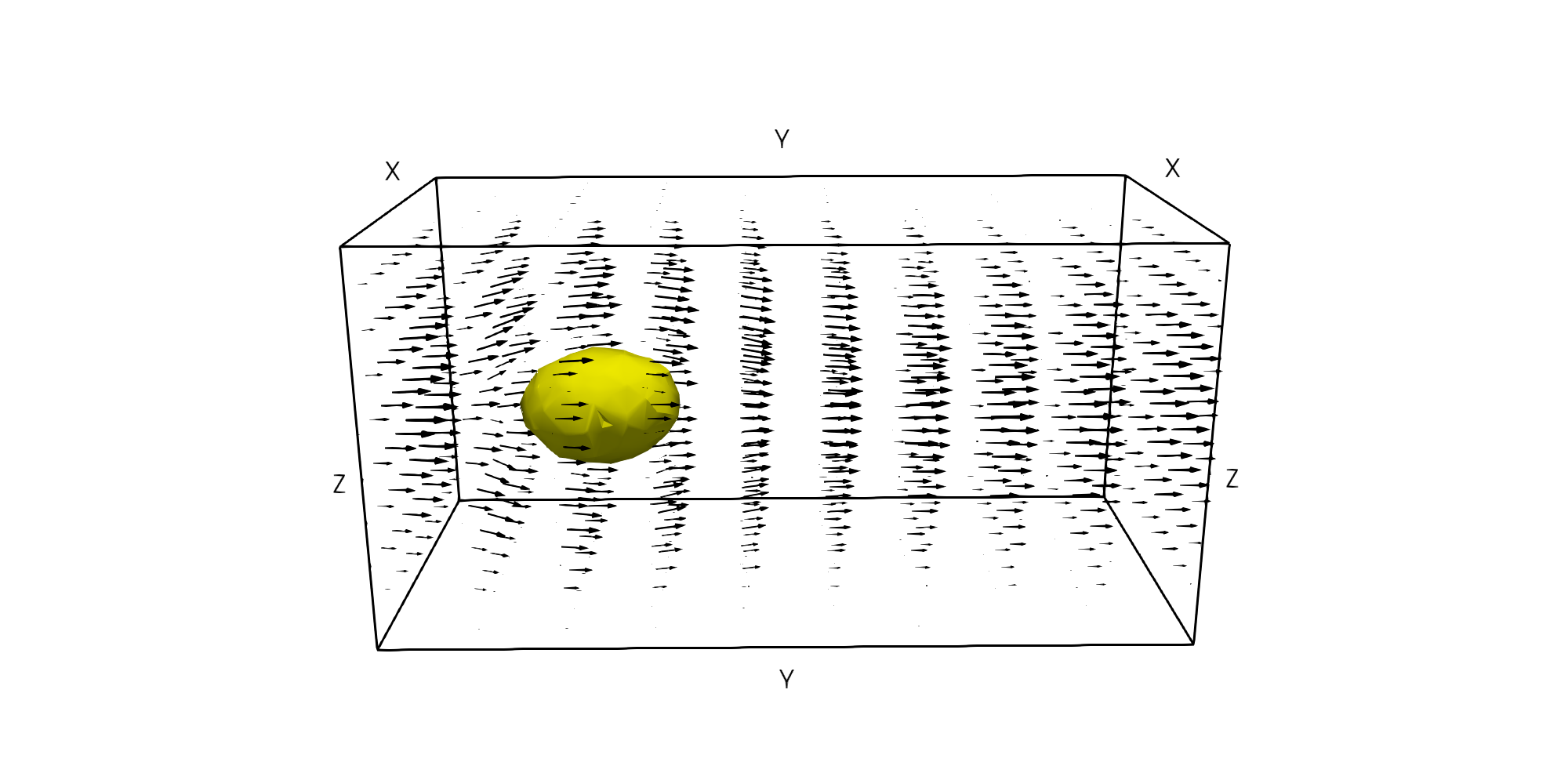}
    \end{subfigure}%
}

\vspace{0.3em}

    \makebox[\textwidth][c]{%
        \begin{subfigure}[b]{0.365\textwidth}
            \centering
            \includegraphics[width=\linewidth]{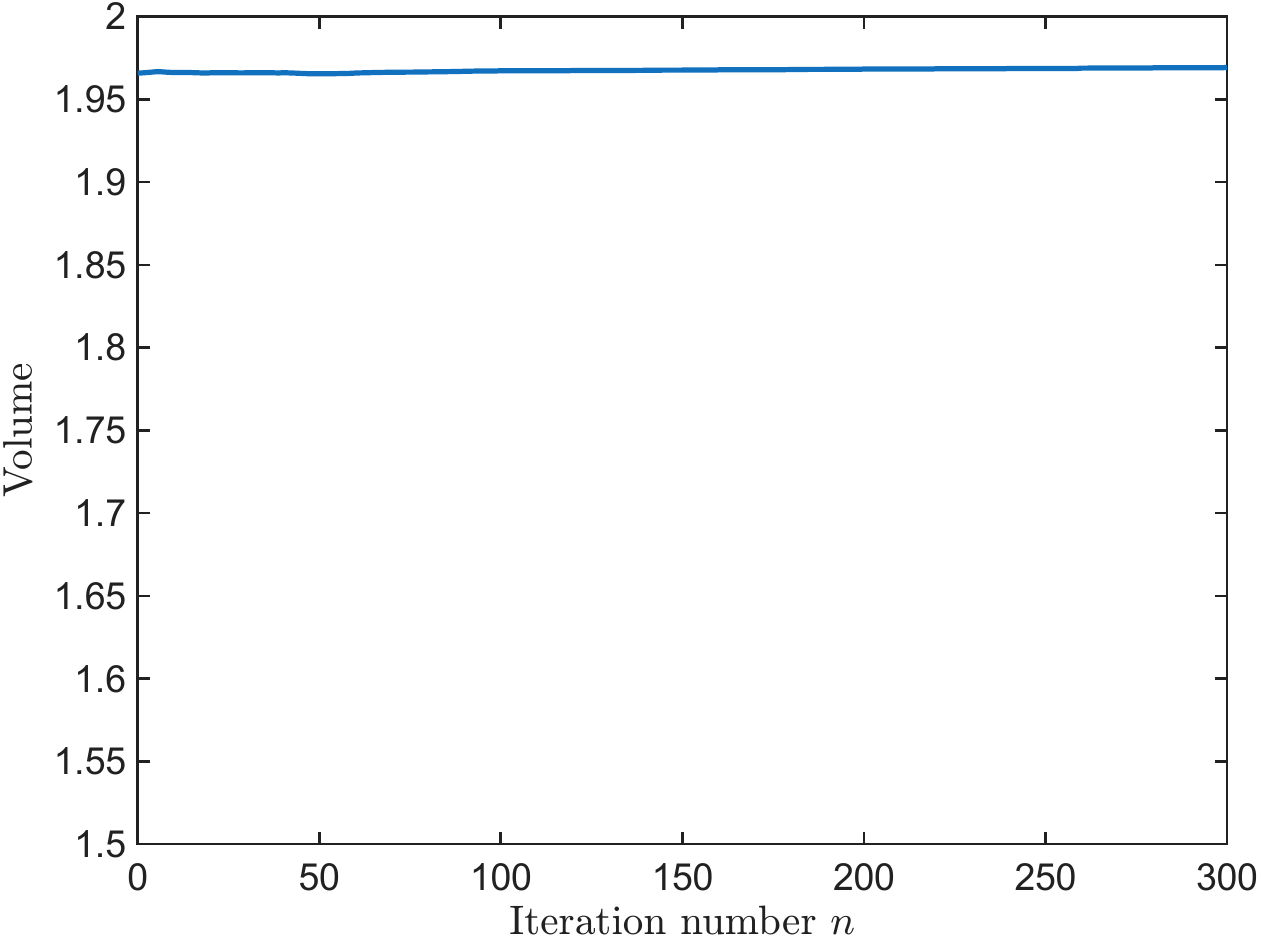}
        \end{subfigure}%
        \hspace{0.04\textwidth}%
        \begin{subfigure}[b]{0.355\textwidth}
            \centering
            \includegraphics[width=\linewidth]{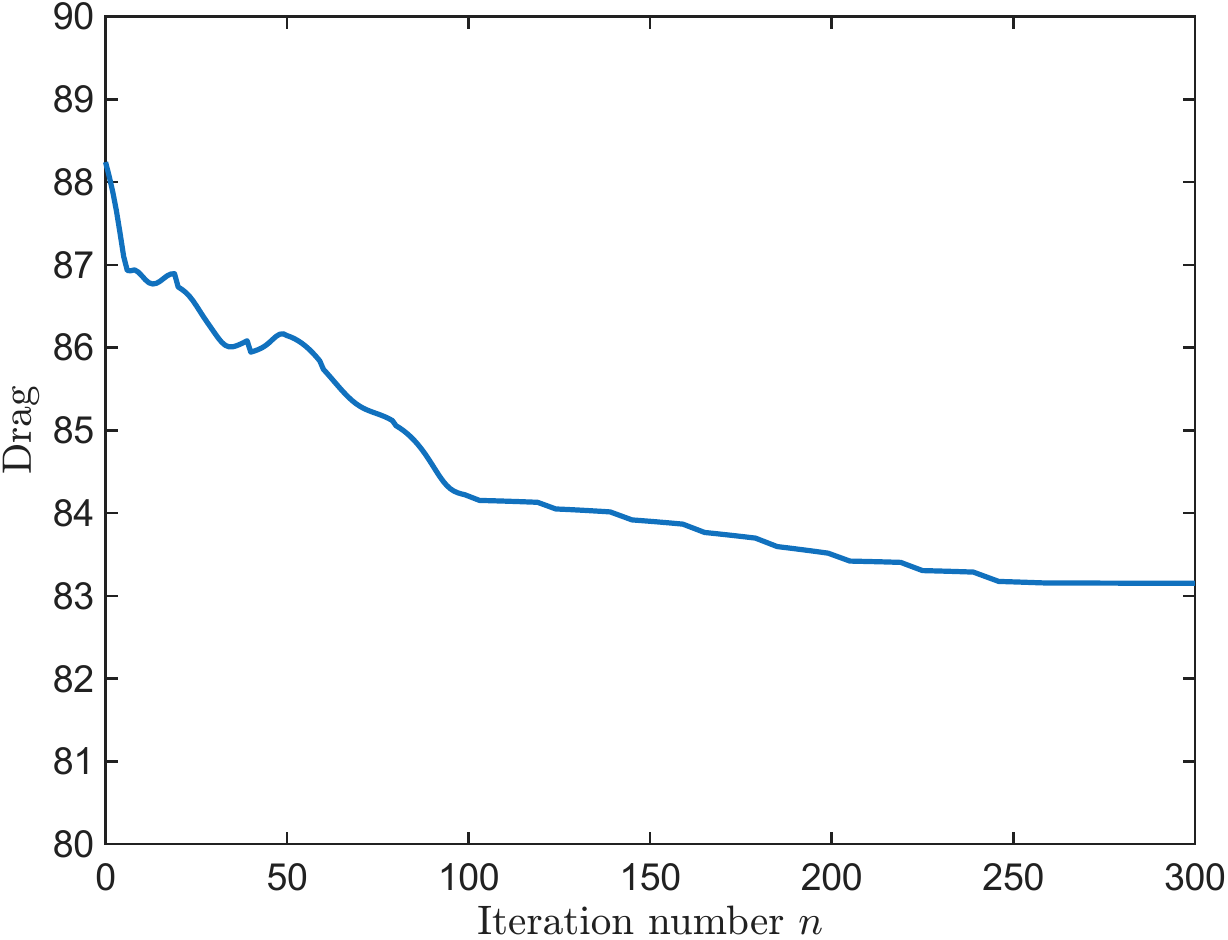}
        \end{subfigure}%
    }
    \caption{The initial design (top left), final design (top mid), streamlines of $\bm{u}$ (top right), and convergence histories of the volume (bottom left) and energy (bottom right) for Example \ref{5_M3E3}.}
    \label{Figure_M3E3_VandJ}
\end{figure}

\begin{table}[htbp]
  \centering
  \caption{Numerical parameters and volume-control errors for Examples~\ref{5_M1E1}--\ref{5_M3E3}.}
  \label{tab:1}
    \resizebox{\textwidth}{!}{
    \begin{tabular}{llccccccc}
      \toprule
      Problems & Examples & $d$ & $h$ & $N$ & $T=5\tau$ & $V_0$ & $|V_0-V_N|/V_0$ & $V^e$ \\
      \midrule
      \multirow{5}{*}{Structural compliance}
        & \ref{5_M1E1} & $2$ & $0.025$ & $300$  & $0.075$ & $2+(\eta-1)0.3\pi$ & $0.0409$ & $0.0454$ \\
        & \ref{5_M1E2} & $2$ & $0.020$ & $300$  & $0.050$ & $2+0.36\pi(\eta-1)$ & $0.0259$ & $0.0225$ \\
        & \ref{5_M1E3} & $2$ & $0.023$ & $1000$ & $0.008$ & $2+(\eta-1)0.289\pi$ & $0.0430$ & $0.0345$ \\
        & \ref{5_M4E1} & $3$ & $0.060$ & $300$  & $\min\{0.0025,0.4h/\lVert\bm{V}\rVert\}$ & $0.308,\,0.272$ & $0.0295,\,0.0051$ & $0.0082,\,0.0060$ \\
        & \ref{5_M4E2} & $3$ & $0.060$ & $300$  & $\min\{0.0025,0.4h/\lVert\bm{V}\rVert\}$ & $0.382,\,0.253$ & $0.0976,\,0.1055$ & $0.0334,\,0.0205$ \\
      \midrule
      \multirow{3}{*}{Structural eigenfrequency}
        & \ref{5_M5E1} & $2$ & $0.025$ & $700$ & $0.060$ & $2+0.3\pi(\eta_1-1)$ & $0.0126$ & $0.0105$ \\
        & \ref{5_M5E2} & $2$ & $0.029$ & $700$ & $0.030$ & $\frac{64}{25}+(1-\eta_2)\frac{252\pi}{625}$ & $0.0151$ & $0.0482$ \\
        & \ref{5_M5E3} & $3$ & $0.097$ & $300$ & $0.200$ & $\approx1.000$ & $0.0240$ & $0.023$ \\
      \midrule
      \multirow{8}{*}{Fluid flows}
        & \ref{5_M2E1} & $2$ & $0.017$ & $300$ & $10^{-3}$ & $0.5$ & $0.0120$ & $0.0038$ \\
        & \ref{5_M2E2} & $2$ & $0.018$ & $300$ & $0.005$ & $0.5$ & $0.0168$ & $0.0045$ \\
        & \ref{5_M2E3} & $2$ & $0.014$ & $300$ & $0.005$ & $0.32$ & $0.0344$ & $0.0069$ \\
        & \ref{5_M2E4} & $2$ & $0.015$ & $300$ & $5\times10^{-4},\,1.5\times10^{-3},\,1.5\times10^{-3}$ & $0.25$ & $0.0128,\,0.0064,\,0.0160$ & $0.0022,\,0.0010,\,0.0024$ \\
        & \ref{5_M2E5} & $2$ & $0.020$ & $300$ & $2.5\times10^{-4}$ & $1.93$ & $0.0012$ & $0.0007$ \\
        & \ref{5_M3E1} & $3$ & $0.108$ & $300$ & $3.5\times10^{-4}$ & $\approx0.528$ & $0.0549$ & $0.042$ \\
        & \ref{5_M3E2} & $3$ & $0.113$ & $300$ & $10^{-4}$ & $\approx0.267$ & $0.0105$ & $0.0042$ \\
        & \ref{5_M3E3} & $3$ & $0.100$ & $300$ & $4\times10^{-5}$ & $1.966$ & $0.0018$ & $0.0017$ \\
      \bottomrule
    \end{tabular}
  }
\end{table}

\clearpage
\section{Conclusion} \label{sect:Conclusion}
We have proposed a level-set framework for topology optimization in which volume control is incorporated into the velocity extension-regularization step through a scalar weighted compatibility constraint. For each frozen continuous transport interval, the constraint cancels the initial rate of change of the regularized volume, and the remaining drift is shown to start at order $T^2$. Under a narrow-band assumption, the corresponding constant is uniform with respect to the regularization width. For the spatially semi-discrete DG transport problem, we derive a quantitative regularized-volume drift estimate that identifies the effects of interface smoothing, spatial approximation, transport time, and the discrete velocity. Time discretization and reinitialization introduce additional numerical errors, which are monitored separately in the computations. The refinement diagnostics show approximately first-order decay of the transport drift with $T$. Numerical experiments for structural compliance, structural vibration, and Stokes-flow topology optimization show that the proposed strategy keeps the volume drift small while retaining stable objective evolution.

	\appendix
\section{Derivations of the shape derivatives}\label{sect:appendix}
	Here we derive \eqref{3_Modified_dJ1},  \eqref{3_Modified_dJ3} and \eqref{3_Modified_dJ2}  for the sake of completeness and self-containment of the paper.
	{
		\begin{proof}[Proof of \eqref{3_Modified_dJ1}]
			Let $\phi_t(\bm{x}) = \phi(t, \bm{x})$ and $\bm{u}_t = \bm{u}(\phi_t)$ be the solution of the following problem:
			\begin{equation} \label{3_Model1_timeeq}
				\begin{cases}
					-\bm{\nabla} \cdot (M_{\eta,\gamma}(\phi_t) A\varepsilon(\bm{u}_t)) = \bm{f}_s & \texttt{in} \ D, \\
					\bm{u}_t = \bm{0} & \texttt{on} \ \partial D_D, \\
					M_{\eta,\gamma}(\phi_t) A\varepsilon(\bm{u}_t)\bm{n} = \bm{g}_s & \texttt{on} \ \partial D_N, \\
					M_{\eta,\gamma}(\phi_t) A\varepsilon(\bm{u}_t)\bm{n} = \bm{0} & \texttt{on} \ \partial D_0.
				\end{cases}
			\end{equation}
			We obtain
			\begin{equation} \label{3_dJModel1_firststep}
				d\mathcal{J}(\phi; \bm{V}) = \int_D \frac{\partial M_{\eta,\gamma}(\phi_t)}{\partial t} \Big|_{t = 0} A \varepsilon(\bm{u}):\varepsilon(\bm{u}) {\rm d}x + \int_D 2M_{\eta,\gamma}(\phi) A \varepsilon \Big( \frac{\partial \bm{u}_t}{\partial t} \Big|_{t = 0} \Big) : \varepsilon(\bm{u}) {\rm d}x.
			\end{equation}
			Consider the weak formulation of equation \eqref{3_Model1_timeeq}
			\begin{equation}
				\int_D M_{\eta,\gamma}(\phi_t) A \varepsilon(\bm{u}_t):\varepsilon(\bm{v}) {\rm d}x = \int_D \bm{f}_s \cdot \bm{v} {\rm d}x + \int_{\partial D_N} \bm{g}_s \cdot \bm{v} \ds \quad \forall \bm{v} \in [H_{\partial D_D}^1(D)]^d.
			\end{equation}
			Taking derivatives with respect to $t$ on both sides of the above equation yields
			\begin{equation} \label{3_Model1_dteq}
				\int_D \frac{\partial M_{\eta,\gamma}(\phi_t)}{\partial t} A \varepsilon(\bm{u}_t):\varepsilon(\bm{v}) {\rm d}x + \int_D M_{\eta,\gamma}(\phi) A \varepsilon \Big( \frac{\partial \bm{u}_t}{\partial t} \Big) : \varepsilon(\bm{v}) {\rm d}x = 0 \quad \forall \bm{v} \in [H_{\partial D_D}^1(D)]^d.
			\end{equation}
			Setting $t = 0$ and $\bm{v} = \bm{u}$ in \eqref{3_Model1_dteq}, from \eqref{3_dJModel1_firststep} and \eqref{3_levelset_eq} it follows that
			\begin{align}
				d\mathcal{J}(\phi; \bm{V}) &= -\int_D \frac{\partial M_{\eta,\gamma}(\phi_t)}{\partial t} \Big|_{t = 0} A \varepsilon(\bm{u}):\varepsilon(\bm{u}) {\rm d}x \notag \\
				&= \int_D -M_{\eta,\gamma}'(\phi_0) \frac{\partial \phi}{\partial t}(0, \bm{x}) A \varepsilon(\bm{u}):\varepsilon(\bm{u}) {\rm d}x \notag \\
				&= \int_D M_{\eta,\gamma}'(\phi_0) A \varepsilon(\bm{u}):\varepsilon(\bm{u}) \bm{V} \cdot \nabla \phi_0(\bm{x}) {\rm d}x \notag 
			\end{align}
			This finishes the proof. \qed
		\end{proof} 
	}
            {
		\begin{proof}[Proof of \eqref{3_Modified_dJ3}]
			Let $\phi_t(\bm{x}) = \phi(t, \bm{x})$ and $(\lambda_{1,t},\bm{u}_{1,t})$ with $ \bm{u}_{1,t}= \bm{u}_1(\phi_t)$ be the first eigenpair of 
			\begin{equation} \label{3_Model3_timeeq}
				\begin{cases}
					-\bm{\nabla} \cdot (M_{\eta_1,\gamma}(\phi_t) A\varepsilon(\bm{u}_t)) = \lambda_{t} \rho_M \bm{u}_t & \texttt{in} \ D, \\
					\bm{u}_t = \bm{0} & \texttt{on} \ \partial D_D, \\
					M_{\eta_1,\gamma}(\phi_t) A\varepsilon(\bm{u}_t) = \bm{0} & \texttt{on} \ \partial D_N. 
				\end{cases}
			\end{equation}
			The first eigenpair satisfies \eqref{3_Model3_timeeq}:
			\begin{equation} \label{3_Model3_weakform}
				\int_D M_{\eta_1,\gamma}(\phi_t) A \varepsilon(\bm{u}_{1,t}):\varepsilon(\bm{v}) {\rm d}x = \lambda_{1,t} \int_D \rho_M \bm{u}_{1,t} \cdot \bm{v} {\rm d}x  \quad \forall \bm{v} \in [H_{\partial D_D}^1(D)]^d.
			\end{equation}
            
            Taking derivatives with respect to $t$ on both sides of the above equation yields
			\begin{align} \label{3_Model3_dteq}
				& \int_D \frac{\partial M_{\eta_1,\gamma}(\phi_t)}{\partial t} A \varepsilon(\bm{u}_{1,t}):\varepsilon(\bm{v}) {\rm d}x + \int_D M_{\eta_1,\gamma}(\phi) A \varepsilon \Big( \frac{\partial \bm{u}_{1,t}}{\partial t} \Big) : \varepsilon(\bm{v}) {\rm d}x \notag \\
                = &\frac{\partial \lambda_{1,t}}{\partial t} \int_D \rho_M \bm{u}_{1,t} \cdot \bm{v} {\rm d}x + \lambda_{1,t} \int_D \frac{\partial \rho_M}{\partial t} \bm{u}_{1,t} \cdot \bm{v} {\rm d}x + \lambda_{1,t} \int_D \rho_M \frac{\partial \bm{u}_{1,t}}{\partial t} \cdot \bm{v} {\rm d}x \quad \forall \bm{v} \in [H_{\partial D_D}^1(D)]^d.
			\end{align}
			Because the small region is not subject to optimization, the mass derivative satisfies
			\begin{equation}
				\left.\frac{\partial\rho_M}{\partial t}\right|_{t=0}=-(1-\eta_2)(1-\chi_{\texttt{block}})M'_\gamma(\phi_0)\,\bm V\cdot\nabla\phi_0.
			\end{equation}
			Setting $t = 0$ and $\bm{v} = \bm{u}_{1,t}$ in \eqref{3_Model3_dteq}, $\bm{v} = \partial_t \bm{u}_{1,t}$ in \eqref{3_Model3_weakform}, then from \eqref{3_levelset_eq} it follows that
			\begin{align}
				\frac{\partial \lambda_{1,t}}{\partial t} \Big|_{t = 0} \notag &= \int_D \frac{\partial M_{\eta_1,\gamma}(\phi_t)}{\partial t}\Big|_{t = 0} A \varepsilon(\bm{u}_1):\varepsilon(\bm{u}_1) {\rm d}x - \lambda_{1} \int_D \frac{\partial \rho_M}{\partial t} \Big|_{t = 0}|\bm{u}_1|^2 {\rm d}x\notag \\
				&= -\int_D \left[ A \varepsilon(\bm{u}_1):\varepsilon(\bm{u}_1) - \lambda_1 \frac{1-\eta_2}{1-\eta_1}(1-\chi_{\texttt{block}}) |\bm{u}_1|^2\right] \ M_{\eta_1,\gamma}^\prime(\phi_0) \ \bm{V} \cdot \nabla \phi_0 {\rm d}x.                
			\end{align}
			This finishes the proof. \qed
		\end{proof} 
	}

	{
		\begin{proof}[Proof of \eqref{3_Modified_dJ2}]
			Let $\phi_t(\bm{x}) = \phi(t, \bm{x})$ and $(\bm{u}_t, p_t) = (\bm{u}(\Omega_t), p_f(\Omega_t))$ be the solution of the following problem
			\begin{equation} \label{3_Model2_timeeq}
				\begin{cases}
					-\nu_f \bm{\Delta} \bm{u}_t + \alpha(M_{\gamma}(\phi_t)) \bm{u}_t + \bm{\nabla} p_t = \bm{f}_f & \texttt{ in } D, \\
					\bm{\nabla} \cdot \bm{u}_t = 0 & \texttt{ in } D, \\
					\bm{u}_t = \bm{g}_f & \texttt{ on } \Gamma_{\texttt{in}}, \\
					\bm{u}_t = \bm{0} & \texttt{ on } \partial D_{\texttt{wall}}, \\
					\nu_f \bm{\nabla} \bm{u}_t \bm{n} - p_t \bm{n} = \bm{0} & \texttt{ on } \Gamma_{\texttt{out}},
				\end{cases}
			\end{equation}
			whose weak formulation is given by
            \begin{equation}\label{3_Model2_weakeq}
			\begin{aligned} 
				\int_D \alpha(M_{\gamma}(\phi_t)) \bm{u}_t \cdot \bm{v} {\rm d}x + \int_D \nu_f \bm{\nabla u_t} : \bm{\nabla v} {\rm d}x - \int_D \bm{\nabla} \cdot \bm{v} \, p_t {\rm d}x = \int_D \bm{f}_f \cdot \bm{v} {\rm d}x & \quad \forall \bm{v} \in \left[ H^1_{\Gamma_{\texttt{in}} \cup \partial D_{\texttt{wall}}}(D) \right]^d, \\
				\int_D \bm{\nabla} \cdot \bm{u}_t \, q_f {\rm d}x = 0 & \quad \forall q_f \in L_0^2(D).
			\end{aligned}
            \end{equation}
			From \eqref{3_Model2_timeeq} we can obtain
			\begin{equation}
				\begin{dcases}
					-\nu_f \bm{\Delta} \dfrac{\partial \bm{u}_t}{\partial t} + (\alpha \circ M_{\gamma})'(\phi_t) \dfrac{\partial \phi_t}{\partial t} \bm{u}_t + \alpha(M_{\gamma}(\phi_t)) \dfrac{\partial \bm{u}_t}{\partial t} + \bm{\nabla} \dfrac{\partial p_t}{\partial t} = \bm{0} & \texttt{ in } D, \\
					\bm{\nabla} \cdot \dfrac{\partial \bm{u}_t}{\partial t} = 0 & \texttt{ in } D, \\
					\dfrac{\partial \bm{u}_t}{\partial t} = \bm{0} & \texttt{ on } \Gamma_{\texttt{in}}, \\
					 \dfrac{\partial \bm{u}_t}{\partial t} = \bm{0} & \texttt{ on } \partial D_{\texttt{wall}}, \\
					\nu_f \bm{\nabla} \dfrac{\partial \bm{u}_t}{\partial t} \bm{n} - \dfrac{\partial p_t}{\partial t} \bm{n} = 0 & \texttt{ on } \Gamma_{\texttt{out}},
				\end{dcases}
			\end{equation}
			so that $\dfrac{\partial \bm{u}_t}{\partial t} \in \left[ H^1_{\Gamma_{\texttt{in}} \cup \partial D_{\texttt{wall}}}(D) \right]^d$.
			We can obtain
			\begin{align} \label{3_dJModel2_firststep}
				d\mathcal{J}(\phi; \bm{V}) =& \frac{1}{2} \int_D (\alpha \circ M_{\gamma})'(\phi_0) \frac{\partial \phi_t}{\partial t}(0, \bm{x}) \bm{u} \cdot \bm{u} {\rm d}x + \int_D \alpha(M_{\gamma}(\phi_0)) \frac{\partial \bm{u}_t}{\partial t} \Big|_{t = 0} \cdot \bm{u} {\rm d}x \notag \\
				&+ \int_D \nu_f \bm{\nabla} \Big( \frac{\partial \bm{u}_t}{\partial t} \Big|_{t = 0} \Big) : \bm{\nabla u} {\rm d}x - \int_D \bm{f}_f \cdot \frac{\partial \bm{u}_t}{\partial t} \Big|_{t = 0} {\rm d}x.
			\end{align}
			Taking $t = 0$ and $(\bm{v}, q_f) = \Big( \dfrac{\partial \bm{u}_t}{\partial t}, p_f \Big)$ in \eqref{3_Model2_weakeq} yields
			\begin{align}
				\int_D \alpha(M_{\gamma}(\phi_0)) \frac{\partial \bm{u}_t}{\partial t} \Big|_{t = 0} \cdot \bm{u} {\rm d}x + \int_D \nu_f \bm{\nabla} \Big( \frac{\partial \bm{u}_t}{\partial t} \Big|_{t = 0} \Big) : \bm{\nabla u} {\rm d}x = \int_D \bm{f}_f \cdot \frac{\partial \bm{u}_t}{\partial t} \Big|_{t = 0} {\rm d}x.
			\end{align}
			Thus
			\begin{align}
				d\mathcal{J}(\phi; \bm{V}) = \frac{1}{2} \int_D (\alpha \circ M_{\gamma})'(\phi_0) \frac{\partial \phi_t}{\partial t}(0, \bm{x}) \bm{u} \cdot \bm{u} {\rm d}x 
				= -\frac{1}{2} \int_D \alpha^\prime(M_{\gamma}(\phi_0)) \left| \bm{u} \right|^2 \ M'_{\gamma}(\phi_0) \ \bm{V} \cdot \nabla \phi_0 {\rm d}x.
			\end{align}
			This gives the result. \qed
		\end{proof} 
	}

\section*{Declaration of generative AI use}
During the preparation of this work, the authors used ChatGPT 5.6-sol in order to improve the language and readability of the manuscript, to assist with the typesetting of its tables, and to suggest the proof idea for Theorem~\ref{thm:volume-drift}. After using this tool, the authors reviewed and edited the content as needed and take full responsibility for the content of the published article. 

\bibliographystyle{unsrtnat}
\bibliography{cas-refs}

\end{document}